\documentclass[10pt]{article}

\usepackage[english]{babel}

\usepackage[letterpaper,top=2cm,bottom=2cm,left=3cm,right=3cm,marginparwidth=1.75cm]{geometry}

\usepackage{amsmath,amssymb,amsfonts,amsthm}
\usepackage{graphicx}
\usepackage[colorlinks=true, allcolors=blue]{hyperref}
\usepackage{bussproofs} 
\usepackage{multicol}
\usepackage{color}

\newtheorem{definition}{Definition}[section]
\newtheorem{theorem}[definition]{Theorem}
\newtheorem{lemma}[definition]{Lemma}
\newtheorem{proposition}[definition]{Proposition}
\newtheorem{remark}[definition]{Remark}
 
\newtheorem{example}[definition]{Example}
\newtheorem{corollary}[definition]{Corollary}

\def\A{\mathcal{A}}

\def\P{\mathbf{P}}
\def\F{\mathbf{F}}
\def\H{\mathbf{H}}
\def\G{\mathbf{G}}
\def\K{\mathcal{K}}
\def\La{\mathcal{L}}

\def\tdlat{{\bf tDL}}
\def\dl{{\bf DL}}
\def\tps{{\bf tPS}}
\def\ps{{\bf PS}}

\title{Tense distributive lattices: algebra, logic and topology.}
\author{A. V. Figallo\footnote{E-mail: \texttt{avfigallo@gmail.com}},  J. Sarmiento\footnote{E-mail: \texttt{jsarmiento33@gmail.com }}\, and  M. Figallo\footnote{E-mail: \texttt{figallomartin@gmail.com}}  \\ [2mm] %
{\small $^\ast$$^\dag$ Instituto de Ciencias B\'asicas, Universidad Nacional de San Juan (UNSJ), San Juan, Argentina}\\
{\small $^\dag$$^\ddag$ Departamento de Matemática, Universidad Nacional del Sur, Bahía Blanca, Argentina}\\
{\small $^\ddag$ Departamento de Matemática and Instituto de Matemática (INMABB),}\\ 
{\small Universidad Nacional del Sur, Bahía Blanca, Argentinal}}
\date{}

\begin{document}
\maketitle

\begin{abstract}
Tense logic was introduced by Arthur Prior in the late 1950’s as a result of his interest in the relationship between tense and modality. Prior's idea was to add four primitive modal-like unary connectives to the base language today widely known as {\em Prior’s tense operators}. Since then, Prior's operators have been considered in many contexts by different authors, in particular, in the context of algebraic logic.

Here, we consider the category \tdlat\ of bounded distributive lattices equipped with Prior’s tense operators. We establish categorical dualities for \tdlat\ in terms of certain categories of Kripke frames and Priestley spaces, respectively. As an application, we characterize the congruence lattice of any tense distributive lattice as well as the subdirectly irreducible members of this category. Finally, we define the logic that preserves degrees of truth with respect to \tdlat-algebras and precise the relation between particular sub-classes of \tdlat\ and  know tense logics found in the literature.
\end{abstract}

\vspace*{4mm}

\noindent {\bf MSC (2010):} {Primary 06D35, Secondary 03B60.} 

\vspace*{4mm}

\noindent {\bf \em Keywords:} {tense algebraic structures, topological dualities, logics that preserve degrees of truth.}

\section{Introduction}
The term {\em Temporal Logic} is used to designate a wide range of logical systems, and their formal representation, which allow reasoning about time and temporal information. In particular, it is used to refer to the approach introduced by Arthur Prior under the name {\em Tense Logic} and subsequently developed further by many logicians and computer scientists. The contribution of Prior was to consider temporal logics as a special kind of modal logic where truth values of assertions vary with time. 
Temporal logics have many applications such as providing a suitable formalism for clarifying philosophical issues about time as well as within which to define semantics of temporal expressions in natural language. Besides, it provides the suitable language for encoding temporal knowledge in artificial intelligence, and as a tool for specification, formal analysis, and verification of the executions of computer programs and systems.
Prior's motivation for considering his well-known tense operators $\P$, $\F$, $\H$ and $\G$ was strongly motivated in the use of tense in natural language (see \cite{Prior57, Prior67, Prior68}). The intended meaning of these four temporal operators is the following: $\P$
:``{\em It has at some time been the case that $\dots$}''; $\F$ :``{\em It will at some time be the case that $\dots$}''; $\H$ :``{\em It has always been the case that $\dots$}''; and $\G$ :``{\em It will always be the case that $\dots$}''.\\
The {\em minimal tense logic} $K_t$ is the logic whose axioms are all classical tautologies along with the axioms

$$\G(\varphi\rightarrow \psi)\rightarrow (\G\varphi\rightarrow \G\psi),$$

$$\H(\varphi\rightarrow \psi)\rightarrow (\H\varphi\rightarrow \H\psi),$$

$$\varphi\rightarrow\G\P\varphi,$$

$$\varphi\rightarrow\H\F\varphi.$$

\noindent and the inference rules are the classical {\em modus ponens} and 

$$\displaystyle \frac{\varphi}{\G\varphi},$$

$$\displaystyle \frac{\varphi}{\H\varphi}.$$

The first two axiom schemata are the temporal correspondents of the so-called $K$-axiom of modal logic, and hence the terminology $K_t$. The third and the fourth axiom schemata capture the interaction of the past and future operators. The last two rules are the corresponding {\em necessitation rules} for the temporal operators.
$K_t$ encompass one pair of temporal operators for the past, $\P$ and $\H$, and one pair of temporal operators for the future, $\F$ and $\G$. The operators $\P$ and $\F$ are often referred to as the ‘weak’ temporal operators, while $\H$ and $\G$  are known as the ‘strong’ ones. The respective past and future operators are duals of each other, i.e., they are interdefinable by means of the following equivalences:

$$\P\varphi\equiv \neg\H\neg\varphi, \,  \H\varphi\equiv\neg\P\neg\varphi \, \mbox{ and } \, \F\varphi\equiv \neg\G\neg\varphi, \, \G\varphi\equiv\neg\F\neg\varphi$$

Later, in \cite{Ewald86}, intuitionistic tense logic $IK_t$ was introduced by Ewald by extending the language of intuitionistic propositional logic with the four tense operators. A Hilbert-style axiomatization of $IK_t$ can be found in \cite{Ewald86}. It is well-known that the Ewald's axiomatization of is not minimal in the sense that several axioms can be deduced from others. Besides, in contrast to what happens in $K_t$, in the intuitionistic case $\F$ and $\P$ cannot be defined in terms of $\G$ and $\H$ (see \cite{Dzik}). In order to provide a semantics for $IK_t$, Ewald introduced the notion of {\em intuitionistic tense structure}. Fromally, a intuitionistic tense structure is a quintuple $\langle \Gamma, \leq, \{T_\gamma\}_{\gamma\in \Gamma}, \{u_\gamma\}_{\gamma\in \Gamma}, \{R_{t}^{\gamma}\}_{\gamma\in \Gamma, t\in T_{\gamma}} \rangle$ where $(\Gamma, \leq)$ is a partially-ordered set (the "states-of-knowledge"), $T_{\gamma}$, is a set (the set of times known at state-of-knowledge $\gamma$), and $u_\gamma$ is a binary relation on $T_{\gamma}$ (the temporal ordering of $T_\gamma$ as it is understood at state-of-knowledge $\gamma$). We require that $\gamma\leq\varphi$  imply $T_{\gamma} \subseteq T_{\varphi}$ and and $u_\gamma \subseteq u_{\varphi}$, that is, if we advance in knowledge we retain what we know about times and their temporal ordering. On the other hand, if we require times and the temporal ordering to be unchanging, that is,
for all $\gamma$, $\varphi$ we have $T_\gamma = T_\varphi$, and $u_\gamma = u_\varphi$, we obtain the {\em intuitionistic tense logic of unchanging times} denoted by $IK^{*}_t$; and where each $R_{t}^{\gamma}$ is a relation on formulas such that (see \cite{Ewald86}): 
\begin{enumerate}
\item $R_{t}^{\gamma}$(p) and $\gamma\leq \varphi$ imply $R_{t}^{\varphi}(p)$ for atomic $p$; 
\item $R_{t}^{\gamma}(X\wedge Y)$ iff $R_{t}^{\gamma}(X)$ and $R_{t}^{\gamma}(Y)$; 
\item $R_{t}^{\gamma}(X \vee Y)$ iff $R_{t}^{\gamma}(X)$ or $R_{t}^{\gamma}(Y)$; 
\item $R_{t}^{\gamma} (\neg X)$ iff ($\forall \varphi: \varphi\geq \gamma$) not $R_{t}^{\varphi}(X)$; 
\item $R_{t}^{\gamma}(X\rightarrow Y)$ iff ($\forall \varphi: \varphi\geq \gamma$)($R_{t}^{\varphi}(X)$ implies $R_{t}^{\varphi}(Y))$; 
\item $R_{t}^{\gamma}(\P X)$ iff ($\exists t'$)($t'u_{\gamma}t$ and $R_{t'}^{\gamma}(X)$); 
\item $R_{t}^{\gamma}(\F X)$ iff ($\exists t'$)($tu_{\gamma}t'$ and $R_{t'}^{\gamma}(X)$); 
\item $R_{t}^{\gamma}(\H X)$ iff ($\forall \varphi \geq \gamma$)($\forall t'\in T_{\varphi}$)($t'u_{\varphi}t$ implies $R_{t'}^{\varphi}(X)$); 
\item $R_{t}^{\gamma}(\G X)$ iff ($\forall \varphi \geq \gamma$)($\forall t'\in T_{\varphi}$)($tu_{\varphi}t'$ implies $R_{t'}^{\varphi}(X)$).
\end{enumerate}

\

From the algebraic point of view, recall that {\em tense algebras} are Boolean algebras equipped with two unary join-preserving and normal operators $\F$ and $\P$ which are conjugates, that is, they satisfy  $$x\wedge \P y = 0 \, \mbox{ iff } \, \F x \wedge y=0.$$ In every tense algebra one can consider the operators $\H$ and $\G$ defined as $\H\varphi\equiv\neg\P\neg\varphi$ and $\G\varphi\equiv\neg\F\neg\varphi$.
It is well-known that tense algebras are an algebraic counterpart of $\K_t$.\\
In \cite{F2}, it was considered the class of Heyting algebras equipped with four unary operators $\F$, $\P$, $\H$ and $\G$ where the first two operators are join-preserving, normal and conjugates, the last two verifies the dual properties and such that
$$\F(x\rightarrow y)\leq \G x \rightarrow \F y \, \mbox{ \, and \, } \, \P(x\rightarrow y)\leq \H x \rightarrow \P y$$
Besides, it was proved that this class of algebras, known as $IK_t$-algebras, constitutes an algebraic counterpart of $IK_t$.
In the last few years, tense operators have been considered by different
authors in diverse classes of algebras. Some contributions in this area can be found in works by Diaconescu and Georgescu \cite{Diaconescu07}, Botur et al. \cite{Botur11}, Chiri\c{t}\u{a} \cite{Chirita11},
Chajda \cite{Chajda11}, Figallo, Pascual and Pelaitay \cite{F1,F2,F6}, Bakhshi \cite{BM}, Botur and Paseka \cite{BP}, Menni and Smith \cite{Menni14}, Dzik et al. \cite{Dzik}, Figallo, Pelaitay and Sarmiento \cite{F21,F22,F23}.

\

In this work, we focus on topological and logical aspects of tense operators defined over bounded distributive lattices. These structures have been considered already in works such as \cite{ChajdaPaseka} and \cite{Menni14}. In Section \ref{s2}, we introduce the category \tdlat\ of tense distributive lattices, we study the objects (\tdlat-algebras) of this category; in particular, we prove important properties of the prime filters of a given \tdlat-algebra. In Section \ref{s3}, we develop a topological duality for tense distributive lattices, more precisely, we introduce the category \tps\ of tense Priestley spaces, whose objects are Priestley spaces equipped with a particular binary relation, and prove that the categories \tdlat\ and \tps\ are naturally equivalent. As an application of this, in Section \ref{s4}, we give a characterization of the simple and subdirectly irreducible objects of \tdlat. Also, we particularize many of these results to some well-known subcategories of \tdlat. Later, we develop a discrete duality for \tdlat-algebras in Section \ref{s5}. Finally, in Section \ref{s6}, we define the {\em logic that preserves degrees of truth} w.r.t. \tdlat-algebras and other tense structures, provide syntactic presentations for them and show that some of them coincide with well-known tense logics. 

\section{Tense distributive lattices}\label{s2}
For convenience, all lattices in this section are assumed to be bounded and distributive and we denote by \dl\ the category whose objects are bounded distributive lattices and whose morphisms are the usual homomorphisms between lattices. Let us recall the notion of tense distributive lattices:

\begin{definition} \label{D21} A {\em tense distributive lattice} is a structure $\mathcal{A}=\langle \mathcal{A}_0, \G, \H, \F, \P\rangle$ where $\mathcal{A}_0=\langle A, \wedge, \vee, 0, 1\rangle$ is a bounded distributive lattice and $\G,\H,\F$ and $\P$ are Prior's tense operators defined on $\mathcal{A}_0$, that is, they are unary operators satisying:
\begin{multicols}{2}
\begin{itemize}
  \item[\rm{(t1)}] $\G 1=1$ and $\H 1=1$,
  \item[\rm{(t2)}] $\G(x\wedge y)=\G x\wedge \G y$ and $\H(x\wedge y)=\H x\wedge \H y$,
  \item[\rm{(t3)}] $x\leq \G\P x$ and $x\leq \H\F x$,
  \item[\rm{(t4)}] $\G(x\vee y)\leq \G x\vee \F y$ and $\H(x\vee y)\leq \H x\vee \P y$,
  \item[\rm{(t5)}] $\F 0=0$ and $\P 0=0$,
  \item[\rm{(t6)}] $\F(x\vee y)=\F x\vee \F y$ and $\P(x\vee y)=\P x\vee \P y$,
  \item[\rm{(t7)}] $\P\G x\leq x$ and $\F\H x\leq x$,
  \item[\rm{(t8)}] $\G x\wedge \F y\leq \F(x\wedge y)$ and $\H x\wedge \P y\leq \P(x\wedge y)$.
\end{itemize}
\end{multicols}
\end{definition}

Notice that, from the point of view of Universal Algebra, the class of tense distributive lattices constitute a variety.
We denote by \tdlat\ the category whose objects are tense distributive lattices and whose morphisms are tense distributive homomorphisms, that is, lattice homomorphisms that respect the four tense operators. We call the objects of \tdlat\ ``\tdlat-algebras''. 
\begin{remark} A distributive lattice with adjunction (dLata) is a triple $\langle \mathcal{A}, L, R\rangle$ (also noted $\langle \mathcal{A}, L\dashv R\rangle$) where $\mathcal{A}$ is a distributive lattice and $L, R : A \rightarrow A$ are monotone functions such that $L$ is left adjoint to $R$. In \cite{Menni14}, it was introduced {\em tense dLatas} as structures $\langle\mathcal{A}, \Diamond\dashv H, P\dashv\square\rangle$ where both  $\langle\mathcal{A}, \Diamond\dashv H\rangle$ and  $\langle\mathcal{A}, P\dashv\square\rangle$ are dLatas and such that
$$\Diamond p \wedge \square q \leq \Diamond (p\wedge q) \hspace{1cm} Pp \wedge Hq \leq P(p\wedge q)$$
and 
$$\square(p \vee q) \leq \square p \vee \Diamond q \hspace{1cm} H(p \vee q) \leq Hp\vee Pq$$
hold. It is clear that the notions of tense dLata and tense distributive lattices coincide (see Lemma \ref{L212}.
\end{remark}

\

In every \tdlat-algebra it is possible to define two particular unary operations which are very useful when characterizing important notions:

\begin{definition}\label{Dd} Let $\mathcal{A}$ a \tdlat-algebra. Define the functions $d,\hat{d}:A\longrightarrow  A$ by
\begin{equation}
d x=\G x\wedge x\wedge \H x
\end{equation}
and
\begin{equation}
\hat{d} x= \F x\vee x\vee \P x
\end{equation}
for every $x\in A$. Besides, for $n\in \omega$, define $d^n x$ and $\hat{d}^n x$ inductively by

\begin{equation}
d^0 x=x \mbox{ \, and \, } d^{n+1} x=dd^n x
\end{equation}
and
\begin{equation}
\hat{d}^0 x=x \mbox{ \, and \, } \hat{d}^{n+1} x=\hat{d}\hat{d}^nx.
\end{equation}
\end{definition}
\noindent Besides, given $X\subseteq A$ and $n\in\omega$, we denote $d^n X$ and $\hat{d}^n X$ the sets 
$$d^n X:=\{d^n x: x\in X\} \mbox{ \, and \, }\hat{d}^n X:=\{\hat{d}^n x: x\in X\}.$$

\noindent Next, we show some examples of tense distributive lattices
\begin{example} There are two extreme examples of tense operators that can be considered on a given distributive lattice $\mathcal{A}$: (1) define $\G,\H,\F$ and $\P$  as the identity function $id_{A}$; (2) define $\G$ and $\H$ as the constant function $1_A$ {\rm (}$1_A x = 1$, for every $x\in A${\rm )}; and $\F$ and $\P$ as the constant function $0_A$.
\end{example}


\begin{example}\label{ej2} Let $\mathcal{A}_0$ the lattice whose diagram is the following:
\vspace{2.5cm}
\begin{figure}[htbp]
\begin{center}
\hspace{0.25cm}
\begin{picture}(-40,40)(0,0)
\put(00,00){\makebox(1,1){$\bullet$}}
\put(-30,30){\makebox(1,1){$\bullet$}}
\put(30,30){\makebox(1,1){$\bullet$}}
\put(00,60){\makebox(1,1){$\bullet$}}
\put(-60,60){\makebox(1,1){$\bullet$}}
\put(-30,90){\makebox(1,1){$\bullet$}}
\put(00,00){\line(1,1){30}}
\put(00,00){\line(-1,1){30}}
\put(-30,30){\line(1,1){30}}
\put(-30,30){\line(-1,1){30}}
\put(30,30){\line(-1,1){30}}
\put(-60,60){\line(1,1){30}}
\put(00,60){\line(-1,1){30}}
\put(00,-10){\makebox(2,2){$ 0$}}
\put(-40,30){\makebox(2,2){$ a$}}
\put(40,30){\makebox(2,2){$ b$}}
\put(-70,60){\makebox(2,2){$ c$}}
\put(10,60){\makebox(2,2){$ d$}}
\put(-30,100){\makebox(2,2){$ 1$}}
\end{picture}
\end{center}
\end{figure}

\

Define the operators $\G,\H, \F$ and $\P$ as follows 

\begin{center}
\begin{tabular}{|c|c|c|c|c|}\hline
   $x$   &  $\G x$ & $\H x$ &  $\F x$ & $\P x$ \\ \hline
   $0$   &   $b$   &   $0$  &   $0$   &   $0$  \\ \hline
   $a$   &   $b$   &   $a$  &   $a$   &   $d$  \\ \hline
   $b$   &   $b$   &   $0$  &   $c$   &   $0$  \\ \hline
   $c$   &   $b$   &   $1$  &   $c$   &   $1$  \\ \hline
   $d$   &   $d$   &   $a$  &   $c$   &   $d$  \\ \hline
   $1$   &   $1$   &   $1$  &   $c$   &   $1$  \\ \hline
\end{tabular}
\end{center}

\noindent Then, it can be verified that $\langle\mathcal{A}_0,\G,\H,\F,\P\rangle$ is a  \tdlat-algebra and, as we shall see, it is a simple algebra.
\end{example}


\begin{proposition}\label{P26} Let $\mathcal{A}$ be a \tdlat-algebra. Then
\begin{multicols}{2}
\begin{itemize}
  \item[\rm{(t9)}] $\G, \H, \F$ and $\P$ are monotone.
  \item[\rm{(t10)}] $\G x\vee \G y\leq \G(x\vee y)$  and $\H x\vee \H y\leq \H(x\vee y)$
  \item[\rm{(t11)}] $\F(x\wedge y)\leq \F x\wedge \F y$ and $\P(x\wedge y)\leq \P x\wedge \P y$,  
  \item[\rm{(t12)}] $x\wedge \F y\leq \F(\P x\wedge y)$ and $x\wedge \P y\leq \P(\F x\wedge y)$,
  \item[\rm{(t13)}] $\F x\wedge y=0$ \, iff \, $x\wedge \P y=0$,
  \item[\rm{(t14)}] $\G(x\vee \H y)\leq \G x\vee y$ and $\H(x\vee \G y)\leq \H x\vee y$,
  \item[\rm{(t15)}] $x\vee \G y=1$ \, iff \, $\H x\vee y=1$,   
  \item[\rm{(t16)}] $\P x\leq y$ \, iff \, $x\leq \G y$,
  \item[\rm{(t17)}] $\F x\leq y$ \, iff \, $x\leq \H y$,
  \item[\rm{(t18)}] $\F=\F\H\F$, $\P=\P\G\P$, $\G=\G\P\G$ and $\H=\H\F\H$.
\end{itemize}
\end{multicols}
\end{proposition}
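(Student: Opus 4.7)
The proof is a long list of routine monotonicity and adjunction-style manipulations, but it is cleanest to organise the items into groups and treat each group with the same idea, rather than attacking each property in isolation. My plan is to work through the list roughly in the following order, reusing earlier items as soon as they are established.

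First I would handle the block of purely order-theoretic consequences. Monotonicity (t9) is immediate: if $x\leq y$ then $x\wedge y=x$ and $x\vee y=y$, so (t2) gives $\G x=\G x\wedge\G y$ and (t6) gives $\F y=\F x\vee\F y$, from which $\G x\leq\G y$ and $\F x\leq\F y$; the $\H,\P$ cases are identical. With (t9) in hand, (t10) and (t11) are one-line consequences by taking joins/meets of the bounds obtained from $x,y\leq x\vee y$ and $x\wedge y\leq x,y$.

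Next I would dispatch the adjunctions (t16), (t17), since they are the most useful tool for the remainder. For (t16) ($\P x\leq y$ iff $x\leq\G y$), one direction applies (t9) and (t3) to get $x\leq\G\P x\leq\G y$; the converse applies (t9) and (t7) to get $\P x\leq\P\G y\leq y$. Item (t17) is obtained in the same way with the roles of $\F,\H$ and the other halves of (t3), (t7). From here, (t18) is immediate: $\F x\leq\F x$ together with (t17) yields $x\leq\H\F x$, whence $\F x\leq\F\H\F x$ by (t9); conversely (t7) applied to $\F x$ gives $\F\H\F x\leq\F x$. The other three equalities in (t18) are obtained symmetrically.

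Then I would turn to the mixed inequalities. For (t12), the key observation is to feed the bound $x\leq\G\P x$ from (t3) into the conjugacy axiom (t8):
\[
x\wedge\F y\;\leq\;\G\P x\wedge\F y\;\leq\;\F(\P x\wedge y),
\]
and the companion inequality is obtained by swapping the roles of past and future. For (t14), one combines (t4) with (t7): substituting $\H y$ for $y$ in $\G(x\vee y)\leq\G x\vee\F y$ and then using $\F\H y\leq y$ gives $\G(x\vee\H y)\leq\G x\vee y$; the $\H$-version is parallel.

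Finally, the two ``iff'' statements (t13) and (t15) follow by reading (t12) and (t14) with the bottom/top element plugged in, respectively, and invoking the normality axioms (t5) and (t1). For (t13): if $\F x\wedge y=0$ then by (the second half of) (t12) and (t5) we get $x\wedge\P y\leq\P(\F x\wedge y)=\P 0=0$; the converse is symmetric. For (t15): if $x\vee\G y=1$ then (t1) together with (t14) gives $1=\H 1=\H(x\vee\G y)\leq\H x\vee y$, and the converse uses the $\G$-version of (t14) after rewriting $\H x\vee y$ as $y\vee\H x$. I expect no real obstacle; the only point that requires a moment of care is lining up the correct pairing of axioms for the two directions of the biconditionals (t13) and (t15), since one must apply the ``right-handed'' version of (t12) or (t14) in each direction.
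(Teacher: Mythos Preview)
Your proposal is correct and follows essentially the same route as the paper: each item is derived from the same combination of axioms (monotonicity from (t2)/(t6); (t10)--(t11) from (t9); (t12) from (t3)+(t8); (t14) from (t4)+(t7); (t13), (t15) from these with (t5), (t1); the adjunctions (t16)--(t17) from (t3)/(t7)+(t9); (t18) from (t3)/(t7)). The only cosmetic difference is that you establish the adjunctions before (t12)--(t15), whereas the paper treats the items in their listed order.
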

\begin{proof} 

From $x\leq y$ iff $x=x\wedge y$ iff $x\vee y=y$ we have that: (t9) follows from (t2) abd (t6), besides,  (t10) and (t11) are consequence of (t9). Let us prove (t12): $x\wedge \F y\leq_{(t3)}\G\P x\wedge \F y\leq_{(t8)}\F(\P x\wedge y)$, (t13) is consequence of (t12) and (t5).  (t14) follows from $\G(x\vee \H y)\leq_{(t4)}\G x\vee \F\H y\leq_{(t7)}\G x\vee y$, (t15) is consequence of (t14) and (t1). (t16) is proved from:  $\P x\leq y$ implies$_{(t9)}$ $\G\P x\leq \G y$ implies$_{(t3)}$ $ x\leq \G y$, conversely, $x\leq \G y$ implies$_{(t9)}$ $\P x\leq \P\G y$ implies$_{(t7)}$ $\P x\leq y$. The proof of (t17) is similar to the one of (t16). Finally, for (t18) we only show $\F=\F\H\F$ (the others are similar), from (t3) $x\leq \H\F x$ then, by  (t9), $\F x\leq \F\H\F x$, besides from (t7) $\F\H\F x\leq \F x$.
\end{proof}

\begin{proposition}\label{P27} Let $\mathcal{A}$ be a \tdlat-algebra and let $\{a_i\}_{i\in I}\subseteq A$. If $\displaystyle\bigwedge_{i\in I}a_i$ and  $\displaystyle\bigvee_{i\in I}a_i$ exist then:
\begin{multicols}{2}
\begin{itemize}
  \item[(i)] $\displaystyle\bigwedge_{i\in I}\G a_i$ exists and $\displaystyle\bigwedge_{i\in I}\G a_i=\G \displaystyle\bigwedge_{i\in I}a_i$,
  \item[(ii)] $\displaystyle\bigwedge_{i\in I}\H a_i$ exists and $\displaystyle\bigwedge_{i\in I} \H a_i=\H \displaystyle\bigwedge_{i\in I}a_i$, 
  \item[(iii)] $\displaystyle\bigwedge_{i\in I}d a_i$ exists and $\displaystyle\bigwedge_{i\in I}d a_i=d \displaystyle\bigwedge_{i\in I}a_i$,
  \item[(iv)] $\displaystyle\bigvee_{i\in I}\F a_i$ exists and $\displaystyle\bigvee_{i\in I}\F a_i=\F \displaystyle\bigvee_{i\in I}a_i$, 
  \item[(v)] $\displaystyle\bigvee_{i\in I}\P a_i$ exists and $\displaystyle\bigvee_{i\in I}\P a_i= \P\displaystyle\bigvee_{i\in I}a_i$, 
  \item[(vi)] $\displaystyle\bigvee_{i\in I}\hat{d} a_i$ exists and $\displaystyle\bigvee_{i\in I}\hat{d} a_i =\hat{d}\displaystyle\bigvee_{i\in I}a_i$.
\end{itemize}
\end{multicols}
\end{proposition}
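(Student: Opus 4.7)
The plan is to reduce everything to the adjunctions already established in Proposition~\ref{P26}: items (t16) and (t17) say $\P x\leq y$ iff $x\leq \G y$, and $\F x\leq y$ iff $x\leq \H y$, which express the Galois connections $\P\dashv\G$ and $\F\dashv\H$. The statement is then an instance of the standard fact that right adjoints preserve existing meets and left adjoints preserve existing joins. I would spell out this argument in elementary order-theoretic terms, in order to obtain existence together with equality rather than merely the identity of two elements already assumed to exist.

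For (i), monotonicity (t9) gives that $\G\bigwedge_{i\in I}a_i$ is a lower bound of $\{\G a_i\}_{i\in I}$. If $b$ is any lower bound of this family then $b\leq\G a_i$ for every $i$, so by (t16) $\P b\leq a_i$ for every $i$; since $\bigwedge_{i\in I}a_i$ exists this yields $\P b\leq\bigwedge_{i\in I}a_i$, and a second application of (t16) gives $b\leq \G\bigwedge_{i\in I}a_i$. Hence $\bigwedge_{i\in I}\G a_i$ exists and coincides with $\G\bigwedge_{i\in I}a_i$. Item (ii) is completely analogous, using (t17) in place of (t16). The dual statements (iv) and (v) follow by the dual argument: for (iv), monotonicity shows that $\F\bigvee_{i\in I}a_i$ is an upper bound of $\{\F a_i\}_{i\in I}$; if $b$ is any other upper bound, then (t17) yields $a_i\leq \H b$ for every $i$, so $\bigvee_{i\in I}a_i\leq \H b$, and (t17) again gives $\F\bigvee_{i\in I}a_i\leq b$. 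Item (v) is proved in the same way using (t16).

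For (iii) and (vi) I would first record the elementary observation that if $\bigwedge_{i\in I}x_i$ and $\bigwedge_{i\in I}y_i$ exist in any poset then $\bigwedge_{i\in I}(x_i\wedge y_i)$ also exists and equals $\bigwedge_{i\in I}x_i\wedge \bigwedge_{i\in I}y_i$ (and dually for joins). Combined with the already established (i)--(ii) and the hypothesis on $\bigwedge_{i\in I}a_i$, this gives
\[
d\,\bigwedge_{i\in I}a_i \;=\; \G\!\bigwedge_{i\in I}a_i \,\wedge\, \bigwedge_{i\in I}a_i \,\wedge\, \H\!\bigwedge_{i\in I}a_i \;=\; \bigwedge_{i\in I}\bigl(\G a_i\wedge a_i\wedge \H a_i\bigr) \;=\; \bigwedge_{i\in I} d\,a_i,
\]
and an entirely dual computation using (iv) and (v) handles $\hat d$ in (vi).

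The whole argument is structurally routine once the adjunctions (t16)--(t17) are available; there is no real obstacle, but the one point that deserves attention is to phrase each step as a genuine verification of the universal property of a meet or a join, so that existence of $\bigwedge \G a_i$, $\bigwedge\H a_i$, $\bigvee\F a_i$, $\bigvee \P a_i$ is established and not merely assumed.
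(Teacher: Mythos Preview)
Your argument is correct and matches the paper's proof essentially line for line: the paper also proves (i) by using monotonicity (t9) to get a lower bound and then the adjunction (t16) twice to verify the universal property, declaring the remaining items ``similar.'' Your explicit treatment of (iii) and (vi) via the componentwise meet/join lemma is the natural way to unpack that ``similar,'' since the adjunction $\hat d\dashv d$ is only recorded later in Proposition~\ref{P29}(c).
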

\begin{proof} We only show (i), the rest are similar. If $\displaystyle\bigwedge_{i\in I}a_i$ exists so does $\G\displaystyle\bigwedge_{i\in I}a_i$. Besides, by (t9) we have that $\G\displaystyle\bigwedge_{i\in I}a_i$ is lower bound of $\{\G a_i\}_{i\in I}$. Let $b\in A$ such that $b\leq \G a_i$ for every $i\in I$, then by (t16), $\P b\leq  a_i$ for all $i\in I$, and therefore $\P b\leq \displaystyle\bigwedge_{i\in I}a_i$, again from (t16) we have $b\leq \G \displaystyle\bigwedge_{i\in I}a_i$.    
\end{proof}

From Definition \ref{Dd} and axioms (t1), (t2), (t5) and (t6) we can easily prove the following properties using induction on $n\in \omega$.  

\begin{proposition}\label{P28} Let $\mathcal{A}$ be a \tdlat-algebra. Then for all $n\in \omega$ 
\begin{multicols}{2}
\begin{itemize}
  \item[$(d_1)$] $d^n 1 =1$ and $d^n 0 =0$,
  \item[$(d_2)$] $d^{n+1} x \leq d^n x $,
  \item[$(d_3)$] $d^{n}(x\wedge y)=d^{n} x \wedge d^{n} y $,
  \item[$(d_4)$] $x\leq y$ implies $d^{n} x \leq d^n y $,
  \item[$(\hat{d}_1)$] $\hat{d}^n 1 =1$ and $\hat{d}^n 0 =0$,
  \item[$(\hat{d}_2)$] $\hat{d}^{n} x \leq \hat{d}^{n+1} x $,
  \item[$(\hat{d}_3)$] $\hat{d}^{n}(x\vee y)=\hat{d}^{n} x \vee \hat{d}^{n} y $,
  \item[$(\hat{d}_4)$] $x\leq y$ implies $\hat{d}^{n} x \leq \hat{d}^n y $.
\end{itemize}
\end{multicols}
\end{proposition}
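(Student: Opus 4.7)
The plan is to carry out a routine induction on $n$ for each of the eight properties, exploiting in the inductive step the single-step identities that $d$ and $\hat{d}$ inherit from the tense axioms. I will treat the $d$-statements in detail; the $\hat{d}$-statements are dual, obtained by swapping meets with joins and invoking (t5),(t6) in place of (t1),(t2).

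Base case $n=0$. Since $d^0x=x=\hat{d}^0 x$, every item reduces to a trivial lattice fact (e.g.\ $x\le x$, $x\wedge y=x\wedge y$, identity is monotone), so the base holds uniformly.

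Inductive step. For $(d_1)$: $d^{n+1}1=d(d^n 1)=d(1)=\G 1\wedge 1\wedge \H 1=1$ by the inductive hypothesis and (t1); similarly $d^{n+1}0=d(0)=\G 0\wedge 0\wedge \H 0=0$ because $0$ annihilates the meet. For $(d_3)$ the crux is the single-step identity $d(a\wedge b)=da\wedge db$, which follows from (t2) by commutativity and associativity of $\wedge$:
\[
\G(a\wedge b)\wedge (a\wedge b)\wedge \H(a\wedge b)=(\G a\wedge a\wedge \H a)\wedge (\G b\wedge b\wedge \H b).
\]
Combining this with the inductive hypothesis gives $d^{n+1}(x\wedge y)=d(d^n x\wedge d^n y)=d^{n+1}x\wedge d^{n+1}y$. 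For $(d_2)$ the induction is actually superfluous: from the definition $dy=\G y\wedge y\wedge \H y\le y$ holds for every $y$, so setting $y=d^n x$ yields $d^{n+1}x\le d^n x$. For $(d_4)$ use $(d_3)$: if $x\le y$ then $x=x\wedge y$, so $d^n x=d^n(x\wedge y)=d^n x\wedge d^n y$, i.e.\ $d^n x\le d^n y$.

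The $\hat{d}$-side is strictly parallel: $\hat{d}1=\F 1\vee 1\vee \P 1=1$ trivially, $\hat{d}0=\F 0\vee 0\vee \P 0=0$ by (t5); $\hat{d}(a\vee b)=\hat{d}a\vee \hat{d}b$ by (t6); $y\le \hat{d}y$ directly from the definition, giving $(\hat{d}_2)$; and $(\hat{d}_4)$ follows from $(\hat{d}_3)$ via $x\le y$ iff $x\vee y=y$. No genuine obstacle arises; the only ``hard'' piece is recognising that the compound operators $d,\hat{d}$ preserve the relevant algebraic structure one step at a time, after which the iteration is immediate by induction on $n$.
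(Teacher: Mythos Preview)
Your proof is correct and follows essentially the same approach as the paper, which merely states that the properties follow from Definition~\ref{Dd} and axioms (t1), (t2), (t5), (t6) by induction on $n$; you have simply filled in the routine details of that induction.
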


\begin{proposition}\label{P29} Let $\mathcal{A}$ be a \tdlat-algebra and $x,y\in A$. Then
\begin{multicols}{2}
\begin{itemize}
  \item[(a)]  $d^n x \leq d x \leq x\leq \hat{d} x \leq \hat{d}^n x $,
  \item[(b)]  $x\leq d\hat{d} x$ y $\hat{d} d x\leq x$,
  \item[(c)]  $\hat{d} x \leq y$ iff $x\leq d y $,
  \item[(d)]  $x=d x $ iff $x=\hat{d} x $,
  \item[(e)]  $x=d x $ iff there exists  $n\in \omega$ such that $x\leq d^n x $,
  \item[(f)]  $x=d x $ iff  there exists $n\in \omega$ such that $\hat{d}^n x \leq x$.
\end{itemize}
\end{multicols}
\end{proposition}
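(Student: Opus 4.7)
The plan is to exploit the adjunction hidden in part (c) to reduce most of the statements to one-line consequences of Propositions \ref{P26} and \ref{P28}.

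First I would dispatch parts (a) and (b) by direct unpacking. The definitions immediately give $dx = \G x \wedge x \wedge \H x \leq x$ and $x \leq \F x \vee x \vee \P x = \hat{d} x$; iterating with $(d_2)$ and $(\hat{d}_2)$ of Proposition \ref{P28} then yields $d^n x \leq dx$ and $\hat{d} x \leq \hat{d}^n x$ for $n \geq 1$ by an obvious induction. For (b), I would expand $d \hat{d} x = \G \hat{d} x \wedge \hat{d} x \wedge \H \hat{d} x$, note that $\P x, \F x \leq \hat{d} x$, and apply monotonicity (t9) together with axiom (t3) to get $x \leq \G \P x \leq \G \hat{d} x$ and $x \leq \H \F x \leq \H \hat{d} x$; combined with $x \leq \hat{d} x$ this gives $x \leq d \hat{d} x$. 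The inequality $\hat{d} d x \leq x$ is the exact dual, using (t7) in place of (t3).

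The heart of the proof is part (c), where my strategy is to present the identity as an adjunction $\hat{d} \dashv d$ obtained by bundling (t16) and (t17). Unfolding $\hat{d} x \leq y$ into the conjunction $\F x \leq y$, $x \leq y$, $\P x \leq y$, one rewrites the first as $x \leq \H y$ via (t17) and the third as $x \leq \G y$ via (t16), the middle inequality being tautological; the conjunction thus becomes $x \leq \G y \wedge y \wedge \H y = d y$, and every step is reversible.

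From (c), the remaining parts follow almost for free. Specializing $y := x$ gives the biconditional $\hat{d} x \leq x$ iff $x \leq d x$, so if $x = d x$ then $\hat{d} x \leq x$ while (a) gives $x \leq \hat{d} x$, forcing $\hat{d} x = x$; the converse is symmetric, proving (d). For (e), the forward direction is immediate (by induction $x = d^n x$ for all $n$, so any $n$ works), and for the converse the chain $x \leq d^n x \leq d x$ from (a), combined with $d x \leq x$, delivers $x = d x$. Part (f) is the exact dual of (e). The only real subtlety is in (e) and (f), where ``there exists $n$'' must be read as $n \geq 1$, since the case $n = 0$ is tautologically satisfied by every element; the anticipated obstacle, if any, is purely this kind of bookkeeping. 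No further ingredient beyond (t3), (t7), (t16), (t17) and the basic lattice structure is needed.
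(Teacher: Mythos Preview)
Your proof is correct. For parts (a) and (b) you proceed exactly as the paper does (the paper only spells out (b), and your argument is identical). The one genuine difference is in (d): the paper derives it from (b) via monotonicity, arguing that $x=dx$ gives $\hat{d}x=\hat{d}dx\leq_{(b)}x\leq_{(a)}\hat{d}x$, whereas you first isolate (c) as the adjunction $\hat{d}\dashv d$ (obtained by bundling (t16) and (t17)) and then specialize it at $y=x$. Both routes are short and natural; yours has the advantage of making the adjunction explicit as the structural reason behind (b)--(f), while the paper's route avoids needing (c) at all for (d). Your remark that (e) and (f) require reading ``there exists $n$'' as $n\geq 1$ is well taken and is the only place where any care is needed.
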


\begin{proof} We only prove (b) and (d).\\
(b): It is clear that $\P x\leq \hat{d} x $ and $\F x\leq \hat{d} x $ then, by (t9) and (t3) we have $x\leq \G\P x\leq \G \hat{d} x$ and $x\leq \H\F x\leq \H \hat{d} x$. Besides, $x\leq \hat{d} x $ and therefore $x\leq \G \hat{d} x\wedge\hat{d} x \wedge \H \hat{d} x=d \hat{d} x$. Similarly, we prove $\hat{d} d x \leq x$.\\[2mm]
$(d):$ $x=d x $ implies $\hat{d} x =\hat{d} d x \leq_{(b)}x\leq_{(a)}\hat{d} x $ and then $\hat{d} x =x$. Conversely, $x=\hat{d} x $ implies $d x =d\hat{d} x\geq_{(b)}x\geq_{(a)}d x $ and then $d x =x$.
\end{proof}

\begin{definition}\label{Dinva} Given the \tdlat-algebra $\mathcal{A}$, we say that $x\in A$ is a $d$-invariant element of $\mathcal{A}$ if $x=d(x)$. We denote by $A^d$ the set of all $d$-invariant elements of $\mathcal{A}$. That is, $A^d:=\{x\in A: x=d(x)\}$.
\end{definition}

\begin{lemma}\label{L211} Let $\mathcal{A}$ be a \tdlat-algebra. Then, the structure $\mathcal{A}^d=\langle A^{d}, \wedge, \vee, 0,1\rangle$ is a sublattice of  $\mathcal{A}_0$.
\end{lemma}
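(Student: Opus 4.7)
The plan is to verify the three sublattice conditions directly: $0,1\in A^d$, closure under $\wedge$, and closure under $\vee$.

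For the bounds, I would note that $d\,1 = \G 1 \wedge 1 \wedge \H 1 = 1$ by (t1), and $d\,0 = \G 0 \wedge 0 \wedge \H 0 = 0$ since $0$ is the minimum, so $0,1 \in A^d$.

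For closure under $\wedge$, if $x,y\in A^d$ then by property $(d_3)$ of Proposition \ref{P28} (with $n=1$) we have $d(x\wedge y) = d x \wedge d y = x\wedge y$, so $x\wedge y \in A^d$.

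The only subtle point is closure under $\vee$, since $d$ does not distribute over $\vee$ in general: the definition $d x = \G x \wedge x \wedge \H x$ makes it look like a meet operator, and although $(d_3)$ gives distribution over $\wedge$, there is no analogous axiom for $\vee$. The way around this is to pass to the dual operator $\hat{d}$ via Proposition \ref{P29}(d), which states that $x = d x$ if and only if $x = \hat{d} x$. So if $x,y\in A^d$, then $\hat{d} x = x$ and $\hat{d} y = y$, and by $(\hat{d}_3)$ from Proposition \ref{P28} we get
\begin{equation*}
\hat{d}(x\vee y) = \hat{d} x \vee \hat{d} y = x\vee y,
\end{equation*}
so $x\vee y$ is $\hat{d}$-invariant. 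Applying Proposition \ref{P29}(d) once more yields $x\vee y = d(x\vee y)$, i.e.\ $x\vee y \in A^d$. Together with the first two items this shows $\mathcal{A}^d$ is a sublattice of $\mathcal{A}_0$, with the meet and join inherited from $\mathcal{A}$. The main conceptual step is this use of the $d/\hat{d}$ correspondence to bypass the asymmetric behaviour of $d$ with respect to joins.
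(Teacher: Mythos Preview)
Your proof is correct and follows essentially the same approach as the paper: using $(d_3)$ for closure under $\wedge$, and passing to $\hat d$ via Proposition~\ref{P29}(d) together with $(\hat d_3)$ for closure under $\vee$. The only cosmetic difference is that the paper cites $(d_1)$ of Proposition~\ref{P28} for $0,1\in A^d$, whereas you verify this directly from the definition and (t1).
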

\begin{proof} From Proposition \ref{P28} $(d_1)$, we have $\{0,1\}\subseteq A^d$. Let $x,y\in A^d$, that is, $d x =x$ and $d y =y$ then by $(d_3)$ we have $x\wedge y\in A^d$. On the other hand, from Proposition \ref{P29} (d) we have that $\hat{d} x =x$ and $\hat{d} y =y$. Then $x\vee y=\hat{d} x \vee \hat{d} y =_{(\hat{d}_3)}\hat{d}(x\vee y)$, and then $x\vee y=d(x\vee y)$ (by Proposition \ref{P29} (d)). That is,  $x\vee y\in A^d$.
\end{proof}

\

\noindent The next lemma provides alternative ways to define \tdlat-algebras.

\begin{lemma}\label{L212} Let $\mathcal{A}_0=\langle A,\wedge, \vee, 0, 1\rangle$ be a distributive lattice and $\G, \H, \F, \P$ be unary operators on $A$. The following conditions are equivalent.
\begin{itemize}
  \item[(a)] $\langle\mathcal{A}, \G, \H, \F, \P\rangle$ is a \tdlat-algebra. 
  \item[(b)] It holds 
  \begin{itemize}  
  \item[\rm{(t4)}] $\G(x\vee y)\leq \G x\vee \F y$ and $\H(x\vee y)\leq \H x\vee \P y$, 
  \item[\rm{(t8)}] $\G x\wedge \F y\leq \F(x\wedge y)$ and $\H x\wedge \P y\leq \P(x\wedge y)$
  \item[\rm{(t16)}] $\P x\leq y$ \, iff \, $x\leq \G y$,
  \item[\rm{(t17)}] $\F x\leq y$ \, iff \, $x\leq \H y$,
  \end{itemize} 
  \item[(c)] It holds
  \begin{itemize} 
  \item[\rm{(t4)}] $\G(x\vee y)\leq \G x\vee \F y$ and $\H(x\vee y)\leq \H x\vee \P y$, 
  \item[\rm{(t8)}] $\G x\wedge \F y\leq \F(x\wedge y)$ and $\H x\wedge \P y\leq \P(x\wedge y)$
  \item[\rm{(t9)}] $\G, \H, \F$ and $\P$ are monotone.
  \item[\rm{(t3)}] $x\leq \G\P x$ and $x\leq \H\F x$,
  \item[\rm{(t7)}] $\P\G x\leq x$ and $\F\H x\leq x$, 
  \end{itemize} 
\end{itemize}
\end{lemma}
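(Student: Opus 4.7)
The plan is to prove the three conditions equivalent via a cycle (a) $\Rightarrow$ (b) $\Rightarrow$ (c) $\Rightarrow$ (a). The first implication is essentially free: axioms (t4) and (t8) belong to Definition \ref{D21}, while (t16) and (t17) were already established as consequences of (a) in Proposition \ref{P26}.

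For (b) $\Rightarrow$ (c), I would extract (t3), (t7), and (t9) from the adjunctions encoded by (t16) and (t17). Instantiating (t16) at $y:=\P x$ makes the left side $\P x\leq \P x$ trivially true, and the biconditional then forces $x\leq \G\P x$, which is (t3) for $\G$; instantiating at $x:=\G y$ makes the right side $\G y\leq \G y$ true, forcing $\P\G y\leq y$, which is (t7) for $\G$. Monotonicity (t9) follows at once: if $x\leq y$, the chain $\P\G x\leq x\leq y$ yields $\G x\leq \G y$ via (t16), and $x\leq y\leq \G\P y$ yields $\P x\leq \P y$. The operators $\F$ and $\H$ are handled identically using (t17).

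The substantive step is (c) $\Rightarrow$ (a), which amounts to deriving (t1), (t2), (t5), and (t6) from (t3), (t7), and (t9). Axioms (t1) and (t5) are nearly immediate: $1\leq \G\P 1$ by (t3), $\P 1\leq 1$ trivially, so $1\leq \G\P 1\leq \G 1$ via (t9); dually $0\leq \H 0$ and (t7) give $\F 0\leq \F\H 0\leq 0$. For (t2), the non-trivial inequality $\G x\wedge \G y\leq \G(x\wedge y)$ is obtained by showing $\P(\G x\wedge \G y)\leq \P\G x\wedge \P\G y\leq x\wedge y$ using (t9) and (t7), then applying (t9) to get $\G\P(\G x\wedge \G y)\leq \G(x\wedge y)$, and finally using (t3) to bridge $\G x\wedge \G y\leq \G\P(\G x\wedge \G y)$. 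Axiom (t6) is proved dually: the inequalities $x\leq \H\F x\leq \H(\F x\vee \F y)$ and its analogue for $y$ give $x\vee y\leq \H(\F x\vee \F y)$, whence $\F(x\vee y)\leq \F\H(\F x\vee \F y)\leq \F x\vee \F y$ by (t9) and (t7).

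The main obstacle is not any individual calculation but rather keeping rigorous track of which axioms are available at each node of the cycle, particularly in (c) $\Rightarrow$ (a), where lattice-preservation of the tense operators must be recovered from only the ``unit/counit'' inequalities (t3), (t7) together with monotonicity, without direct recourse to the biconditionals (t16), (t17).
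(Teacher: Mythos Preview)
Your proof is correct and complete. Note that the paper does not actually supply a proof of this lemma; it is stated and then the exposition moves on, treating the equivalences as routine. Your cycle (a) $\Rightarrow$ (b) $\Rightarrow$ (c) $\Rightarrow$ (a) is the natural approach: the first implication is immediate from Definition~\ref{D21} and Proposition~\ref{P26}, the second extracts unit/counit and monotonicity from the adjunctions (t16)--(t17) in the standard way, and the third recovers the normality and join/meet-preservation axioms (t1), (t2), (t5), (t6) from monotonicity together with (t3) and (t7). All the individual calculations check out, including the slightly more delicate ones for (t2) and (t6), where you correctly route through $\G\P(\G x\wedge\G y)$ and $\F\H(\F x\vee\F y)$ respectively. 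This is exactly the argument the paper's surrounding material (in particular the proof of (t16)--(t17) in Proposition~\ref{P26} and the remark linking the definition to tense dLatas) is implicitly pointing toward.
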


\

Next, we state the precise relationship  between \tdlat-algebras and some well-known tense structures. In particular, we show that the notion of \tdlat-algebra is a natural generalization of tense Boolean algebras (tense algebras).
\begin{remark}
\begin{itemize}
  \item[] 
  \item[$\bullet$] Recall that a {\em tense Boolean algebra} (or simply {\em tense algebra} is a structure $\langle B,\wedge,\vee,\neg,\G,\H,0,1\rangle$ such that $\langle B,\wedge,\vee,\neg,0,1\rangle$ is a Boolean algebra such that $\G$ and $\H$ verify (t1), (t2) and (t3). The operators $\F$ and $\P$ are defined by $\F:=\neg \G\neg x$ and $\P:=\neg \H\neg$. Sometimes it will be convenient for us to consider tense algebras as structures $\langle\mathcal{B},\G,\H,\F, \P \rangle$ where $\mathcal{B}$ is the underlying Boolean algebra and $\F$ and $\P$ are the defined operators, that is, we include the non-primitive operators in the signature.
  \item[$\bullet$] Every tense algebra verifies all the axioms (t1)-(t8) and therefore, every tense algebra is a \tdlat-algebra.
  \item[$\bullet$] If the underlying lattice of a given \tdlat-algebra $\mathcal{A}$ is a Boolean algebra, then $\mathcal{A}$ is a tense algebra. Indeed, nest we check that under these conditions it holds $\F:=\neg \G\neg$ and $\P:=\neg \H\neg$.
\end{itemize}  
\end{remark}

\noindent Given a \tdlat-algebra $\mathcal{A}$ we denote by $B(A)$ the set of Boolean elements of $\mathcal{A}$. 

\begin{lemma}\label{L214} Let $\mathcal{A}$ be a \tdlat-algebra. Then,  $\mathcal{B}(\mathcal{A})=\langle B(A),\wedge,\vee,\neg, \G, \H,0,1\rangle$ is a tense algebra.
\end{lemma}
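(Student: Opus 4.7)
The plan is to verify three things: (i) $B(A)$ is a Boolean subalgebra of $\mathcal{A}_0$; (ii) the operators $\G, \H, \F, \P$ send Boolean elements to Boolean elements, so they restrict to $B(A)$; and (iii) the resulting structure satisfies the tense-algebra axioms (t1)--(t3) with $\F$ and $\P$ being the De Morgan duals of $\G$ and $\H$. Step (i) is standard in distributive lattice theory, so the only substantive work is in (ii) and the final compatibility check.

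The key computation is the following. Given $x\in B(A)$ with complement $\neg x$, I claim $\F\neg x$ is a complement of $\G x$ in $\mathcal{A}_0$, hence $\G x\in B(A)$ and $\neg \G x=\F\neg x$. For the meet,
\[
\G x \wedge \F\neg x \leq \F(x\wedge\neg x)=\F 0 = 0
\]
by (t8) and (t5). For the join, applying (t4) and (t1),
\[
1=\G 1 = \G(x\vee \neg x)\leq \G x \vee \F\neg x.
\]
Uniqueness of complements in a distributive lattice gives $\neg \G x=\F\neg x$. Substituting $\neg x$ for $x$ yields $\neg \G\neg x=\F x$, so $\F x \in B(A)$ as well and $\F$ restricts to $B(A)$. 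The same argument using the second halves of (t1), (t4), (t5), (t8) shows $\H x,\P x\in B(A)$ with $\neg \H x = \P\neg x$ and $\neg\P\neg x=\H x$.

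With closure established, the De Morgan identities $\F x=\neg\G\neg x$ and $\P x=\neg\H\neg x$ hold on $B(A)$, precisely as required for a tense Boolean algebra in the sense recalled in the remark preceding the lemma. It remains only to verify (t1), (t2), and (t3) for the restricted operators $\G$ and $\H$ on $B(A)$, but these axioms hold universally in $\mathcal{A}$ and therefore inherit to the subalgebra. Combined with the fact that $\langle B(A),\wedge,\vee,\neg,0,1\rangle$ is a Boolean algebra, this shows $\mathcal{B}(\mathcal{A})$ is a tense algebra.

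The main obstacle I expect is the closure step: one has to guess the correct candidate complements $\F\neg x$ and $\P\neg x$ for $\G x$ and $\H x$, and then recognize that (t4) and (t8) are exactly the axioms designed to force this mutual cancellation. Once those candidates are in hand, however, the verification reduces to a two-line application of the axioms, and everything else is inheritance.
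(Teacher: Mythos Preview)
Your proof is correct and follows essentially the same approach as the paper: the key step is showing that $\F\neg x$ is the Boolean complement of $\G x$ via (t8)/(t5) for the meet and (t1)/(t4) for the join, with the $\H$/$\P$ case analogous. If anything you are slightly more explicit than the paper in noting uniqueness of complements and the inheritance of (t1)--(t3).
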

\begin{proof} Let us show that $\G x,\H x\in B(A)$, for every $x\in B(A)$. Indeed, if $x\in B(A)$ then there exists $\neg x$ such that $x\wedge\neg x=0$ and $x\vee\neg x=1$.\\
From (t1) and (t4) $1=\G 1 =\G (x\vee \neg x)\leq \G x \vee \F \neg x$ and from (t8) and (t5) $\G x \wedge \F \neg x \leq \F ( x\wedge\neg x)=\F 0=0$, and therefore $\F \neg x$ is the Boolean complement of $\G x $. Hence $\G x \in B(A)$. Similarly, it can be proved that $\G \neg x $ is the Boolean complement of $\F x $, that is, $\F x =\neg \G \neg x$. 

The proof that $\H x \in B(A)$ and $\P x =\neg \H \neg x$ is analogous.
\end{proof}

\begin{corollary} If $\mathcal{A}$ is a $\tdlat$-algebra such that $\mathcal{A}_0$ is a Boolean algebra then $\mathcal{A}$ is a tense algebra.
\end{corollary}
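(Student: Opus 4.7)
The plan is to reduce the corollary to Lemma \ref{L214} by noting that in a Boolean algebra every element is Boolean. More precisely, if the underlying lattice $\mathcal{A}_0$ is a Boolean algebra, then for every $x\in A$ there exists $\neg x\in A$ with $x\wedge \neg x=0$ and $x\vee\neg x=1$, so that the set of Boolean elements satisfies $B(A)=A$.

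Applying Lemma \ref{L214} to $\mathcal{A}$, one obtains that $\mathcal{B}(\mathcal{A})=\langle B(A),\wedge,\vee,\neg,\G,\H,0,1\rangle$ is a tense Boolean algebra. Since $B(A)=A$, the structure $\mathcal{B}(\mathcal{A})$ has the same underlying lattice as $\mathcal{A}$, and its operators $\G$ and $\H$ are literally the operators of $\mathcal{A}$ (not merely their restriction). Moreover, the proof of Lemma \ref{L214} explicitly establishes that for every $x\in B(A)$ the identities $\F x=\neg\G\neg x$ and $\P x=\neg\H\neg x$ hold. Under our hypothesis $B(A)=A$, these identities hold for every $x\in A$, so the operators $\F$ and $\P$ of $\mathcal{A}$ coincide with those that are canonically derived from $\G$ and $\H$ in any tense Boolean algebra.

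Combining the two observations, $\mathcal{A}$ agrees as an algebra with the tense Boolean algebra $\mathcal{B}(\mathcal{A})$ in the extended signature that includes $\F$ and $\P$ (as in the preceding Remark), and hence $\mathcal{A}$ is a tense algebra.

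There is no real obstacle: the corollary is essentially a rereading of Lemma \ref{L214} in the special case $B(A)=A$. The only subtlety worth underlining is that one must invoke the identities $\F=\neg\G\neg$ and $\P=\neg\H\neg$ that were proved \emph{inside} the lemma, so that no ambiguity remains between the primitive $\F,\P$ of $\mathcal{A}$ and the operators derived from $\G,\H$ in the tense-algebraic signature.
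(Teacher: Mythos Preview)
Your proof is correct and follows exactly the approach intended by the paper: the corollary is stated without proof precisely because it is the special case $B(A)=A$ of Lemma~\ref{L214}, and you have spelled this out carefully, including the point that the identities $\F=\neg\G\neg$ and $\P=\neg\H\neg$ established in that lemma ensure the primitive operators of $\mathcal{A}$ agree with the derived ones.
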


\begin{corollary} If $\mathcal{A}$ is a $\tdlat$-algebra such that $\mathcal{A}_0$ is a Boolean algebra then $\mathcal{A}^d=\langle A^{d}, \wedge, \vee, \neg, 0 , 1\rangle$ is a subalgebra of $\mathcal{A}_0$.
\end{corollary}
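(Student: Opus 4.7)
The plan is to reduce the claim to closure of $A^d$ under Boolean complement, since Lemma \ref{L211} already gives that $A^d$ is a sublattice of $\mathcal{A}_0$ containing $0$ and $1$. Also, by the preceding corollary, our hypothesis that $\mathcal{A}_0$ is Boolean implies $\mathcal{A}$ is a tense algebra, so by Lemma \ref{L214} (or directly by its proof) we have the dualities $\F x=\neg\G\neg x$ and $\P x=\neg\H\neg x$ available for every $x\in A$. Thus it suffices to show: if $x\in A^d$, then $\neg x\in A^d$.

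Let $x\in A^d$. By Proposition \ref{P29}(d), the condition $x=d x$ is equivalent to $x=\hat{d} x$, which unpacks to the two inequalities $\F x\leq x$ and $\P x\leq x$. Applying Boolean negation (which reverses order) to these inequalities and using the dualities above, I would obtain
\[
\neg x\leq \neg \F x=\G\neg x\qquad\text{and}\qquad \neg x\leq \neg\P x=\H\neg x.
\]
Combining these with the trivial inequality $\neg x\leq \neg x$ yields $\neg x\leq \G\neg x\wedge \neg x\wedge \H\neg x=d(\neg x)$. On the other hand, Proposition \ref{P29}(a) gives $d(\neg x)\leq \neg x$ unconditionally. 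Therefore $d(\neg x)=\neg x$, i.e., $\neg x\in A^d$.

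Putting everything together, $A^d$ is closed under $\wedge$, $\vee$ and $\neg$ and contains $0,1$, so $\mathcal{A}^d$ is a Boolean subalgebra of $\mathcal{A}_0$. The only mildly delicate point is invoking the equivalence $d x=x\Leftrightarrow \hat{d} x=x$ from Proposition \ref{P29}(d); everything else is a direct use of the tense-algebra dualities made available by the previous corollary.
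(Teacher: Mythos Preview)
Your proof is correct. The paper states this corollary without proof, treating it as an immediate consequence of Lemma~\ref{L211} together with the preceding corollary that $\mathcal{A}$ is a tense algebra; your argument is precisely the natural way to fill in those omitted details, using Proposition~\ref{P29}(d) to pass to $\hat d$ and then the dualities $\neg\F=\G\neg$, $\neg\P=\H\neg$ to obtain closure of $A^d$ under complement.
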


\begin{theorem}\label{T216} Let $\mathcal{B}$ be a Boolean algebra and let $ \G,\H,\F,\P$ be unary operators defined on $B$. Then, the following conditions are equivalent.
\begin{itemize}
  \item[(i)] $\langle\mathcal{B}, \G, \H, \F, \P\rangle$ is a tense algebra.
  \item[(ii)] $\langle\mathcal{B}, \G, \H, \F, \P\rangle$ is a \tdlat-algebra.
\end{itemize}
\end{theorem}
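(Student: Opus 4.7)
The plan is to prove both implications by invoking results already established just before the theorem, so that little new work is needed. The implication (i)$\Rightarrow$(ii) is the content of the remark preceding Lemma \ref{L214}: if $\langle\mathcal{B},\G,\H,\F,\P\rangle$ is a tense algebra, then by definition $\G$ and $\H$ satisfy (t1), (t2), (t3), and $\F x=\neg\G\neg x$, $\P x=\neg\H\neg x$. A routine computation using Boolean duality (De Morgan, double negation) transfers the $\G,\H$-properties to their De Morgan duals: (t5) and (t6) come from (t1) and (t2); (t7) follows from (t3) by contraposition; and (t4) and (t8) are dual of each other and are standard theorems of the minimal tense logic $K_t$. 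Thus all of (t1)--(t8) hold, so $\langle\mathcal{B},\G,\H,\F,\P\rangle$ is a \tdlat-algebra.

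For (ii)$\Rightarrow$(i), I would apply Lemma \ref{L214} directly to $\mathcal{A}=\langle\mathcal{B},\G,\H,\F,\P\rangle$. Since $\mathcal{B}$ is already a Boolean algebra, every element of $B$ is complemented and so $B(A)=B$. Lemma \ref{L214} then yields that $\G x,\H x\in B(A)=B$ for every $x\in B$ (automatic here) and, more importantly, that $\F x=\neg\G\neg x$ and $\P x=\neg\H\neg x$. Combined with the fact that $\G$ and $\H$ satisfy (t1), (t2), (t3) as part of the \tdlat-axioms, this is precisely the definition of a tense algebra.

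The only step with any content is the identification $\F=\neg\G\neg$ and $\P=\neg\H\neg$ in Lemma \ref{L214}, which uses (t1), (t4), (t5), (t8) together with the uniqueness of Boolean complements. Since this has already been proved, no obstacle remains: the theorem is essentially a summary that the subvariety of \tdlat\ on which the underlying lattice reduct is Boolean coincides on the nose with the variety of tense Boolean algebras, primitive $\F,\P$ and defined $\F,\P$ yielding the same operations.
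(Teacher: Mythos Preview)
Your proposal is correct and follows precisely the path the paper itself lays out: the implication (i)$\Rightarrow$(ii) is exactly the content of the second bullet of the remark preceding Lemma~\ref{L214}, and (ii)$\Rightarrow$(i) is the first Corollary after Lemma~\ref{L214}, which is what you recover by applying Lemma~\ref{L214} with $B(A)=B$. The paper in fact gives no separate proof for Theorem~\ref{T216}, treating it as an immediate consequence of those preceding results, so your write-up is, if anything, more explicit than the original.
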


We denote by $\tdlat_{\bf B}$ the category whose objects are $\tdlat$-algebras with underlying lattice being a Boolean algebra and whose morphisms are the corresponding homomorphisms (in the sense of Universal Algebra).

\begin{remark}
    \begin{itemize}
        \item[]
        \item[$\bullet$] 
        Recall that an $IK_t$-algebra is a system $\langle \mathcal{A},\G,\H,\F,\P\rangle$ where $\mathcal{A}$ is a Heyting algebra and $\G,\H,\F$ and $\P$ satisfy the axioms of Definition \ref{D21} except (t4). These algebras are an algebraic counterpart of the logic $IK_t$ introduced by \cite{Ewald86} (see \cite{F2}). 
        \item[$\bullet$] Following the notation proposed by Edwald in \cite{Ewald86}, we call $IK_t^{\ast}$-algebra to any a $IK_t$-algebra which satisfies (t4).
    \end{itemize}
\end{remark}

\begin{theorem}\label{T218} Let $\mathcal{A}$ be a Heyting algebra and let $\G,\H,\F,\P$ be unary operators defined on $A$. Then, the following conditions are equivalent.
\begin{itemize}
  \item[(i)] $\langle\mathcal{A}, \G, \H, \F, \P\rangle$ is an $IKt^{\ast}$-algebra.
  \item[(ii)] $\langle\mathcal{A}, \G, \H, \F, \P\rangle$ is a \tdlat-algebra.
\end{itemize}
\end{theorem}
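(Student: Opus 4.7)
The statement is essentially a bookkeeping observation between two sets of axioms, so the plan is to show that once we unfold the definitions given in the remark immediately preceding the theorem, both conditions reduce to exactly the same list of identities on the same underlying bounded distributive lattice.

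First I would argue the direction (i) $\Rightarrow$ (ii). Assume $\langle\mathcal{A},\G,\H,\F,\P\rangle$ is an $IK_t^{\ast}$-algebra. By definition, $\mathcal{A}$ is an $IK_t$-algebra that additionally satisfies (t4); and by the remark preceding the theorem, an $IK_t$-algebra is a Heyting algebra equipped with tense operators satisfying all the axioms of Definition \ref{D21} except (t4). Combining these, the tense operators satisfy every axiom (t1)--(t8). Since every Heyting algebra is a bounded distributive lattice, $\langle \mathcal{A}_0, \wedge, \vee, 0, 1\rangle$ is an object of $\dl$, and hence $\langle\mathcal{A},\G,\H,\F,\P\rangle$ fits exactly Definition \ref{D21}. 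Therefore it is a \tdlat-algebra.

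Next I would treat the converse (ii) $\Rightarrow$ (i). Suppose $\langle\mathcal{A},\G,\H,\F,\P\rangle$ is a \tdlat-algebra whose underlying lattice is a Heyting algebra. Then by Definition \ref{D21} the operators $\G,\H,\F,\P$ satisfy all of (t1)--(t8). In particular they satisfy (t1), (t2), (t3), (t5), (t6), (t7) and (t8), so by the remark preceding the theorem $\langle\mathcal{A},\G,\H,\F,\P\rangle$ is an $IK_t$-algebra. Since (t4) is also fulfilled, by the very definition it is an $IK_t^{\ast}$-algebra.

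There is no genuine obstacle: the theorem is a direct translation between two presentations of the same list of identities on a common class of underlying structures. The only point worth stressing in the write-up is that the remark has already rephrased the original implication-based axioms of $IK_t$ from \cite{F2} into the language of Definition \ref{D21}, so no further manipulation involving the Heyting implication $\to$ is needed; the equivalence is pinned down simply by checking that both lists (t1)--(t8) coincide. Consequently, the categories $IK_t^{\ast}$ and the full subcategory of \tdlat\ whose objects have Heyting-algebra reduct agree on objects, and the theorem can be applied later (for example, in Section \ref{s6}) to transfer results obtained for \tdlat-algebras to $IK_t^{\ast}$-algebras.
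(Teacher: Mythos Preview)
Your proposal is correct and matches the paper's approach: the paper does not supply an explicit proof of this theorem, precisely because the preceding remark has set up the definitions so that the equivalence is an immediate unfolding of (t1)--(t8), exactly as you describe.
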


 Denote by $\tdlat_{\bf H}$ the category whose objects are  $\tdlat$-algebras whose underlying lattice are Heyting and whose morphisms are the corresponding homomorphisms.\\
 Recall that a {\em tense De Morgan algebra} is a system $\mathcal{A}=\langle A, \wedge, \vee, \sim, \G, \H, 0, 1\rangle$ such that $\langle A, \wedge, \vee, \sim, 0, 1\rangle$ is a De Morgan algebra and that satisfies conditions (t1)--(t4) where $\F x:={\sim}\G({\sim}x)$, $\P x:={\sim}\H({\sim} x)$ (see \cite{F6}).
 
\begin{theorem} \label{T219}Let $\mathcal{A}$ be a De Morgan algebra and let $ \G,\H,\F,\P$ be unary operators defined on $A$ such that $\F{:=\sim}\G{\sim}$ and $\P{:=\sim}\H{\sim}$ where $\sim$ is the De Morgan negation. Then, the following conditions are equivalent:
\begin{itemize}
  \item[(i)] $\langle\mathcal{A}, \G, \H,\F, \P\rangle$ is a tense De Morgan algebra.
  \item[(ii)] $\langle\mathcal{A}, \G, \H, \F, \P\rangle$ is a \tdlat-algebra.
\end{itemize}
\end{theorem}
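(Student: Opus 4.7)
The equivalence should split cleanly into two directions. The direction (ii) $\Rightarrow$ (i) is essentially immediate: axioms (t1)--(t4) are literally among the defining axioms of a \tdlat-algebra, and the defining equations $\F={\sim}\G{\sim}$ and $\P={\sim}\H{\sim}$ are part of the standing hypothesis. So all the work lies in (i) $\Rightarrow$ (ii): starting from a tense De Morgan algebra (which only gives us (t1)--(t4)), we must derive the remaining axioms (t5)--(t8).

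The strategy is to exploit the De Morgan involution $\sim$ to transport properties of $\G$ and $\H$ onto $\F$ and $\P$. The key preliminary calculation, which I would record once and reuse, is that $\G{\sim}x={\sim}\F x$ and $\H{\sim}x={\sim}\P x$; both follow by applying $\sim$ to the defining equations and using $\sim\sim=\mathrm{id}$. With these in hand, (t5) is one line, $\F 0={\sim}\G{\sim}0={\sim}\G 1={\sim}1=0$ using (t1), and similarly for $\P 0$. For (t6) I would compute
\[
\F(x\vee y)={\sim}\G{\sim}(x\vee y)={\sim}\G({\sim}x\wedge {\sim}y)={\sim}(\G{\sim}x\wedge \G{\sim}y)={\sim}\G{\sim}x\vee {\sim}\G{\sim}y=\F x\vee \F y,
\]
using (t2) and the De Morgan laws, with the symmetric argument for $\P$.

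For (t7), I would dualize (t3) through $\sim$: instantiating $x\leq \G\P x$ at $\sim x$ and using $\P{\sim}x={\sim}\H x$ yields $\sim x\leq \G{\sim}\H x={\sim}\F\H x$, so $\F\H x\leq x$ by order-reversal of $\sim$; the other half is analogous. For (t8) I would apply (t4) to the pair $({\sim}x,{\sim}y)$ to get $\G({\sim}x\vee{\sim}y)\leq \G{\sim}x\vee \F{\sim}y={\sim}\F x\vee{\sim}\G y={\sim}(\F x\wedge \G y)$, and then take $\sim$ on both sides to reach $\F x\wedge \G y\leq {\sim}\G{\sim}(x\wedge y)=\F(x\wedge y)$; after relabelling $x\leftrightarrow y$ this is exactly the first clause of (t8), and the $\H$/$\P$ clause is obtained by the same manipulation applied to the second half of (t4).

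I do not anticipate any real obstacle: every step is a short calculation with $\sim\sim=\mathrm{id}$, the De Morgan laws, and order-reversal of $\sim$. The only point requiring care is bookkeeping in the derivation of (t8), where one must keep track of which variable plays which role so that the final inequality matches the form stated in Definition \ref{D21}.
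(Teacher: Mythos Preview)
Your proof is correct and complete; each of the derivations of (t5)--(t8) from (t1)--(t4) via the De Morgan involution is sound, including the bookkeeping in (t8). The paper states this theorem without proof, so there is nothing to compare against; your argument is the natural one and is exactly what the authors would have had in mind.
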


We denote by $\tdlat_{\bf M}$ the category whose objects are $\tdlat$-algebras with underlying lattice being a De Morgan algebra and whose morphisms are the corresponding homomorphisms. 

\

\subsection{Prime filters in \tdlat-algebras}

Now, we initiate the study of prime filters in \tdlat-algebras and their properties.
Given a \tdlat-algebra $\mathcal{A}=\langle\mathcal{A}_0,\G,\H,\F,\P\rangle$ we denote by $X(\mathcal{A})$ the family of lattice filters of $\mathcal{A}_0$.\\
The following results are fundamental in the development of representation theorems for \tdlat-algebras.

\begin{lemma}\label{L220} Let $\mathcal{A}$ be a \tdlat-algebra and let $S, T$ two lattice filters of $\mathcal{A}_0$. The following conditions are equivalent. 
\begin{itemize}
  \item[(i)] $\G^{-1}(S)\subseteq T\subseteq \F^{-1}(S)$
  \item[(ii)] $\H^{-1}(T)\subseteq S\subseteq \P^{-1}(T)$
\end{itemize}
\end{lemma}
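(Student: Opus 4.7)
The plan is to verify the two implications directly by unwinding the preimage inclusions into pointwise statements and then applying the two ``unit/counit-like'' axioms (t3) and (t7), together with the fact that $S$ and $T$ are upward closed. Recall that membership in $\G^{-1}(S)$ means $\G x\in S$, and similarly for the other three preimages, so each of the four inclusions is a single implication in $x$.

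For the direction (i) $\Rightarrow$ (ii), I would first show $S\subseteq \P^{-1}(T)$: pick $x\in S$; by (t3) we have $x\leq \G\P x$, and since $S$ is upward closed we get $\G\P x\in S$; the hypothesis $\G^{-1}(S)\subseteq T$ then gives $\P x\in T$, i.e.\ $x\in\P^{-1}(T)$. To obtain $\H^{-1}(T)\subseteq S$, take $x$ with $\H x\in T$; the hypothesis $T\subseteq \F^{-1}(S)$ yields $\F\H x\in S$, and since $\F\H x\leq x$ by (t7), the upward closure of $S$ forces $x\in S$.

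The direction (ii) $\Rightarrow$ (i) is completely symmetric, exchanging the roles of the past/future dual pairs $(\G,\P)$ and $(\H,\F)$: to see $T\subseteq \F^{-1}(S)$, pick $x\in T$; (t3) gives $x\leq \H\F x$, hence $\H\F x\in T$, and then $\H^{-1}(T)\subseteq S$ delivers $\F x\in S$. To see $\G^{-1}(S)\subseteq T$, take $\G x\in S$; then $S\subseteq\P^{-1}(T)$ gives $\P\G x\in T$, and $\P\G x\leq x$ by (t7) forces $x\in T$ by upward closure.

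There is essentially no obstacle here: the whole argument is a one-line use, for each of the four inclusions, of (t3), (t7), and the upward closure of a lattice filter. What is worth noting is that primeness of $S$ and $T$ is never invoked, so the lemma in fact holds for \emph{arbitrary} lattice filters; this is consistent with the hypothesis stated in the lemma and will be convenient later when the equivalent conditions (i) and (ii) are used to define the Priestley-space relation of the duality in Section \ref{s3}.
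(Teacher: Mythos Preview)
Your proof is correct and follows exactly the approach indicated in the paper, which simply states that the result is immediate from axioms (t3) and (t7). You have merely spelled out the four pointwise verifications that the paper leaves implicit, and your observation that primeness is not needed is also accurate.
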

\begin{proof} Immediate from axioms (t3) and (t7).
\end{proof}

Some of the results that follow can be obtained from \cite{ChajdaPaseka}. For the sake of readability, we include them in this paper.

\begin{definition}\label{DRA} Let $\mathcal{A}$ be a \tdlat-algebra. Define on  $X(A)$ the binary relations $R_{\G\F}$ y $R_{\H\P}$ as follows.
$$(S,T)\in R_{\G\F}\,\,\,\mbox{ iff }\,\,\,\G^{-1}(S)\subseteq T\subseteq \F^{-1}(S)$$
$$(S,T)\in R_{\H\P}\,\,\,\mbox{ iff }\,\,\,\H^{-1}(S)\subseteq T\subseteq \P^{-1}(S)$$

\noindent where for every $\Phi\in \{\G,\H,\F,\P\}$, $\Phi^{-1}(S)=\{a\in A: \Phi(a)\in S\}$.
\end{definition}

\begin{proposition}\label{Pinv} $R_{\H\P}$ is the inverse relation $R_{\G\F}$.    
\end{proposition}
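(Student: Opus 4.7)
The plan is to reduce the claim to Lemma \ref{L220} by a simple change of variable. Unpacking definitions, the assertion ``$R_{\H\P} = R_{\G\F}^{-1}$'' means that for all lattice filters $S,T$ of $\mathcal{A}_0$,
\[
(S,T)\in R_{\H\P}\ \iff\ (T,S)\in R_{\G\F},
\]
i.e.,
\[
\H^{-1}(S)\subseteq T\subseteq \P^{-1}(S)\ \iff\ \G^{-1}(T)\subseteq S\subseteq \F^{-1}(T).
\]

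So the first step is to invoke Lemma \ref{L220} with the roles of $S$ and $T$ swapped: replacing $S$ by $T$ and $T$ by $S$ in the statement of that lemma yields precisely the equivalence
\[
\G^{-1}(T)\subseteq S\subseteq \F^{-1}(T)\ \iff\ \H^{-1}(S)\subseteq T\subseteq \P^{-1}(S).
\]
The right-hand side is $(S,T)\in R_{\H\P}$ by Definition \ref{DRA}, while the left-hand side is $(T,S)\in R_{\G\F}$. This gives the desired biconditional.

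There is no real obstacle here: the content is entirely absorbed by Lemma \ref{L220}, whose proof in turn is immediate from the adjunction-style axioms (t3) and (t7) (equivalently, the Galois connections (t16) and (t17)). Thus the proof is essentially a one-line citation, with the only thing to check being that the symmetric reading of Lemma \ref{L220} is indeed available (which it is, since that lemma is symmetric in the naming of the two filters).
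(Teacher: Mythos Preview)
Your proposal is correct and matches the paper's own proof, which simply says ``Immediate from Lemma \ref{L220}.'' You have merely made explicit the variable-swap that is implicit in that citation.
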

\begin{proof} Immediate from Lemma \ref{L220}.
\end{proof}

\begin{remark} Taking into account Proposition \ref{Pinv}, we denote simply by  $R_{A}$ the relation $R_{\G\F}$ and the the inverse relation is $R^{-1}_A=R_{\H\P}$
\end{remark}

\begin{lemma}\label{L224} Let $\A$ be a \tdlat-algebra. Then, for every $S\in X(A)$ it holds:
\begin{multicols}{2}
\begin{itemize} 
  \item[(i)] $R_A({\uparrow}S)\subseteq{\uparrow}R_A(S)$, 
  \item[(ii)] $R^{-1}_A({\uparrow}S)\subseteq{\uparrow}R^{-1}_A(S)$, 
  \item[(iii)] $R_A({\downarrow}S)\subseteq{\downarrow}R_A(S)$, 
  \item[(iv)] $R^{-1}_A({\downarrow}S)\subseteq{\downarrow}R^{-1}_A(S)$, 
  \item[(v)] $R_A(S)={\uparrow}R_A(S)\cap{\downarrow}R_A(S)$.
\end{itemize}
\end{multicols}
\end{lemma}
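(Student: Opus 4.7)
My plan is to treat the five items as standard back-and-forth conditions for the relation $R_A$ on $X(\mathcal{A})$, of the type familiar from Priestley duality for modal/tense operators: (i)--(iv) rest on either (t4) or (t8) together with the prime-filter extension lemma (Birkhoff--Stone), while (v) is purely formal. For (v), one direction is immediate; conversely, if $T_1\subseteq T\subseteq T_2$ with $T_1,T_2\in R_A(S)$, then $\G^{-1}(S)\subseteq T_1\subseteq T\subseteq T_2\subseteq \F^{-1}(S)$, so $T\in R_A(S)$.

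For (iii) (and symmetrically (iv), using the $\H,\P$-parts of the axioms), given $(S',T)\in R_A$ with $S'\subseteq S$ I would let $T'$ be the filter generated by $T\cup \G^{-1}(S)$. The inclusions $T\subseteq T'$ and $\G^{-1}(S)\subseteq T'$ are automatic; the main point is $T'\subseteq \F^{-1}(S)$: any $z\in T'$ dominates some $t\wedge g$ with $t\in T$, $g\in \G^{-1}(S)$, and (t8) gives $\G g\wedge \F t\leq \F(t\wedge g)\leq \F z$, with $\G g\in S$ trivially and $\F t\in S$ because $t\in T\subseteq \F^{-1}(S')\subseteq \F^{-1}(S)$, hence $\F z\in S$. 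Properness of $T'$ follows from (t5) and properness of $S$, and a final prime-filter extension of $T'$ against the ideal $A\setminus \F^{-1}(S)$ (an ideal by (t6) and primality of $S$) upgrades $T'$ to a prime witness for $T\in {\downarrow}R_A(S)$.

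For (i) (and symmetrically (ii)), the construction is more delicate because no direct shrinking of $T$ to a filter inside $\F^{-1}(S)$ is available. Given $(S',T)\in R_A$ with $S\subseteq S'$, I would invoke the prime-filter extension lemma directly with $F:=\G^{-1}(S)$ and $I$ the ideal generated by $(A\setminus T)\cup (A\setminus \F^{-1}(S))$. The crucial step is $F\cap I=\emptyset$: if $x\in F$ satisfies $x\leq t'\vee s''$ with $t'\notin T$ and $\F s''\notin S$, then $\G x\in S$ and (t4) gives $\G x\leq \G(t'\vee s'')\leq \G t'\vee \F s''$; primality of $S$ forces either $\G t'\in S$, whence $t'\in \G^{-1}(S')\subseteq T$, contradicting $t'\notin T$, or $\F s''\in S$, directly contradictory. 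The extension lemma then produces a prime filter $T'$ with $\G^{-1}(S)\subseteq T'\subseteq T\cap \F^{-1}(S)$, so that $(S,T')\in R_A$ and $T'\subseteq T$.

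The main obstacle is (i): the axioms offer no direct constructive way to shrink $T$, so one must rely on the full prime-filter extension lemma combined with a non-obvious application of (t4). The asymmetry between (t4) and (t8) is precisely what produces the two flavors of argument --- (t8) enlarges $T$ while keeping $\F$-images inside $S$, whereas (t4) controls the situation only indirectly, via separation of a filter from an appropriate ideal.
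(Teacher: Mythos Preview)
Your proposal is correct and follows essentially the same approach as the paper: (i) and (ii) are proved by separating the filter $\G^{-1}(S)$ from the ideal generated by $T^{c}\cup(\F^{-1}(S))^{c}$ using (t4) and Birkhoff--Stone, while (iii) and (iv) are proved by extending the filter generated by $\G^{-1}(S)\cup T$ inside $\F^{-1}(S)$ using (t8) and then applying Birkhoff--Stone. Your argument for (v) is in fact cleaner than the paper's (which is somewhat obscurely written), but the underlying observation is the same: the two inclusions $\G^{-1}(S)\subseteq T$ and $T\subseteq\F^{-1}(S)$ are witnessed separately by the ${\uparrow}$- and ${\downarrow}$-conditions.
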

\begin{proof} We only prove (i), (iii) and (v) since the proof of (ii) and (iv) are similar to the one of (i) and (iii), respectively.\\
(i) \ Let $S, T$ two prime filters such that $T\in R_A({\uparrow}S)$, then there is a prime filter $Y$ such that 
$$(1)\,\,S\subseteq Y\hspace{1cm}\mbox{and}\hspace{1cm}(2)\,\,\G^{-1}(Y)\subseteq T\subseteq \F^{-1}(Y).$$
Now, consider the filter $\G^{-1}(S)$ and the ideal $I$ generated by $T^c\cup(\F^{-1}(S))^c$. Then, $\G^{-1}(S)\cap I=\emptyset$. Indeed, suppose that $(3)\,\,x\in \G^{-1}(S)$ such that $x\in I$. Then, there are elements  $(4)\,\,y\in T^c$ and $(5)\,\,z\in (\F^{-1}(S))^c$ such that $x\leq y\vee z$. Hence, from (t9) and (t4) we have $\G(x)\leq \G(y\vee z)\leq \G(y)\vee \F(z)$ and then, by (3) and the fact that $S$ is a prime filter,  $\G(y)\in S\,\,\,\mbox{o}\,\,\,\F(z)\in S$. From (5) we have $\F(z)\not\in S$ and then it holds $\G(y)\in S$. From (1) and (2), $y\in T$ which contradicts (4). By the Birkhoff-Stone Theorem, there is a prime filter $W$ such that
$$\G^{-1}(S)\subseteq  W\subseteq \F^{-1}(S)\,\,\,\mbox{y}\,\,\,W\subseteq T$$
That is, $T\in {\uparrow}R_A(S)$. \\[1.5mm]
(iii) \ Let $S, T$ be two prime filters such that  $T\in R_A({\downarrow}S)$. Then, there is a prime filter $Y$ such that 
$$(6)\,\,Y\subseteq S\hspace{1cm}\mbox{and}\hspace{1cm}(7)\,\,\G^{-1}(Y)\subseteq T\subseteq \F^{-1}(Y).$$
Let  $Z$ be the filter generated by  $\G^{-1}(S)\cup T$ and let us show that $Z\subseteq \F^{-1}(S)$. Indeed, let $x\in Z$, then there are $y\in \G^{-1}(S)$, $z\in T$ such that $y\wedge z\leq x$. Since $z\in T$, by (7) and (6) we have $\F(z)\in S$ and then $\G(y)\wedge \F(z)\in S$. Besides, by (t8) and (t9), $\G(y)\wedge \F(z)\leq \F(y\wedge z)\leq \F(x)$ and therefore $x\in \F^{-1}(S)$. Hence $Z\cap (\F^{-1}(S))^c=\emptyset$ and since $S$ is a prime filter, it is not difficult to see that $(\F^{-1}(S))^c$ is an ideal. Again, by  Birkhoff-Stone Theorem, there is a prime filter $W$ such that
$$\G^{-1}(S)\subseteq  W\subseteq \F^{-1}(S)\,\,\,\mbox{and}\,\,\,T\subseteq W$$
Than is,  $T\in {\downarrow}R_A(S)$. \\[1.5mm]
(v) \ It is immediate that $R_A(S)\subseteq{\uparrow}R_A(S)\cap{\downarrow}R_A(S)$. Conversely, let $T$ be a prime filter such that $T\in {\uparrow}R_A(S)$ and $T\in {\downarrow}R_A(S)$. Then, there are prime filters $Y, Z$ such that
$(8)\,\,S\subseteq Y$ and $(9)\,\,\G^{-1}(Y)\subseteq T\subseteq \F^{-1}(Y)$, besides
$(9)\,\,Z\subseteq S$ and $(10)\,\,\G^{-1}(Z)\subseteq T\subseteq \F^{-1}(Z)$. Then, $$\G^{-1}(S)\subseteq_{(8)} \G^{-1}(Y)\subseteq_{(9)} T\subseteq_{(10)} \F^{-1}(Z)\subseteq_{(9)} \F^{-1}(S).$$
That is,  $T\in R_A(S)$.
\end{proof}

\

\begin{lemma}\label{L225} Let $\mathcal{A}$ be a \tdlat-algebra, $S\in X(A)$ and $a\in A$. Then,
\begin{itemize}
  \item[(i)] $\G(a)\not\in S$ \ iff \ there is $T\in X(A)$ such that $T\in R_{A}(S)$ and $a\not\in T$. 
  \item[(ii)] $\H(a)\not\in S$ \ iff \ there is $T\in X(A)$ such that $T\in R^{-1}_{A}(S)$ and $a\not\in T$.
  \item[(iii)] $\F(a)\in S$ \ iff \ there is $T\in X(A)$ such that $T\in R_{A}(S)$ and $a\in T$.
  \item[(iv)] $\P(a)\in S$ \ iff \ there is $T\in X(A)$ such that $T\in R^{-1}_{A}(S)$ and $a\in T$.
\end{itemize}
\end{lemma}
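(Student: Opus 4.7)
The four items are pairwise dual: (ii) follows from (i) by swapping $(\G,\F,R_A)$ with $(\H,\P,R^{-1}_A)$ and invoking Proposition \ref{Pinv}, and (iv) is dual to (iii) in the same way. I therefore plan to prove only (i) and (iii). In both, the ``$\Leftarrow$'' directions are immediate from the definition of $R_A$: if $T\in R_A(S)$, then $\G^{-1}(S)\subseteq T\subseteq \F^{-1}(S)$, so $a\notin T$ forces $\G(a)\notin S$ (by contrapositive of $a\in\G^{-1}(S)\Rightarrow a\in T$), and $a\in T$ forces $\F(a)\in S$.

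For the nontrivial direction of (i), assuming $\G(a)\notin S$, I would separate, via Birkhoff--Stone, the lattice filter $\G^{-1}(S)$ (which is a filter thanks to (t1) and (t2)) from the ideal $I$ generated by $\{a\}\cup(\F^{-1}(S))^c$. The preliminary observation here is that $(\F^{-1}(S))^c$ is already an ideal: downward closed by (t9), and closed under finite joins because (t6) gives $\F(x\vee y)=\F(x)\vee\F(y)$ and $S$ is prime. The disjointness $\G^{-1}(S)\cap I=\emptyset$ comes from axiom (t4): if some $x\in\G^{-1}(S)$ lies below $a\vee w$ with $\F(w)\notin S$, then $\G(x)\leq \G(a\vee w)\leq \G(a)\vee\F(w)$, and primality of $S$ forces $\G(a)\in S$ or $\F(w)\in S$, contradicting both hypotheses. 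Birkhoff--Stone then yields a prime filter $T$ with $\G^{-1}(S)\subseteq T\subseteq \F^{-1}(S)$ and $a\notin T$, as required.

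For (iii), assuming $\F(a)\in S$, I would dualize the construction and separate the filter $F$ generated by $\G^{-1}(S)\cup\{a\}$ from the ideal $(\F^{-1}(S))^c$. Any $z\in F$ satisfies $z\geq y\wedge a$ for some $y$ with $\G(y)\in S$; applying (t8) together with monotonicity of $\F$ gives $\G(y)\wedge \F(a)\leq \F(y\wedge a)\leq \F(z)$, and since both $\G(y)$ and $\F(a)$ belong to $S$, we get $\F(z)\in S$. Hence $F\cap(\F^{-1}(S))^c=\emptyset$, and a final invocation of Birkhoff--Stone produces the desired prime filter $T$. The main obstacle is choosing the correct filter/ideal pair on each side; once that is set up, the axioms doing the real work are (t4) for (i) and (t8) for (iii), while the remainder is routine lattice-theoretic bookkeeping.
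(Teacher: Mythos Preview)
Your proposal is correct and follows essentially the same approach as the paper: for (i) you separate the filter $\G^{-1}(S)$ from the ideal generated by $\{a\}\cup(\F^{-1}(S))^c$ using (t4), and for (iii) you separate the filter generated by $\G^{-1}(S)\cup\{a\}$ from the ideal $(\F^{-1}(S))^c$ using (t8), exactly as the paper does. Your added remarks on why $(\F^{-1}(S))^c$ is an ideal and why the easy directions hold are correct and even slightly more explicit than the paper's version.
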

\begin{proof} We only show  (i) and (iii), the rest are analogous.\\
(i) \ Suppose that $\G(a)\not\in S$ and check that (1) $\G^{-1}(S)\cap I=\emptyset$ where $I$ is the ideal generated by $\{a\}\cup(\F^{-1}(S))^c$. Suppose that (1) does not hold, then there is $x\in \G^{-1}(S)$ y $z\in (\F^{-1}(S))^c$ such that $x\leq a\vee z$.From (t9) and (t4), we have $\G(x)\leq \G(a\vee z)\leq \G(a)\vee \F(z)$ and since $S$ is a prime filter, we have that either $\G(a)\in S$ or $\F(z)\in S$, which is a contradiction. By the Birkhoff-Stone Theorem, there is a prime filter $T$ such that $\G^{-1}(S)\subseteq T\subseteq \F^{-1}(S)$ and $a\not\in T$.\\[1.5mm]
(iii) \ Suppose that $\F(a)\in S$ and let $Z$ be the filter generated by $\G^{-1}(S)\cup \{a\}$. Then, $(1)\,Z\cap(\F^{-1}(S))^c=\emptyset$. Indeed, suppose that there is $(2)\,x\in Z$ such that $(3)\,x\not\in \F^{-1}(S)$. From (2), there is  $(4)\,y\in \G^{-1}(S)$ such that $(5)\,y\wedge a\leq x$. By the hypothesis and (4), $\G(y)\wedge \F(a)\in S$, besides, from (5), (t8) and (t9) we have $\G(y)\wedge \F(a)\leq \F(y\wedge a)\leq \F(x)$. Then $\F(x)\in S$, which contradicts (3). From the Birkhoff-Stone Theorem, there is a prime filter $T$ such that
$\G^{-1}(S)\subseteq T\subseteq \F^{-1}(S)$ and $a\in T$.
\end{proof}

\

\subsection{Tense filters and ideals in \tdlat-algebras}

In order to obtain characterizations of important particular congruences of a given \tdlat-algebra, we introduce the notions of {\em tense filter} and {\em tense ideals} which we also call \tdlat-filters y \tdlat-ideals, respectively. In what follows, $\mathcal{A}=\langle\mathcal{A}_0,G,H,F,P\rangle$ is a \tdlat-algebra.

\begin{definition}[\tdlat-Filtro] A given lattice filter $S$ of  $\mathcal{A}_0$ is said to be a tense filter (or \tdlat-filter) if:
\begin{itemize}
  \item[(FLt)] $\G(x), \H(x)\in S$, for every $x\in S$.
\end{itemize}
\end{definition}

\noindent From Definition \ref{Dd}, one can easily check the following result. 

\begin{lemma}\label{L227} Let $S$ be a filter of $\mathcal{A}_0$. The following conditions are equivalent.
\begin{itemize}
  \item[(i)] $S$ is a $Lt$-filter.
  \item[(ii)] $d x\in S$, for every $x\in S$.
  \item[(iii)] $d^n x\in S$, for every $x\in S$ and every $n\in \omega$.
\end{itemize}
\end{lemma}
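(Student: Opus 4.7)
The plan is to prove the cyclic chain (i) $\Rightarrow$ (ii) $\Rightarrow$ (iii) $\Rightarrow$ (i), all of which reduce to elementary manipulations using the definition $dx = \G x \wedge x \wedge \H x$ together with the filter axioms (closure under binary meets and upward closure).

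First, for (i) $\Rightarrow$ (ii): given $x \in S$, the tense filter condition (FLt) yields $\G x, \H x \in S$, and since $S$ is a lattice filter it is closed under finite meets, so $\G x \wedge x \wedge \H x \in S$, which is precisely $dx \in S$.

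Next, for (ii) $\Rightarrow$ (iii): I would proceed by induction on $n \in \omega$. The base case $n = 0$ is immediate since $d^0 x = x \in S$ by hypothesis. For the inductive step, assume $d^n x \in S$; then applying (ii) to the element $d^n x$ gives $d(d^n x) = d^{n+1} x \in S$, closing the induction.

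Finally, for (iii) $\Rightarrow$ (i): specializing (iii) to $n = 1$ gives $dx \in S$, that is, $\G x \wedge x \wedge \H x \in S$. Since a lattice filter is upward closed and obviously $dx \leq \G x$ and $dx \leq \H x$, we conclude $\G x \in S$ and $\H x \in S$, which is exactly condition (FLt).

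No step presents a real obstacle; the whole argument is bookkeeping around the definition of $d$ and the two filter closure properties. The only subtlety worth flagging is to remember that $n = 0$ is permitted in the quantifier of (iii), which keeps the equivalence with (ii) genuinely tight via the $n = 1$ instance used in the last implication.
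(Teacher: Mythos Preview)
Your proof is correct and is exactly the routine verification the paper alludes to when it says the result follows easily from Definition~\ref{Dd}; the paper does not spell out any details beyond that. Your cyclic chain (i)$\Rightarrow$(ii)$\Rightarrow$(iii)$\Rightarrow$(i) using filter closure under meets and upward closure is the natural argument.
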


\noindent Let $X\subseteq A$ a non-empty set, we denote by $\mathcal{F}(X)$ the filter generated by $X$ in $\mathcal{A}_0$, that is:

$$\mathcal{F}(X)=\left\{y\in A: \,\,\mbox{ there is }\,\,n\in\mathbb{N}\,\,\mbox{and}\,\,\{x_i\}^n_{_{i=1}}\subseteq X,\,\,\mbox{ such that }\,\,\bigwedge^n_{i=1}x_i\leq y\right\}$$

\noindent and denote by $\langle X\rangle$ the \tdlat-filtro generated by $X$ in $\mathcal{A}$.

\begin{definition}\label{D228} Let $X\subseteq A$ a non-empty set. For every $p\in \omega$, define the set $D_p(X)$ as follows:
$$D_p(X):=\left\{y\in A:\,\,\mbox{ there is }\,\,n\in\mathbb{N}\,\,\mbox{ and }\,\,\{x_i\}^n_{_{i=1}}\subseteq X,\,\,\mbox{ such that }\,\,d^p\left(\bigwedge^n_{i=1}x_i\right)\leq y\right\}$$
\end{definition}

\begin{theorem}\label{T229} Let $X\subseteq A$ a non-empty set and let $\mathcal{D}=\left\{D_p(X)\right\}_{p\in\omega}$. Then, it holds:
\begin{itemize}
  \item[(1)] $\mathcal{D}$ is an upward chain of filters of $\mathcal{A}_0$.
  \item[(2)] $\bigcup\mathcal{D}$ is a \tdlat-filter of $\mathcal{A}$.
  \item[(3)] $\langle X\rangle=\bigcup\mathcal{D}$
\end{itemize}
\end{theorem}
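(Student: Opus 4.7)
My plan is to prove the three items in order, since each uses properties of $d^n$ established in Proposition \ref{P28} together with Lemma \ref{L227}.

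For (1), I would first check that each $D_p(X)$ is a lattice filter of $\mathcal{A}_0$. Non-emptiness follows from picking any $x\in X$, since $d^p(x)\leq d^p(x)$ gives $d^p(x)\in D_p(X)$ (and in fact $1\in D_p(X)$ by $(d_1)$). Upward closure is immediate from the defining condition. Closure under binary meets is the key check: given $y_1,y_2\in D_p(X)$ witnessed by finite subsets $\{x_i\}_{i=1}^{n_1}$ and $\{x_j'\}_{j=1}^{n_2}$ of $X$, the combined finite subset $\{x_i\}\cup\{x_j'\}\subseteq X$ together with property $(d_3)$ of Proposition \ref{P28} yields $d^p\bigl(\bigwedge_i x_i \wedge \bigwedge_j x_j'\bigr)=d^p(\bigwedge_i x_i)\wedge d^p(\bigwedge_j x_j')\leq y_1\wedge y_2$, so $y_1\wedge y_2\in D_p(X)$. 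The chain inclusion $D_p(X)\subseteq D_{p+1}(X)$ is a direct consequence of $(d_2)$: if $d^p(\bigwedge_i x_i)\leq y$ and since $d^{p+1}(\bigwedge_i x_i)\leq d^p(\bigwedge_i x_i)$, the same finite subset witnesses $y\in D_{p+1}(X)$.

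For (2), the union of an upward chain of filters is a filter, so $\bigcup\mathcal{D}$ is a lattice filter of $\mathcal{A}_0$. To verify it is a \tdlat-filter, by Lemma \ref{L227} it suffices to show $d(y)\in \bigcup\mathcal{D}$ whenever $y\in\bigcup\mathcal{D}$. If $y\in D_p(X)$ with witnessing subset $\{x_i\}_{i=1}^n\subseteq X$, then property $(d_4)$ applied to the inequality $d^p(\bigwedge_i x_i)\leq y$ gives $d^{p+1}(\bigwedge_i x_i)=d\bigl(d^p(\bigwedge_i x_i)\bigr)\leq d(y)$, hence $d(y)\in D_{p+1}(X)\subseteq \bigcup\mathcal{D}$.

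For (3), I would show containment in both directions. Since $X\subseteq D_0(X)\subseteq \bigcup\mathcal{D}$ and $\bigcup\mathcal{D}$ is a \tdlat-filter by (2), minimality of $\langle X\rangle$ gives $\langle X\rangle\subseteq \bigcup\mathcal{D}$. Conversely, any $y\in\bigcup\mathcal{D}$ satisfies $d^p(\bigwedge_{i=1}^n x_i)\leq y$ for some $p\in\omega$ and some $\{x_i\}\subseteq X\subseteq\langle X\rangle$; since $\langle X\rangle$ is a lattice filter, $\bigwedge_i x_i\in\langle X\rangle$, and since it is a \tdlat-filter, Lemma \ref{L227}(iii) yields $d^p(\bigwedge_i x_i)\in\langle X\rangle$, whence $y\in\langle X\rangle$ by upward closure.

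No step is genuinely hard; the only place requiring some care is the closure of each $D_p(X)$ under meets, because one must preserve the same exponent $p$ when combining two witnesses, and this is exactly what $(d_3)$ provides. The whole argument rests on a clean interplay between the four properties $(d_1)$--$(d_4)$ of Proposition \ref{P28} and the Lemma \ref{L227} characterization of \tdlat-filters in terms of $d$.
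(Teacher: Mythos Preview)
Your proof is correct and follows essentially the same approach as the paper. The only cosmetic difference is in part (1): the paper observes that $D_p(X)=\mathcal{F}(d^p X)$ (which follows from $(d_3)$) to conclude at once that each $D_p(X)$ is a filter, whereas you verify the filter axioms directly; and in part (3) the paper phrases the argument as ``any \tdlat-filter $S\supseteq X$ contains $\bigcup\mathcal{D}$'' rather than checking both inclusions separately, but your reverse inclusion is exactly that argument applied to $S=\langle X\rangle$.
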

\begin{proof} 
\begin{itemize}
  \item[(1)] For every $p\in\omega$, consider the set $d^p X=\{d^p x:x\in X\}$. By $(d_3)$, $d^p$ respect infinite infimum and then it is immediate that $D_p(X)=\mathcal{F}(d^p X)$. Therefore, $\mathcal{D}$ is a family of filters of  $\mathcal{A}_0$. Besides, from $(d_2)$ we have that $D_p(X)\subseteq D_{p+1}(X)$ for every $p\in \omega$ and then $\mathcal{D}$ is an upward chain.
  \item[(2)] By (1) we have that $\bigcup\mathcal{D}$ is a filter of $\mathcal{A}_0$. Then $d x\in \bigcup\mathcal{D}$ for every $x\in \bigcup\mathcal{D}$. Indeed, let $x\in \bigcup\mathcal{D}$ then there is $p\in \omega$ such that $x\in D_p(X)$, that is, there are $\{a_i\}^n_{_{i=1}}\subseteq X$ such that $$\displaystyle d^p\left(\bigwedge^n_{i=1}a_i\right)\leq x,$$ 
  from $(d_4)$ we have $\displaystyle d^{p+1}\left(\bigwedge^n_{i=1}a_i\right)\leq d(x)$ and then $d x\in D_{p+1}(X)$. Therefore, $d x\in \bigcup\mathcal{D}$ and by Lemma \ref{L227} we conclude that $\bigcup\mathcal{D}$ is a \tdlat-filter.
  \item[(3)] It is clear that $X\subseteq \bigcup\mathcal{D}$. In fact, $X\subseteq D_p(X)$ for every $p\in\omega$. Let $S$ be a \tdlat-filter such that $X\subseteq S$. By Lemma \ref{L227} we have $d^p X\subseteq S$ for every  $p\in\omega$, and since $S$ is a particular filter of $\mathcal{A}_0$ we have $\mathcal{F}(d^p X)\subseteq S$ for every $p\in\omega$. Then $D_p X\subseteq S$ for every  $p\in\omega$ and then $\bigcup\mathcal{D}\subseteq S$.
\end{itemize}
\end{proof}

\

\begin{corollary}[Compactness]\label{C230} Let $X\subseteq A$ be a non-empty set. Then 
$$\langle X \rangle=\left\{y\in A:\,\,\mbox{ there is }\,\,n\in\mathbb{N}\,\,\mbox{ and }\,\,\{x_i\}^n_{_{i=1}}\subseteq X,\,\,\mbox{ such that 
 }\,\,d^p\left(\bigwedge^n_{i=1}x_i\right)\leq y,\,\,\mbox{ for some }\,\,p\in\omega\right\}$$
\end{corollary}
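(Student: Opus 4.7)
The plan is to observe that this corollary is essentially a restatement of Theorem~\ref{T229}(3) combined with Definition~\ref{D228}, once one rewrites the union $\bigcup \mathcal{D}$ explicitly in terms of its elements. So the proof should be a short derivation rather than introducing any new ideas.

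Concretely, I would start by invoking Theorem~\ref{T229}(3) to replace $\langle X\rangle$ with $\bigcup_{p\in\omega}D_p(X)$. Then I would simply unwind the definition of union: an element $y\in A$ lies in $\bigcup_{p\in\omega}D_p(X)$ if and only if there exists some $p\in\omega$ with $y\in D_p(X)$. Finally, I would expand the defining condition of $D_p(X)$ from Definition~\ref{D228}, which states exactly that there exist $n\in\mathbb{N}$ and $\{x_i\}_{i=1}^n\subseteq X$ such that $d^p(\bigwedge_{i=1}^n x_i)\leq y$. Chaining these three equivalences produces the stated set equality.

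There is essentially no obstacle here; the only thing one needs to be careful about is the order of the existential quantifiers. In the corollary, ``there exist $n$ and $\{x_i\}$ such that $d^p(\bigwedge x_i)\leq y$ for some $p$'' is logically equivalent to ``there exist $p,n,\{x_i\}$ with $d^p(\bigwedge x_i)\leq y$'', which is what the union of the $D_p(X)$ gives. Since all quantifiers are existential, their order is irrelevant, and the identification is immediate. Thus the corollary follows directly without any further use of the axioms (t1)--(t8) beyond what has already been absorbed into Theorem~\ref{T229}.
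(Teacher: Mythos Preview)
Your proposal is correct and matches the paper's (implicit) approach: the corollary is stated without proof as an immediate consequence of Theorem~\ref{T229}(3), and your argument simply spells out the unfolding of $\bigcup_{p\in\omega} D_p(X)$ via Definition~\ref{D228}.
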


\noindent In particular,

\begin{corollary}\label{C231} Let  $S$ be a \tdlat-filter of $\A$ and $a\in A$. Then 
$$\langle S\cup\{a\} \rangle=\{y\in A: x\wedge d^{p} a\leq y,\,\,\mbox{ for some }\,\,n\in\omega,x\in S\}$$
Besides, if  $a\in A^d$
$$\langle S\cup\{a\} \rangle=\{y\in A: x\wedge a\leq y,\,\,\mbox{ for some }\,\,x\in S\}=\mathcal{F}(S\cup\{a\})$$

\end{corollary}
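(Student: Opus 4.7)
The plan is to derive Corollary \ref{C231} as a specialization of the compactness result in Corollary \ref{C230} applied to the set $X=S\cup\{a\}$, exploiting the fact that $S$ is already closed under $d^p$ because it is a \tdlat-filter (Lemma \ref{L227}).

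\smallskip

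First, let $y\in \langle S\cup\{a\}\rangle$. By Corollary \ref{C230} there exist $n\in\mathbb{N}$, elements $z_1,\ldots,z_n\in S\cup\{a\}$ and $p\in\omega$ with $d^p(z_1\wedge\cdots\wedge z_n)\le y$. Separating the $z_i$ that lie in $S$ from possible occurrences of $a$, we may rewrite $z_1\wedge\cdots\wedge z_n = s\wedge a^{\varepsilon}$, where $s$ is the meet of the $S$-elements among the $z_i$ (so $s\in S$, since $S$ is a lattice filter) and $\varepsilon\in\{0,1\}$ records whether $a$ occurs. By property $(d_3)$, $d^p(s\wedge a)=d^p(s)\wedge d^p(a)$, so in either case $d^p(s)\wedge d^p(a)\le y$. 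Setting $x:=d^p(s)$, Lemma \ref{L227} tells us that $x\in S$ (because $S$ is a \tdlat-filter), and we get $x\wedge d^p(a)\le y$, which is the claimed form.

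\smallskip

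For the converse, suppose $x\wedge d^p(a)\le y$ with $x\in S$ and $p\in\omega$. Since both $x$ and $a$ lie in $\langle S\cup\{a\}\rangle$, and since $\langle S\cup\{a\}\rangle$ is a \tdlat-filter (hence closed under $d^p$ by Lemma \ref{L227}), we have $d^p(a)\in \langle S\cup\{a\}\rangle$, whence $x\wedge d^p(a)\in \langle S\cup\{a\}\rangle$, and therefore $y\in \langle S\cup\{a\}\rangle$ by upward closure.

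\smallskip

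For the final statement, if $a\in A^d$ then $d(a)=a$, so by induction $d^p(a)=a$ for every $p\in\omega$, and the characterization collapses to $\{y\in A : x\wedge a\le y \text{ for some } x\in S\}$, which is exactly the lattice filter $\mathcal{F}(S\cup\{a\})$. The only subtlety—and what I would call the main obstacle—is the bookkeeping in the forward direction: one must see that the infimum indexed by elements of $S\cup\{a\}$ can always be grouped into ``an $S$-part plus at most one copy of $a$'' and then use $(d_3)$ together with the closure of $S$ under $d^p$ to push $d^p$ across the $S$-part so that it lands inside $S$, leaving $d^p(a)$ as the only non-trivial factor.
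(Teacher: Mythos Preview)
Your proof is correct and follows exactly the approach the paper intends: the corollary is stated without proof as an immediate specialization of Corollary~\ref{C230}, and your argument spells out precisely that specialization by splitting the generators into an $S$-part and an $a$-part, applying $(d_3)$, and using Lemma~\ref{L227} to absorb $d^p(s)$ back into $S$. The edge cases (all $z_i\in S$, or all $z_i=a$) are handled by your convention that the empty meet is $1\in S$, so nothing is missing.
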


\

\begin{corollary}\label{C232} If $a\in A$ then $$\langle a\rangle=\{x\in A: d^{p} a\leq x,\,\, \textnormal{ for some }\,\, p\in\omega\}.$$
Besides, if $a\in A^d$ then 
$$\langle a\rangle=\{x\in A: a\leq x\}=\mathcal{F}(a)$$
More over,  
\begin{center}
    $\langle a\rangle=\mathcal{F}(a)$  iff  $a\in A^d$
\end{center}
\end{corollary}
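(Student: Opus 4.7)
The plan is to reduce everything to Theorem \ref{T229} (equivalently, Corollary \ref{C231} with $S$ trivial), and then use the characterization of $A^d$ via Proposition \ref{P29}.

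First, for the description of $\langle a\rangle$, I would apply Theorem \ref{T229} to the singleton set $X=\{a\}$. In that case, the only finite meets one can form from $X$ are $a$ itself (up to repetition), so for each $p\in\omega$ the set $D_p(\{a\})$ collapses to $\{y\in A : d^p a\leq y\}$. Taking the union over $p\in\omega$ immediately yields $\langle a\rangle=\{x\in A : d^p a\leq x \text{ for some } p\in\omega\}$.

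For the second assertion, suppose $a\in A^d$, i.e., $a=d\,a$. Then a trivial induction on $p$ shows $d^p a=a$ for every $p\in\omega$, so the description above becomes $\langle a\rangle=\{x\in A : a\leq x\}$, which is exactly $\mathcal{F}(a)$.

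For the final equivalence, the $(\Leftarrow)$ direction is the previous paragraph. For $(\Rightarrow)$, suppose $\langle a\rangle=\mathcal{F}(a)=\{x\in A:a\leq x\}$. Since $\langle a\rangle$ is a \tdlat-filter containing $a$, we have $d\,a\in\langle a\rangle=\mathcal{F}(a)$, hence $a\leq d\,a$. Combining with $d\,a\leq a$ from Proposition \ref{P29}(a), we conclude $a=d\,a$, i.e., $a\in A^d$. The only delicate point is making sure the reduction $D_p(\{a\})=\{y:d^p a\leq y\}$ is carried out cleanly; everything else is a direct application of the earlier machinery.
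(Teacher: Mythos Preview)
Your proposal is correct and follows exactly the route the paper intends: the first description is Corollary~\ref{C230} specialized to $X=\{a\}$, the second follows since $a\in A^d$ forces $d^p a=a$ for every $p$, and the nontrivial implication of the equivalence is obtained by noting $d a\in\langle a\rangle=\mathcal{F}(a)$ and combining with $(d_2)$. There is nothing to add.
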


\

\begin{corollary}\label{C233} If $\mathcal{A}$ is finite then, the lattice $\mathcal{A}^d$ of  $d$-invariant elements of $\mathcal{A}$ is isomorphic to the dual lattice of  \tdlat-filters of $\mathcal{A}$.  
\end{corollary}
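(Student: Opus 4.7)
My plan is to exhibit an explicit order-reversing bijection between $A^{d}$ and the set of $\tdlat$-filters of $\A$, and then verify that it carries finite joins to intersections and finite meets to generated filters. Concretely, I would define
\[
\varphi: A^{d} \longrightarrow \{\tdlat\text{-filters of }\A\}, \qquad \varphi(a) := \langle a\rangle.
\]
By Corollary \ref{C232}, for $a\in A^{d}$ we have $\langle a\rangle = \mathcal{F}(a) = \{x\in A : a\leq x\}$, so the map is well defined and well understood.

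The first step is to check injectivity and the order-reversing property. Both are immediate: if $a,b\in A^{d}$ then $\langle a\rangle = \langle b\rangle$ forces $a\leq b$ and $b\leq a$ (using that $a$ is the minimum of $\langle a\rangle$), whence $a=b$; and $a\leq b$ in $\A^{d}$ holds iff $\mathcal{F}(b)\subseteq\mathcal{F}(a)$, i.e.\ iff $\varphi(b)\subseteq\varphi(a)$.

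The key step is surjectivity, and this is where finiteness enters. Given an arbitrary $\tdlat$-filter $S$, let $m := \bigwedge S$, which exists because $\A$ is finite, and satisfies $m\in S$ (since $S$ is closed under finite meets). Because $S$ is a $\tdlat$-filter, Lemma \ref{L227} yields $d(m)\in S$, so $m \leq d(m)$; combined with $d(m)\leq m$ from Proposition \ref{P29}(a), this gives $m = d(m)$, hence $m\in A^{d}$. Then $S = \mathcal{F}(m) = \langle m\rangle = \varphi(m)$, again by Corollary \ref{C232}. This establishes the bijection.

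Finally I would verify that $\varphi$ is a lattice anti-isomorphism. For $a,b\in A^{d}$, Lemma \ref{L211} guarantees $a\vee b, a\wedge b \in A^{d}$, and then: $\varphi(a\vee b) = \mathcal{F}(a\vee b) = \mathcal{F}(a)\cap\mathcal{F}(b) = \varphi(a)\cap\varphi(b)$, the meet in the filter lattice; while $\varphi(a\wedge b) = \mathcal{F}(a\wedge b)$ is easily seen to be the $\tdlat$-filter generated by $\varphi(a)\cup\varphi(b)$ (one inclusion is automatic since $a\wedge b$ lies below both $a$ and $b$; the other follows because any $\tdlat$-filter containing both $a$ and $b$ must contain $a\wedge b$). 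The only subtle point worth flagging is the surjectivity argument, since it is the unique place where the finiteness hypothesis is genuinely used; the rest is routine once Corollaries \ref{C231} and \ref{C232} are in hand.
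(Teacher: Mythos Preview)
Your proof is correct and follows exactly the approach the paper intends: the corollary is stated without proof immediately after Corollary~\ref{C232}, and the bijection $a\mapsto\langle a\rangle=\mathcal{F}(a)$ together with the inverse $S\mapsto\bigwedge S$ (using finiteness) is precisely the argument those corollaries set up.
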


\noindent It is possible to establish the notion of \tdlat-ideal and prove the corresponding results. 

\begin{definition}[\tdlat-Ideal]\label{D234} A lattice ideal $I$ of $\mathcal{A}_0$ is said to be a \tdlat-ideal if 
\begin{itemize}
  \item[(ILt)] $\F(x), \P(x)\in I$, for every $x\in I$.
\end{itemize}
\end{definition}

Then, we can recast Lemma \ref{L227} in terms of the notion of \tdlat-ideal and using the map $\hat{d}$ instead of $d$. 

\begin{lemma}\label{L235} Let $I$ be a ideal of $\mathcal{A}_0$. The following conditions are equivalent.
\begin{itemize}
  \item[(i)] $I$ is a \tdlat-ideal.
  \item[(ii)] $\hat{d} x\in S$, for every $x\in S$.
  \item[(iii)] $\hat{d}^n x\in S$, for every $x\in S$ and every $n\in \omega$.
\end{itemize}
\end{lemma}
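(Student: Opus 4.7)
The plan is to adapt the proof of Lemma \ref{L227} by swapping every ingredient for its order-theoretic dual: meets become joins, the operator $d = \G(\cdot) \wedge (\cdot) \wedge \H(\cdot)$ is replaced by $\hat{d} = \F(\cdot) \vee (\cdot) \vee \P(\cdot)$, upward-closure becomes downward-closure, and the operators $\G,\H$ appearing in (FLt) are replaced by $\F,\P$ appearing in (ILt). I would then establish the cycle (i) $\Rightarrow$ (ii) $\Rightarrow$ (iii) $\Rightarrow$ (i).

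For (i) $\Rightarrow$ (ii), fix $x \in I$; by (ILt) both $\F x$ and $\P x$ belong to $I$, and $x \in I$ by hypothesis, so since an ideal is closed under finite joins we obtain $\hat{d} x = \F x \vee x \vee \P x \in I$. For (ii) $\Rightarrow$ (iii), I would proceed by induction on $n$: the base case $n=0$ is immediate since $\hat{d}^0 x = x$, and the inductive step is just the application of (ii) to $\hat{d}^n x$ together with the recursive definition $\hat{d}^{n+1} x = \hat{d}(\hat{d}^n x)$. For (iii) $\Rightarrow$ (i), pick $x \in I$ and apply (iii) with $n = 1$ to get $\hat{d} x = \F x \vee x \vee \P x \in I$; since $I$ is downward-closed and $\F x \leq \hat{d} x$ as well as $\P x \leq \hat{d} x$ by the very definition of the join, both $\F x$ and $\P x$ lie in $I$, which is exactly (ILt).

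I do not anticipate any genuine obstacle, as the argument is a purely formal dualization. The only point worth being careful about is that downward-closure of $I$ is used precisely where upward-closure of a filter was used in Lemma \ref{L227}, namely in the last implication, to recover the individual operators $\F x$ and $\P x$ from the single join $\hat{d} x$. Everything else is symbolic manipulation using the definition of $\hat{d}$ and the ideal closure conditions, and one should also note the minor typographical slip in the statement (the set should read $I$ rather than $S$ in clauses (ii) and (iii)).
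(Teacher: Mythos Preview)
Your proposal is correct and follows exactly the approach intended by the paper, which merely states that Lemma~\ref{L235} is the recasting of Lemma~\ref{L227} with $\hat{d}$ in place of $d$ and ideals in place of filters. Your observation about the typographical slip ($S$ should read $I$ in clauses (ii) and (iii)) is also accurate.
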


Besides, if we denote by $\langle X\rangle^{\partial}$ the \tdlat-ideal generated by $X\neq\emptyset$ in $\mathcal{A}$. Then, we can prove the following characterization. 

\begin{lemma}\label{L236} Let $X\subseteq A$ be a non-empty set. Then 
$$\langle X \rangle^{\partial}=\left\{y\in A:\,\,\mbox{ there is }\,\,n\in\mathbb{N}\,\,\mbox{ and }\,\,\{x_i\}^n_{_{i=1}}\subseteq X,\,\,\mbox{ such that }\,\,y\leq \hat{d}^p\left(\bigvee^n_{i=1}x_i\right),\,\,\mbox{ for some }\,\,p\in\omega\right\}$$
\end{lemma}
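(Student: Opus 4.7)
The plan is to mirror the proof of Theorem \ref{T229} using its order-dual and the dual properties of $\hat{d}$ (namely $(\hat{d}_1)$--$(\hat{d}_4)$ of Proposition \ref{P28}), replacing filters by ideals, meets by joins, and $d$ by $\hat{d}$. For each $p\in\omega$ I define
\[
E_p(X):=\left\{y\in A:\exists n\in\mathbb{N},\,\exists\{x_i\}_{i=1}^n\subseteq X,\, y\leq \hat{d}^p\!\left(\bigvee_{i=1}^n x_i\right)\right\},
\]
and claim that the set in the statement equals $\bigcup_{p\in\omega}E_p(X)$, which in turn equals $\langle X\rangle^{\partial}$.

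First I would observe that $E_p(X)$ is exactly the lattice ideal $\mathcal{I}(\hat{d}^p X)$ generated by $\hat{d}^p X=\{\hat{d}^p x:x\in X\}$; this uses $(\hat{d}_3)$, which says $\hat{d}^p$ preserves finite joins, so any finite join of elements of $\hat{d}^p X$ is of the form $\hat{d}^p(\bigvee_{i=1}^n x_i)$. Hence each $E_p(X)$ is an ideal of $\mathcal{A}_0$. Next, from $(\hat{d}_2)$, $\hat{d}^p z\leq \hat{d}^{p+1}z$ for all $z\in A$, so $E_p(X)\subseteq E_{p+1}(X)$, and the family $\{E_p(X)\}_{p\in\omega}$ is an upward chain of lattice ideals; therefore $U:=\bigcup_{p\in\omega}E_p(X)$ is itself a lattice ideal of $\mathcal{A}_0$.

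Then I would verify that $U$ is a \tdlat-ideal. By Lemma \ref{L235} it suffices to show $\hat{d}(y)\in U$ whenever $y\in U$. If $y\in E_p(X)$, then $y\leq \hat{d}^p(\bigvee_{i=1}^n x_i)$ for some $\{x_i\}\subseteq X$, and by $(\hat{d}_4)$ we get $\hat{d}(y)\leq \hat{d}^{p+1}(\bigvee_{i=1}^n x_i)$, so $\hat{d}(y)\in E_{p+1}(X)\subseteq U$. Obviously $X\subseteq E_0(X)\subseteq U$, so $U$ is a \tdlat-ideal containing $X$, whence $\langle X\rangle^{\partial}\subseteq U$.

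For the reverse inclusion, let $J$ be any \tdlat-ideal with $X\subseteq J$. By the iterated version of Lemma \ref{L235}, $\hat{d}^p X\subseteq J$ for every $p\in\omega$, and since $J$ is in particular a lattice ideal, $E_p(X)=\mathcal{I}(\hat{d}^p X)\subseteq J$; taking unions gives $U\subseteq J$. Applying this with $J=\langle X\rangle^{\partial}$ yields $U\subseteq \langle X\rangle^{\partial}$, completing the proof. The only step requiring care is the closure of $U$ under $\hat{d}$, which is where $(\hat{d}_2)$ and $(\hat{d}_4)$ are both needed; the rest is bookkeeping dual to Theorem \ref{T229}, so no genuinely new difficulty arises.
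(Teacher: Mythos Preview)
Your proof is correct and follows exactly the approach the paper intends: the paper does not give a separate proof of this lemma but simply indicates that the filter-side argument (Theorem~\ref{T229} and Corollary~\ref{C230}) dualizes, replacing filters by ideals, $d$ by $\hat{d}$, and using $(\hat{d}_1)$--$(\hat{d}_4)$ together with Lemma~\ref{L235}. Your write-up carries out precisely this dualization with all the right ingredients in place.
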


\section{Tense Priestley spaces and duality}\label{s3}
As it is usual in Algebraic Logic, here we introduce the notion of {\em tense Priestley space} and develop a topological duality for tense distributive lattices. Denote by \ps\ the category whose objects are Priestley spaces and whose morphisms are the usual functions between Priestley spaces.

\begin{definition}\label{DGR} Let $X$ be a non-empty set, $R$ a binary relation on  $X$ and $R^{-1}$ the inverse relation of $R$. Define operators on $\mathcal{P}(X)$ as follows: for every $Y\subseteq X$:
\begin{equation}
\G_R(Y)=\{x\in x: R(x)\subseteq Y\}
\end{equation}
\begin{equation}
\F_R(Y)=\{x\in X: R(x)\cap Y\neq\emptyset\}
\end{equation}
\begin{equation}
\H_{R^{-1}}(Y)=\{x\in X: R^{-1}(x)\subseteq Y\}
\end{equation}
\begin{equation}
\P_{R^{-1}}(Y)=\{x\in X: R^{-1}(x)\cap Y\neq\emptyset\}
\end{equation}
\end{definition}

\begin{proposition}\label{P32} If $X$ is a non-empty set and $R$ a binary relation on $X$ then
$$\langle X, R\rangle^+:=\langle \mathcal{P}(X),\cap, \cup, \G_R, \H_{R^{-1}}, \F_R, \P_{R^{-1}},\emptyset, X\rangle$$ 
is a \tdlat-algebra.
\end{proposition}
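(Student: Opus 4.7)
The plan is to invoke Lemma \ref{L212}(b), which reduces the task to verifying only the axioms (t4), (t8), (t16) and (t17); since $\mathcal{P}(X)$ with $\cap$, $\cup$, $\emptyset$, $X$ is well known to be a (complete) bounded distributive lattice, this will finish the proof. Before diving in, I would record the obvious observation that, by symmetry of the definitions, every fact proved for the pair $(\G_R,\F_R)$ against the relation $R$ will also hold for the pair $(\H_{R^{-1}},\P_{R^{-1}})$ against $R^{-1}$, so it suffices to verify the first half of each axiom.

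First I would verify the adjunction (t16): $\P_{R^{-1}}(Y)\subseteq Z$ iff $Y\subseteq \G_R(Z)$. The left-hand side unfolds to: for every $x$, if $R^{-1}(x)\cap Y\neq\emptyset$ then $x\in Z$, i.e.\ for every $y\in Y$ and every $x$ with $y\in R^{-1}(x)$ one has $x\in Z$. But $y\in R^{-1}(x)$ means $x\in R(y)$, so this reads: for every $y\in Y$, $R(y)\subseteq Z$, which is exactly $Y\subseteq \G_R(Z)$. The dual adjunction (t17) for $(\F_R,\H_{R^{-1}})$ is obtained by exchanging the roles of $R$ and $R^{-1}$.

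Next I would verify (t4): if $x\in \G_R(Y\cup Z)$ then $R(x)\subseteq Y\cup Z$, and either $R(x)\cap Z=\emptyset$, in which case $R(x)\subseteq Y$ and $x\in \G_R(Y)$, or $R(x)\cap Z\neq\emptyset$, in which case $x\in \F_R(Z)$. For (t8): if $x\in \G_R(Y)\cap \F_R(Z)$ then $R(x)\subseteq Y$ and there is some $w\in R(x)\cap Z$; since $w\in R(x)\subseteq Y$, we get $w\in R(x)\cap(Y\cap Z)$, so $x\in \F_R(Y\cap Z)$. The corresponding statements for $\H_{R^{-1}}$ and $\P_{R^{-1}}$ follow by replacing $R$ with $R^{-1}$.

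The only potential obstacle is keeping the bookkeeping of $R$ versus $R^{-1}$ straight in the adjunction axioms, since the ``past'' operator $\P_{R^{-1}}$ is defined using $R^{-1}$ but is paired in (t16) with $\G_R$ defined using $R$; once the identity $y\in R^{-1}(x)\Leftrightarrow x\in R(y)$ is used to turn quantifiers around, everything reduces to routine set-theoretic manipulations.
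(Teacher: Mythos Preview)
Your proof is correct. The paper's own proof simply asserts that $\mathcal{P}(X)$ is a complete bounded distributive lattice and that it is ``routine to check'' that the four operators satisfy (t1)--(t8), without giving any details. Your approach is slightly different and more economical: by invoking Lemma~\ref{L212}(b) you reduce the verification to only the four conditions (t4), (t8), (t16), (t17), and you actually carry out these verifications explicitly. The symmetry observation you make (that the $(\H_{R^{-1}},\P_{R^{-1}})$ statements follow from the $(\G_R,\F_R)$ ones by swapping $R$ and $R^{-1}$) is exactly right, and your unwinding of the adjunction (t16) via $y\in R^{-1}(x)\Leftrightarrow x\in R(y)$ is the correct bookkeeping. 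Both approaches amount to elementary set-theoretic manipulation; yours is just more explicit and avoids checking the redundant axioms (t1), (t2), (t3), (t5), (t6), (t7).
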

\begin{proof} It is clear that $\langle \mathcal{P}(X),\cap, \cup, \emptyset, X\rangle$ is a complete bounded distributive lattice. Besides, it is routine to check that $\G_R, \H_{R^{-1}}, \F_R, \P_{R^{-1}}$ verifies (t1)-(t8). 
\end{proof}

\begin{remark}\label{O33} If we consider the set complement on the structure of Proposition \ref{P32} we have that $$\langle X, R\rangle_{B}^+:=\langle \mathcal{P}(X),\cap, \cup, ^c,\G_R, \H_{R^{-1}}, \F_R, \P_{R^{-1}},\emptyset, X\rangle$$ 
is a tense algebra.    
\end{remark}
 
\begin{definition}[\tps-space]\label{DLTS} A tense Priestley space (or \tps-espace) is a pair $\mathfrak{X}=(X, R)$ where $X$ es a Priestley space and $R$ is a binary relation on $X$ such that:
\begin{itemize}
  \item[\rm{(\tps 1)}] for every $x\in X$, $R(x)$ is a closed subset of $X$,
  \item[\rm{(\tps 2)}] for every $x\in X$, $R(x)={\uparrow}R(x)\cap{\downarrow}R(x)$,
  \item[\rm{(\tps 3)}] $\G_R(U), \H_{R^{-1}}(U), \F_R(U), \P_{R^{-1}}(U)\in D(X)$, for every $U\in D(X)$,
\end{itemize} 
where $G_R, H_{R^{-1}}, F_R$ and $P_{R^{-1}}$ are as in Definition \ref{DGR}, and  $D(X)$ is the set of all upward clopens of $X$.
\end{definition}

\begin{definition}[\tps-function]\label{DLTF} A \tps-function from the \tps-space $\mathfrak{X}_1$ on the \tps-space $\mathfrak{X}_2$ is a continuous function  $f:X_1\to X_2$ that preserves the order and satisfies the following conditions.
\begin{itemize}
  \item[\rm{(\tps f1)}] $f(R_1(x))\subseteq R_2(f(x))$, for every $x\in X_1$.
  \item[\rm{(\tps f2)}] For every $y\in X_2$ and every $x\in X_1$,
  \item[] if $y\in R_2(f(x))$ then, there are $z,w\in R_1(x)$ such that  $f(z)\leq y\leq f(w)$.
  \item[\rm{(\tps f3)}] For every $y\in X_2$ and every $x\in X_1$,
  \item[] if  $y\in R^{-1}_2(f(x))$ then, there are $z,w\in R^{-1}_1(x)$ such that $f(z)\leq y\leq f(w)$.
\end{itemize}
\end{definition}

\

\begin{remark} Axiom {\rm(\tps f1)} is equivalent to: \, {\rm(\tps f1')} $f(R^{-1}_1(x))\subseteq R^{-1}_2(f(x))$, for every $x\in X_1$.
\end{remark}

\

\noindent We denote by $\tps$ the category whose objects are $\tps$-spaces and whose morphisms are \tps-functions.

\begin{lemma}\label{L37} Let $f$ be a \tps-function from $\mathfrak{X}_1$ in $\mathfrak{X}_2$. The following conditions are equivalent.
\begin{itemize}
  \item[(i)] f is an isomorphism in the category \tps.
  \item[(ii)] f es an isomorphism in the category \ps\ such that
  \item[] $f(R_1(x))=R_2(f(x))$, for all $x\in X_1$. 
\end{itemize}
\end{lemma}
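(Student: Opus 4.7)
The plan is to prove the two implications separately, using the bijectivity of $f$ in a crucial way for the direction (i)$\Rightarrow$(ii).

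\medskip

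\noindent\textbf{(i)$\Rightarrow$(ii).} If $f$ is an isomorphism in \tps, there exists $g:X_2\to X_1$, a \tps-function, with $g\circ f=\mathrm{id}_{X_1}$ and $f\circ g=\mathrm{id}_{X_2}$. In particular $f$ is a bijection, it and its inverse are continuous, and both preserve the order; hence $f$ is an isomorphism in \ps. To obtain the equality $f(R_1(x))=R_2(f(x))$, note that the inclusion $\subseteq$ is exactly (\tps f1) applied to $f$. For the reverse, take $y\in R_2(f(x))$; by (\tps f1) applied to $g$ we have $g(y)\in R_1(g(f(x)))=R_1(x)$, so $y=f(g(y))\in f(R_1(x))$. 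Observe that here the ``sandwich'' axiom (\tps f2) is not needed directly: once we have a genuine inverse in the category, (\tps f1) for $g$ already delivers equality.

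\medskip

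\noindent\textbf{(ii)$\Rightarrow$(i).} Assume $f$ is a \ps-isomorphism with $f(R_1(x))=R_2(f(x))$. Since $f$ is a \tps-function by hypothesis, it suffices to show that $g:=f^{-1}:X_2\to X_1$ is a \tps-function. Continuity and order preservation of $g$ are immediate from $f$ being an isomorphism in \ps. To verify (\tps f1) for $g$, fix $y\in X_2$ and set $x=g(y)$; then by hypothesis $R_2(y)=R_2(f(x))=f(R_1(x))$, so $g(R_2(y))=R_1(x)=R_1(g(y))$, and in fact equality holds (a fortiori $\subseteq$). For (\tps f2), given $y\in X_2$ and $z'\in X_1$ with $z'\in R_1(g(y))$, one has $f(z')\in f(R_1(g(y)))=R_2(y)$; thus choosing both witnesses to be $f(z')\in R_2(y)$ yields $g(f(z'))\le z'\le g(f(z'))$, proving (\tps f2). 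The verification of (\tps f3) is symmetric: using the equality $f(R_1^{-1}(x))=R_2^{-1}(f(x))$, which follows from $f(R_1(x))=R_2(f(x))$ for all $x$ (pass to inverse relations pointwise using that $f$ is a bijection). Hence $g$ is a \tps-function and $f$ is an isomorphism in \tps.

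\medskip

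\noindent\textbf{Main obstacle.} The delicate point is really the direction (i)$\Rightarrow$(ii): the definition of \tps-morphism only requires the \emph{inclusion} $f(R_1(x))\subseteq R_2(f(x))$ together with the ``sandwich'' axioms (\tps f2)--(\tps f3), so it is not a priori obvious that an iso must witness set-theoretic equality of $f(R_1(x))$ and $R_2(f(x))$. The trick is that the existence of a categorical inverse $g$ upgrades (\tps f1) to an equality, because (\tps f1) applied to $g$ and bijectivity conspire to produce a preimage inside $R_1(x)$ for every $y\in R_2(f(x))$. Once this is seen, the remaining steps are formal.
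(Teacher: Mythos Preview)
Your proof is correct and fills in precisely the routine details the paper omits (the paper's own proof is just ``It is routine''). Your use of (\tps f1) for the inverse $g$ to upgrade the inclusion to an equality in (i)$\Rightarrow$(ii), and the direct verification of (\tps f1)--(\tps f3) for $g=f^{-1}$ in (ii)$\Rightarrow$(i), are exactly the expected steps; your derivation of $f(R_1^{-1}(x))=R_2^{-1}(f(x))$ from bijectivity is also correct.
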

\begin{proof} It is routine.
\end{proof}

\

From the Proposition \ref{P32} and axiom {\rm(\tps 3)} are immediate the following lemmas.

\begin{lemma}\label{L38} If $\mathfrak{X}$ is a \tps-space, then ${\bf \Psi}(\mathfrak{X})=\langle D(X),\cap, \cup, \G_R, \H_{R^{-1}}, \F_R, \P_{R^{-1}},\emptyset,X\rangle$ is a \tdlat-algebra. 
\end{lemma}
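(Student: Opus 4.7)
The plan is to view $\mathbf{\Psi}(\mathfrak{X})$ as a substructure of the \tdlat-algebra produced by Proposition~\ref{P32} and to argue that every axiom (t1)--(t8) is inherited by this substructure, once we know the operations really do restrict to $D(X)$.

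First, I would record the bounded-distributive-lattice part. By standard Priestley duality, for any Priestley space $X$ the set $D(X)$ of upward clopens is closed under finite intersections and finite unions and contains both $\emptyset$ and $X$; being a collection of subsets of $X$, it is automatically distributive. Hence $\langle D(X), \cap, \cup, \emptyset, X\rangle$ is a bounded distributive sublattice of $\langle \mathcal{P}(X), \cap, \cup, \emptyset, X\rangle$.

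Next I would check that the four tense operators are in fact operations on $D(X)$. This is exactly what axiom $(\tps 3)$ of Definition~\ref{DLTS} asserts: for every $U\in D(X)$ the sets $\G_R(U), \H_{R^{-1}}(U), \F_R(U), \P_{R^{-1}}(U)$ again belong to $D(X)$. Thus the operators $\G_R,\H_{R^{-1}},\F_R,\P_{R^{-1}}$ defined on $\mathcal{P}(X)$ restrict to unary operations on $D(X)$, and so $\mathbf{\Psi}(\mathfrak{X})$ is a well-defined algebra of the same similarity type as a \tdlat-algebra.

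Finally, I would invoke Proposition~\ref{P32}, which says that $\langle X, R\rangle^{+}$ is a \tdlat-algebra, i.e.\ that (t1)--(t8) hold in $\mathcal{P}(X)$ endowed with the operators $\G_R, \H_{R^{-1}}, \F_R, \P_{R^{-1}}$. Each of (t1)--(t8) is a universally quantified equality or inequality between lattice terms built from $\wedge, \vee, 0, 1, \G, \H, \F, \P$. Since $\mathbf{\Psi}(\mathfrak{X})$ sits inside $\langle X, R\rangle^{+}$ as a subset closed under all of these operations (by the two previous paragraphs), every such identity/inequality that holds for all elements of $\mathcal{P}(X)$ holds in particular for all elements of $D(X)$. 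Hence $\mathbf{\Psi}(\mathfrak{X})$ satisfies (t1)--(t8) and is a \tdlat-algebra. There is no real obstacle here: the only nontrivial point is closure of $D(X)$ under the four tense operators, and that is precisely the content of axiom $(\tps 3)$ in the definition of a \tps-space.
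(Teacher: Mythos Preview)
Your proof is correct and follows exactly the same approach as the paper: the paper simply notes that the result is immediate from Proposition~\ref{P32} together with axiom~(\tps 3), which is precisely the substructure argument you spell out in detail.
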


\begin{lemma}\label{L39} If $f$ is a  \tps-function from $\mathfrak{X}_1$ in $\mathfrak{X}_2$ and $U\in D(X_2)$, then for every $x\in X_1$ it holds:
\begin{itemize}
  \item[(i)] $R_2(f(x))\subseteq U$ \, iff \,  $f(R_1(x))\subseteq U$,
  \item[(ii)] $R^{-1}_2(f(x))\subseteq U$ \, iff \, $f(R^{-1}_1(x))\subseteq U$,
  \item[(iii)] $R_2(f(x))\cap U\not=\emptyset$ \, iff \, $R_1(x)\cap f^{-1}(U)\not=\emptyset$,
  \item[(iv)] $R^{-1}_2(f(x))\cap U\not=\emptyset$ \, iff \,  $R^{-1}_1(x)\cap f^{-1}(U)\not=\emptyset$.
\end{itemize}
\end{lemma}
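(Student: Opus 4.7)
All four equivalences have the same shape: each asserts, pointwise at $x\in X_1$, that the condition ``$R_2(f(x))$ sits in (resp.\ meets) $U$'' matches the condition ``$f(R_1(x))$ sits in (resp.\ meets) $U$''. The plan is to combine the two kinds of morphism axioms: the plain inclusion (\tps f1) giving $f(R_1(x))\subseteq R_2(f(x))$, which will handle one direction of each equivalence, and the ``sandwich'' axiom (\tps f2), which for every $y\in R_2(f(x))$ produces $z,w\in R_1(x)$ with $f(z)\leq y\leq f(w)$, which will handle the other direction. The crucial external fact used throughout is that $U\in D(X_2)$ is upward closed.

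For (i), the forward implication is immediate from (\tps f1): $f(R_1(x))\subseteq R_2(f(x))\subseteq U$. For the converse, assume $f(R_1(x))\subseteq U$ and take $y\in R_2(f(x))$; by (\tps f2) there exists $z\in R_1(x)$ with $f(z)\leq y$, and since $f(z)\in U$ and $U$ is increasing we conclude $y\in U$. For (iii), the reverse implication is again immediate from (\tps f1), since any $z\in R_1(x)\cap f^{-1}(U)$ gives $f(z)\in R_2(f(x))\cap U$. For the direct implication, given $y\in R_2(f(x))\cap U$, apply (\tps f2) to obtain $w\in R_1(x)$ with $y\leq f(w)$; upward-closedness of $U$ yields $f(w)\in U$, hence $w\in R_1(x)\cap f^{-1}(U)$.

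Items (ii) and (iv) are strictly analogous, replacing the axioms (\tps f1), (\tps f2) by their counterparts for the inverse relation, namely (\tps f1$'$) (noted in the remark before the lemma) and (\tps f3); the role of the upward-closedness of $U$ is identical. I do not foresee a real obstacle here: the only care needed is to track which direction of each equivalence requires only the pointwise inclusion of $f$-images into the target orbit, and which direction additionally needs the ``sandwich'' witnesses combined with the order-convexity of $U$ in the Priestley order.
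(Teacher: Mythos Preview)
Your proof is correct and follows essentially the same argument as the paper: one direction of each equivalence comes directly from (\tps f1) (resp.\ (\tps f1$'$)), and the other direction uses the sandwich witnesses from (\tps f2) (resp.\ (\tps f3)) together with the upward-closedness of $U\in D(X_2)$. The only minor remark is terminological: what you call ``order-convexity of $U$'' at the end is really just upward-closedness, which you correctly invoke in the body of the argument.
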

\begin{proof} We only prove (i) and (iii) the rest are similar using (\tps f1') and (\tps f3).\\[1.5mm]
(i): $R_2(f(x))\subseteq U$ implies $f(R_1(x))\subseteq U$, immediate from (\tps f1).\\
On the other hand, suppose that $f(R_1(x))\subseteq U$ and take $y\in R_2(f(x))$ then, by (\tps f2), there is $z\in R_1(x)$ such that $f(z)\leq y$. Then, from the hypothesis we have $f(z)\in U$ and, since $U$ is a upward set, $y\in U$. Therefore,  $R_2(f(x))\subseteq U$.\\[2mm]
(iii): Suppose that $R_2(f(x))\cap U\not=\emptyset$. Then, there is $y\in R_2(f(x))$ such that $y\in U$. By (\tps f2), there is $w\in R_1(x)$ such that $y\leq f(w)$, and then, since $U$ as an upward set, we know that $f(w)\in U$, and therefore $w\in R_1(x)\cap f^{-1}(U)$. That is, $R_1(x)\cap f^{-1}(U)\not=\emptyset$.\\
Conversely, assume that $R_1(x)\cap f^{-1}(U)\not=\emptyset$. Taking into account (\tps f1),
$$\emptyset\not=f(R_1(x)\cap f^{-1}(U))\subseteq f(R_1(x))\cap f(f^{-1}(U))\subseteq R_2(f(x))\cap U.$$
\end{proof}

\begin{lemma}\label{L310} Let $f$ be a \tps-function from $\mathfrak{X}_1$ in $\mathfrak{X}_2$. Then,  ${\bf \Psi}(f):D(X_2)\longrightarrow D(X_1)$ defined by ${\bf \Psi}(f)(U)=f^{-1}(U)$ for every $U\in D(X_2)$, is a morphism in the category \tdlat.
\end{lemma}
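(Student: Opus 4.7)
My plan is to verify in turn that ${\bf \Psi}(f)$ lands in $D(X_1)$, that it preserves the distributive-lattice structure, and that it commutes with each of the four tense operators; the last part is where the \tps-function axioms actually get used, and Lemma \ref{L39} does the heavy lifting.

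First, I would check well-definedness: given $U\in D(X_2)$, since $f$ is continuous the preimage $f^{-1}(U)$ is clopen, and since $f$ is order-preserving and $U$ is increasing, $f^{-1}(U)$ is also increasing, hence $f^{-1}(U)\in D(X_1)$. Preservation of $\cap$, $\cup$, $\emptyset$ and $X_2$ is then the standard fact that preimages commute with Boolean set operations, so ${\bf \Psi}(f)$ is a \dl-morphism.

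The substantial step is to show that for every $U\in D(X_2)$ and every $x\in X_1$,
\[
x\in f^{-1}(\G_{R_2}(U))\ \Longleftrightarrow\ x\in \G_{R_1}(f^{-1}(U)),
\]
and analogously for $\H_{R^{-1}}$, $\F_R$ and $\P_{R^{-1}}$. Unfolding, $x\in f^{-1}(\G_{R_2}(U))$ means $R_2(f(x))\subseteq U$, which by Lemma \ref{L39}(i) is equivalent to $f(R_1(x))\subseteq U$, i.e.\ $R_1(x)\subseteq f^{-1}(U)$, i.e.\ $x\in \G_{R_1}(f^{-1}(U))$. The same pattern works for the other three: the $\F$ case uses Lemma \ref{L39}(iii) (rewriting the non-emptiness condition $R_2(f(x))\cap U\neq\emptyset$ as $R_1(x)\cap f^{-1}(U)\neq\emptyset$), and the $\H$ and $\P$ cases use parts (ii) and (iv) of the same lemma, which are the $R^{-1}$-versions.

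I expect the main (mild) obstacle to be mostly notational bookkeeping: making sure the direction of the preimage is correct in each of the four identities, and noting that Lemma \ref{L39} was stated precisely so that these four equivalences are immediate, which in turn relies on the \tps-function axioms (\tps f1), (\tps f1'), (\tps f2), (\tps f3). No new argument about the topology or the order is needed beyond what is already packaged in those axioms, so once the $\G$ case is written out the remaining three are routine adaptations.
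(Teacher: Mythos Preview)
Your proposal is correct and follows essentially the same approach as the paper: reduce to showing that $f^{-1}$ commutes with each of $\G_{R}$, $\H_{R^{-1}}$, $\F_{R}$, $\P_{R^{-1}}$, and do this by unfolding the definitions and invoking the four parts of Lemma~\ref{L39}. The only cosmetic difference is that the paper takes the \dl-morphism part as already known from Priestley duality, whereas you spell out the well-definedness check explicitly.
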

\begin{proof} We know that ${\bf \Psi}(f):D(X_2)\longrightarrow D(X_1)$ is a morphism in the category \dl. We just need to show that it preserves all tense operators, or equivalently, he have to prove that for every $U\in D(X_2)$ it holds:
\begin{itemize}
  \item[{\bf(a)}] $f^{-1}(\G_{R_2}(U))=\G_{R_1}(f^{-1}(U))$,
  \item[{\bf(b)}] $f^{-1}(\H_{R^{-1}_2}(U))=\H_{R^{-1}_1}(f^{-1}(U))$,
  \item[{\bf(c)}] $f^{-1}(\F_{R_2}(U))=\F_{R_1}(f^{-1}(U))$,
  \item[{\bf(d)}] $f^{-1}(\P_{R^{-1}_2}(U))=\P_{R^{-1}_1}(f^{-1}(U))$.
\end{itemize}
Let $U\in D(X_2)$

\

\noindent{\bf(a)}  
\begin{eqnarray*}
x\in f^{-1}(\G_{R_2}(U)) & \mbox{iff} & f(x)\in \G_{R_2}(U)\\
                        & \mbox{iff} & R_2(f(x))\subseteq U\\
                        & \mbox{iff} & f(R_1(x))\subseteq U\hspace{2cm}\mbox{\mbox{Lemma  \ref{L39} (i)}}\\
                        & \mbox{iff} & R_1(x)\subseteq f^{-1}(U)\\
                        & \mbox{iff} & x\in \G_{R_1}(f^{-1}(U))
\end{eqnarray*}

\noindent{\bf(c)} 
\begin{eqnarray*}
x\in f^{-1}(\F_{R_2}(U)) & \mbox{iff} & f(x)\in \F_{R_2}(U)\\
                        & \mbox{iff} & R_2(f(x))\cap U\not=\emptyset\\
                        & \mbox{iff} & R_1(x)\cap f^{-1}(U)\not=\emptyset\hspace{2cm}\mbox{\mbox{Lemma  \ref{L39} (iii)}}\\
                        & \mbox{iff} & x\in \F_{R_1}(f^{-1}(U))\\
\end{eqnarray*}

\noindent{\bf(b)} and {\bf(d)} are proved analogously using Lemma \ref{L39} (ii) and (iv). 
\end{proof}

\

\begin{theorem} ${\bf \Psi}:\tps\longrightarrow \tdlat$ is a contravariant functor.
\end{theorem}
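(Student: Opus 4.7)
The plan is to assemble the functoriality of $\Psi$ from the pieces already in place, checking the three standard requirements: well-defined action on objects, well-defined (contravariant) action on morphisms, and preservation of identities together with reversal of compositions.

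First, I would note that the object part is already handled by Lemma \ref{L38}: for every $\tps$-space $\mathfrak{X}=(X,R)$, $\Psi(\mathfrak{X})=\langle D(X),\cap,\cup,\G_R,\H_{R^{-1}},\F_R,\P_{R^{-1}},\emptyset,X\rangle$ is a $\tdlat$-algebra. Similarly, Lemma \ref{L310} tells us that for any $\tps$-function $f\colon \mathfrak{X}_1\to\mathfrak{X}_2$, the map $\Psi(f)\colon D(X_2)\to D(X_1)$ defined by $\Psi(f)(U)=f^{-1}(U)$ is a $\tdlat$-morphism, which realizes the contravariant direction. These two facts together reduce the theorem to verifying the two categorical axioms for contravariant functors.

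Next, I would verify preservation of identities and reversal of composition. For the identity $\mathrm{id}_{\mathfrak{X}}\colon\mathfrak{X}\to\mathfrak{X}$, which is trivially a $\tps$-function (all three conditions (\tps f1)–(\tps f3) are immediate since $R(x)={\uparrow}R(x)\cap{\downarrow}R(x)$ by (\tps 2)), one has $\Psi(\mathrm{id}_{\mathfrak{X}})(U)=\mathrm{id}_{\mathfrak{X}}^{-1}(U)=U$ for every $U\in D(X)$, so $\Psi(\mathrm{id}_{\mathfrak{X}})=\mathrm{id}_{D(X)}$. For composition, given $\tps$-functions $f\colon\mathfrak{X}_1\to\mathfrak{X}_2$ and $g\colon\mathfrak{X}_2\to\mathfrak{X}_3$, I would first observe that $g\circ f$ is again a $\tps$-function (continuity and monotonicity are preserved, and conditions (\tps f1)–(\tps f3) follow by chaining: e.g.\ $(g\circ f)(R_1(x))=g(f(R_1(x)))\subseteq g(R_2(f(x)))\subseteq R_3(g(f(x)))$ by two applications of (\tps f1), while (\tps f2) and (\tps f3) follow by iterating the given witness statements). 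Then the standard set-theoretic identity $(g\circ f)^{-1}(U)=f^{-1}(g^{-1}(U))$ yields $\Psi(g\circ f)=\Psi(f)\circ\Psi(g)$, exactly the contravariant composition law.

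The most delicate step, and the only one requiring some care beyond bookkeeping, is the closure of $\tps$-functions under composition, particularly the verification of (\tps f2) and (\tps f3) for $g\circ f$; here one must produce the desired witnesses in $R_1(x)$ by first using (\tps f2) for $g$ to find $z',w'\in R_2(f(x))$ sandwiching the given element of $R_3(g(f(x)))$, and then using (\tps f2) for $f$ (twice, along with the fact from (\tps 2) that $R_1(x)$ is order-convex in its up and down closures) to pull these back to points of $R_1(x)$. Everything else is routine.
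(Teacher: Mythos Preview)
Your proposal is correct and follows the same path the paper takes: the paper states the theorem immediately after Lemmas~\ref{L38} and~\ref{L310} with no explicit proof, treating functoriality as a routine consequence of those two lemmas together with the set-theoretic identity $(g\circ f)^{-1}=f^{-1}\circ g^{-1}$. Your write-up is simply more explicit, in particular in checking that $\tps$ is closed under composition; note only that the appeal to order-convexity via (\tps 2) in the last paragraph is not actually needed---once (\tps f2) for $g$ gives $z',w'\in R_2(f(x))$ with $g(z')\leq y\leq g(w')$, applying (\tps f2) for $f$ to each of $z',w'$ and using monotonicity of $g$ already yields the required witnesses in $R_1(x)$.
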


\

\begin{lemma}\label{L312} Let $\mathcal{A}$ be a  \tdlat-algebra. Then,  ${\bf \Phi}(\mathcal{A})=(X(A),R_A)$ is a \tps-space and $\sigma_A:A\longrightarrow D(X(A))$ defined by  $\sigma_A(a)=\{T\in X(A):\,a\in T\}$, for all $a\in A$, is an isomorphism in the category \tdlat.
\end{lemma}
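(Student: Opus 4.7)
The plan is to split the verification into the two parts of the statement: first, that $\mathbf{\Phi}(\mathcal{A})=(X(A),R_A)$ satisfies axioms (\tps 1)--(\tps 3) of Definition \ref{DLTS}; second, that $\sigma_A$ is an isomorphism in $\tdlat$.

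For (\tps 1), I would unpack $T\in R_A(S)$ as the pair of inclusions $\G^{-1}(S)\subseteq T\subseteq \F^{-1}(S)$, and then rewrite
\[
R_A(S)\;=\;\bigcap_{a\in \G^{-1}(S)}\sigma_A(a)\;\cap\;\bigcap_{b\notin \F^{-1}(S)}\sigma_A(b)^{c},
\]
which exhibits $R_A(S)$ as an intersection of clopen subsets of the Priestley space $X(A)$, hence a closed set. Axiom (\tps 2) is precisely item (v) of Lemma \ref{L224}, so nothing further is needed there.

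For (\tps 3) I would invoke the classical Priestley fact that $D(X(A))=\{\sigma_A(a):a\in A\}$, which reduces the clopen-preservation requirement to establishing the four identities
\[
\sigma_A(\G a)=\G_{R_A}(\sigma_A(a)),\qquad \sigma_A(\H a)=\H_{R_A^{-1}}(\sigma_A(a)),
\]
\[
\sigma_A(\F a)=\F_{R_A}(\sigma_A(a)),\qquad \sigma_A(\P a)=\P_{R_A^{-1}}(\sigma_A(a)).
\]
Each of these is a short derivation from Lemma \ref{L225}. For instance, $S\in \sigma_A(\G a)$ iff $\G a\in S$ iff, by the contrapositive of Lemma \ref{L225}(i), every $T\in R_A(S)$ satisfies $a\in T$, i.e.\ $R_A(S)\subseteq \sigma_A(a)$, i.e.\ $S\in \G_{R_A}(\sigma_A(a))$; the remaining three identities follow by the same pattern using items (ii)--(iv) of Lemma \ref{L225}. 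These four identities simultaneously discharge (\tps 3) and show that $\sigma_A$ commutes with the tense operators.

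Finally, for the isomorphism claim, the classical Priestley representation already yields that $\sigma_A:A\to D(X(A))$ is an order-preserving bounded-lattice isomorphism, and together with the tense identities just proved, $\sigma_A$ is a bijective \tdlat-homomorphism, hence an isomorphism in \tdlat. The most delicate step is the careful bookkeeping in (\tps 3), namely matching the right instance of Lemma \ref{L225} to each operator and keeping track of $R_A$ versus $R_A^{-1}$; once that is done, the rest is routine Priestley machinery plus the filter-theoretic content already established in Section \ref{s2}.
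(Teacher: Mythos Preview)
Your proposal is correct and follows essentially the same route as the paper: the tense identities are obtained from Lemma~\ref{L225}, axiom (\tps 2) is read off from Lemma~\ref{L224}(v), and (\tps 3) together with the \tdlat-isomorphism claim are handled by the surjectivity of $\sigma_A$ plus those identities. The only noticeable difference is your treatment of (\tps 1): you exhibit $R_A(S)$ directly as an intersection of subbasic clopens, whereas the paper argues that the complement is open by a two-case analysis ($\G^{-1}(S)\not\subseteq T$ versus $T\not\subseteq \F^{-1}(S)$); the two arguments are of course equivalent, yours being slightly more compact.
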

\begin{proof} We know that $X(A)$ is a Priestley space and $\sigma_A:A\longrightarrow D(X(A))$ is an isomorphism in the category \dl.

Let us show that $\sigma_A$ is an isomorphism in \tdlat-algebra. So, we only have to check that $\sigma_A$ preserves all tense operators, that is, for every $x\in A$ it holds

\begin{itemize}
  \item[{\bf(a)}] $\sigma_A(\G(x))=\G_{R_A}(\sigma_A(x))$,
  \item[{\bf(b)}] $\sigma_A(\H(x))=\H_{R^{-1}_A}(\sigma_A(x))$,
  \item[{\bf(c)}] $\sigma_A(\F(x))=\F_{R_A}(\sigma_A(x))$,
  \item[{\bf(d)}] $\sigma_A(\P(x))=\P_{R^{-1}_A}(\sigma_A(x))$.
\end{itemize}
\begin{itemize}
  \item[] {\bf(a)} For $\G_{R_A}(\sigma_A(x))\subseteq \sigma_A(\G(x))$, take a prime filter $S$ such that $S\not\in \sigma_A(\G(x))$. Then, $\G(x)\not\in S$ and, by (i) and Lemma \ref{L225}, there is $T\in X(A)$ such that $T\in R_{A}(S)$ and $x\not\in T$. Then $R_{A}(S)\not\subseteq \sigma_A(x)$ and therefore $S\not\in \G_{R_A}(\sigma_A(x))$. Conversely, it easy to check $\sigma_A(\G(x))\subseteq \G_{R_A}(\sigma_A(x))$.
  \item[] {\bf(c)} Let $S\in \sigma_A(\F(x))$, then  $\F(x)\in S$ and by Lemma \ref{L225} (iii), there is $T\in X(A)$ such that $T\in R_{A}(S)$ and $x\in T$. Consequently, $R_{A}(S)\cap \sigma_A(x)\not=\emptyset$ and then $S\in \F_{R_A}(\sigma_A(x))$. Therefore $\sigma_A(\F(x))\subseteq \F_{R_A}(\sigma_A(x))$. The other inclusion is similar.
  \item[] {\bf(b)} and {\bf(d)} are similar using (ii) y (iv) del Lema \ref{L225}
\end{itemize}

To see that ${\bf \Phi}(\mathcal{A})$ is a \tdlat-space we just need to prove (\tps 1)-(\tps 3).

\begin{itemize}
  \item[(\tps 1)] $R_A(S)$ is a closed subset of $X(A)$ for all $S\in X(A)$. Indeed:
  \item[] Let $S\in X(A)$. Let us see that the complement of $R_A(S)$ is an open set. Let $T\in X(A)$ such that $T\not\in R_A(S)$, then either $(1)\,\,\,\G^{-1}(S)\not\subseteq T$ or $(2)\,\,\,T\not\subseteq \F^{-1}(S)$. If it holds (1) then there is $x\in \G^{-1}(S)$ such that $x\not\in T$, and then it is not difficult to check that $R_A(S)\subseteq \sigma_A(x)$. On the other hand, if it holds (2), then there is $y\in T$ such that $y\not\in \F^{-1}(S)$, and in this case we have  $R_A(S)\subseteq (\sigma_A(y))^c$. In both cases, we see that there is at least one basic in  $(R_A(S))^c$ and therefore $R_A(S)$ is closed. 
  \item[(\tps 2)] $R_A(S)={\uparrow}R_A(S)\cap{\downarrow}R_A(S)$, for all $S\in X(A)$ is consequence of  Lemma \ref{L224} (v).
  \item[(\tps 3)] $\G_{R_A}(U), \H_{R^{-1}_A}(U), \F_{R_A}(U), \P_{R^{-1}_A}(U)\in D(X(A))$, for all $U\in D(X(A))$. Indeed:
  \item[] Let $U\in D(X(A))$. Since $\sigma_A$ is an onto map, there is $x\in A$ such that $\sigma_A(x)=U$ and, taking into account {\bf(a)}, we have $\G_{R_A}(U)=\G_{R_A}(\sigma_A(x))=\sigma_A(\G(x))\in D(X(A))$. The rest are proves analogously using {\bf(b)}, {\bf(c)} and {\bf(d)}.
\end{itemize}
\end{proof}

\

\begin{lemma}\label{L313} Let $h:\mathcal{A}\longrightarrow \mathcal{B}$ a morphism in the category \tdlat. Then, ${\bf \Phi}(h):(X(B),R_{B})\to (X(A),R_{A})$ defined by ${\bf \Phi}(h)(S)=h^{-1}(S)$ for all $S\in X(B)$, is a morphism in the category \tps.
\end{lemma}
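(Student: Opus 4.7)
The plan is to verify the four requirements of a \tps-function for $\Phi(h)$: continuity, order-preservation, (\tps f1), (\tps f2), and (\tps f3). Continuity and order-preservation of $\Phi(h)$ are immediate from classical Priestley duality applied to the underlying $\dl$-morphism, so the content of the proof is in verifying the three conditions involving the accessibility relations.

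For (\tps f1), I would take $S\in X(B)$ and $T\in R_B(S)$ and show $h^{-1}(T)\in R_A(h^{-1}(S))$, that is, $G_A^{-1}(h^{-1}(S))\subseteq h^{-1}(T)\subseteq F_A^{-1}(h^{-1}(S))$. Both inclusions drop out directly from the identities $h(G_A a)=G_B(h(a))$ and $h(F_A a)=F_B(h(a))$ guaranteed by $h$ being a \tdlat-morphism.

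The substantive work is in (\tps f2): given $T\in R_A(h^{-1}(S))$, I must exhibit $U,V\in R_B(S)$ with $h^{-1}(U)\subseteq T\subseteq h^{-1}(V)$. I would produce $V$ by a Birkhoff--Stone separation: let $W$ be the filter generated by $G_B^{-1}(S)\cup h(T)$, which since both sets are closed under meets equals $\{b\in B:b\geq y\wedge h(t)\text{ for some }y\in G_B^{-1}(S),\,t\in T\}$, and let $J=(F_B^{-1}(S))^c$, which is a lattice ideal of $B$ because $F_B$ preserves joins (t6) and is monotone (t9), and $S$ is prime. For any $y\wedge h(t)\leq w\in J$, axiom (t8) gives $G_B(y)\wedge h(F_A t)\leq F_B(w)$; since $y\in G_B^{-1}(S)$ and $t\in T\subseteq F_A^{-1}(h^{-1}(S))$ force $G_B(y),h(F_A t)\in S$, this contradicts $F_B(w)\notin S$. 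Birkhoff--Stone then produces a prime $V$ with $W\subseteq V\subseteq F_B^{-1}(S)$, so $V\in R_B(S)$ and $h(T)\subseteq V$, i.e.\ $T\subseteq h^{-1}(V)$. To produce $U$, I would dually build the ideal $J'=\{b:b\leq h(t)\vee z\text{ for some }t\in T^c,\,z\in(F_B^{-1}(S))^c\}$, verify it is an ideal, and show $G_B^{-1}(S)\cap J'=\emptyset$: from $y\leq h(t)\vee z$, axioms (t9) and (t4) give $G_B(y)\leq h(G_A t)\vee F_B(z)$; primeness of $S$ and $F_B(z)\notin S$ force $h(G_A t)\in S$, so $t\in G_A^{-1}(h^{-1}(S))\subseteq T$, contradicting $t\in T^c$. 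Birkhoff--Stone then yields a prime $U\supseteq G_B^{-1}(S)$ disjoint from $J'$, so $U\subseteq F_B^{-1}(S)$ and $h^{-1}(U)\subseteq T$.

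The verification of (\tps f3) is entirely analogous, interchanging the roles of $G,F$ with $H,P$ (and of $R$ with $R^{-1}$), using the dual halves of (t4) and (t8). The main obstacle I expect is the bookkeeping in (\tps f2): choosing the right filter/ideal pair so that the key inequalities (t4) and (t8) apply in exactly the right direction to reach a contradiction with the primeness of $S$. Once those two separation arguments are set up cleanly, the rest of the proof is essentially formal.
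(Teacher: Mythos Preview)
Your proposal is correct and follows essentially the same route as the paper's proof: continuity and order-preservation come from classical Priestley duality, (\tps f1) is a direct computation using $h\circ\G_A=\G_B\circ h$ and $h\circ\F_A=\F_B\circ h$, and for (\tps f2) you run the same two Birkhoff--Stone separations as the paper (filter generated by $\G_B^{-1}(S)\cup h(T)$ versus the ideal $(\F_B^{-1}(S))^c$ via (t8) to get the upper witness; filter $\G_B^{-1}(S)$ versus the ideal generated by $h(T^c)\cup(\F_B^{-1}(S))^c$ via (t4) to get the lower one), only in the opposite order. The treatment of (\tps f3) by symmetry is also what the paper does.
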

\begin{proof} We know that ${\bf \Phi}(h):X(B)\longrightarrow X(A)$ is a continuous function that preserves the order. We just need to show (\tps f1)-(\tps f3)
\begin{itemize}
  \item[(\tps f1)] ${\bf \Phi}(h)(R_B(S))\subseteq R_A({\bf \Phi}(h)(S))$, for every $S\in X(B)$. Indeed:
  \item[] Let $S\in X(B)$ and $T\in {\bf \Phi}(h)(R_B(S))$. Then, there is $T_0\in R_B(S)$ such that ${\bf \Phi}(h)(T_0)=T$, that is $\G^{-1}_B(S)\subseteq T_0\subseteq \F^{-1}_B(S)$. Then $h^{-1}(\G^{-1}_B(S))\subseteq h^{-1}(T_0)\subseteq h^{-1}(\F^{-1}_B(S))$, or equivalently $$(\G_B\circ h)^{-1}(S)\subseteq h^{-1}(T_0)\subseteq (\F_B\circ h)^{-1}(S).$$
   Since $h$ is a homomorphism between \tdlat-algebras,  $h\circ \G_A=\G_B\circ h$ and $h\circ \F_A=\F_B\circ h$. Then $$(h\circ \G_A)^{-1}(S)\subseteq h^{-1}(T_0)\subseteq (h\circ \F_A)^{-1}(S),$$
  that is, $\G^{-1}_A(h^{-1}(S))\subseteq h^{-1}(T_0)\subseteq \F^{-1}_A(h^{-1}(S))$ and therefore $(h^{-1}(S),h^{-1}(T_0))\in R_A$. Then, $h^{-1}(T_0)\in R_A(h^{-1}(S))$, that is,  $T\in R_A({\bf \Phi}(h)(S))$.
  \item[(\tps f2)] For every $T\in X(A)$ and every $S\in X(B)$,
  \item[] if $T\in R_{A}({\bf \Phi}(h)(S))$, then there are $Z,W\in R_{B}(S)$ such that ${\bf \Phi}(h)(Z)\subseteq T\subseteq {\bf \Phi}(h)(W)$.
  \item[(\tps f3)] For every $T\in X(A)$ and every $S\in X(B)$,
  \item[] if $T\in R^{-1}_{A}({\bf \Phi}(h)(S))$, then there are $Z,W\in R^{-1}_{B}(S)$ such that ${\bf \Phi}(h)(Z)\subseteq T\subseteq {\bf \Phi}(h)(W)$.
\end{itemize}
We only prove (\tps f2). The proof of (\tps f3) is analogous.
Let $T\in X(A)$ and $S\in X(B)$ such that $T\in R_{A}({\bf \Phi}(h)(S))$, that is  $(h^{-1}(S),T)\in R_{A}$ and therefore $$(1)\,\,\G_A^{-1}(h^{-1}(S))\subseteq T\,\,\,\mbox{ and }\,\,\,(2)\,\,T\subseteq \F_A^{-1}(h^{-1}(S)).$$
Let us see that 
\begin{itemize}
  \item[{\bf(a)}] there is $Z\in R_{B}(S)$ such that ${\bf \Phi}(h)(Z)\subseteq T$, and
  \item[{\bf(b)}] there is $W\in R_{B}(S)$ such that $T\subseteq {\bf \Phi}(h)(W)$.
\end{itemize}
Indeed, 
\begin{itemize}
  \item[{\bf(a)}] consider the filter $\G_B^{-1}(S)$ and the ideal $I$ generated by  $h(T^c)\cup (\F_B^{-1}(S))^c$. Then $$(\ast)\,\,\,\,G_B^{-1}(S)\cap I=\emptyset.$$
 Suppose that it is not the case $(\ast)$. Then, there are $x,y,z\in B$ such that $$(3)\,\,x\in \G_B^{-1}(S),\,\,(4)\,\,y\in h(T^c),\,\,(5)\,\,z\in (\F_B^{-1}(S))^c\,\,\,\mbox{ such that }\,\,(6)\,\,x\leq y\vee z.$$
  Then, from (6), (t9), (t4) and (3) we have $\G_B x \leq \G_B(y\vee z)\leq \G_B y \vee \F_B z \in S$. From (5) and the fact that $S$ is a prime filter, we have  $(7)\,\,\G_B y \in S$. Besides, by (4), there is $w\in T^c$ such that $h(w)=y$ and then $\G_B y =\G_B h(w) =h(\G_A w )$. From (7),  $h(\G_A w )\in S$  and then  $w\in \G_A^{-1}(h^{-1}(S))$. Taking into account (1) we have $w\in T$ which is a contradiction. Therefore, $(\ast)$ holds. By the Birkhoff-Stone Theorem, there is  $Z\in X(B)$ such that $\G_B^{-1}(S)\subseteq Z$ and $Z\cap I=\emptyset$. By the definition of $I$, $Z\cap(\F_B^{-1}(S))^c =\emptyset$ and $Z\cap h(T^c)=\emptyset$.\
  
  From the first condition, we have $\G_B^{-1}(S)\subseteq Z\subseteq \F_B^{-1}(S)$, that is $Z\in R_B(S)$. Besides, since $h^{-1}(Z)\cap T^c\subseteq h^{-1}(Z)\cap h^{-1}(h(T^c))=h^{-1}(Z\cap h(T^c))$. The second condition implies $h^{-1}(Z)\subseteq T$, that is, ${\bf \Phi}(h)(Z)\subseteq T$.
  \item[{\bf(b)}] Let $Q$ be the filter generated by $\G_B^{-1}(S)\cup h(T)$ and prove that  $Q\subseteq \F_B^{-1}(S)$.
  \item[] Notice that $h(T)$ not necessarily is a filter, but it is always closed by  $\wedge$ and this is enough to characterize  $Q$ as follows: $x\in Q$ \, iff \, there are $y\in G_B^{-1}(S), z\in h(T)$ such that $y\wedge z\leq x$.
  \item[] Let $x\in Q$. Then, there are $y,z\in B$ such that  $$\mbox{(8) }\,\,y\in \G_2^{-1}(S),\,\,\,\mbox{(9) }\,\,z\in h(T)\,\,\mbox{ and }\,\,\,\mbox{(10) }\,\,y\wedge z\leq x$$ 
  From (9), there is $w\in T$ such that $h(w)=z$. Then, by (2), $w\in \F_A^{-1}(h^{-1}(S))$ from where $h(\F_A w )=\F_B h(w) =\F_B z \in S$, and then, from (8) we have $\G_B y \wedge \F_B z \in S$. Besides, by (10), (t9) and (t8), $\G_B y \wedge \F_B z \leq \F_B(y\wedge z)\leq \F_B x $ and then $\F_B x \in S$. Therefore, $Q\subseteq \F_B^{-1}(S)$.
  From the above we know  $Q\cap(\F_B^{-1}(S))^c=\emptyset$. Then, by the Birkhoff-Stone Theorem, there is $W\in X(B)$ such that $Q\subseteq W$ and $W\cap (\F_B^{-1}(S))^c=\emptyset$. From this, $W\subseteq \F_B^{-1}(S)$. Besides, by the definition of $Q$, we have $\G_B^{-1}(S)\subseteq Q$ and $h(T)\subseteq Q$. Then, $\G_B^{-1}(S)\subseteq W\subseteq \F_B^{-1}(S)$ and $T\subseteq h^{-1}(h(T))\subseteq h^{-1}(W)$, that is, $W\in R_B(S)$ y $T\subseteq {\bf \Phi}(h)(W)$.
\end{itemize}
\end{proof}
    
\begin{theorem} ${\bf \Phi}:\tdlat\longrightarrow\tps$ is a contravariant functor.
\end{theorem}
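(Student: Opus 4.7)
The plan is to assemble Lemmas \ref{L312} and \ref{L313} into a proof of functoriality. Those lemmas have already established the object and morphism assignments: each \tdlat-algebra $\mathcal{A}$ is sent to the \tps-space $(X(A), R_A)$, and each \tdlat-morphism $h:\mathcal{A}\to\mathcal{B}$ is sent to a bona fide \tps-function ${\bf \Phi}(h):(X(B), R_{B})\to (X(A), R_{A})$ defined by ${\bf \Phi}(h)(S)=h^{-1}(S)$, which indeed goes in the reversed direction. Thus the only remaining task is to verify the two axioms of a contravariant functor, namely that ${\bf \Phi}$ preserves identities and reverses composition.

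For the identity axiom, I would simply observe that for every $S\in X(A)$ one has ${\bf \Phi}(\mathrm{id}_\mathcal{A})(S)=\mathrm{id}_\mathcal{A}^{-1}(S)=S$, so that ${\bf \Phi}(\mathrm{id}_\mathcal{A})=\mathrm{id}_{{\bf \Phi}(\mathcal{A})}$. For the reversal of composition, given morphisms $h:\mathcal{A}\to\mathcal{B}$ and $g:\mathcal{B}\to\mathcal{C}$ in \tdlat\ and any $S\in X(C)$, I would invoke the standard set-theoretic identity for preimages under composition to write
\[
{\bf \Phi}(g\circ h)(S) = (g\circ h)^{-1}(S) = h^{-1}(g^{-1}(S)) = {\bf \Phi}(h)({\bf \Phi}(g)(S)) = ({\bf \Phi}(h)\circ {\bf \Phi}(g))(S),
\]
from which ${\bf \Phi}(g\circ h)={\bf \Phi}(h)\circ{\bf \Phi}(g)$ follows.

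There is no genuine obstacle in this step: both required equalities are immediate from the functorial behaviour of the preimage operator. All the substantive content (that the preimage of a prime filter under a \tdlat-homomorphism is again a prime filter, that the induced map satisfies (\tps f1)--(\tps f3), and that $(X(A),R_A)$ is itself a \tps-space) has been absorbed into Lemmas \ref{L312} and \ref{L313}, so the theorem is essentially a bookkeeping consequence of what has already been proved.
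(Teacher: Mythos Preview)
Your proposal is correct and matches the paper's approach: the paper states this theorem without proof, treating it as an immediate consequence of Lemmas~\ref{L312} and~\ref{L313}, and your write-up simply spells out the routine functoriality checks (preservation of identities and reversal of composition via the preimage operator) that the paper leaves implicit.
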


\

\begin{lemma}\label{L315} Let $\mathfrak{X}$ be a \tps-space, then $\varepsilon_{X}:X\longrightarrow X(D(X))$ defined by $\varepsilon_{X}(x)=\{U\in  D(X): x\in U\}$, is an isomorphism in the category \tps.
\end{lemma}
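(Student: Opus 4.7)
The plan is to leverage classical Priestley duality, which already guarantees that $\varepsilon_X$ is an isomorphism in the category $\ps$ (an order-homeomorphism onto $X(D(X))$). By Lemma~\ref{L37}, it then suffices to verify the single condition $\varepsilon_X(R(x)) = R_{D(X)}(\varepsilon_X(x))$ for every $x \in X$, where $R_{D(X)}$ is the relation from Definition~\ref{DRA} applied to the \tdlat-algebra $D(X) = {\bf \Psi}(\mathfrak{X})$ (Lemma~\ref{L38}).

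For the inclusion $\varepsilon_X(R(x)) \subseteq R_{D(X)}(\varepsilon_X(x))$, I would simply unfold the definitions of $\G_R$ and $\F_R$. Given $y \in R(x)$: if $U \in \G_R^{-1}(\varepsilon_X(x))$ then $x \in \G_R(U)$, i.e.\ $R(x) \subseteq U$, so $y \in U$ and $U \in \varepsilon_X(y)$; and if $U \in \varepsilon_X(y)$ then $y \in U \cap R(x)$, so $x \in \F_R(U)$ and $U \in \F_R^{-1}(\varepsilon_X(x))$. This gives $\G_R^{-1}(\varepsilon_X(x)) \subseteq \varepsilon_X(y) \subseteq \F_R^{-1}(\varepsilon_X(x))$ as required.

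The harder inclusion $R_{D(X)}(\varepsilon_X(x)) \subseteq \varepsilon_X(R(x))$ is where the \tps-axioms really come into play. Given $T \in R_{D(X)}(\varepsilon_X(x))$, by Priestley duality there is a unique $y \in X$ with $\varepsilon_X(y) = T$, and I must show $y \in R(x)$. Arguing by contradiction, suppose $y \notin R(x)$; axiom (\tps 2) gives $R(x) = {\uparrow}R(x) \cap {\downarrow}R(x)$, so either $y \notin {\uparrow}R(x)$ or $y \notin {\downarrow}R(x)$. In the first case, for every $z \in R(x)$ we have $z \not\leq y$, and Priestley separation provides a clopen up-set $U_z$ with $z \in U_z$ and $y \notin U_z$; since $R(x)$ is closed hence compact by (\tps 1), finitely many $U_{z_i}$ cover $R(x)$, and $U := \bigcup_i U_{z_i} \in D(X)$ satisfies $R(x) \subseteq U$ but $y \notin U$, so $U \in \G_R^{-1}(\varepsilon_X(x)) \setminus T$, contradicting $\G_R^{-1}(\varepsilon_X(x)) \subseteq T$. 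In the second case, for each $z \in R(x)$ we have $y \not\leq z$, so there is a clopen up-set $V_z$ with $y \in V_z$ and $z \notin V_z$; by compactness of $R(x)$, finitely many $X \setminus V_{z_i}$ cover $R(x)$, and $V := \bigcap_i V_{z_i} \in D(X)$ satisfies $y \in V$ and $V \cap R(x) = \emptyset$, so $V \in T \setminus \F_R^{-1}(\varepsilon_X(x))$, contradicting $T \subseteq \F_R^{-1}(\varepsilon_X(x))$.

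The main obstacle is precisely the second inclusion: the separation-plus-compactness argument that forces $y$ to lie in $R(x)$. Axioms (\tps 1) and (\tps 2) are tailored exactly to make this argument go through — closedness supplies the compactness needed for a finite subcover, and the $\uparrow\cap\downarrow$ condition splits the failure $y \notin R(x)$ into two symmetric cases each handled by Priestley separation. Once this is done, the surjectivity of $\varepsilon_X$ in the Priestley sense immediately yields $\varepsilon_X(y) = T$, closing the argument.
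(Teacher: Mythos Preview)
Your proposal is correct and follows essentially the same approach as the paper: reduce via Lemma~\ref{L37} to checking $\varepsilon_X(R(x))=R_{D(X)}(\varepsilon_X(x))$, do the forward inclusion by unfolding $\G_R$ and $\F_R$, and for the converse argue by contradiction using (\tps 2) to split into the two cases $y\notin{\uparrow}R(x)$ and $y\notin{\downarrow}R(x)$, each handled by Priestley separation plus compactness of $R(x)$ from (\tps 1). The only cosmetic difference is that you start from an arbitrary $T\in R_{D(X)}(\varepsilon_X(x))$ and invoke surjectivity of $\varepsilon_X$ to write $T=\varepsilon_X(y)$, whereas the paper works directly with pairs $(\varepsilon_X(x),\varepsilon_X(y))$.
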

\begin{proof} We know that $\varepsilon_{X}$ is an isomorphism in the category \ps, so, by Lemma \ref{L37}, we just have to prove that for every $x\in X$, $\varepsilon_{X}(R(x))=R_{_{D(X)}}(\varepsilon_{X}(x))$ or equivalently
$$(*)\,\,(x,y)\in R\,\,\,\mbox{ iff }\,\,\,(\varepsilon_{X}(x),\varepsilon_{X}(y))\in R_{_{D(X)}}.$$
Recall that the relation $R_{_{D(X)}}$ is defined as follows. Given $U,V\in X(D(X))$
$$(U,V)\in R_{_{D(X)}}\,\,\,\mbox{ iff }\,\,\,\G^{-1}_R(U)\subseteq V\subseteq \F^{-1}_R(U)$$
First, notice that the following conditions are equivalent.

\begin{itemize}
  \item[(1)] $(\varepsilon_{X}(x),\varepsilon_{X}(y))\in R_{_{D(X)}}$
  \item[(2)] $\G^{-1}_R(\varepsilon_{X}(x))\subseteq \varepsilon_{X}(y)\subseteq \F^{-1}_R(\varepsilon_{X}(x))$ 
  \item[(3)] For every $U,V\in D(X)$ it holds
$$(\G_R(U)\in \varepsilon_{X}(x)\,\,\mbox{ implies }\,\,U\in \varepsilon_{X}(y))\,\,\,\,\&\,\,\,\,(\,V\in \varepsilon_{X}(y)\,\,\mbox{ implies } \,\,\F_R(U)\in \varepsilon_{X}(x)).$$ 
  \item[(4)] For every $U,V\in D(X)$ it holds
$$(\,x\in \G_R(U)\,\,\mbox{ implies } \,\,y\in U)\,\,\,\,\&\,\,\,\,(\,y\in V\,\,\mbox{ implies } \,\,x\in \F_R(V)).$$ 
  \item[(5)] For every $U,V\in D(X)$ it holds
 $$(R(x)\subseteq U\,\,\,\mbox{ implies }\,\,\,y\in U)\,\,\&\,\,(y\in V\,\,\,\mbox{ implies }\,\,\,R(x)\cap V\not=\emptyset)$$ 
\end{itemize}

Then,
\begin{itemize}
  \item[{\bf(a)}] if $(x,y)\in R$ then $(\varepsilon_{X}(x),\varepsilon_{X}(y))\in R_{_{D(X)}}$. Indeed, 
  \item[] suppose that $(x,y)\in R$, that is $y\in R(x)$. Then, we have that, for every $U,V\in X(D(X))$, if $R(x)\subseteq U$ then $y\in U$. Besides, if $y\in V$ then $R(x)\cap V\not=\emptyset$ and therefore $(\varepsilon_{X}(x),\varepsilon_{X}(y))\in R_{_{D(X)}}$.
  \item[{\bf(b)}] If $(\varepsilon_{X}(x),\varepsilon_{X}(y))\in R_{_{D(X)}}$, then $(x,y)\in R$. Indeed, 
  \item[] consider $(\varepsilon_{X}(x),\varepsilon_{X}(y))\in R_{_{D(X)}}$ and suppose that $y\not\in R(x)$. By (\tps 2), $R(x)={\uparrow}R(x)\cap{\downarrow}R(x)$. We have the following two cases:
  \item[] {\bf Case 1:} $y\not\in {\uparrow}R(x)$. Then, for each $z\in R(x)$, $z\not\leq y$. Since $X$ is a totally disconnected space in the order, for each $z\in R(x)$ there is $U_z\in D(X)$ such that $z\in U_z$ and $y\not\in U_z$, from where we know  $R(x)\subseteq\displaystyle\bigcup_{z\in R(x)}U_z$ and $y\not\in \displaystyle\bigcup_{z\in R(x)}U_z$. Besides, since $X$ is compact, by (\tps 1) we have $R(x)$ is compact and then there are $z_1,\cdots,z_n\in R(x)$ such that $R(x)\subseteq\displaystyle\bigcup_{i=1}^{n}U_{z_i}$ and $y\not\in \displaystyle\bigcup_{i=1}^{n}U_{z_i}$. Then, there is  $U:=\displaystyle\bigcup_{i=1}^{n}U_{z_i}\in D(X)$ such that $R(x)\subseteq U$ and $y\not\in U$ which is a contradiction.
  \item[] {\bf Case 2:} $y\not\in {\downarrow}R(x)$. Then, for each $z\in R(x)$, $y\not\leq z$, then for each $z\in R(x)$ there is $V_z\in D(X)$ such that $y\in V_z$ and $z\not\in V_z$. Then, $R(x)\subseteq \displaystyle\bigcup_{z\in R(x)}(V_z^c)$. Using the same argument as in case 1, there are  $z_1,\cdots,z_n\in R(x)$ such that $R(x)\subseteq\displaystyle\bigcup_{i=1}^{n}(V_{z_i}^c)=\displaystyle\left(\bigcap_{i=1}^{n}V_{z_i}\right)^c$. Then, $V:=\displaystyle\bigcap_{i=1}^{n}V_{z_i}\in D(X)$ and it holds that $y\in V$ and $R(x)\cap V=\emptyset$ which is a contradiction.
\end{itemize}
Then, $(\ast)$ holds and, by Lemma \ref{L37},  $\varepsilon_X$ is an isomorphism in the category \tps.
\end{proof}

\

\noindent Now, we can prove the main result of this section.

\begin{theorem}\label{TDT} The categories \tps\ and \tdlat\ are naturally equivalent.
\end{theorem}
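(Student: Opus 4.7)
The plan is to show that the contravariant functors $\Phi: \tdlat \to \tps$ and $\Psi: \tps \to \tdlat$ constructed in this section form a contravariant equivalence, by exhibiting natural isomorphisms $\sigma: \mathrm{id}_{\tdlat} \Rightarrow \Psi \circ \Phi$ and $\varepsilon: \mathrm{id}_{\tps} \Rightarrow \Phi \circ \Psi$. The component-wise isomorphism has already been established: Lemma \ref{L312} states that $\sigma_A: A \to D(X(A))$ is an isomorphism in $\tdlat$ for every $\tdlat$-algebra $\mathcal{A}$, while Lemma \ref{L315} gives that $\varepsilon_X: X \to X(D(X))$ is an isomorphism in $\tps$ for every $\tps$-space $\mathfrak{X}$. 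The remaining task is therefore purely to verify the naturality of the two families $\{\sigma_A\}_{\mathcal{A}\in\tdlat}$ and $\{\varepsilon_{\mathfrak{X}}\}_{\mathfrak{X}\in\tps}$.

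For the naturality of $\sigma$, given any $\tdlat$-morphism $h: \mathcal{A} \to \mathcal{B}$ I would check that
$$\Psi(\Phi(h)) \circ \sigma_A = \sigma_B \circ h.$$
Unwinding the definitions, for $a \in A$ and $S \in X(B)$, $S$ belongs to $(\sigma_B \circ h)(a)$ exactly when $h(a) \in S$, i.e., when $a \in h^{-1}(S) = \Phi(h)(S)$. On the other hand, $\Psi(\Phi(h))(\sigma_A(a)) = \Phi(h)^{-1}(\sigma_A(a))$, and $S$ belongs to this set exactly when $\Phi(h)(S) \in \sigma_A(a)$, that is, again when $a \in h^{-1}(S)$. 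The two descriptions coincide, so the square commutes.

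For the naturality of $\varepsilon$, given a $\tps$-function $f: \mathfrak{X}_1 \to \mathfrak{X}_2$, I would verify in the analogous way that
$$\Phi(\Psi(f)) \circ \varepsilon_{X_1} = \varepsilon_{X_2} \circ f.$$
For $x \in X_1$ and $U \in D(X_2)$, membership of $U$ in $(\varepsilon_{X_2} \circ f)(x)$ says exactly $f(x) \in U$, i.e., $x \in f^{-1}(U) = \Psi(f)(U)$; while membership of $U$ in $\Phi(\Psi(f))(\varepsilon_{X_1}(x)) = \Psi(f)^{-1}(\varepsilon_{X_1}(x))$ says $\Psi(f)(U) \in \varepsilon_{X_1}(x)$, which is the same condition.

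Since essentially all the substantive work---namely that $\sigma_A$ preserves the tense operators (Lemma \ref{L312}, which rests on Lemma \ref{L225}) and that $\varepsilon_X$ intertwines $R$ with $R_{D(X)}$ (Lemma \ref{L315}, which uses the compactness of $R(x)$ from (\tps 1), the sandwich condition (\tps 2), and Priestley separation)---has already been carried out earlier in the section, no serious obstacle remains. The only care needed is to correctly unwind the contravariant assignments $\Phi(h) = h^{-1}(\cdot)$ and $\Psi(f) = f^{-1}(\cdot)$ when checking the two commutative squares, after which the standard fact that a pair of contravariant functors equipped with natural isomorphisms to the identities yields a (dual) equivalence of categories finishes the proof.
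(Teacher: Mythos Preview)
Your proposal is correct and follows essentially the same approach as the paper: invoke Lemmas \ref{L312} and \ref{L315} for the component-wise isomorphisms and then check naturality of $\{\sigma_A\}$ and $\{\varepsilon_X\}$. In fact you give more detail than the paper, which simply asserts that the two squares commute without unwinding the definitions as you do.
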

\begin{proof} Consider the functors ${\bf \Psi}:\tps\longrightarrow\tdlat$ and ${\bf \Phi}:\tdlat\longrightarrow\tps$. From Lemma \ref{L312}, we have that $\{\sigma_A: A\in  \tdlat\}$ is a family of isomorphisms in \tdlat. Besides, for every $A,A'\in \tdlat$ and  $h:A\longrightarrow A'$ the following diagram commutes

\vspace{2cm}
\begin{figure}[htbp]
\begin{center}
\hspace{0.25cm}
\begin{picture}(0,0)(0,0)
\put(-50,40){\makebox(1,1){$A$}}
\put(55,40){\makebox(1,1){${\bf \Psi}({\bf \Phi}(A))$}}
\put(-50,00){\makebox(1,1){$A'$}}
\put(55,00){\makebox(1,1){${\bf \Psi}({\bf \Phi}(A'))$}}
\put(-40,40){\vector(1,0){70}}
\put(-40,00){\vector(1,0){70}}
\put(-50,33){\vector(0,-1){25}}
\put(50,33){\vector(0,-1){25}}
\put(00,-10){\makebox(2,2){$\sigma_{A'}$}}
\put(00,45){\makebox(2,2){$\sigma_{A}$}}
\put(-60,20){\makebox(2,2){${\tiny h}$}}
\put(75,20){\makebox(2,2){${\bf \Psi}({\bf \Phi}(h))$}}
\end{picture}
\end{center}
\end{figure}

\noindent that is $({\bf \Psi}\circ{\bf \Phi})(h)\circ \sigma_A=\sigma_{A'}\circ Id_{\tdlat}(h)$, from where ${\bf \Psi}\circ{\bf \Phi}$ is naturally equivalent $Id_{\tdlat}$.\\
On the other hand, by Lemma \ref{L315}, $\{\varepsilon_X: X\in \tps\}$ is a family of isomorphisms in \tps\, such that for every $(X,R),(X',R')\in \tps$ and every morphism $f:X\longrightarrow X'$ the following diagram commutes

\vspace{2cm}
\begin{figure}[htbp]
\begin{center}
\hspace{0.25cm}
\begin{picture}(0,0)(0,0)
\put(-50,40){\makebox(1,1){$X$}}
\put(55,40){\makebox(1,1){${\bf \Phi}({\bf \Psi}(X))$}}
\put(-50,00){\makebox(1,1){$X'$}}
\put(55,00){\makebox(1,1){${\bf \Phi}({\bf \Psi}(X'))$}}
\put(-40,40){\vector(1,0){70}}
\put(-40,00){\vector(1,0){70}}
\put(-50,33){\vector(0,-1){25}}
\put(50,33){\vector(0,-1){25}}
\put(00,-10){\makebox(2,2){$\varepsilon_{X'}$}}
\put(00,45){\makebox(2,2){$\varepsilon_X$}}
\put(-60,20){\makebox(2,2){${\tiny f}$}}
\put(75,20){\makebox(2,2){${\bf \Phi}({\bf \Psi}(f))$}}
\end{picture}
\end{center}
\end{figure}

\noindent that is, $({\bf \Phi}\circ{\bf \Psi})(f)\circ \varepsilon_X=\varepsilon_{X'}\circ Id_{\tps}(f)$ and therefore ${\bf \Phi}\circ{\bf \Psi}$ is naturally equivalent to $Id_{\tps}$.
\end{proof}

\
\section{Some applications of the topological duality} \label{s4}
In this section, we apply the equivalence proved in Section \ref{s3} to obtain 
nice characterizations of important algebraic notions.\\
Firstly, we describe certain sets of a given \tps-space which play an important role in what follows.
\begin{definition}[\tps-set] \label{DTC} Let $\mathfrak{X}=(X,R)$ be a \tdlat-space. A subset $Y$ of $X$ is said to be a \tps-set of $\mathfrak{X}$ if it satisfies the following conditions: for every $x,y\in X$
\begin{itemize}
  \item[{\rm(tc1)}] if $x\in R^{-1}(y)\cap Y$ then there are $w_1,w_2\in R(x)\cap Y$ such that $w_1\leq y\leq w_2$,
  \item[{\rm(tc2)}] if $x\in R(y)\cap Y$ then there are $w_1,w_2\in R^{-1}(x)\cap Y$ such that $w_1\leq y\leq w_2$.  
\end{itemize}
\end{definition}

\noindent Denote by $C_{t}(X)$ the family of all closed \tps-sets of $\mathfrak{X}$. Upward and downward \tps-sets can be characterized as follows.

\begin{lemma}\label{L42} Let $\mathfrak{X}$ be a \tps-space and let $Y$ be an upward (downward) subset of $\mathfrak{X}$. The following conditions are equivalent.
\begin{itemize}
  \item[(i)] $Y$ is a \tps-set of $\mathfrak{X}$,
  \item[(ii)] for all $x\in Y$, the following conditions are satisfied
  \item[]{\rm(tc3)} $R(x)\subseteq Y$, 
  \item[]{\rm(tc4)} $R^{-1}(x)\subseteq Y$  ,
  \item[(iii)] $Y=G_R(Y)\cap Y\cap H_{R^{-1}}(Y)$,
  \item[(iv)] $Y=F_R(Y)\cup Y\cup P_{R^{-1}}(Y)$.
\end{itemize}
\end{lemma}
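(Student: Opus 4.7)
The plan is to close the cycle $(i)\Rightarrow(ii)\Rightarrow(iii)\Rightarrow(i)$ and then separately check $(ii)\Leftrightarrow(iv)$, since the equivalence between $(ii)$ and each of $(iii)$, $(iv)$ is a direct unfolding of the definitions of $\G_R, \H_{R^{-1}}, \F_R, \P_{R^{-1}}$ from Definition \ref{DGR}. The only place where the upward/downward hypothesis on $Y$ really enters is the implication $(i)\Rightarrow(ii)$, and this is where I expect the delicate step to be.

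For $(i)\Rightarrow(ii)$, suppose first that $Y$ is upward and that $Y$ is a \tps-set. Fix $x\in Y$. To prove $R(x)\subseteq Y$, pick $y\in R(x)$, so that $x\in R^{-1}(y)\cap Y$. By \rm{(tc1)} there exist $w_1,w_2\in R(x)\cap Y$ with $w_1\le y\le w_2$. Since $Y$ is upward and $w_1\in Y$ with $w_1\le y$, we conclude $y\in Y$. To prove $R^{-1}(x)\subseteq Y$, pick $y\in R^{-1}(x)$, so that $x\in R(y)\cap Y$; now apply \rm{(tc2)} to produce $w_1,w_2\in R^{-1}(x)\cap Y$ with $w_1\le y\le w_2$, and again the upward hypothesis on $Y$ yields $y\in Y$. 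The downward case is symmetric, using $w_2$ and $y\le w_2$ instead. This is the only step where the one-sided character of $Y$ is crucial, and it is the main obstacle.

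For $(ii)\Rightarrow(iii)$, the inclusion $\G_R(Y)\cap Y\cap \H_{R^{-1}}(Y)\subseteq Y$ is trivial, and the reverse inclusion says exactly that every $x\in Y$ satisfies $R(x)\subseteq Y$ and $R^{-1}(x)\subseteq Y$, which is $(ii)$ verbatim. The implication $(iii)\Rightarrow(i)$ is even easier: if $x\in R^{-1}(y)\cap Y$, then $(iii)$ forces $R(x)\subseteq Y$, so in particular $y\in R(x)\cap Y$, and we may take $w_1=w_2=y$, and likewise for \rm{(tc2)}; this closes the cycle and confirms $(i)\Leftrightarrow(ii)\Leftrightarrow(iii)$.

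Finally, for $(ii)\Leftrightarrow(iv)$, I would exploit the point-wise dual description: $Y\subseteq \F_R(Y)\cup Y\cup \P_{R^{-1}}(Y)$ is automatic, so $(iv)$ is equivalent to $\F_R(Y)\subseteq Y$ and $\P_{R^{-1}}(Y)\subseteq Y$. Assume $(ii)$ and let $x\in \F_R(Y)$; pick $z\in R(x)\cap Y$. Then $x\in R^{-1}(z)$ and, since $z\in Y$, $(ii)$ gives $R^{-1}(z)\subseteq Y$, hence $x\in Y$; the case $\P_{R^{-1}}(Y)\subseteq Y$ is analogous, swapping $R$ with $R^{-1}$. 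Conversely, assuming $(iv)$ and $x\in Y$, for any $y\in R(x)$ we have $x\in R^{-1}(y)\cap Y$, so $y\in \P_{R^{-1}}(Y)\subseteq Y$, and by the symmetric argument $R^{-1}(x)\subseteq Y$. No new ingredients are needed beyond the definitions of $\F_R$ and $\P_{R^{-1}}$ and a careful bookkeeping of the two relations $R$ and $R^{-1}$.
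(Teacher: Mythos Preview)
Your proof is correct and uses essentially the same ingredients as the paper's. The only difference is organizational: the paper proves a single cycle $(i)\Rightarrow(ii)\Rightarrow(iii)\Rightarrow(iv)\Rightarrow(i)$, whereas you close $(i)\Leftrightarrow(ii)\Leftrightarrow(iii)$ first and then handle $(ii)\Leftrightarrow(iv)$ separately; the arguments themselves (use of the upward/downward hypothesis in $(i)\Rightarrow(ii)$, and the choice $w_1=w_2=y$ when returning to $(i)$) are identical.
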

\begin{proof}
\noindent\emph{(i) implies (ii):} Let $x\in Y$ and $z\in R(x)$. Then, $x\in R^{-1}(z)\cap Y$ and, by (tc1), there are $w_1,w_2\in R(x)\cap Y$ such that $w_1\leq z\leq w_2$. If $Y$ is downward, since $w_1\in Y$ we have $z\in Y$; if $Y$ is downward, since $w_2\in Y$ we have $z\in Y$. In both cases, we conclude $R(x)\subseteq Y$. Similarly, by (tc2) we have $R^{-1}(x)\subseteq Y$.

\noindent\emph{(ii) implies (iii):} Let $x\in Y$. Then, $R(x)\subseteq Y$ and $R^{-1}(x)\subseteq Y$, that is, $x\in G_R(Y)\cap Y\cap H_{R^{-1}}(Y)$ and therefore it holds $(iii)$.

\noindent\emph{(iii) implies (iv):} Let $x\in F_R(Y)\cup Y\cup P_{R^{-1}}(Y)$. In 
 first place, suppose that $x\in F_R(Y)$. Then, $R(x)\cap Y\not=\emptyset$ and so there is $z\in R(x)$ such that  $z\in Y$. By the hypothesis, $z\in H_{R^{-1}}(Y)$ and then $R^{-1}(z)\subseteq Y$ and so $x\in Y$. Suppose now that $x\in P_{R^{-1}}(Y)$ then there is $z'\in R^{-1}(x)\cap Y$ and, by hypothesis, $z'\in G_R(Y)$, that is $R(z')\subseteq Y$ and then $x\in Y$. From all the above $F_R(Y)\cup Y\cup P_{R^{-1}}(Y)\subseteq Y$ and therefore it holds $(iv)$.

\noindent\emph{(iv) implies (i):}  Let us see (tc1). Let $x,y\in X$ such that $x\in R^{-1}(y)\cap Y$. Then $y\in P_{R^{-1}}(Y)$ and, by the hypothesis, $y\in Y$ and so there are $z=w=y\in R(x)\cap Y$ such that $z\leq y\leq w$. Similarly, we prove (tc2).
\end{proof}

\begin{lemma}\label{L43} Let $Y, Z$ be \tps-sets of $\mathfrak{X}$. Then, 
\begin{itemize}
    \item[(i)] $Y\cup Z$ is a \tps-set,
    \item[(ii)] if $Y$ is upward and $Z$ is downward, then $Y\cap Z$ is a  \tps-set,
    \item[(iii)] $\{x\}\in C_t(X)$ \ iff \ $R(x)=R^{-1}(x)=\emptyset$ or $R(x)=R^{-1}(x)=\{x\}$,
    \item[(iv)] if $\{x\}\in C_t(X)$ then $\{x\}^c$ is a \tps-set.
\end{itemize}
\end{lemma}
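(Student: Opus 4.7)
\emph{Proof plan.} All four parts will follow by unpacking conditions {\rm(tc1)} and {\rm(tc2)} of Definition~\ref{DTC}, aided by Lemma~\ref{L42} for the monotone cases.

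\textbf{(i) and (ii).} For (i), I would take $x\in R^{-1}(y)\cap(Y\cup Z)$ and split on whether $x\in Y$ or $x\in Z$; in either case the (tc1)-witnesses of the relevant set already lie in $R(x)\cap(Y\cup Z)$, and (tc2) is handled identically. For (ii), I would invoke Lemma~\ref{L42}: since $Y$ is an upward \tps-set and $Z$ is a downward \tps-set, both $R(x)$ and $R^{-1}(x)$ are contained in $Y$ whenever $x\in Y$, and in $Z$ whenever $x\in Z$. Hence for any $x\in R^{-1}(y)\cap(Y\cap Z)$, we get $y\in R(x)\subseteq Y\cap Z$, and the choice $w_1=w_2=y$ fulfils (tc1); (tc2) is entirely symmetric.

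\textbf{(iii).} The key direction is the forward one. Assuming $\{x\}\in C_t(X)$, I would pick any $y\in R(x)$ (if there is one) and observe that $x\in R^{-1}(y)\cap\{x\}$, so (tc1) delivers $w_1=w_2=x\in R(x)$ with $x\le y\le x$, forcing $y=x$; hence $R(x)\subseteq\{x\}$ and moreover $xRx$. A symmetric use of (tc2) yields $R^{-1}(x)\in\{\emptyset,\{x\}\}$. The two dichotomies will then be coupled via the equivalence $x\in R(x)\iff xRx\iff x\in R^{-1}(x)$, which rules out the mixed possibilities. For the converse, I would verify (tc1) and (tc2) by direct inspection: in the empty case the premises never hold, whereas in the singleton case the only $y$ to consider is $x$ itself, and $w_1=w_2=x$ is the required witness.

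\textbf{(iv) and the main obstacle.} Applying (iii), given $z\in R^{-1}(y)\cap\{x\}^{c}$, if $y=x$ then $z\in R^{-1}(x)\in\{\emptyset,\{x\}\}$, contradicting $z\ne x$ in either option. Hence $y\ne x$ and $w_1=w_2=y$ verifies (tc1); the argument for (tc2) is mirror-symmetric and relies on $R(x)\in\{\emptyset,\{x\}\}$. The only genuinely delicate step in the whole lemma is the forward direction of (iii): one must simultaneously extract $R(x)\subseteq\{x\}$ and $x\in R(x)$ from a single application of (tc1) to the singleton $\{x\}$, and then link the behaviour of $R(x)$ and $R^{-1}(x)$ through the presence (or absence) of the loop $xRx$.
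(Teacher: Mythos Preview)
Your argument is correct throughout; parts (i), (iii) and (iv) proceed exactly as in the paper's proof (including the coupling of $R(x)$ and $R^{-1}(x)$ in (iii) via the loop $xRx$, which the paper phrases as ``$R(z)\neq\emptyset$ iff $R^{-1}(z)\neq\emptyset$'').

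The one genuine difference is in (ii). The paper does \emph{not} invoke Lemma~\ref{L42}; instead it applies (tc1) separately to $Y$ and to $Z$, obtaining witnesses $w_1\le y\le w_2$ in $R(x)\cap Y$ and $u_1\le y\le u_2$ in $R(x)\cap Z$, and then uses that $Y$ is upward and $Z$ is downward to push $u_2$ into $Y$ and $w_1$ into $Z$, so that $w_1,u_2\in R(x)\cap Y\cap Z$. Your route is shorter: Lemma~\ref{L42} immediately gives $R(x)\subseteq Y\cap Z$ for any $x\in Y\cap Z$, so $y$ itself serves as both witnesses. What the paper's approach buys is that it works from the bare definition of \tps-set without appealing to the characterisation of the monotone case; what yours buys is brevity and the observation that, once Lemma~\ref{L42} is available, $Y\cap Z$ actually satisfies the stronger condition (tc3)--(tc4) rather than just (tc1)--(tc2).
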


\

\subsection{Congruences of tense distributive lattices}
In what follows, $\mathcal{A}$ is a \tdlat-algebra. We denote by $Con(\mathcal{A})$ the lattice of congruences of the underlying lattice $\mathcal{A}_0$ and by $Con_t(\mathcal{A})$ the lattice of congruences of the \tdlat-algebra $\mathcal{A}$.  
\begin{lemma}\label{L44} $\Theta(Y)\in Con_t(\mathcal{A})$, for all $Y\in C_t(X(A))$. Where
\begin{equation}
  \Theta(Y)=\{(a,b)\in A\times A: \sigma_A(a)\cap Y=\sigma_A(b)\cap Y\}  
\end{equation}
\end{lemma}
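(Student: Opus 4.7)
The plan is to verify that $\Theta(Y)$ is an equivalence relation, then a lattice congruence, and finally that it is compatible with each of the four tense operators; the last step is the substantive one.

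Reflexivity, symmetry and transitivity of $\Theta(Y)$ are inherited from equality of sets. Since $\sigma_A$ is a lattice isomorphism onto $D(X(A))$ (Lemma \ref{L312}) and the map $U\mapsto U\cap Y$ commutes with $\cap$ and $\cup$, the relation $\Theta(Y)$ is a congruence of the underlying bounded distributive lattice $\mathcal{A}_0$. So I only have to check compatibility with $\G,\H,\F,\P$. Using the identities $\sigma_A(\G a)=\G_{R}(\sigma_A(a))$, $\sigma_A(\H a)=\H_{R^{-1}}(\sigma_A(a))$, $\sigma_A(\F a)=\F_{R}(\sigma_A(a))$, $\sigma_A(\P a)=\P_{R^{-1}}(\sigma_A(a))$, the problem reduces to the following \emph{topological compatibility} claim: whenever $U,V\in D(X(A))$ satisfy $U\cap Y=V\cap Y$, one has
\[
\Phi(U)\cap Y=\Phi(V)\cap Y \qquad \text{for } \Phi\in\{\G_R,\H_{R^{-1}},\F_R,\P_{R^{-1}}\}.
\]

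The key observation is that elements of $D(X(A))$ are \emph{upward}-closed, and that the \tps-set conditions (tc1)--(tc2) let me replace an arbitrary point of $R(x)$ (or $R^{-1}(x)$) by an upper/lower comparable point lying in $Y$. I would argue the four cases as follows, working by symmetry after doing one of each ``box/diamond'' flavour.

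For $\G_R$: let $x\in \G_R(U)\cap Y$ and take $z\in R(x)$. Then $x\in R^{-1}(z)\cap Y$, so by (tc1) there exist $w_1,w_2\in R(x)\cap Y$ with $w_1\le z\le w_2$. Since $w_1\in R(x)\subseteq U$ and $w_1\in Y$, the hypothesis gives $w_1\in V$; upward-closedness of $V$ yields $z\in V$, so $R(x)\subseteq V$ and $x\in \G_R(V)\cap Y$. For $\F_R$: let $x\in \F_R(U)\cap Y$ and pick $z\in R(x)\cap U$; applying (tc1) with $x\in R^{-1}(z)\cap Y$ produces $w_2\in R(x)\cap Y$ with $z\le w_2$, whence $w_2\in U$ (as $U$ is upward), so $w_2\in U\cap Y=V\cap Y$ and $x\in \F_R(V)$. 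The cases for $\H_{R^{-1}}$ and $\P_{R^{-1}}$ are identical after swapping $R$ with $R^{-1}$ and using (tc2) in place of (tc1). Reversing the roles of $U$ and $V$ completes each equality.

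The only real obstacle is keeping track of the asymmetry between the universal operators $\G,\H$ (where I need the lower comparable witness $w_1$ to pull membership of an arbitrary $R$-successor back into $V$ via upward-closedness) and the existential operators $\F,\P$ (where I use the upper comparable witness $w_2$ to push a given witness up into $V\cap Y$). Once this bookkeeping is set, each of the four verifications is a short chase and the theorem follows.
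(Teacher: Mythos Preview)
Your proof is correct and follows essentially the same route as the paper's: reduce to the topological compatibility claim via $\sigma_A$, then use (tc1) with the lower witness $w_1$ for $\G_R$ and the upper witness $w_2$ for $\F_R$, exploiting upward-closedness of clopen up-sets, and finish $\H_{R^{-1}},\P_{R^{-1}}$ by the symmetric argument with (tc2). The only cosmetic difference is that the paper cites the standard Priestley fact that closed subsets yield lattice congruences, whereas you give the (equally valid, and in fact slightly more general) direct kernel argument via $a\mapsto\sigma_A(a)\cap Y$.
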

\begin{proof} Let $Y\in C_{t}(X(A))$. Since $Y$ is closed, we have $\Theta(Y)\in Con(\mathcal{A})$. We only have to check that $\Theta(Y)$ is compatible with the tense operators. Let $(a,b)\in \Theta(Y)$, that is, $\sigma_A(a)\cap Y=\sigma_A(b)\cap Y$. We only prove it for the operators $\G$ and $\F$, the rest are similar using (tc2).\\[2mm] 
$\bullet$ $\sigma_A(\G a )\cap Y=\sigma_A(\G b )\cap Y$: \ Let $S\in \sigma_A(\G a)\cap Y$. Since $\sigma_A(\G a)=\G_{R_A}(\sigma_A(a))$, we have $R_A(S)\subseteq \sigma_A(a)$ and $S\in Y$. Then, $R_A(S)\subseteq \sigma_A(b)$, indeed, if $T\in R_A(S)$ then  $S\in R^{-1}_A(T)\cap Y$ and by  (tc1), there is $Z\in R_A(S)\cap Y$ such that  $Z\subseteq T$. Then, $Z\in R_A(S)\cap Y\subseteq \sigma_A(a)\cap Y=\sigma_A(b)\cap Y\subseteq \sigma_A(b)$ and since $\sigma_A(b)$ is upward, we have that $T\in \sigma_A(b)$. Therefore, $R_A(S)\subseteq \sigma_A(b)$ that is $S\in \G_{R_A}(\sigma_A(b))=\sigma_A(\G b)$ and then $S\in \sigma_A(\G b)\cap Y$. The other inclusion is similar.\\[2mm]
$\bullet$ $\sigma_A(\F a)\cap Y=\sigma_A(\F b)\cap Y$: \ Let $S\in \sigma_A(\F a)\cap Y$. Since $\sigma_A(\F a)=\F_{R_A}(\sigma_A(a))$ we have $R_A(S)\cap \sigma_A(a)\not=\emptyset$ and $S\in Y$. Then, there is $T\in R_A(S)$ such that $T\in \sigma_A(a)$. Then $S\in R^{-1}_A(T)\cap Y$ and by (tc1), there is $W\in R_A(S)\cap Y$ such that $T\subseteq W$ and taking into account that $\sigma_A(a)$ is upward, we have $W\in \sigma_A(a)\cap Y=\sigma_A(b)\cap Y\subseteq \sigma_A(b)$. Therefore,  $W\in R_A(S)\cap \sigma_A(b)$ and then $S\in \F_{R_A}(\sigma_A(b))=\sigma_A(\F b)$. That is, $S\in \sigma_A(\F b)\cap Y$. The other inclusion is similar.
\end{proof}

\

\begin{lemma}\label{L45} Let $\theta\in Con_t(\mathcal{A})$ and $q:A\longrightarrow A/\theta$ the natural epimorphism. Then,  $$Y=\{{\bf \Phi}(q)(S): S\in X(A/\theta)\}\in C_{t}(X(A)).$$
\end{lemma}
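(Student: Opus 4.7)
The plan is to exploit the fact that, by Lemma \ref{L313}, the induced map ${\bf \Phi}(q):X(A/\theta)\to X(A)$ is a morphism in \tps, and to observe that $Y$ is precisely the image of $X(A/\theta)$ under this map. Both closedness of $Y$ and the two \tps-set conditions (tc1) and (tc2) will therefore reduce to the defining properties (\tps f1)--(\tps f3) of a \tps-function, together with a standard compactness argument.

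First I would check that $Y$ is a closed subset of $X(A)$. Since $X(A/\theta)$ is a Priestley space (hence compact) and ${\bf \Phi}(q)$ is continuous into the Hausdorff space $X(A)$, the image $Y={\bf \Phi}(q)(X(A/\theta))$ is automatically compact, and therefore closed.

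For condition (tc1), suppose $x\in R_A^{-1}(y)\cap Y$. Since $x\in Y$, there is $S\in X(A/\theta)$ with $x={\bf \Phi}(q)(S)$, and the hypothesis $x\in R_A^{-1}(y)$ says $y\in R_A({\bf \Phi}(q)(S))$. Applying (\tps f2) to the \tps-function ${\bf \Phi}(q)$, I obtain $z,w\in R_{A/\theta}(S)$ such that ${\bf \Phi}(q)(z)\leq y\leq {\bf \Phi}(q)(w)$. Setting $w_1={\bf \Phi}(q)(z)$ and $w_2={\bf \Phi}(q)(w)$, these lie in $Y$ by construction; and by (\tps f1) both belong to $R_A({\bf \Phi}(q)(S))=R_A(x)$, so $w_1,w_2\in R_A(x)\cap Y$ with $w_1\leq y\leq w_2$. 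Condition (tc2) is entirely symmetric: from $x\in R_A(y)\cap Y$ with $x={\bf \Phi}(q)(S)$ one has $y\in R_A^{-1}({\bf \Phi}(q)(S))$, and (\tps f3) together with (\tps f1'), the inverse form of (\tps f1) noted after Definition \ref{DLTF}, yield the required $w_1,w_2\in R_A^{-1}(x)\cap Y$.

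In essence the lemma is a direct corollary of Lemma \ref{L313}, so I do not anticipate any substantial obstacle. The only care one must take is to invoke (\tps f2) for (tc1) and its $R^{-1}$-analogue (\tps f3) for (tc2), and to note that the comparison witnesses furnished by (\tps f2)/(\tps f3) automatically lie in $R_A(x)$ (resp.\ $R_A^{-1}(x)$) thanks to (\tps f1)/(\tps f1').
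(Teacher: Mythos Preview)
Your proof is correct and follows essentially the same approach as the paper: both arguments invoke Lemma~\ref{L313} to get that ${\bf \Phi}(q)$ is a \tps-function, then derive (tc1) from (\tps f2) together with (\tps f1), and (tc2) from (\tps f3) together with (\tps f1'). The only cosmetic difference is that the paper obtains closedness of $Y$ by citing the known Priestley-duality correspondence between lattice congruences and closed subsets, while you spell this out via the compact-image-in-Hausdorff argument; these amount to the same thing.
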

\begin{proof} From the fact that  $Con_t(\mathcal{A})$ is a sublattice of $Con(\mathcal{A})$ we have that $Y=\{{\bf \Phi}(q)(S): S\in X(A/\theta)\}$ is a closed set of $X(A)$ and $\theta=\Theta(Y)$. Besides, from Lemma \ref{L313}, ${\bf \Phi}(q):(X(A/\theta),R_{A/\theta})\longrightarrow(X(A),R_{A})$ is a \tps-function. Now, we check that $Y$ verifies (tc1) and (tc2). Let $T,Q\in X(A)$. \\[2mm]
(tc1) \ If $T\in R^{-1}_A(Q)\cap Y$ then there are $Z, W\in R_A(T)\cap Y$ such that  $Z\subseteq Q\subseteq W$. Let $T\in R^{-1}_A(Q)\cap Y$, then $T={\bf \Phi}(q)(S)$ for some $S\in X(A/\theta)$ since $T\in Y$ and therefore $Q\in R_A({\bf \Phi}(q)(S))$. By (tPSf2), there are $Z',W'\in R_{A/\theta}(S)$ such that ${\bf \Phi}(q)(Z')\subseteq Q\subseteq {\bf \Phi}(q)(W')$.
 From $Z',W'\in R_{A/\theta}(S)$ we know that ${\bf \Phi}(q)(Z'),{\bf \Phi}(q)(W')\in {\bf \Phi}(R_{A/\theta}(S))$ and by (tPSf1), we obtain ${\bf \Phi}(q)(Z'),{\bf \Phi}(q)(W')\in R_A({\bf \Phi}(q)(S))$ and therefore it is enough to take $Z={\bf \Phi}(q)(Z')$ and $W={\bf \Phi}(q)(W')$. \\[2mm]
(tc2): \ If $T\in R_A(Q)\cap Y$ then there are $Z, W\in R^{-1}_A(T)\cap Y$ such that $Z\subseteq Q\subseteq W$. The proof goes similarly to (tc1) using (tPSf1') and (tPSf3).
\end{proof}

\

\noindent From lemmas \ref{L44} and  \ref{L45} we obtain the next important result.

\begin{theorem}\label{T46} Let $\mathcal{A}$ be a \tdlat-algebra and ${\bf \Phi}(\mathcal{A})$ its associated \tps-space. Then, the lattice $C_{t}(X(A))$ of all closed \tps-sets of ${\bf \Phi}(\mathcal{A})$ is isomorphic to the dual lattice of  $Con_t(\mathcal{A})$ of all \tdlat-congruences of $\mathcal{A}$. Besides, the anti-isomorphism is the map $\varphi:C_{t}(X(A))\longrightarrow Con_t(\mathcal{A})$ defined by $\varphi(Y)=\Theta(Y)$.
\end{theorem}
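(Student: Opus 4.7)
The natural strategy is to exhibit an explicit inverse to $\varphi$ and then verify both the bijection and the order-reversal. I would define $\psi\colon Con_t(\mathcal{A})\longrightarrow C_{t}(X(A))$ by
$$\psi(\theta)=\{{\bf \Phi}(q_\theta)(S):S\in X(A/\theta)\},$$
where $q_\theta\colon A\to A/\theta$ is the natural epimorphism. By Lemma \ref{L44}, $\varphi$ is well-defined, and by Lemma \ref{L45}, $\psi$ is well-defined.

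The first step is to check $\varphi\circ\psi=Id_{Con_t(\mathcal{A})}$. Given $\theta\in Con_t(\mathcal{A})$, the quotient $\mathcal{A}/\theta$ is itself a \tdlat-algebra, so by Lemma \ref{L312} the map $\sigma_{A/\theta}$ is an isomorphism; unpacking the Priestley representation, $(a,b)\in\theta$ iff $q_\theta(a)=q_\theta(b)$ iff $\sigma_{A/\theta}(q_\theta(a))=\sigma_{A/\theta}(q_\theta(b))$ iff for every $S\in X(A/\theta)$, $a\in q_\theta^{-1}(S)\Leftrightarrow b\in q_\theta^{-1}(S)$, and the latter translates precisely into $\sigma_A(a)\cap\psi(\theta)=\sigma_A(b)\cap\psi(\theta)$. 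Hence $\Theta(\psi(\theta))=\theta$. This observation was already used implicitly in the proof of Lemma \ref{L45}.

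The second step, and the part I expect to require the most care, is to check $\psi\circ\varphi=Id_{C_t(X(A))}$, i.e.\ that every closed \tps-set $Y$ is recovered as the image of $X(A/\Theta(Y))$ under ${\bf \Phi}(q_{\Theta(Y)})$. For the underlying bounded distributive lattice this is exactly the classical Priestley-duality correspondence between closed subsets of $X(A)$ and lattice congruences of $\mathcal{A}_0$, sending a closed set $Y$ to the congruence $(a,b)\mapsto\sigma_A(a)\cap Y=\sigma_A(b)\cap Y$. Since \tdlat-congruences are, by definition, lattice congruences compatible with the tense operators, and since we already know from Lemmas \ref{L44}--\ref{L45} that the subcorrespondence restricts to closed \tps-sets on one side and tense congruences on the other, the Priestley-level identity $Y_{\Theta(Y)}=Y$ lifts automatically. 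The delicate point is to verify this lift explicitly (and not merely invoke it); concretely, one takes the natural epimorphism $q\colon\mathcal{A}\to\mathcal{A}/\Theta(Y)$, identifies $X(A/\Theta(Y))$ with the prime filters of $A$ that are unions of $\Theta(Y)$-classes, and uses the compactness of $Y$ together with (tc1)--(tc2) to show that this coincides set-theoretically with $Y$.

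Finally, $\varphi$ reverses order by a direct intersection argument: if $Y_1\subseteq Y_2$ in $C_{t}(X(A))$ and $(a,b)\in\Theta(Y_2)$, then
$$\sigma_A(a)\cap Y_1=\sigma_A(a)\cap Y_2\cap Y_1=\sigma_A(b)\cap Y_2\cap Y_1=\sigma_A(b)\cap Y_1,$$
so $\Theta(Y_2)\subseteq\Theta(Y_1)$. Combined with the bijection established above, this shows that $\varphi$ is an anti-isomorphism between $C_t(X(A))$ and $Con_t(\mathcal{A})$, proving the theorem.
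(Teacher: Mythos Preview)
Your proposal is correct and follows essentially the same approach as the paper. The paper's own proof consists of a single sentence invoking Lemmas~\ref{L44} and~\ref{L45}, relying implicitly on the classical Priestley correspondence between closed subsets of $X(A)$ and lattice congruences of $\mathcal{A}_0$; your write-up simply makes explicit the inverse map $\psi$, the two identities $\varphi\circ\psi=Id$ and $\psi\circ\varphi=Id$, and the order-reversal that the paper leaves to the reader.
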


\

\subsubsection{Congruences determined by tense filters (ideals)}



Now, we established the relation between \tdlat-filters and upward closed \tps-sets. For this, we need the following definition. Let $\mathcal{A}$ be a \tdlat-algebra and ${\bf \Phi}(\mathcal{A})$ its associated \tps-space.

\begin{definition} 
\begin{itemize}
\item[]
  \item[(a)] If $S$ is a \tps-filter of $\mathcal{A}$, define $\sigma(S)=\{T\in X(A): S\subseteq T\}$.
  \item[(b)] If $Y$ us an upward closed \tps-set of ${\bf \Phi}(\mathcal{A})$, define $\varrho(Y)=\{a\in A: Y\subseteq \sigma_A(a)\}$.
\end{itemize}
\end{definition}

\

\noindent Then

\begin{lemma}\label{L48} For each \tdlat-filter $S$ of $\mathcal{A}$ and for each upward closed \tps-set $Y$ of ${\bf \Phi}(\mathcal{A})$ the following conditions hold.
\begin{itemize}
  \item[(i)] $\sigma(S)$ is an upward closed \tps-set of ${\bf \Phi}(\mathcal{A})$ and $\varrho(\sigma(S))=S$.
  \item[(ii)] $\varrho(Y)$ is a \tdlat-filter of $\mathcal{A}$ and $\sigma(\varrho(Y))=Y$.
\end{itemize}
\end{lemma}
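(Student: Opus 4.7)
The plan is to handle (i) and (ii) in turn, in each case first verifying the image lives in the right class (\tps-set or \tdlat-filter) and then checking the round-trip identity. Throughout, the key tool is Lemma \ref{L42}, which tells us that for an upward subset $Y\subseteq X(A)$, being a \tps-set is equivalent to $R_A(T)\subseteq Y$ and $R_A^{-1}(T)\subseteq Y$ for every $T\in Y$.

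For (i), I would first note that $\sigma(S)=\bigcap_{s\in S}\sigma_A(s)$, so $\sigma(S)$ is automatically closed (intersection of clopens) and upward (intersection of upward sets). To show it is a \tps-set, I verify Lemma \ref{L42}(ii): let $T\in\sigma(S)$ and $T'\in R_A(T)$, so $\G^{-1}(T)\subseteq T'$. If $s\in S$ then $\G(s)\in S\subseteq T$ because $S$ is a \tdlat-filter, hence $s\in\G^{-1}(T)\subseteq T'$; thus $S\subseteq T'$ and $R_A(T)\subseteq\sigma(S)$. The analogous argument with $\H$ in place of $\G$ gives $R_A^{-1}(T)\subseteq\sigma(S)$. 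For $\varrho(\sigma(S))=S$, the inclusion $S\subseteq\varrho(\sigma(S))$ is immediate from the definitions, and the converse reduces to the standard Birkhoff--Stone argument: if $a\notin S$ then by prime filter extension there exists $T\in X(A)$ with $S\subseteq T$ and $a\notin T$, i.e.\ $T\in\sigma(S)\setminus\sigma_A(a)$, so $a\notin\varrho(\sigma(S))$.

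For (ii), I first check $\varrho(Y)$ is a lattice filter using $\sigma_A(1)=X(A)$, $\sigma_A(a\wedge b)=\sigma_A(a)\cap\sigma_A(b)$, and the monotonicity of $\sigma_A$. To see it is a \tdlat-filter, let $a\in\varrho(Y)$ and $T\in Y$; since $Y$ is an upward \tps-set, Lemma \ref{L42}(ii) gives $R_A(T)\subseteq Y\subseteq\sigma_A(a)$, i.e.\ $T\in\G_{R_A}(\sigma_A(a))=\sigma_A(\G a)$, so $\G a\in\varrho(Y)$; the argument for $\H a$ uses (tc4) in exactly the same way. For $Y\subseteq\sigma(\varrho(Y))$: if $T\in Y$ and $a\in\varrho(Y)$ then $T\in Y\subseteq\sigma_A(a)$, so $a\in T$, giving $\varrho(Y)\subseteq T$.

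The main obstacle is the reverse inclusion $\sigma(\varrho(Y))\subseteq Y$, and this is where the Priestley structure (beyond the purely order-theoretic argument used in (i)) must be invoked. I plan to argue by contrapositive: if $T\notin Y$, then since $Y$ is a closed upward subset of the Priestley space $X(A)$ and $T\notin Y$, the Priestley separation property yields a clopen upward set $U$ with $Y\subseteq U$ and $T\notin U$. Writing $U=\sigma_A(a)$ gives $a\in\varrho(Y)$ but $a\notin T$, so $\varrho(Y)\not\subseteq T$ and $T\notin\sigma(\varrho(Y))$. This step is the most delicate because it relies on the external Priestley separation theorem rather than on the tense axioms; the compactness/closedness of $Y$ and the totally order-disconnected topology are essential and cannot be replaced by purely algebraic manipulation.
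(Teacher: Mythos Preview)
Your proof is correct and follows the same overall strategy as the paper, pivoting on Lemma~\ref{L42} for both directions. The tactical execution differs in a couple of places worth noting. For (i), the paper shows $\sigma(S)$ is a \tps-set by the computational route: using $d(s)\in S$ and Proposition~\ref{P27} it rewrites $\sigma(S)=\G_{R_A}(\sigma(S))\cap\sigma(S)\cap\H_{R_A^{-1}}(\sigma(S))$ and invokes Lemma~\ref{L42}(iii), whereas you verify Lemma~\ref{L42}(ii) by a direct element chase; your route is shorter and avoids the infinite-meet argument. For (ii), the paper shows closure under $\G$ by contradiction via Lemma~\ref{L225} (produce a prime $T'\in R_A(T)$ missing $x$), while you argue positively using $\sigma_A(\G a)=\G_{R_A}(\sigma_A(a))$ from Lemma~\ref{L312}; both ultimately rest on the same Birkhoff--Stone machinery, but yours is a cleaner one-liner. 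Finally, the paper dismisses both round-trip identities $\varrho(\sigma(S))=S$ and $\sigma(\varrho(Y))=Y$ as well-known from Priestley duality, whereas you spell out the separation argument for the latter; your explicit treatment is an improvement in exposition and correctly identifies that closedness of $Y$ is essential there.
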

\begin{proof}
$(i)$: \ Let $S$ be a \tdlat-filter of $\mathcal{A}$. It is well-known that $\sigma(S)=\displaystyle\bigcap_{s\in S}\sigma_A(s)$ is an upward closed set of  ${\bf \Phi}(\mathcal{A})$ and $\varrho(\sigma(S))=S$. Besides, since $S$ is a \tdlat-filter, we have that $d(s)\in S$, for all $s\in S$, and then it is not difficult to check that $\sigma(S)=\displaystyle\bigcap_{s\in S}\sigma_A(d s)$. Therefore, by  Proposition \ref{P27} we have
\begin{eqnarray*}
\sigma(S)  & = & \bigcap_{s\in S}\sigma_A(\G s\wedge s\wedge \H s)\\ 
           & = & \bigcap_{s\in S}\left(\G_{R_A}(\sigma_A(s))\cap \sigma_A(s)\cap \H_{R^{-1}_A}(\sigma_A(s))\right)\\
           & = & \left(\bigcap_{s\in S}\G_{R_A}(\sigma_A(s))\right)\cap \left(\bigcap_{s\in S}\sigma_A(s)\right)\cap \left(\bigcap_{s\in S}\H_{R^{-1}_A}(\sigma_A(s))\right)\\
           & = & \G_{R_A}\left(\bigcap_{s\in S}\sigma_A(s)\right)\cap \left(\bigcap_{s\in S}\sigma_A(s)\right)\cap \H_{R^{-1}_A}\left(\bigcap_{s\in S}\sigma_A(s)\right)\\ 
           & = & \G_{R_A}(\sigma(S))\cap \sigma(S)\cap \H_{R^{-1}_A}(\sigma(S))
\end{eqnarray*}
From Lemma \ref{L42}, $\sigma(S)$ is a \tps-set.\\[2mm]
$(ii)$: \ Let $Y$ be an upward closed \tps-set in ${\bf \Phi}(\mathcal{A})$. We know that $\varrho(Y)=\displaystyle\bigcap_{T\in Y}T$ is a filter of $\mathcal{A}_0$ and  $\sigma(\varrho(Y))=Y$. So, we only have to check that the filter $\varrho(Y)$ is closed by $\G$ and $\H$. Let $x\in \varrho(Y)$ and suppose that $\G x\not\in \varrho(Y)$, then there is $T\in Y$ such that $\G x\not\in T$. By Lemma \ref{L225}, there is $T'\in X(A)$ such that $T'\in R_A(T)$ y $x\not\in T'$ and by Lemma \ref{L42} (tc3), $R_A(T)\subseteq Y$.Then $x\not\in \varrho(Y)$ which is a contradiction. Similarly, using (tc4), we prove that $\H x\in \varrho(Y)$, for all $x\in \varrho(Y)$.
\end{proof}

\

\noindent Denote by $\mathcal{F}_t(A)$ the lattice of all \tdlat-filters of $\mathcal{A}$ and by $C^{\uparrow}_{t}(X(A))$ the lattice of all upward closed \tps-set of ${\bf \Phi}(\mathcal{A})$. By Lemma \ref{L48}, it is immediate the following result.

\begin{theorem}\label{T49}  $C^{\uparrow}_{t}(X(A))$ is isomorphic to the dual lattice of $\mathcal{F}_t(A)$, and the isomorphism is given by the map  $Y\mapsto \varrho(Y)$ whose  inverse map is $S\mapsto \sigma(S)$.
\end{theorem}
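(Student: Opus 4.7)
The plan is to assemble the result from Lemma \ref{L48}, which already does the heavy lifting. That lemma guarantees that the maps
\[
\sigma:\mathcal{F}_t(A)\longrightarrow C^{\uparrow}_{t}(X(A)),\qquad \varrho:C^{\uparrow}_{t}(X(A))\longrightarrow \mathcal{F}_t(A)
\]
are well-defined (the images land in the right classes), and that $\varrho\circ\sigma=\mathrm{id}_{\mathcal{F}_t(A)}$ and $\sigma\circ\varrho=\mathrm{id}_{C^{\uparrow}_{t}(X(A))}$. Hence $\sigma$ and $\varrho$ are mutually inverse bijections, and all that remains is to verify that each of them reverses the inclusion order.

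First I would check that $\sigma$ is antitone: if $S_1\subseteq S_2$ are \tdlat-filters and $T\in\sigma(S_2)$, then $S_2\subseteq T$, so $S_1\subseteq T$, i.e.\ $T\in\sigma(S_1)$; hence $\sigma(S_2)\subseteq\sigma(S_1)$. Next I would check that $\varrho$ is antitone: if $Y_1\subseteq Y_2$ are upward closed \tps-sets and $a\in\varrho(Y_2)$, then $Y_2\subseteq\sigma_A(a)$ and therefore $Y_1\subseteq\sigma_A(a)$, yielding $a\in\varrho(Y_1)$; hence $\varrho(Y_2)\subseteq\varrho(Y_1)$.

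Combining both monotonicity (in the reverse sense) properties with the fact that $\sigma$ and $\varrho$ are mutually inverse, we get that $\sigma$ is an order-reversing bijection from $\mathcal{F}_t(A)$ onto $C^{\uparrow}_{t}(X(A))$ whose inverse is $\varrho$. Equivalently, $\sigma$ and $\varrho$ establish an isomorphism between $C^{\uparrow}_{t}(X(A))$ and the dual lattice of $\mathcal{F}_t(A)$.

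There is essentially no obstacle in this proof: the nontrivial content (closure of $\sigma(S)$ under meeting the \tps-set conditions via (tc3)--(tc4), and closure of $\varrho(Y)$ under $\G$ and $\H$) has already been carried out in Lemma \ref{L48}. The only thing worth being explicit about is that the bijective correspondence is antitone, which reduces to a one-line set-theoretic observation in each direction.
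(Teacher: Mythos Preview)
Your proof is correct and follows the same approach as the paper, which simply states that the result is immediate from Lemma~\ref{L48}. You are just more explicit in spelling out the antitone property of $\sigma$ and $\varrho$, which is indeed a one-line verification in each direction.
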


\noindent It is well-known that every filter $S$ of a given bounded distributive lattice $\mathcal{L}$, the relation
\begin{equation}
\Theta_S=\{(a,b)\in A\times A: a\wedge s=b\wedge s\,\mbox{ for some }\,\,s\in S\}
\end{equation}

\noindent is a congruence of $\mathcal{L}$ and $\Theta_S=\Theta(\sigma(S))$. Besides, for each $Y$ is an upward closed set of $X(L)$ and $\Theta(Y)=\Theta_{\varrho(Y)}$.

\begin{remark} As it is shown in Lemma \ref{L48}, $\sigma(S)\in C_t(X(A))$ for all $S\in \mathcal{F}_t(A)$ and by Theorem \ref{T46}, we have that  $\Theta(\sigma(S))\in Con_t(A)$ and therefore $\Theta_S\in Con_t(A)$. \\[1.5mm]
It is worth mentioning that, from the algebraic point of view, there is no simple proof for this fact. However, using the equivalence proved in Section \ref{s2}, we have it almost immediately. This shows the importance of our development of this nice topological duality.
    
\end{remark}
 

\

\noindent Denote by $Con_{\mathcal{F}_t}(\mathcal{A})$ the lattice of all congruences of $\mathcal{A}$ determined by tense filters. From Theorems \ref{T46} and \ref{T49}, we can state.

\begin{theorem}\label{T411} $C^{\uparrow}_{t}(X(A))$ is isomorphic to the dual lattice of $Con_{\mathcal{F}_t}(\mathcal{A})$. The isomorphism is given by the map $Y\mapsto \Theta_{\varrho(Y)}$.
\end{theorem}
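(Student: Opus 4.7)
The plan is to realize the correspondence $Y\mapsto \Theta_{\varrho(Y)}$ as the composition of two maps already set up in the excerpt, namely the anti-isomorphism of Theorem~\ref{T49} followed by the natural passage from tense filters to filter-congruences.

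First, by Theorem~\ref{T49}, the assignment $\varrho\colon C^{\uparrow}_{t}(X(A))\longrightarrow \mathcal{F}_{t}(A)$ given by $Y\mapsto \varrho(Y)$ is an anti-isomorphism of lattices, with inverse $S\mapsto \sigma(S)$. It therefore suffices to show that the map $\Xi\colon \mathcal{F}_{t}(A)\longrightarrow Con_{\mathcal{F}_{t}}(\mathcal{A})$ defined by $\Xi(S)=\Theta_{S}$ is a lattice isomorphism; composing with $\varrho$ will then yield the desired anti-isomorphism $Y\mapsto \Theta_{\varrho(Y)}$.

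Second, I would establish the three properties of $\Xi$ as follows. Surjectivity is immediate from the definition of $Con_{\mathcal{F}_{t}}(\mathcal{A})$ as the image of $S\mapsto \Theta_{S}$ restricted to $\mathcal{F}_{t}(A)$. For injectivity and order-reflection, I would verify the standard identity $[1]_{\Theta_{S}}=S$ for every tense filter $S$: indeed, $a\in [1]_{\Theta_{S}}$ iff $a\wedge s=s$ for some $s\in S$, iff $s\leq a$ for some $s\in S$, iff $a\in S$ (using that $S$ is a filter). Hence $\Theta_{S}\subseteq \Theta_{T}$ forces $S=[1]_{\Theta_{S}}\subseteq [1]_{\Theta_{T}}=T$. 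Order preservation in the forward direction is a routine check. Finally, lattice operations are preserved because meets of tense filters are intersections, which match meets of congruences, and joins are obtained by applying $\Xi$ and then verifying compatibility using the explicit description of the generated tense filter from Theorem~\ref{T229}.

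Third, I would connect this composition back to Theorem~\ref{T46}. For $Y\in C^{\uparrow}_{t}(X(A))$, the remark preceding the statement gives $\Theta(Y)=\Theta_{\varrho(Y)}$; moreover $\sigma(\varrho(Y))=Y$ by Lemma~\ref{L48}(ii). This simultaneously confirms that the composition agrees with the restriction of the anti-isomorphism of Theorem~\ref{T46} to $C^{\uparrow}_{t}(X(A))$, and that $\Theta_{S}\in Con_{t}(\mathcal{A})$ for every $S\in \mathcal{F}_{t}(A)$ (since $\sigma(S)\in C^{\uparrow}_{t}(X(A))\subseteq C_{t}(X(A))$ by Lemma~\ref{L48}(i)), so $Con_{\mathcal{F}_{t}}(\mathcal{A})$ is indeed a sublattice of $Con_{t}(\mathcal{A})$.

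The step I expect to require the most care is not any single calculation but the bookkeeping of directions: $\varrho$ reverses order while $\Xi$ preserves it, so the composition is an anti-isomorphism. The only genuinely non-trivial point is the argument that $\Xi$ is surjective onto \emph{tense} congruences rather than merely onto lattice congruences determined by the underlying filter; this is precisely what the remark guarantees, since $\sigma(S)$ is a \tps-set and hence $\Theta_{S}=\Theta(\sigma(S))\in Con_{t}(\mathcal{A})$ by Theorem~\ref{T46}.
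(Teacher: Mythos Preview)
Your proposal is correct and follows the same approach the paper (tersely) indicates: the paper simply cites Theorems~\ref{T46} and~\ref{T49} without further detail, and your argument supplies exactly the missing steps, namely that $\Xi\colon S\mapsto\Theta_S$ is an order-isomorphism from $\mathcal{F}_t(A)$ onto $Con_{\mathcal{F}_t}(\mathcal{A})$ (via $[1]_{\Theta_S}=S$) and that the composite $\Xi\circ\varrho$ coincides with the restriction of the anti-isomorphism of Theorem~\ref{T46}. One small simplification: once you know $\Xi$ is an order-bijection, preservation of meets and joins is automatic, so the appeal to Theorem~\ref{T229} for joins is unnecessary.
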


\

Analogously, we can establish a correspondence between \tdlat-ideals and downward closed \tps-sets.

\begin{definition} 
\begin{itemize}
\item[]
  \item[(a)] For each \tdlat-ideal $I$ of $\mathcal{A}$, define $\sigma(I)=\{T\in X(A): T\cap I=\emptyset\}$.
  \item[(b)] For each downward closed \tps-set $Z$ of ${\bf \Phi}(\mathcal{A})$, define $\varrho(Z)=\displaystyle\left(\bigcup_{T\in Z}T\right)^c$.
\end{itemize}
\end{definition}

\noindent Denote by $\mathcal{I}_t(A)$ the lattice of all \tdlat-ideals of $\mathcal{A}$  and by $C^{\downarrow}_{t}(X(A))$ the lattice of all downward closed \tps-sets of ${\bf \Phi}(\mathcal{A})$. Then

\begin{theorem}\label{T413}  $C^{\downarrow}_{t}(X(A))$ is isomorphic to the dual lattice of $\mathcal{I}_t(A)$ and the isomorphism is given by the map $Z\mapsto \varrho(Z)$ whose inverse map is $I\mapsto \sigma(I)$.
\end{theorem}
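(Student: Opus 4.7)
The plan is to mirror the filter/upward-set correspondence established in Lemma \ref{L48} and Theorem \ref{T49}, working with the dual operator $\hat{d}$ and exploiting the fact that complements of prime filters are prime ideals. The proof reduces to three standard steps: (a) the assignments are well-defined, (b) they invert each other, (c) both reverse order.

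\emph{Well-definedness.} Given a \tdlat-ideal $I$, I would first write
\[
\sigma(I)=\bigcap_{i\in I}\bigl(\sigma_A(i)\bigr)^c,
\]
which is clearly closed and downward-directed in $X(A)$. To show $\sigma(I)$ is a \tps-set I would use Lemma \ref{L235}: since $\hat{d}(i)\in I$ for every $i\in I$, we may replace each $i$ by $\hat{d}(i)=\F i\vee i\vee\P i$, obtaining
\[
\sigma(I)=\bigcap_{i\in I}\bigl(\sigma_A(\hat{d}\,i)\bigr)^c=\bigcap_{i\in I}\Bigl(\G_{R_A}(\sigma_A(i)^c)\cap\sigma_A(i)^c\cap\H_{R_A^{-1}}(\sigma_A(i)^c)\Bigr),
\]
using that $\F_R(Y)^c=\G_R(Y^c)$ and $\P_{R^{-1}}(Y)^c=\H_{R^{-1}}(Y^c)$. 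Because $\G_{R_A}$ and $\H_{R_A^{-1}}$ commute with arbitrary intersections, this rearranges to $\sigma(I)=\G_{R_A}(\sigma(I))\cap\sigma(I)\cap\H_{R_A^{-1}}(\sigma(I))$, which by (the downward version of) Lemma \ref{L42} gives that $\sigma(I)$ is a \tps-set. For the other direction, given a downward closed \tps-set $Z$, the set $\varrho(Z)=\bigcap_{T\in Z}T^c$ is an intersection of prime ideals, hence an ideal. To see it is a \tdlat-ideal, let $x\in\varrho(Z)$ and suppose $\F x\in T$ for some $T\in Z$. By Lemma \ref{L225}(iii) there is $T'\in R_A(T)$ with $x\in T'$; by Lemma \ref{L42} applied to downward $Z$ we have $R_A(T)\subseteq Z$, so $T'\in Z$ and hence $x\notin\varrho(Z)$, a contradiction. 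The argument for $\P$ is identical, using Lemma \ref{L225}(iv) and the fact that $R_A^{-1}(T)\subseteq Z$.

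\emph{Mutual inversion.} For $\varrho(\sigma(I))=I$, the inclusion $I\subseteq\varrho(\sigma(I))$ is immediate. For the reverse, if $x\notin I$ then since $I$ is a downward set the filter $\mathcal{F}(\{x\})$ is disjoint from $I$; by the Birkhoff-Stone Theorem there exists a prime filter $T$ with $x\in T$ and $T\cap I=\emptyset$, so $T\in\sigma(I)$ and hence $x\notin\varrho(\sigma(I))$. For $\sigma(\varrho(Z))=Z$, the inclusion $Z\subseteq\sigma(\varrho(Z))$ is obvious. For the converse, if $T\notin Z$ then (as $Z$ is a closed down-set in the Priestley space $X(A)$) there is a clopen up-set $U=\sigma_A(x)$ with $T\in U$ and $Z\cap U=\emptyset$; thus $x\in T$ and $x\in T'{}^c$ for every $T'\in Z$, so $x\in T\cap\varrho(Z)$, which yields $T\notin\sigma(\varrho(Z))$.

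\emph{Anti-isomorphism.} Both $\sigma$ and $\varrho$ are plainly order-reversing: $I\subseteq I'$ makes fewer prime filters disjoint from $I'$, and $Z\subseteq Z'$ enlarges the union whose complement is taken. Combined with the previous step, the maps form a bijection that reverses inclusion, which gives the claimed anti-isomorphism between $C^{\downarrow}_{t}(X(A))$ and $\mathcal{I}_t(A)$.

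The only slightly delicate point is the use of the Priestley basis in the step $\sigma(\varrho(Z))\subseteq Z$; everywhere else the argument is the formal dual of the proofs of Lemma \ref{L48} and Theorem \ref{T49}, with $\hat{d}$, $\F_R$, $\P_{R^{-1}}$, and downward closure playing the roles of $d$, $\G_R$, $\H_{R^{-1}}$, and upward closure, respectively. I do not expect a genuine obstacle beyond carefully tracking these dualizations.
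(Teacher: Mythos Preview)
Your proposal is correct and follows exactly the approach the paper intends: the paper does not spell out a proof of Theorem~\ref{T413} but introduces it with ``Analogously, we can establish a correspondence between \tdlat-ideals and downward closed \tps-sets'', relying on the reader to dualize Lemma~\ref{L48} and Theorem~\ref{T49}. You have carried out precisely this dualization, replacing $d$ by $\hat d$, $\G_{R_A},\H_{R_A^{-1}}$ by their conjugates $\F_{R_A},\P_{R_A^{-1}}$, and upward closed sets by downward ones, and your handling of the only non-formal point (the Priestley separation argument for $\sigma(\varrho(Z))\subseteq Z$) is sound.
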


\noindent It is well-known that for every ideal $I$ of a bounded distributive lattice $\mathcal{L}$, the relation 
\begin{equation}
\Theta_I=\{(a,b)\in A\times A: a\vee i=b\vee i\,\mbox{ for some }\,\,i\in I\}
\end{equation}
is a congruence of $\mathcal{L}$. Besides, $\Theta_I=\Theta(\sigma(I))$ and, for every downward closed set $Z$ of $X(L)$, it holds \ $\Theta(Z)=\Theta_{\varrho(Z)}$.
Denote by  $Con_{\mathcal{I}_t}(\mathcal{A})$ the lattice of all \tdlat-congruences determined by \tdlat-ideals, by Theorems \ref{T46} and \ref{T413} we have

\begin{theorem}\label{T414}  $C^{\downarrow}_{t}(X(A))$ is isomorphic to the dual lattice of $Con_{\mathcal{I}_t}(\mathcal{A})$. The isomorphism is given by  $Z\mapsto \Theta_{\varrho(Z)}$.
\end{theorem}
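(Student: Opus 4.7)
The plan is to derive this result as a direct consequence of the two preceding isomorphism theorems, exploiting the identities $\Theta_I=\Theta(\sigma(I))$ and $\Theta(Z)=\Theta_{\varrho(Z)}$ recalled in the paragraph just before the statement. In essence, Theorem \ref{T413} gives an anti-isomorphism $C^{\downarrow}_{t}(X(A))\to \mathcal{I}_t(A)$, and what remains is to observe that $I\mapsto \Theta_I$ identifies $\mathcal{I}_t(A)$ with $Con_{\mathcal{I}_t}(\mathcal{A})$ in an order-preserving way.

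First, I would verify that the map $Z\mapsto \Theta_{\varrho(Z)}$ really lands in $Con_{\mathcal{I}_t}(\mathcal{A})$. By Theorem \ref{T413}, $\varrho(Z)$ is a \tdlat-ideal whenever $Z\in C^{\downarrow}_{t}(X(A))$, so $\Theta_{\varrho(Z)}$ is a congruence of $\mathcal{A}_0$ determined by a \tdlat-ideal; using $\Theta_{\varrho(Z)}=\Theta(Z)$ together with Theorem \ref{T46} (applied to the closed \tps-set $Z$) shows that this congruence is actually tense, hence belongs to $Con_{\mathcal{I}_t}(\mathcal{A})$.

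Next, I would check surjectivity and injectivity. Surjectivity is immediate from the definition of $Con_{\mathcal{I}_t}(\mathcal{A})$: any such congruence has the form $\Theta_I$ for some $I\in\mathcal{I}_t(A)$, and taking $Z=\sigma(I)$ — which lies in $C^{\downarrow}_{t}(X(A))$ by Theorem \ref{T413} — gives $\Theta_{\varrho(Z)}=\Theta_{\varrho(\sigma(I))}=\Theta_I$, where the last step uses that $\varrho\circ\sigma$ is the identity on $\mathcal{I}_t(A)$ (the ideal analogue of Lemma \ref{L48}, encoded in Theorem \ref{T413}). For injectivity, if $\Theta_{\varrho(Z_1)}=\Theta_{\varrho(Z_2)}$, rewrite each side as $\Theta(Z_i)$ and invoke the injectivity half of Theorem \ref{T46} to conclude $Z_1=Z_2$.

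Finally, the map is order-reversing: $Z_1\subseteq Z_2$ yields $\varrho(Z_2)\subseteq \varrho(Z_1)$ by Theorem \ref{T413}, and from the definition of $\Theta_I$ one sees directly that $I\mapsto \Theta_I$ is inclusion-preserving, so $\Theta_{\varrho(Z_2)}\subseteq \Theta_{\varrho(Z_1)}$. Together with bijectivity this gives the claimed lattice anti-isomorphism. There is no hard step here; the only real bookkeeping is to align the two formulations $\Theta(Z)$ and $\Theta_{\varrho(Z)}$ so that the anti-isomorphism of Theorem \ref{T46}, restricted to $C^{\downarrow}_{t}(X(A))$, factors through $\mathcal{I}_t(A)$ as required.
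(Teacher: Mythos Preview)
Your proposal is correct and follows exactly the approach the paper indicates: the theorem is stated as an immediate consequence of Theorems \ref{T46} and \ref{T413} together with the identities $\Theta_I=\Theta(\sigma(I))$ and $\Theta(Z)=\Theta_{\varrho(Z)}$, and you have simply spelled out the bookkeeping (well-definedness, bijectivity, order-reversal) that the paper leaves implicit.
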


\

\noindent We end this section summing up the most relevant relations obtained using the equivalence proved in  Section \ref{s3}.

\begin{eqnarray*}
    \tdlat-algebras & \longleftrightarrow & \tps-spaces\\
    \tdlat-homomorphisms  & \longleftrightarrow & \tps-functions\\
    \tdlat-filters & \longleftrightarrow & \tps-upward\,\,\,closed\,\,\,sets\\
    \tdlat-ideals & \longleftrightarrow & \tps-downward\,\,\,closed\,\,\,sets
\end{eqnarray*}  

\ 

\noindent Besides, we have established the following isomorphisms.

\begin{eqnarray*}
    Con_t(\mathcal{A}) & \longleftrightarrow & C_t(\mathfrak{X}(\mathcal{A}))\\
    Con_{\mathcal{F}_t}(\mathcal{A}) & \longleftrightarrow & C^{\uparrow}_t(\mathfrak{X}(\mathcal{A}))\\
    Con_{\mathcal{I}_t}(\mathcal{A}) & \longleftrightarrow & C^{\downarrow}_t(\mathfrak{X}(\mathcal{A}))    
\end{eqnarray*}  

\

\subsection{Simple and subdirectly irreducible objects in \tdlat}
Now, we apply the results obtained in the previous subsection in order to give a characterization of simple and subdirectly irreducible tense distributive lattices.

\begin{theorem}\label{T415} Let $\mathcal{A}$ be a \tdlat-algebra and let ${\bf \Phi}(\mathcal{A})$ be its associated \tps-space. The following conditions are equivalent.
\begin{itemize}
  \item[(i)] $\mathcal{A}$ is a simple \tdlat-algebra.
  \item[(ii)] $C_{t}(X(A))=\{\emptyset,X(A)\}$.
\end{itemize}
\end{theorem}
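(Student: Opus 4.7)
The plan is to derive this characterization as an immediate consequence of Theorem \ref{T46}, which already establishes an anti-isomorphism $\varphi: C_{t}(X(A)) \longrightarrow Con_t(\mathcal{A})$ given by $\varphi(Y) = \Theta(Y)$. Since an algebra is simple exactly when its congruence lattice consists of only the diagonal $\Delta_A$ and the total congruence $A \times A$, the statement will follow once I identify which \tps-sets map to these two distinguished congruences under $\varphi$.

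First I would compute $\Theta(\emptyset)$ and $\Theta(X(A))$ directly from the definition in equation (13). For $Y = \emptyset$ we have $\sigma_A(a) \cap \emptyset = \emptyset = \sigma_A(b) \cap \emptyset$ for all $a,b \in A$, so $\Theta(\emptyset) = A \times A$. For $Y = X(A)$ we get $\Theta(X(A)) = \{(a,b) : \sigma_A(a) = \sigma_A(b)\}$, and since $\sigma_A$ is injective (by Lemma \ref{L312}), this equals $\Delta_A$. Thus the two trivial \tps-sets correspond precisely to the two trivial tense congruences.

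To finish, suppose $\mathcal{A}$ is simple. Given any $Y \in C_{t}(X(A))$, the congruence $\Theta(Y)$ is either $\Delta_A$ or $A \times A$, and since $\varphi$ is a bijection, $Y$ must be either $X(A)$ or $\emptyset$. Conversely, if $C_{t}(X(A)) = \{\emptyset, X(A)\}$, then the anti-isomorphism $\varphi$ forces $Con_t(\mathcal{A}) = \{\Delta_A, A \times A\}$, so $\mathcal{A}$ is simple. No obstacle arises, since the heavy lifting (that $\Theta$ is an anti-isomorphism onto $Con_t(\mathcal{A})$) is already done; the only care needed is the bookkeeping that $\emptyset$ and $X(A)$ both lie in $C_t(X(A))$, which is trivial from Definition \ref{DTC} (both conditions (tc1) and (tc2) are vacuously satisfied).
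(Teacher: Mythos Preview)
Your proposal is correct and matches the paper's approach: the paper states Theorem \ref{T415} without an explicit proof, treating it as an immediate consequence of the anti-isomorphism in Theorem \ref{T46}, which is precisely the argument you spell out. Your identifications $\Theta(\emptyset)=A\times A$ and $\Theta(X(A))=\Delta_A$ and the verification that $\emptyset,X(A)\in C_t(X(A))$ are exactly the routine bookkeeping needed to close the gap.
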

\begin{remark} In the \tdlat-algebra of Example \ref{ej2} one can verify that $C_{t}(X(A))=\{\emptyset,X(A)\}$ and, by Theorem \ref{T415}, it is a simple algebra.  
\end{remark}

\begin{theorem}\label{T417} Let $\mathcal{A}$ be a \tdlat-algebra and let ${\bf \Phi}(\mathcal{A})$ be its associated \tps-space. The following conditions are equivalent.
\begin{itemize}
  \item[(i)] $\mathcal{A}$ is a subdirectly irreducible  \tdlat-algebra.
  \item[(ii)] There is $Z\in C_{t}(X(A))\setminus\{X(A)\}$ such that $Y\subseteq Z$ for every $Y\in C_{t}(X(A))\setminus\{X(A)\}$.
\end{itemize}
\end{theorem}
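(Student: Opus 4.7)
The plan is to obtain the theorem as an immediate consequence of Theorem~\ref{T46}, which asserts that $\varphi : C_{t}(X(A)) \to Con_t(\mathcal{A})$, $Y \mapsto \Theta(Y)$, is a dual lattice isomorphism. First I would identify the images of the two extremal elements of $C_{t}(X(A))$ under $\varphi$: the top $X(A)$ corresponds to the identity congruence $\Delta_A$, because $(a,b) \in \Theta(X(A))$ iff $\sigma_A(a) = \sigma_A(b)$, and $\sigma_A$ is injective by Lemma~\ref{L312}; dually, $\emptyset$ corresponds to the total congruence $\nabla_A$.

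Next I would invoke the standard universal-algebraic characterization: $\mathcal{A}$ is subdirectly irreducible iff $Con_t(\mathcal{A})$ has a monolith, i.e., a least element in $Con_t(\mathcal{A}) \setminus \{\Delta_A\}$. Since $\varphi$ is an order-reversing bijection that sends the top $X(A)$ to the bottom $\Delta_A$, such a least non-trivial congruence exists if and only if there is a greatest element in $C_{t}(X(A)) \setminus \{X(A)\}$, which is exactly condition (ii). Combining these observations gives the equivalence (i) $\Leftrightarrow$ (ii).

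I do not expect any substantive obstacle: all of the heavy lifting is already contained in Theorem~\ref{T46} (the closure of tps-sets under intersection and union, together with the bijection between $C_{t}(X(A))$ and $Con_t(\mathcal{A})$). The only point requiring explicit verification is the identification $\Theta(X(A)) = \Delta_A$, which rests on the injectivity of $\sigma_A$. The rest of the argument is a purely formal transfer along the anti-isomorphism, entirely parallel to the proof of Theorem~\ref{T415} for simple algebras, with ``$C_{t}(X(A)) = \{\emptyset, X(A)\}$'' replaced by the weaker condition that $C_{t}(X(A)) \setminus \{X(A)\}$ admits a maximum.
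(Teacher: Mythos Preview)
Your proposal is correct and follows exactly the approach the paper intends: Theorem~\ref{T417} is stated in the paper without an explicit proof, as an immediate consequence of the anti-isomorphism of Theorem~\ref{T46} together with the standard universal-algebraic characterization of subdirect irreducibility via the monolith. Your identification of $\Theta(X(A))=\Delta_A$ via the injectivity of $\sigma_A$ and the subsequent transfer along the order-reversing bijection is precisely the argument that fills in the omitted proof.
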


\begin{lemma}\label{L418} For every \tdlat-algebra $\mathcal{A}$, the following conditions are equivalent.
\begin{itemize}
    \item[(a)] For every $X\in \mathcal{P}(A)\setminus\{\emptyset,\{0\},\{1\}\}$ and every  $a\in A$, there are $x_1,\hdots,x_{n_a},y_1,\hdots, y_{m_a}\in X$ and $p_a, q_a\in\omega$ such that $\displaystyle d^{p_a}\left(\bigwedge_{i=1}^{n_a}x_i\right)\leq a\leq\displaystyle \hat{d}^{p_a}\left(\bigvee_{j=1}^{m_a}y_j\right)$.
    \item[(b)] For every $a\in A\setminus\{0,1\}$ there are $p_a, q_a\in \omega$ such that $d^{p_a} a= 0$ and $\hat{d}^{q_a} a =1$. 
    \item[(c)] $\mathcal{F}_t(A)=\{A,\{1\}\}$ and \,$\mathcal{I}_t(A)=\{A,\{0\}\}$
    \item[] If $\mathcal{A}$ is a finite algebra, (a),(b) and (c) are equivalent to
    \item[(d)] $A^d=\{0,1\}$
\end{itemize}
\end{lemma}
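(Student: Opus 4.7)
The plan is to prove the cycle (a)$\Leftrightarrow$(b)$\Leftrightarrow$(c), and then (b)$\Leftrightarrow$(d) under the finiteness hypothesis. The principal tools are Corollary~\ref{C232} (the shape of the principal \tdlat-filter $\langle a\rangle$ and its dual for $\langle a\rangle^{\partial}$), Lemma~\ref{L227} together with Lemma~\ref{L235} (closure of \tdlat-filters/ideals under $d^n$ and $\hat{d}^n$), and the monotonicity properties of $d^n,\hat{d}^n$ from Proposition~\ref{P28}.

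For (a)$\Rightarrow$(b), given $a\in A\setminus\{0,1\}$, I would apply (a) to the singleton $X=\{a\}$, which is admissible precisely because $a\neq 0,1$. Choosing the ``target'' element in (a) to be $0$ yields some $p\in\omega$ with $d^{p}(a)\leq 0$, i.e.\ $d^{p}a=0$; choosing it to be $1$ yields some $q$ with $1\leq \hat{d}^{q}(a)$, i.e.\ $\hat{d}^{q}a=1$. Conversely, for (b)$\Rightarrow$(a), fix $X\in\mathcal{P}(A)\setminus\{\emptyset,\{0\},\{1\}\}$ and $a\in A$. Since $X\neq\emptyset$ and $X\neq\{1\}$, pick $x\in X$ with $x\neq 1$: either $x=0$ (so $d^{0}x=0\leq a$) or (b) supplies $p_x$ with $d^{p_x}x=0\leq a$. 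Dually, pick $y\in X$ with $y\neq 0$ and use $\hat{d}^{q_y}y=1\geq a$. This gives both inequalities required by (a).

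For (b)$\Leftrightarrow$(c), the bridge is Corollary~\ref{C232}: $\langle a\rangle = A$ if and only if $0\in\langle a\rangle$, which happens iff $d^{p}a\leq 0$ for some $p$, i.e.\ $d^{p}a=0$. Assuming (c), if $a\in A\setminus\{0,1\}$ then $\langle a\rangle$ is a \tdlat-filter containing $a\neq 1$, so $\langle a\rangle\neq\{1\}$, forcing $\langle a\rangle=A$ and hence $d^{p_a}a=0$; the dual argument for $\langle a\rangle^{\partial}$ gives $\hat{d}^{q_a}a=1$. Assuming (b), let $S\in\mathcal{F}_t(A)$ with $S\neq\{1\}$; then $S$ contains some $a\neq 1$, and by Lemma~\ref{L227} together with (b) we get $0=d^{p_a}a\in S$, so $S=A$. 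The ideal clause in (c) is proved symmetrically using Lemma~\ref{L235}.

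Finally, under the finiteness assumption, (b)$\Rightarrow$(d) is immediate: any $x\in A^{d}$ satisfies $d^{n}x=x$ for all $n$, so if $x\notin\{0,1\}$ then $d^{p_x}x=0$ would give $x=0$, a contradiction. For (d)$\Rightarrow$(b), by $(d_2)$ the chain $\{d^{n}a\}_{n\in\omega}$ is descending, so by finiteness it stabilises at some $d^{N}a$, which is then a $d$-invariant element. Since $d^{N}a\leq a$ and $A^{d}=\{0,1\}$, for $a\neq 1$ we must have $d^{N}a=0$; the $\hat{d}$-part uses $(\hat{d}_2)$ and is entirely dual. The main obstacle is not any single hard step but the careful bookkeeping required to ensure the subset $X$ chosen in (a)$\Leftrightarrow$(b) avoids the excluded cases $\emptyset,\{0\},\{1\}$, and to keep the filter/ideal dualities straight; the mathematical content is really packaged into Corollary~\ref{C232} and Proposition~\ref{P28}.
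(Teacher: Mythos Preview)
Your proof is correct. The overall logical structure differs slightly from the paper's, though the underlying ideas are the same.

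The paper establishes the implications as a cycle (a)$\Rightarrow$(b)$\Rightarrow$(c)$\Rightarrow$(a); in particular, it deduces (a) from (c) by invoking Corollary~\ref{C230} and Lemma~\ref{L236} (the descriptions of $\langle X\rangle$ and $\langle X\rangle^{\partial}$) to conclude that $\langle X\rangle=\langle X\rangle^{\partial}=A$ and then read off the required elements and exponents. You instead close the equivalence by proving (b)$\Rightarrow$(a) directly, picking a single witness $x\in X$ with $x\neq 1$ (respectively $y\neq 0$), which is more elementary and avoids the generated filter/ideal machinery entirely. For the finite case, the paper quotes Corollary~\ref{C233} (the isomorphism $A^{d}\cong$ dual of $\mathcal{F}_t(A)$) to identify (d) with (c), whereas your (d)$\Rightarrow$(b) is again more hands-on, using stabilization of the descending chain $(d^{n}a)_n$ to land in $A^{d}$. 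Both approaches are perfectly fine; yours is a bit more self-contained, the paper's reuses structural results already proved.

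One tiny cosmetic point (shared with the paper): in (b)$\Rightarrow$(c), when you pick $a\in S$ with $a\neq 1$ and invoke (b), you should note the trivial case $a=0$ separately, since (b) is stated only for $a\in A\setminus\{0,1\}$; of course $0\in S$ already gives $S=A$.
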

\begin{proof}
\begin{itemize}
    \item[]
    \item[] (a) implies (b): Let $x\in A\setminus\{0,1\}$. It is enough to consider  $X=\{x\}$, and choosing $a=0$ we have that there is $p_a\in \omega$ such that $d^{p_a} a = 0$. Besides, choosing $a=1$ there is $q_a\in \omega$ such that $\hat{d}^{q_a} a =1$.
    \item[] (b) implies (c): Let $S\in \mathcal{F}_t(A)$ such that $S\neq\{1\}$. Then, there is $a\in S$ such that $a\neq1$ and, by hypothesis, there is $p_a\in \omega$ such that $d^{p_a} a=0$. Since $S$ is closed by $d$  we have $S=A$. Similarly, taking $I\in \mathcal{I}_t(A)$ such that $I\neq\{0\}$ we have $I=A$. 
    \item[] (c) implies (a): Let $X\in \mathcal{P}(A)\setminus\{\emptyset,\{0\},\{1\}\}$ and $a\in A$. Since $X\neq\{1\}$ and taking $\langle X\rangle$, the \tdlat-filter generated by $X$, we have that $\langle X\rangle=A$ and then  $a\in \langle X\rangle$. By Corollary \ref{C230}, there are $x_1,\hdots,x_{n_a}\in X$ and $p_a\in\omega$ such that $\displaystyle d^{p_a}\left(\bigwedge_{i=1}^{n_a}x_i\right)\leq a$. Analogously, since $X\neq\{0\}$ and considering $\langle X\rangle^{\partial}$, the \tdlat-ideal generated by $X$, we have $\langle X\rangle^{\partial}=A$. Then, by Lemma \ref{L236}, there are $y_1,\hdots, y_{m_a}\in X$ and $q_a\in\omega$ such that $ a\leq\displaystyle \hat{d}^{p_a}\left(\bigvee_{j=1}^{m_a}y_j\right)$. 
    \item[] Finally, if $\mathcal{A}$ es finite, from Corollary \ref{C233} we have that (d) is equivalent to (c) and, therefore, equivalent to (a) and (b).
\end{itemize}
\end{proof}

\begin{lemma}\label{L419} Let $\mathcal{A}$ be a simple \tdlat-algebra, then it satisfies conditions (a)-(d) from Lemma \ref{L418}.
\end{lemma}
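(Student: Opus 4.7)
The plan is to exploit the dualities already established in the chapter rather than to argue algebraically. By Lemma \ref{L418}, conditions (a)--(d) are equivalent (with (d) requiring finiteness), so it suffices to establish condition (c), namely $\mathcal{F}_t(A)=\{A,\{1\}\}$ and $\mathcal{I}_t(A)=\{A,\{0\}\}$. I will derive this from the characterization of simplicity given in Theorem \ref{T415}.

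First I would invoke Theorem \ref{T415}: since $\mathcal{A}$ is simple, $C_{t}(X(A))=\{\emptyset, X(A)\}$. The key observation is that the families of upward and downward closed \tps-sets satisfy
$$C^{\uparrow}_{t}(X(A))\subseteq C_{t}(X(A)) \quad \text{and} \quad C^{\downarrow}_{t}(X(A))\subseteq C_{t}(X(A)),$$
and that both $\emptyset$ and $X(A)$ trivially belong to each (conditions (tc1) and (tc2) are vacuous on $\emptyset$ and automatic on $X(A)$, and they are clearly upward and downward closed). Hence
$$C^{\uparrow}_{t}(X(A))=C^{\downarrow}_{t}(X(A))=\{\emptyset,X(A)\}.$$

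Next I would apply the anti-isomorphism of Theorem \ref{T49}, $Y\mapsto \varrho(Y)$, from $C^{\uparrow}_{t}(X(A))$ onto $\mathcal{F}_t(A)$. Computing the two values, $\varrho(\emptyset)=\{a\in A: \emptyset\subseteq \sigma_A(a)\}=A$, while $\varrho(X(A))=\{a\in A: X(A)\subseteq \sigma_A(a)\}=\{a\in A:\sigma_A(a)=X(A)\}=\{1\}$ by injectivity of $\sigma_A$ (Lemma \ref{L312}). Therefore $\mathcal{F}_t(A)=\{A,\{1\}\}$. An entirely analogous argument using Theorem \ref{T413} yields $\mathcal{I}_t(A)=\{A,\{0\}\}$. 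This establishes condition (c), and by Lemma \ref{L418} conditions (a) and (b) follow as well; condition (d) follows in the finite case.

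I do not anticipate a real obstacle: every step is a direct invocation of results already proved, and the only mild subtlety is checking that $\emptyset$ and $X(A)$ qualify as upward (respectively downward) closed \tps-sets, which is immediate from Definition \ref{DTC}. The proof is thus essentially a one-line consequence of the topological duality, which nicely illustrates the usefulness of the representation developed in Section \ref{s3}.
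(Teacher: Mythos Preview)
Your proof is correct and follows essentially the same route as the paper's: both invoke Theorem~\ref{T415} to obtain $C_t(X(A))=\{\emptyset,X(A)\}$ and then appeal to the filter/ideal dualities of Theorems~\ref{T49} and~\ref{T413}. The only cosmetic difference is that the paper establishes condition~(a) first (showing $\langle X\rangle=A=\langle X\rangle^{\partial}$ for any nontrivial $X\subseteq A$), whereas you go directly to condition~(c) via the anti-isomorphism $\varrho$; since Lemma~\ref{L418} makes these equivalent, the two arguments are interchangeable.
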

\begin{proof} From Theorem \ref{T415} we know that $C_{t}(X(A))=\{\emptyset,X(A)\}$. Let us see that (a) from Lemma \ref{L418} holds. \\[2mm]
Let $X\in \mathcal{P}(A)\setminus\{\emptyset,\{0\},\{1\}\}$ and $a\in A$. On the one side, since $X\neq\emptyset$ and $X\neq\{1\}$, the \tdlat-filter generated by $X$ verifies $\{1\}\subset \langle X\rangle$ and, by Theorem \ref{T49}, we have $\sigma(\langle X\rangle)\subset \sigma(\{1\})=X(A)$ and $\sigma(\langle X\rangle)\in C_t(X(A))$. Therefore, $\sigma(\langle X\rangle)=\emptyset=\sigma(A)$ and then  $\langle X\rangle=A$. On the other side, since $X\neq\emptyset$ and $X\neq\{0\}$, similarly to the above, using Theorem \ref{T413} we have that the \tdlat-ideal generated by $X$ verifies $\langle X\rangle^{\partial}=A$. Then, $a\in \langle X\rangle$ and $a\in \langle X\rangle^{\partial}$.\\[2mm]
Finally, notice that condition (a) imply (d). 
\end{proof}

\noindent From Lemma \ref{L419}, we can state the next simple test to determine when a \tdlat-algebra is not simple. 
\begin{corollary} If $A^d\neq\{0,1\}$ then $\mathcal{A}$ is not simple.
\end{corollary}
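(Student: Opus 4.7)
The plan is to obtain the result as a direct contrapositive of (the relevant portion of) Lemma \ref{L419}, using in particular condition (b) of Lemma \ref{L418}, which holds in every simple \tdlat-algebra.

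First I would assume, toward a contradiction, that $\mathcal{A}$ is simple while $A^d \neq \{0,1\}$. Since $0$ and $1$ always lie in $A^d$ (by Proposition \ref{P28} $(d_1)$), the assumption gives some $a \in A^d$ with $a \notin \{0,1\}$. In particular, $d(a) = a$, and hence by an easy induction $d^n(a) = a$ for all $n \in \omega$.

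Next I would invoke Lemma \ref{L419}: since $\mathcal{A}$ is simple, condition (b) of Lemma \ref{L418} holds, so for the element $a \in A \setminus \{0,1\}$ there exists $p_a \in \omega$ with $d^{p_a}(a) = 0$. Combining this with $d^{p_a}(a) = a$ from the previous step yields $a = 0$, contradicting $a \notin \{0,1\}$. Hence $A^d \subseteq \{0,1\}$, so $A^d = \{0,1\}$, and the contrapositive gives the claim.

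There is essentially no obstacle here: the whole content sits in Lemma \ref{L419} (which in turn rests on the topological characterization of simplicity in Theorem \ref{T415} and the correspondence between \tdlat-filters/ideals and upward/downward closed \tps-sets). The only small point to be careful about is to use the equivalent formulation (b) of Lemma \ref{L418} rather than (d), because (d) was only asserted to be equivalent to (a)--(c) in the finite case, whereas (b) is available in general and is precisely what is needed to contradict $d$-invariance of a nontrivial element.
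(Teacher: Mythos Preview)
Your proof is correct and is exactly the contrapositive of Lemma \ref{L419}, which is the paper's intended route (the corollary is stated with no proof, immediately after \ref{L419}). Your caution about using condition (b) rather than (d) is unnecessary: the last line of the proof of Lemma \ref{L419} explicitly notes that (a) implies (d) in general, so (d) is available for arbitrary simple \tdlat-algebras, not only finite ones; but going through (b) as you did is of course equally valid and amounts to the same computation.
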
 


\subsection{Particular cases}\label{subsectPC}

We are going to describe the \tps-space associated to a \tdlat-algebra when its underlying lattice is a:
\begin{itemize}
    \item[(I)] Boolean algebra,
    \item[(II)] Heyting algebra, and
    \item[(III)] De Morgan algebra.
\end{itemize}

\begin{remark} It is not difficult to check that the category of tense algebras along with their homomorphisms, is a full subcategory of \tdlat. 
\end{remark}

\noindent{\bf (I)} Let $\mathcal{B}=\langle \mathcal{B}_0,\G,\H,\F,\P\rangle$ be a \tdlat-algebra where $\mathcal{B}_0$ is a Boolean algebra and ${\bf \Phi(\mathcal{B})}=(X(B),R_B)$ its associated \tps-space.  
\begin{itemize}
    \item In the Priestley space $X(B)$ the order $\subseteq$ is the trivial one. Therefore, $U\subseteq X(B)$ is upward (and downward). Besides, the topology  $\tau_{Pr}$ coincides with the Stone topology, that is, the topology with the subbase  $\Sigma=\{\sigma(a)\}_{a\in B}$. Therefore, $X(B)$ is a Boolean space.
    \item In this case, for all $S,T\in X(B)$ it is verified: 
    $$\G^{-1}(S)\subseteq T\,\,\, \mbox{ iff } \,\,\,T\subseteq \F^{-1}(S)\,\,\,  \mbox{ iff } \,\,\,\H^{-1}(T)\subseteq S\,\,\, \mbox{ iff } \,\,\,S\subseteq \P^{-1}(T)$$
    therefore, the relation $R_B$ is such that : $(S,T)\in R_B\,\,\, \mbox{ iff } \,\,\,\G^{-1}(S)\subseteq T$. Here, we re-obtain results in the literature (for instance, in \cite{GO, K}).
    \item The family of closed \tps-sets verifies: $C_t(X(B))=C^{\uparrow}_t(X(B))=C^{\downarrow}_t(X(B))$. Then, from Theorems  \ref{T46} and \ref{T49} we have the following well-known result.

\begin{theorem}\label{T422} The lattice $Con_t(\mathcal{B})$ of \tps-congruences of  $\mathcal{B}$ is isomorphic to the dual of the lattice of \tdlat-filters of $\mathcal{B}$.
\end{theorem}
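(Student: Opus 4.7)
The approach is to invoke the general dualities from Theorems \ref{T46} and \ref{T49} and collapse them in the Boolean setting. Since $\mathcal{B}_0$ is a Boolean algebra, the Priestley order on $X(B)$ is the identity relation; hence every subset of $X(B)$ is simultaneously upward and downward closed. Under this triviality of the order, the \tps-set conditions (tc1)--(tc2) become symmetric: once $w_1 \leq y \leq w_2$ forces $w_1 = y = w_2$, Lemma \ref{L42} reduces both requirements to the two inclusions $R_B(x) \subseteq Y$ and $R_B^{-1}(x) \subseteq Y$ for every $x \in Y$. Consequently, $C_t(X(B)) = C^{\uparrow}_t(X(B))$ as lattices.

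Given this coincidence, the statement is obtained by chaining the two dualities through this common middle object. Theorem \ref{T46}, applied to $\mathcal{B}$, produces the order-reversing bijection $\Theta \colon C_t(X(B)) \to Con_t(\mathcal{B})$, $Y \mapsto \Theta(Y)$, presenting $Con_t(\mathcal{B})$ as the dual of $C_t(X(B))$. Theorem \ref{T49} produces the order-reversing bijection $\varrho \colon C^{\uparrow}_t(X(B)) \to \mathcal{F}_t(\mathcal{B})$, $Y \mapsto \varrho(Y)$, with inverse $S \mapsto \sigma(S)$, presenting $\mathcal{F}_t(\mathcal{B})$ as the dual of $C^{\uparrow}_t(X(B))$. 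Composing via the identification $C_t(X(B)) = C^{\uparrow}_t(X(B))$, the assignment $S \mapsto \Theta(\sigma(S)) = \Theta_S$ (the identity recorded just after Theorem \ref{T49}) realizes the claimed identification of $Con_t(\mathcal{B})$ with the dual of $\mathcal{F}_t(\mathcal{B})$.

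The main delicate point is the collapse $C_t(X(B)) = C^{\uparrow}_t(X(B))$: everything else is a direct invocation of previously established facts. Once that equality is in hand, the proof amounts to reading off the composition of the two duality isomorphisms on the nose, no further computation being required. A useful sanity check is to verify the endpoints: under $S \mapsto \Theta_S$ the trivial tense filter $\{1\}$ matches with the identity congruence and the improper filter $B$ with the total congruence, reflecting the order-theoretic behaviour expected from two successive anti-isomorphisms mediated by the self-identified lattice $C_t(X(B))$.
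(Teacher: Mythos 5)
Your proposal matches the paper's argument exactly: the paper likewise observes that in the Boolean case the Priestley order on $X(B)$ is trivial, so every subset is upward and downward closed and hence $C_t(X(B))=C^{\uparrow}_t(X(B))=C^{\downarrow}_t(X(B))$, and then derives the theorem by composing the correspondences of Theorems \ref{T46} and \ref{T49}. Your isolation of the collapse $C_t(X(B))=C^{\uparrow}_t(X(B))$ as the only substantive step, together with the explicit composite $S\mapsto\Theta(\sigma(S))=\Theta_S$, is precisely what the paper relies on.
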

Besides, from Theorems \ref{T46} and \ref{T413} 
\begin{theorem}\label{T423} The lattice $Con_t(\mathcal{B})$ of \tps-congruences of $\mathcal{B}$ is isomorphic to the dual of the lattice of \tdlat-ideals of $\mathcal{B}$.     
\end{theorem}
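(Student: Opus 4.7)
The strategy is to reduce this theorem to the general correspondence between closed \tps-sets and \tdlat-congruences/ideals already proved in Section \ref{s3} and the first part of Section \ref{s4}, using only the specific feature of the Boolean case: that the Priestley order on $X(B)$ is trivial.

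First I would record, as in the bulleted list preceding the statement, that when $\mathcal{B}_0$ is Boolean the Priestley order on $X(B)$ collapses to equality. Consequently every subset of $X(B)$ is automatically both upward and downward closed, and so the three families of closed \tps-sets coincide:
\begin{equation*}
C_{t}(X(B)) \;=\; C^{\uparrow}_{t}(X(B)) \;=\; C^{\downarrow}_{t}(X(B)).
\end{equation*}
This is the only genuinely ``Boolean'' input to the argument; everything else is transport through general machinery.

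Next I would invoke Theorem \ref{T46}, which supplies an order-reversing bijection $\varphi: C_{t}(X(B)) \longrightarrow Con_t(\mathcal{B})$ given by $Y \mapsto \Theta(Y)$. Pair this with Theorem \ref{T413}, which supplies an order-reversing bijection $Z \mapsto \varrho(Z)$ from $C^{\downarrow}_{t}(X(B))$ onto $\mathcal{I}_t(B)$, with inverse $I \mapsto \sigma(I)$. Because of the identification $C_{t}(X(B))=C^{\downarrow}_{t}(X(B))$, these two correspondences share a common middle lattice, so composing them yields a bijection
\begin{equation*}
\mathcal{I}_t(B) \;\xrightarrow{\;I \,\mapsto\, \sigma(I)\;}\; C^{\downarrow}_{t}(X(B)) \;=\; C_{t}(X(B)) \;\xrightarrow{\;Y\,\mapsto\,\Theta(Y)\;}\; Con_t(\mathcal{B}),
\end{equation*}
whose explicit effect is $I \longmapsto \Theta(\sigma(I)) = \Theta_I$, using the description of $\Theta_I$ recalled in equation~(10). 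Tracking the order behaviour of the two legs (each of which reverses order) against the convention adopted for the ideal lattice then gives the claimed identification with the dual of $\mathcal{I}_t(B)$.

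I do not anticipate any serious obstacle: the theorem is essentially a corollary of Theorems \ref{T46} and \ref{T413}, with the collapse of the Priestley order doing all the specialization. The only thing to be a little careful about is the bookkeeping of ``dual vs.\ direct'' in the final statement, which is purely a matter of composing two order-reversing maps and comparing with the inclusion order on $\mathcal{I}_t(B)$; everything substantive (compatibility of $\Theta(Y)$ with the tense operators, the fact that $\sigma(I)$ is a closed \tps-set, etc.) has already been verified in the general \tdlat\ setting.
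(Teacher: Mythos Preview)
Your proposal is correct and follows exactly the approach indicated in the paper: the theorem is obtained directly from Theorems \ref{T46} and \ref{T413} together with the observation that, in the Boolean case, the triviality of the Priestley order forces $C_t(X(B))=C^{\uparrow}_t(X(B))=C^{\downarrow}_t(X(B))$. The paper does not spell out more than this, and your write-up faithfully unpacks that derivation.
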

   \item Also, in this case, we re-obtain the known characterizations for simple and subdirectly irreducible algebras (see \cite{K}).
\begin{theorem}\label{T424} The following conditions are equivalent.
\begin{itemize}
    \item[(i)] $\mathcal{B}$ is simple.
    \item[(ii)] For all $a\in B\setminus\{1\}$ there is $p_a\in\omega$ such that $d^{p_a} a =0$.
    \item[(iii)] $\mathcal{F}_t(B)=\{B,\{1\}\}$ and \,$\mathcal{I}_t(B)=\{B,\{0\}\}$
    \item[] If $\mathcal{B}$ is finite (i),(ii) and (iii) are equivalent to 
    \item[(iv)] $B^d=\{0,1\}$.
\end{itemize}
\end{theorem}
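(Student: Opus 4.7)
The plan is to derive this as the Boolean specialization of the general characterization results already established in the preceding sections, leveraging Theorems \ref{T422} and \ref{T423} to turn simpleness into statements about tense filters and tense ideals, and then using Boolean complementation to collapse these two conditions into the single iterated-$d$ condition (ii).

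First, I would prove (i) $\Leftrightarrow$ (iii). Since $\mathcal{B}$ is simple iff $Con_t(\mathcal{B})$ has exactly two elements, Theorem \ref{T422} gives immediately that this is equivalent to $|\mathcal{F}_t(B)|=2$, i.e., $\mathcal{F}_t(B)=\{B,\{1\}\}$; Theorem \ref{T423} gives independently the equivalence with $\mathcal{I}_t(B)=\{B,\{0\}\}$. Hence simpleness is equivalent to the conjunction stated in (iii).

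Next, (iii) $\Leftrightarrow$ (ii). For (iii) $\Rightarrow$ (ii): given $a\in B\setminus\{1\}$, the $\tdlat$-filter $\langle a\rangle$ properly contains $\{1\}$, so $\langle a\rangle=B$, and Corollary \ref{C232} yields $d^{p_a} a=0$ for some $p_a\in\omega$. For (ii) $\Rightarrow$ (iii): if $S\in\mathcal{F}_t(B)$ with $S\neq\{1\}$, pick $a\in S\setminus\{1\}$; by (ii) and Lemma \ref{L227}, $0=d^{p_a}a\in S$, whence $S=B$. For the ideal half, the key observation is that in a tense Boolean algebra, $\F=\neg\G\neg$ and $\P=\neg\H\neg$ (Lemma \ref{L214}), so by De Morgan
\begin{equation*}
\hat{d}(x)=\F x\vee x\vee \P x = \neg\bigl(\G(\neg x)\wedge \neg x\wedge \H(\neg x)\bigr)=\neg d(\neg x),
\end{equation*}
and by a routine induction $\hat{d}^n(x)=\neg d^n(\neg x)$. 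Thus for any $b\in I\setminus\{0\}$ with $I\in\mathcal{I}_t(B)$, we have $\neg b\neq 1$, so (ii) gives $d^{p}(\neg b)=0$ for some $p$, hence $\hat{d}^{p}b=\neg 0=1\in I$ because $I$ is closed under $\hat{d}$ (Lemma \ref{L235}); therefore $I=B$.

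Finally, the finite case equivalence (iv) $\Leftrightarrow$ (iii) follows directly from Corollary \ref{C233}, which gives $\mathcal{B}^d\cong\mathcal{F}_t(B)^{op}$ when $\mathcal{B}$ is finite, so $B^d=\{0,1\}$ iff $|\mathcal{F}_t(B)|=2$. I expect the only subtle point to be the transition between the dual conditions on tense filters and tense ideals; the Boolean duality $\hat{d}^n(x)=\neg d^n(\neg x)$ is what removes this obstacle and makes the cleaner condition (ii) (involving only $d$, and only requiring $a\neq 1$) suffice where the general $\tdlat$-version in Lemma \ref{L418} required a separate condition on $\hat{d}$.
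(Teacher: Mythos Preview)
Your proof is correct and follows essentially the approach implicit in the paper, which states T424 without detailed proof as a direct Boolean specialization of the general framework (via Theorems~\ref{T422}, \ref{T423}, Lemma~\ref{L418}, and Corollaries~\ref{C232}, \ref{C233}). Your explicit use of the Boolean identity $\hat{d}^n(x)=\neg d^n(\neg x)$ to collapse the two-sided condition of Lemma~\ref{L418}(b) into the one-sided condition (ii) is exactly the point that distinguishes the Boolean case from the general $\tdlat$ case, and you handle it cleanly.
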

\begin{theorem}\label{T425} The following conditions are equivalent.
\begin{itemize}
    \item[(i)] $\mathcal{B}$ is subdirectly irreducible.
    \item[(ii)] There is $b\in B\setminus\{1\}$ such that, for all $a\in B\setminus\{1\}$ there is $k_a\in\omega$ such that $d^{k_a} a \leq b$.
\end{itemize}
\end{theorem}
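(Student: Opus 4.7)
My plan is to translate the condition of subdirect irreducibility into a statement about tense filters via the duality, and then unpack that statement using the explicit description of principal tense filters. The key observation is that for $\mathcal{B}$ Boolean, the Priestley order on $X(B)$ is trivial, so every subset is both upward and downward closed, and consequently $C_t(X(B)) = C^{\uparrow}_t(X(B))$. By Theorem \ref{T417}, $\mathcal{B}$ is subdirectly irreducible iff there is a greatest element in $C_t(X(B))\setminus\{X(B)\}$, and by Theorem \ref{T49} this lattice is dually isomorphic to $\mathcal{F}_t(B)$. Hence (i) is equivalent to the existence of a least non-trivial tense filter $S_0 \in \mathcal{F}_t(B)\setminus\{\{1\}\}$.

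For (ii) $\Rightarrow$ (i), given $b$ as in (ii), I would show that the principal tense filter $\langle b\rangle$ is the required least non-trivial tense filter. It is non-trivial because $b \in \langle b\rangle$ and $b \neq 1$ (note $d^p b \leq b$ by property $(d_2)$, so Corollary \ref{C232} makes $b$ a member). Given any non-trivial tense filter $S$, choose $a \in S\setminus\{1\}$; by (ii) there is $k_a$ with $d^{k_a}a \leq b$, and since $S$ is a tense filter Lemma \ref{L227} gives $d^{k_a}a \in S$, whence $b \in S$, so $\langle b\rangle \subseteq S$.

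For (i) $\Rightarrow$ (ii), let $S_0$ be the least non-trivial tense filter and pick any $b \in S_0\setminus\{1\}$. For every $a \in B\setminus\{1\}$, the principal tense filter $\langle a\rangle$ is itself non-trivial (it contains $a$, and $1 \notin \{1\}$ alone is not the issue—what matters is $a \in \langle a\rangle$, $a \neq 1$). Thus $S_0 \subseteq \langle a\rangle$, so $b \in \langle a\rangle$, and by the description of $\langle a\rangle$ in Corollary \ref{C232} there exists $k_a \in \omega$ with $d^{k_a}a \leq b$, which is exactly (ii).

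The main delicate point is keeping straight the order-reversal: the \emph{greatest} proper closed \tps-set of the dual corresponds to the \emph{least} non-trivial tense filter of the algebra, so the existence of a maximal monolith-witness in the space translates into the existence of a minimal generator $b$ in the algebra. The remaining work is purely bookkeeping with the principal tense filter formula from Corollary \ref{C232} and the closure property of tense filters under $d$ from Lemma \ref{L227}; no new computations with the axioms $(t1)$--$(t8)$ are needed.
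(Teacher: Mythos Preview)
Your proof is correct and follows exactly the route the paper sets up. In the Boolean case the paper records that the Priestley order on $X(B)$ is trivial, whence $C_t(X(B))=C^{\uparrow}_t(X(B))$, and then derives Theorem~\ref{T422} from Theorems~\ref{T46} and~\ref{T49}; this is precisely your reduction of subdirect irreducibility (via Theorem~\ref{T417}) to the existence of a least non-trivial tense filter, after which both directions come down to Corollary~\ref{C232} and Lemma~\ref{L227} as you indicate. The paper itself states Theorem~\ref{T425} without an explicit proof (citing \cite{K}), so your argument is exactly the intended filling-in of the details.
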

\end{itemize}

\noindent{\bf (II)} Let $\mathcal{A}=\langle \mathcal{A}_0,\G,\H,\F,\P\rangle$ be a \tdlat-algebra where $\mathcal{A}_0$ is a Heyting algebra and ${\bf \Phi(\mathcal{A})}=(X(A),R_A)$ its associated \tps-space.

\begin{itemize}
    \item In the Priestley space $X(A)$ it is verified that ${\downarrow}U$ is clopen, for all clopen subset $U$ of $X(A)$. Therefore, $X(A)$ is a Heyting space.
    \item Taking into account that congruence lattice of $\mathcal{A}_0$ is isomorphic to the lattice of all its filters, we can obtain results analogous to Theorems \ref{T422}, \ref{T424}, and \ref{T425}. Since $\mathcal{A}^d$ is a subalgebra of the Heyting algebra $\mathcal{A}_0$, we can give the next characterization for the finite case:
    \begin{theorem}\label{T426} If $\mathcal{A}$ is finite. The following conditions are equivalent.
\begin{itemize}
    \item[(i)] $\mathcal{A}$ is subdirectly irreducible.
    \item[(ii)] There is $u\in A^d\setminus\{1\}$ such that $a \leq u$ for all $a\in A^d\setminus\{1\}$.
    \item[(iii)] $\mathcal{A}^d$ is a subdirectly irreducible Heyting algebra.
\end{itemize}
\end{theorem}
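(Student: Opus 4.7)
The plan is to chain together the correspondences already established so that subdirect irreducibility of $\mathcal{A}$ becomes a statement about the finite lattice $\mathcal{A}^d$, which by hypothesis inherits a Heyting structure from $\mathcal{A}_0$.

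First, I would verify that, because $\mathcal{A}_0$ is Heyting, every \tdlat-congruence $\theta$ of $\mathcal{A}$ coincides with $\Theta_S$ for its $1$-class $S=1/\theta$. The filter $1/\theta$ is automatically a \tdlat-filter, since $\theta$ respects $\G$ and $\H$ and $\G 1=\H 1=1$; the converse, that $\Theta_S$ is a \tdlat-congruence whenever $S$ is a \tdlat-filter, is already noted in the remark preceding Theorem \ref{T411}. Hence $Con_t(\mathcal{A})=Con_{\mathcal{F}_t}(\mathcal{A})$, and combining Theorems \ref{T49} and \ref{T411} yields a lattice isomorphism $Con_t(\mathcal{A})\cong \mathcal{F}_t(A)$. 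Since $\mathcal{A}$ is finite, Corollary \ref{C233} supplies an anti-isomorphism $\mathcal{A}^d\cong \mathcal{F}_t(A)$ via $a\mapsto \langle a\rangle=\mathcal{F}(a)$. Composing, one obtains an order-reversing bijection between $Con_t(\mathcal{A})$ and $\mathcal{A}^d$ under which the identity congruence corresponds to $1$ and the total congruence to $0$.

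The equivalence (i)$\Leftrightarrow$(ii) then follows at once: $\mathcal{A}$ is subdirectly irreducible iff $Con_t(\mathcal{A})$ has a smallest non-identity congruence, iff $\mathcal{A}^d$ possesses a largest element strictly below $1$, which is exactly (ii).

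For (ii)$\Leftrightarrow$(iii), the paragraph preceding the theorem asserts that $\mathcal{A}^d$ is a subalgebra of the Heyting algebra $\mathcal{A}_0$. Applying the same reasoning within $\mathcal{A}^d$ itself, now in the purely Heyting setting where congruences correspond bijectively to filters and, in the finite case, to principal filters and hence to elements, one sees that $\mathcal{A}^d$ is subdirectly irreducible as a Heyting algebra iff it has a largest element strictly below $1$, i.e., iff (ii) holds. The step I expect to need the most care is justifying closure of $A^d$ under the Heyting implication of $\mathcal{A}_0$, i.e., that $d(x\rightarrow y)=x\rightarrow y$ whenever $x,y\in A^d$; this is what legitimises reading clause (iii) inside the variety of Heyting algebras and ties the three conditions together.
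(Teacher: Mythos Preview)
Your argument is correct and matches the approach the paper sketches in the lines preceding the statement: use the Heyting hypothesis to identify $Con_t(\mathcal{A})$ with $\mathcal{F}_t(A)$, then invoke Corollary~\ref{C233} to pass to $\mathcal{A}^d$. Regarding the step you flagged, closure of $A^d$ under $\rightarrow$ does hold---for $x,y\in A^d$ one has $x\le\H x$, so $x\wedge\P(x\rightarrow y)\le\H x\wedge\P(x\rightarrow y)\le\P(x\wedge(x\rightarrow y))\le\P y\le y$ by (t8), whence $\P(x\rightarrow y)\le x\rightarrow y$ and thus $x\rightarrow y\le\G(x\rightarrow y)$ by (t16), and symmetrically for $\H$---but strictly speaking you do not need it: by Lemma~\ref{L211} the set $A^d$ is already a finite bounded distributive lattice, hence carries a (unique) Heyting structure, and subdirect irreducibility of a finite Heyting algebra is the purely order-theoretic condition of having a second-greatest element.
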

\end{itemize}

\noindent{\bf (III)} Let $\mathcal{A}=\langle \mathcal{A}_0,\G,\H,\F,\P\rangle$ be a \tdlat-algebra where $\mathcal{A}_0$ is a De Morgan algebra where, $\F:={\sim}\G{\sim}$, $\P:={\sim}\H{\sim}$, and let ${\bf \Phi(\mathcal{A})}=(X(A),R_A)$ its associated \tps-space.

\begin{itemize}
    \item Taking into account that $\mathcal{A}_0$ is a De Morgan algebra we have that we can define on $X(A)$ the function $g_A:X(A)\to X(A)$ by $g_A(S)=\{x\in A: \,\sim x\not\in S\}$ and, therefore, $(X(A),g_A)$ is a De Morgan space. It is not difficult to check that: \begin{center}
    If $(S,T)\in R_A$ then $(g_A(S),g_A(T))\in R_A$.
    \end{center}
    From this we have that $(X(A),g_A,R_A)$ is a tense De Morgan space (see Definition 5.1 in \cite{F6})
    \item Also, we obtain the corresponding versions of Theorems  \ref{T46}, \ref{T415} and \ref{T417}.    
\end{itemize}

\section{Discrete duality}\label{s5}

Given a set $X$, we denote by $\mathcal{P}_i(X)$ the collection of upward subsets of the poset  $(\mathcal{P}(X), \subseteq)$.
In \cite{F6}, it was proved the following result.

\begin{lemma}(see~\cite{F6}) \label{L51} Let $(X,\leq)$ be a poset, $R$ a binary relation on $X$ and $R^{-1}$ the inverse relation of $R$. The following hold.
\begin{itemize}
  \item[\rm{(i)}] $R({\uparrow}x)\subseteq {\uparrow}R(x)$, for all $x\in X$ \, iff \, $\G_R(U)\in \mathcal{P}_i(X)$, for all $U\in \mathcal{P}_i(X)$.
  \item[\rm{(ii)}] $R^{-1}({\uparrow}x)\subseteq {\uparrow}R^{-1}(x)$, for all $x\in X$ \, iff \,  $\H_{R^{-1}}(U)\in \mathcal{P}_i(X)$, for all $U\in \mathcal{P}_i(X)$.
  \item[\rm{(iii)}] $R({\downarrow}x)\subseteq {\downarrow}R(x)$, for all $x\in X$ \, iff \,  $\F_R(U)\in \mathcal{P}_i(X)$, for all $U\in \mathcal{P}_i(X)$. 
  \item[\rm{(iv)}] $R^{-1}({\downarrow}x)\subseteq {\downarrow}R^{-1}(x)$, for all $x\in X$ \, iff \, $\P_{R^{-1}}(U)\in \mathcal{P}_i(X)$, for all $U\in \mathcal{P}_i(X)$. 
  \item[] where $\G_R, \H_{R^{-1}}, \F_R, \P_{R^{-1}}$ are as in Definition \ref{DGR}.
\end{itemize}
\end{lemma}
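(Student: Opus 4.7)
The plan is to establish each of the four equivalences by the same two-step template: one direction unpacks the hypothesis on $R$ (or $R^{-1}$) together with the fact that $U$ is upward to show that the operator applied to $U$ is again upward; the converse direction is proved by contradiction, testing with a carefully chosen principal upward (or downward-complement) set.

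For (i), I would first prove the forward implication. Assuming $R({\uparrow}x)\subseteq {\uparrow}R(x)$ for all $x$, take $U\in\mathcal{P}_i(X)$, $x\in\G_R(U)$, and $y\geq x$. For any $z\in R(y)$, we have $z\in R({\uparrow}x)\subseteq{\uparrow}R(x)$, so there exists $w\in R(x)$ with $w\leq z$; since $w\in U$ and $U$ is upward, $z\in U$, hence $R(y)\subseteq U$ and $y\in\G_R(U)$. For the converse, suppose $z\in R({\uparrow}x)$ (say $z\in R(y)$ with $y\geq x$) but $z\notin{\uparrow}R(x)$, i.e.\ $w\not\leq z$ for every $w\in R(x)$. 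Consider the upward set $U:=X\setminus{\downarrow}z$; by construction $R(x)\subseteq U$, so $x\in\G_R(U)$. The hypothesis gives $y\in\G_R(U)$, so $z\in R(y)\subseteq U$, contradicting $z\in{\downarrow}z$.

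Item (ii) is identical to (i) after replacing $R$ by $R^{-1}$ (and $\G_R$ by $\H_{R^{-1}}$); no new idea is required. For (iii), the forward direction uses that if $R({\downarrow}x)\subseteq{\downarrow}R(x)$ and $z\in R(x)\cap U$ with $y\geq x$, then $z\in R({\downarrow}y)\subseteq{\downarrow}R(y)$, yielding some $w\in R(y)$ with $z\leq w$; upward-ness of $U$ gives $w\in U$ and hence $y\in\F_R(U)$. For the converse, if $z\in R({\downarrow}x)\setminus{\downarrow}R(x)$, choose the upward set $U:={\uparrow}z$. Then $z\in R(y)\cap U$ for some $y\leq x$, so $y\in\F_R(U)$; by assumption $x\in\F_R(U)$, producing $w\in R(x)$ with $z\leq w$, a contradiction. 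Item (iv) is obtained from (iii) by replacing $R$ with $R^{-1}$.

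The only subtle point, and the one I would flag as the ``main obstacle'' in each converse, is selecting the right test set: the operators $\G_R$ and $\H_{R^{-1}}$ are defined via inclusion $\subseteq U$, so one must forbid $z$ from being above elements of $R(x)$ by taking the upward set $X\setminus{\downarrow}z$; the operators $\F_R$ and $\P_{R^{-1}}$ are defined via non-empty intersection, so one must ensure the intersection is non-empty by taking the principal upward set ${\uparrow}z$. Once this dichotomy is seen, the remaining verifications are direct and parallel across the four items.
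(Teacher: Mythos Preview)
Your proof is correct and complete; the choice of test sets $X\setminus{\downarrow}z$ for the $\G$/$\H$ converses and ${\uparrow}z$ for the $\F$/$\P$ converses is exactly the right move, and the four cases are handled cleanly. Note however that the paper does not actually prove this lemma: it is stated with a citation to \cite{F6} and no proof is given in the present article, so there is no in-paper argument to compare against. Your argument is the standard one and would be the expected proof in the cited source.
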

\noindent We introduce the notion of {\em tense distributive lattice} frame.

\begin{definition}[\tdlat-Frame]\label{D52} A structure ${\bf X}=(X,\leq, R)$ is an \tdlat-frame if $(X,\leq)$ is a poset and $R$ is a binary relation on $X$ such that for every $x\in X$ the following holds.
\begin{itemize}
  \item[\rm{(K1)}] $R({\uparrow}x)\subseteq {\uparrow}R(x)$,
  \item[\rm{(K2)}] $R^{-1}({\uparrow}x)\subseteq {\uparrow}R^{-1}(x)$,
  \item[\rm{(K3)}] $R({\downarrow}x)\subseteq {\downarrow}R(x)$, 
  \item[\rm{(K4)}] $R^{-1}({\downarrow}x)\subseteq {\downarrow}R^{-1}(x)$,
  \item[\rm{(K5)}] $R(x)={\uparrow}R(x)\cap{\downarrow}R(x)$.
\end{itemize}
\end{definition}

\begin{lemma} \label{L53} Let $(X,\leq)$ be a poset, let $R$ be a binary relation on $X$ and $R^{-1}$ its inverse. Then
\begin{itemize}
  \item[] the following conditions are equivalent:
  \item[\rm{(K3)}] $R({\downarrow}x)\subseteq {\downarrow}R(x)$, for all $x\in X$.
  \item[\rm{(K3)'}] ${\uparrow}R^{-1}(x)\subseteq R^{-1}({\uparrow}x)$, for all $x\in X$; and
  \item[] the following conditions are equivalent:
  \item[\rm{(K4)}] $R^{-1}({\downarrow}x)\subseteq {\downarrow}R^{-1}(x)$, for all $x\in X$.
  \item[\rm{(K4)'}] ${\uparrow}R(x)\subseteq R({\uparrow}x)$, for all $x\in X$.
\end{itemize}  
\end{lemma}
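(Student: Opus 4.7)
The plan is to prove each equivalence by unfolding the set inclusions into explicit first-order statements about triples $(x,y,z)\in X^3$ and then observing that the two conditions differ only by a relabeling of the bound variables together with the defining identity $(a,b)\in R\iff (b,a)\in R^{-1}$. In particular, no topological or order-theoretic content beyond the definitions of $R^{-1}$, ${\uparrow}\,\cdot$ and ${\downarrow}\,\cdot$ is required, so the argument is entirely a logical manipulation.

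First, for the equivalence (K3) $\Leftrightarrow$ (K3)', I would rewrite (K3) in pointwise form as
\[
\forall x,y,z\in X:\quad y\leq x\ \wedge\ (y,z)\in R\ \Longrightarrow\ \exists w\in X:\ (x,w)\in R\ \wedge\ z\leq w,
\]
and (K3)' in pointwise form as
\[
\forall a,b,c\in X:\quad (b,a)\in R\ \wedge\ b\leq c\ \Longrightarrow\ \exists d\in X:\ a\leq d\ \wedge\ (c,d)\in R.
\]
To derive (K3)' from (K3), given $a,b,c$ with $(b,a)\in R$ and $b\leq c$, I would instantiate (K3) at $x:=c$, $y:=b$, $z:=a$; the hypotheses $b\leq c$ and $(b,a)\in R$ hold, and (K3) yields $w$ with $(c,w)\in R$ and $a\leq w$, which is exactly the witness $d:=w$ needed for (K3)'. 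The converse direction is the same substitution read backwards, taking $a:=z$, $b:=y$, $c:=x$ in (K3)'.

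The argument for (K4) $\Leftrightarrow$ (K4)' is entirely parallel. I would unfold (K4) as
\[
\forall x,y,z\in X:\quad y\leq x\ \wedge\ (z,y)\in R\ \Longrightarrow\ \exists w\in X:\ (w,x)\in R\ \wedge\ z\leq w,
\]
and (K4)' as
\[
\forall a,b,c\in X:\quad (a,b)\in R\ \wedge\ b\leq c\ \Longrightarrow\ \exists d\in X:\ a\leq d\ \wedge\ (d,c)\in R,
\]
and observe that the substitution $z\leftrightarrow a$, $y\leftrightarrow b$, $x\leftrightarrow c$, $w\leftrightarrow d$ turns one statement into the other, using in both directions only the defining relation between $R$ and $R^{-1}$.

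The only real obstacle is bookkeeping the variable correspondence between the two formulations (and being careful that ${\uparrow}$ and ${\downarrow}$ on the right-hand sides of the inclusions are translated into the existential witnesses in the correct direction). Once the pointwise forms are written out, both equivalences are immediate by the substitutions indicated above.
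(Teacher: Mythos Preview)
Your proof is correct: the pointwise unfoldings of all four conditions are accurate, and the variable substitutions you give do exhibit the equivalences in both directions. The paper does not supply a proof of this lemma at all (it is stated and then immediately used to reformulate the notion of \tdlat-frame), so your argument is exactly the routine verification the authors leave implicit.
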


Taking into account Lemma \ref{L53}, we can restate the definition of \tdlat-frames as follows.

\begin{definition} \label{D54} A structure ${\bf X}=(X,\leq, R)$ is a \tdlat-frame if $(X,\leq)$ is a poset and $R$ is a binary relation on $X$ such that:
\begin{itemize}
  \item[(K1$^*$)] $R({\uparrow}x)={\uparrow}R(x)$, for all $x\in X$.
  \item[(K2$^*$)] $R^{-1}({\uparrow}x)={\uparrow}R^{-1}(x)$, for all $x\in X$
  \item[(K5)] $R(x)={\uparrow}R(x)\cap{\downarrow}R(x)$, for all $x\in X$ 
\end{itemize}
\end{definition}

\noindent Now we can introduce the notion of {\em complex algebra} associated with a given \tdlat-frame in this context.

\begin{definition}[Complex algebra]\label{D55} Let ${\bf X}=(X,\leq, R)$ be an \tdlat-frame. The complex algebra of ${\bf X}$ is the structure 
$$\mathfrak{C}({\bf X})=\langle \mathcal{P}_i(X),\cap, \cup, \G_R, \H_{R^{-1}}, \F_R, \P_{R^{-1}},\emptyset, X\rangle$$
where $\G_R, \H_{R^{-1}}, \F_R, \P_{R^{-1}}$ are as in Definition \ref{DGR}.
\end{definition}
\noindent Then, we have the following results.

\begin{lemma}\label{L56} The complex algebra of a given  \tdlat-frame is a complete tense distributive lattice.
\end{lemma}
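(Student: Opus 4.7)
The plan is to realize $\mathfrak{C}({\bf X})$ as a substructure of the full complex algebra $\langle X,R\rangle^{+}$ from Proposition \ref{P32}, which is already known to be a \tdlat-algebra. Under this view, the work splits into three routine verifications: that $\mathcal{P}_i(X)$ is closed under the lattice operations and constants, that it is closed under the four tense operators, and that it is complete.

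For the lattice part, I would observe that $\langle \mathcal{P}_i(X),\cap,\cup,\emptyset,X\rangle$ is a complete bounded distributive lattice. An arbitrary intersection or union of upward subsets of a poset is again upward, so $\mathcal{P}_i(X)$ is a complete $\{0,1\}$-sublattice of $\langle\mathcal{P}(X),\cap,\cup,\emptyset,X\rangle$ and inherits distributivity; suprema and infima of arbitrary families are computed as set-theoretic union and intersection, establishing completeness.

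For closure under the tense operators, I would invoke Lemma \ref{L51} directly. Conditions {\rm(K1)}--{\rm(K4)} of Definition \ref{D52} are precisely the left-hand sides of the four equivalences in that lemma, whose right-hand sides say that $\G_R$, $\H_{R^{-1}}$, $\F_R$ and $\P_{R^{-1}}$ each send $\mathcal{P}_i(X)$ into $\mathcal{P}_i(X)$. Hence every tense operator restricts to a unary operation on $\mathcal{P}_i(X)$.

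Finally, Proposition \ref{P32} tells us that axioms {\rm(t1)}--{\rm(t8)} of Definition \ref{D21} hold in $\langle X,R\rangle^{+}$; since each axiom is a universally quantified (in)equality in the signature of $\mathfrak{C}({\bf X})$ and $\mathcal{P}_i(X)$ is closed under every operation involved, the identities persist in the substructure. I do not anticipate any genuine obstacle here: the substantive content has already been packaged into Lemma \ref{L51} and Proposition \ref{P32}, and this lemma simply combines them. Notice that condition {\rm(K5)} is not used in this argument, so it is presumably reserved for a later step (likely the result relating \tdlat-frames to \tps-spaces, where the requirement $R(x)={\uparrow}R(x)\cap{\downarrow}R(x)$ is needed to match axiom (\tps 2)).
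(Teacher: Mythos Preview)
Your proof is correct and follows the same strategy as the paper: use Lemma \ref{L51} together with (K1)--(K4) to see that the four tense operators restrict to $\mathcal{P}_i(X)$, and then observe that (t1)--(t8) hold. The paper's own proof is terser (it simply says the operators are well-defined by Lemma \ref{L51} and that checking (t1)--(t8) is routine), whereas you make the subalgebra-of-$\langle X,R\rangle^{+}$ viewpoint explicit via Proposition \ref{P32} and also spell out completeness; your remark that (K5) is unused here is accurate and matches the paper's later use of it for (\tps 2).
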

\begin{proof} From Lemma \ref{L51} we know that the operators $\G_R, \H_{R^{-1}}, \F_R, \P_{R^{-1}}$ are well-defined. It is routine to check that axioms (t1)-(t8) are fulfilled. 
\end{proof}

\begin{proposition}\label{P57} If $\mathfrak{X}=(X,\leq,\tau,R)$ is a \tdlat-space, then ${\bf X}=(X,\leq,R)$ is an \tdlat-frame.
\end{proposition}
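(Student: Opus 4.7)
Condition (K5) is literally (tPS2), so it transfers for free from the space to the frame. The remaining work is to derive the four monotonicity–type conditions (K1)--(K4) from the two non-trivial axioms we have available, namely (tPS1) (``$R(x)$ is closed'') and (tPS3) (``the four operators preserve $D(X)$''). Since the four cases are parallel, I will only carry out (K1) and (K3) in my head and indicate that (K2) and (K4) are obtained by dualizing under $R\leftrightarrow R^{-1}$.

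For (K1), I would argue by contradiction: fix $x\in X$ and $y\in R({\uparrow}x)$, and assume $y\notin{\uparrow}R(x)$, i.e.\ $z\not\le y$ for every $z\in R(x)$. By the Priestley separation property, for each such $z$ there is a clopen upset $U_z\in D(X)$ with $z\in U_z$ and $y\notin U_z$. Then $R(x)\subseteq\bigcup_{z\in R(x)}U_z$; since $R(x)$ is closed by (tPS1) and $X$ is compact, finitely many suffice, giving a clopen upset $U:=U_{z_1}\cup\dots\cup U_{z_n}\in D(X)$ with $R(x)\subseteq U$ and $y\notin U$. Thus $x\in\G_R(U)$, and by (tPS3) $\G_R(U)\in D(X)$, i.e.\ it is upward. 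By hypothesis $y\in R(x')$ for some $x'\ge x$, so $x'\in\G_R(U)$, hence $R(x')\subseteq U$, forcing $y\in U$, a contradiction.

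For (K3), I would again argue by contradiction: fix $x\in X$ and $y\in R({\downarrow}x)$, and suppose $y\not\le z$ for every $z\in R(x)$. Priestley separation gives, for each such $z$, a clopen upset $V_z\in D(X)$ with $y\in V_z$ and $z\notin V_z$; then $R(x)\cap V_z=\emptyset$ for each $z$ after replacing $V_z$ by a suitable intersection, and since $R(x)$ is closed and hence compact, finitely many clopen upsets $V_{z_1},\dots,V_{z_n}$ can be intersected to a single $V\in D(X)$ with $y\in V$ and $R(x)\cap V=\emptyset$. Thus $x\notin \F_R(V)$. On the other hand, $y\in R(x')$ for some $x'\le x$, so $R(x')\cap V\neq\emptyset$, i.e.\ $x'\in \F_R(V)$. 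By (tPS3) $\F_R(V)\in D(X)$ is upward, so $x'\le x$ implies $x\in\F_R(V)$, a contradiction. The conditions (K2) and (K4) follow by the same arguments applied to $R^{-1}$, using the corresponding operators $\H_{R^{-1}}$ and $\P_{R^{-1}}$ in (tPS3).

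There is no real obstacle here: the pattern is always the same three-ingredient recipe --- Priestley separation yields a clopen upset witness, (tPS1) plus compactness of $X$ collapses the cover to a finite union/intersection lying in $D(X)$, and (tPS3) promotes the resulting set inside a modal operator back to $D(X)$, whose upward closure produces the contradiction. The only point that requires a moment of attention is choosing, for each of the four conditions, the correct operator (the ``$\G$-type'' one when one needs a lower witness in $R(x)$, and the ``$\F$-type'' one when one needs an upper witness), together with the corresponding version ($R$ vs.\ $R^{-1}$).
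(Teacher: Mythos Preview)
Your argument is correct. The one expositional wobble is in the (K3) paragraph: the sentence ``then $R(x)\cap V_z=\emptyset$ for each $z$ after replacing $V_z$ by a suitable intersection'' is not what actually happens---individually $V_z$ only misses the single point $z$. What you really do (and what your next clause says correctly) is cover $R(x)$ by the open sets $V_z^c$, extract a finite subcover by (\tps 1) + compactness, and set $V=\bigcap_{i=1}^n V_{z_i}$; then $y\in V$ and $R(x)\cap V=\emptyset$. With that clarified, all four cases go through exactly as you describe.

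Your route differs from the paper's. The paper disposes of (K1)--(K4) in one line by invoking Lemma~\ref{L51} together with (\tps 3), ``taking into account that $D(X)\subseteq\mathcal{P}_i(X)$''. But note that Lemma~\ref{L51} characterizes (K1)--(K4) by preservation of \emph{all} upsets under $\G_R,\H_{R^{-1}},\F_R,\P_{R^{-1}}$, whereas (\tps 3) only guarantees preservation of \emph{clopen} upsets; the inclusion $D(X)\subseteq\mathcal{P}_i(X)$ by itself does not bridge that gap. Your compactness argument---Priestley separation, then (\tps 1) to reduce to a finite cover, then (\tps 3) to get a clopen upset image---is exactly the missing step that makes the appeal to Lemma~\ref{L51} legitimate (e.g.\ for $\G_R$ one can then use that every upset is an intersection of clopen upsets and that $\G_R$ commutes with intersections; for $\F_R$ one needs the closedness of $R(x)$, which is why (\tps 1) is essential). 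So your proof is not an alternative so much as a fully worked-out version of what the paper gestures at.
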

\begin{proof} (K5) is exactly (Lts2). Axioms (K1)-(K4) follow from (Lts3) and Lemma \ref{L51} taking into account that $D(X)\subseteq \mathcal{P}_i(X)$.
\end{proof}

\begin{remark} Let $\mathfrak{X}=(X,\leq,\tau,R)$ be an \tdlat-space and ${\bf X}=(X,\leq,R)$. Then, we can determine the following tense distributive lattices:
\begin{itemize}
  \item $\langle X, R\rangle^+=\langle \mathcal{P}(X),\cap, \cup, G_R, H_{R^{-1}}, F_R, P_{R^{-1}},\emptyset, X\rangle$
  \item $\mathfrak{C}({\bf X})=\langle \mathcal{P}_i(X),\cap, \cup, G_R, H_{R^{-1}}, F_R, P_{R^{-1}},\emptyset, X\rangle$
  \item ${\bf \Psi}(\mathfrak{X})=\langle D(X),\cap, \cup,G_R, H_{R^{-1}}, F_R, P_{R^{-1}},\emptyset,X\rangle$
\end{itemize}
Moreover, we have the following relation among them.
$${\bf \Psi}(\mathfrak{X}){\bf \leq} \mathfrak{C}({\bf X}){\bf \leq} \langle X, R\rangle^+$$
\end{remark}

\begin{definition}[Canonical frame]\label{D59}  The canonical frame of a given tense distributive lattice $\mathcal{A}$, is the structure $$\mathfrak{M}(\mathcal{A})=(X(A),\subseteq, R_A)$$
 where $R_A$ is the relation given in Definition \ref{DRA}.
\end{definition}
\noindent From Lemma \ref{L224}, it is immediate the following: 

\begin{lemma}\label{L510} The canonical frame of an \tdlat-algebra is an \tdlat-frame.
\end{lemma}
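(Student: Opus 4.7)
The plan is to observe that the statement is essentially a direct translation of Lemma \ref{L224} into the language of \tdlat-frames, once we have identified what the canonical frame is. So the proof will be very short: unpack the definition of $\mathfrak{M}(\mathcal{A})$, verify the structural prerequisites (poset, binary relation), and then invoke Lemma \ref{L224} item-by-item.

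First I would note that $(X(A),\subseteq)$ is a poset (the prime filters of $\mathcal{A}_0$ ordered by set inclusion), and that by Definition \ref{DRA} the binary relation $R_A$ is indeed a relation on $X(A)$. Hence $\mathfrak{M}(\mathcal{A})=(X(A),\subseteq,R_A)$ has the right signature to be a candidate \tdlat-frame in the sense of Definition \ref{D52}.

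Next I would match up the five axioms (K1)--(K5) with the five items of Lemma \ref{L224}: axiom (K1) is exactly Lemma \ref{L224}(i) applied to an arbitrary $S\in X(A)$; (K2) is (ii); (K3) is (iii); (K4) is (iv); and (K5) is (v). Since Lemma \ref{L224} establishes each of these inclusions/equalities for every $S\in X(A)$, the verification is complete and no additional argument is needed.

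There is no real obstacle here, since the heavy lifting has already been done in Lemma \ref{L224}, whose proof relied on applications of the Birkhoff--Stone separation theorem together with axioms (t4), (t8), (t9). The only thing to be careful about is that (K1)--(K4) in Definition \ref{D52} are written with $R$ and $R^{-1}$, which correspond here to $R_A$ and $R_A^{-1}=R_{\H\P}$ (by Proposition \ref{Pinv}); so the correspondence with items (i)--(iv) of Lemma \ref{L224} is exact after that identification. Thus the lemma follows immediately.
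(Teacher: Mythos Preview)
Your proposal is correct and follows exactly the paper's approach: the paper's proof consists of the single sentence ``From Lemma \ref{L224}, it is immediate,'' and you have simply spelled out the obvious item-by-item correspondence between (K1)--(K5) and parts (i)--(v) of that lemma.
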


\noindent Besides, in a similar way to what was done in the proof of Lemma \ref{L312} we can prove the following.

\begin{lemma}\label{L511} Let $\mathcal{A}$ be a tense distributive lattice. Then $h_A:\mathcal{A}\longrightarrow \mathfrak{C}(\mathfrak{M}(\mathcal{A}))$ defined by  $h_A(a)=\{T\in X(A):\,a\in T\}$, for all $a\in A$, is an immersion of tense distributive lattices.
\end{lemma}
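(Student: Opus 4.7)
The plan is to leverage Lemma \ref{L312} together with the containment $\mathbf{\Psi}(\mathfrak{X}) \leq \mathfrak{C}(\mathbf{X})$ noted after Proposition \ref{P57}. The crucial observation is that, as a set-theoretic map, $h_A$ coincides with $\sigma_A$ from Lemma \ref{L312}. Thus, the task reduces to three things: (a) checking that the image lands in $\mathcal{P}_i(X(A))$ (so that the codomain is indeed $\mathfrak{C}(\mathfrak{M}(\mathcal{A}))$), (b) transferring the homomorphism properties of $\sigma_A$ to the larger codomain, and (c) verifying injectivity.

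First I would verify that $h_A(a) \in \mathcal{P}_i(X(A))$ for every $a \in A$: if $T \in h_A(a)$ and $T \subseteq T'$ with $T' \in X(A)$, then $a \in T \subseteq T'$, so $T' \in h_A(a)$; hence $h_A(a)$ is an upward set in $(X(A),\subseteq)$.

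Next, the lattice operations $\cap$, $\cup$ and the tense operators $\G_{R_A}$, $\H_{R_A^{-1}}$, $\F_{R_A}$, $\P_{R_A^{-1}}$ on $\mathfrak{C}(\mathfrak{M}(\mathcal{A}))$ are defined by exactly the same formulas (Definition \ref{DGR}) used in $\mathbf{\Psi}(\mathbf{\Phi}(\mathcal{A}))$; they simply act on the larger domain $\mathcal{P}_i(X(A)) \supseteq D(X(A))$. Consequently, the arguments in Lemma \ref{L312} showing that $\sigma_A$ preserves $0$, $1$, $\wedge$, $\vee$ and the four tense operators transfer verbatim to show that $h_A$ is a $\tdlat$-homomorphism into $\mathfrak{C}(\mathfrak{M}(\mathcal{A}))$. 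In particular, the identities $h_A(\G a) = \G_{R_A}(h_A(a))$, $h_A(\H a) = \H_{R_A^{-1}}(h_A(a))$, $h_A(\F a) = \F_{R_A}(h_A(a))$ and $h_A(\P a) = \P_{R_A^{-1}}(h_A(a))$ follow directly from Lemma \ref{L225}.

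Finally, injectivity is standard: if $a \neq b$, we may assume without loss of generality that $a \not\leq b$, and by the Birkhoff-Stone prime filter theorem there exists $T \in X(A)$ with $a \in T$ and $b \notin T$, whence $T \in h_A(a) \setminus h_A(b)$. I do not expect any serious obstacle here, as all the substantial preservation content is already encapsulated in Lemmas \ref{L225} and \ref{L312}; the only genuinely new remark required is that $D(X(A))$ embeds as a $\tdlat$-subalgebra of $\mathfrak{C}(\mathfrak{M}(\mathcal{A}))$, which is immediate from the fact that both algebras employ the same definitions of the operators on the common underlying frame $(X(A), R_A)$.
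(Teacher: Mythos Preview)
Your proposal is correct and follows exactly the approach the paper indicates: the paper's own proof simply says that the argument is carried out ``in a similar way to what was done in the proof of Lemma \ref{L312}'', and you have spelled out precisely that reduction, together with the minor additional checks (codomain in $\mathcal{P}_i(X(A))$ and injectivity) needed to justify the change of target algebra.
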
 

\begin{definition} Let ${\bf X}=(X,\leq, R)$ and ${\bf X'}=(X',\leq', R')$ be two \tdlat-frames. An immersion from ${\bf X}$ into ${\bf X'}$ is a map $f:X\to X'$ such that for every $x,y\in X$ it holds:
\begin{itemize}
  \item[$\bullet$] $x\leq y$ \, iff \, $f(x)\leq' f(y)$. 
  \item[$\bullet$] $(x,y)\in R$ \, iff \, $(f(x),f(y))\in R'$. 
\end{itemize}
\end{definition}

\begin{lemma}\label{L513} Let {\bf X} be an \tdlat-frame, then $k_{X}:{\bf X}\longrightarrow \mathfrak{M}(\mathfrak{C}({\bf X}))$ defined by $k_{X}(x)=\{U\in \mathcal{P}_i(X): x\in U\}$ is an immersion from ${\bf X}$ into $\mathfrak{M}(\mathfrak{C}({\bf X}))$.
\end{lemma}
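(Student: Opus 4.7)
The plan is to verify the three things required by the definition of an immersion between \tdlat-frames: first, that $k_X(x)$ really is a point of $\mathfrak{M}(\mathfrak{C}({\bf X}))$, i.e.\ a prime filter of the complex algebra; second, that $k_X$ is an order-embedding; and third, that $k_X$ reflects and preserves the relation $R$.

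For the first item, I would note that $k_X(x)=\{U\in\mathcal{P}_i(X):x\in U\}$ is clearly a lattice filter of $\mathfrak{C}({\bf X})$ (closed under binary intersections, upward closed in $\mathcal{P}_i(X)$, containing $X$ and avoiding $\emptyset$), and it is prime because $x\in U\cup V$ forces $x\in U$ or $x\in V$. So $k_X(x)\in X(\mathfrak{C}({\bf X}))$. For the order-embedding part, one direction is immediate: if $x\leq y$ then every upward set containing $x$ also contains $y$, giving $k_X(x)\subseteq k_X(y)$. For the converse I would test $k_X(x)\subseteq k_X(y)$ on the particular member ${\uparrow}x\in\mathcal{P}_i(X)$, which lies in $k_X(x)$, hence in $k_X(y)$, yielding $x\leq y$.

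The heart of the proof is showing $(x,y)\in R \iff (k_X(x),k_X(y))\in R_{\mathfrak{C}({\bf X})}$. Unfolding Definition \ref{DRA}, the right-hand condition is
\[
\G_R^{-1}(k_X(x))\subseteq k_X(y)\subseteq\F_R^{-1}(k_X(x)),
\]
which, after pushing the preimages through, reads: for every $U\in\mathcal{P}_i(X)$, $R(x)\subseteq U$ implies $y\in U$, and $y\in U$ implies $R(x)\cap U\neq\emptyset$. The forward direction is trivial: if $y\in R(x)$, then $R(x)\subseteq U$ forces $y\in U$, and $y\in U$ puts $y$ in $R(x)\cap U$.

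The backward direction is where the real work lies, and I expect it to be the main obstacle because one must manufacture specific upward test sets to recover $(x,y)\in R$ from two purely set-theoretic implications. Here axiom (K5), which states $R(x)={\uparrow}R(x)\cap{\downarrow}R(x)$, is decisive. I would argue in two steps: applying the first implication with the upward set $U={\uparrow}R(x)\in\mathcal{P}_i(X)$ (which obviously contains $R(x)$) yields $y\in{\uparrow}R(x)$; then, assuming for contradiction that $y\notin{\downarrow}R(x)$, there is no $z\in R(x)$ with $y\leq z$, yet applying the second implication with $U={\uparrow}y$ produces some $z\in R(x)\cap{\uparrow}y$, contradicting the assumption. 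Hence $y\in{\uparrow}R(x)\cap{\downarrow}R(x)=R(x)$ by (K5), i.e.\ $(x,y)\in R$, which closes the argument.
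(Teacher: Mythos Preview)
Your proof is correct and follows essentially the same strategy as the paper: unfold $R_{\mathfrak{C}({\bf X})}$ into the two implications on upward sets, handle the forward direction trivially, and for the converse produce test sets to show $y\in{\uparrow}R(x)\cap{\downarrow}R(x)$ and conclude via (K5). The only difference is in the first test set: you take $U={\uparrow}R(x)$ and apply the first implication directly, whereas the paper takes $U=({\downarrow}y)^c$ and argues by contrapositive; your choice is slightly more direct, but the argument is otherwise identical (and your contradiction framing in the second step is superfluous, since the argument with $V={\uparrow}y$ is already direct).
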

\begin{proof} It is a routine task to show that $k_X$ it is well define and, for every $x,y\in X$ we have
$$x\leq y\,\,\,\mbox{si y solo si}\,\,\,k_x(x)\subseteq k_x(y)$$
Notice that the following conditions are equivalent.
\begin{eqnarray*}
(k_{X}(x),k_{X}(y))\in R_{\mathfrak{C}(X)} & \mbox{ iff } & G^{-1}_R(k_{X}(x))\subseteq k_{X}(y)\,\,\&\,\,k_{X}(y)\subseteq F^{-1}_R(k_{X}(x))\\
     & \mbox{ iff } & (\forall U\in \mathcal{P}_i(X))(\,G_R(U)\in k_{X}(x) \mbox{ implies } U\in k_{X}(y))\\
     &     &  \hspace{4cm}\&\\
     &     & (\forall V\in \mathcal{P}_i(X))(\,V\in k_{X}(y) \mbox{ implies }F_R(U)\in k_{X}(x))\\
     & \mbox{ iff } & (\forall U\in \mathcal{P}_i(X))(\,x\in G_R(U)\,\,implica\,\,y\in U)\\
     &     &  \hspace{4cm}\&\\
     &     & (\forall V\in \mathcal{P}_i(X))(\,y\in V  \mbox{ implies } x\in F_R(V))\\
     & \mbox{ iff } & (1)(\forall U\in \mathcal{P}_i(X))(\,R(x)\subseteq U \mbox{ implies } y\in U)\\
     &     &  \hspace{4cm}\&\\
     &     & (2)(\forall V\in \mathcal{P}_i(X))(\,y\in V \mbox{ implies } R(x)\cap V\not=\emptyset)
\end{eqnarray*}
From the above, it is immediate that 
$$(x,y)\in R \mbox{ implies } (k_{X}(x),k_{X}(y))\in R_{\mathfrak{C}(X)}$$

\

For the converse, suppose that $(k_{X}(x),k_{X}(y))\in R_{\mathfrak{C}(X)}$. Then, conditions  (1) and (2) are clearly verified. Let $U=({\downarrow}y)^c$. It is clear that $U\in \mathcal{P}_i(X)$ and since $y\not\in U$ and (1), we have $R(x)\not\subseteq U$. That is, there is $z\in R(x)$ such that $z\not\in U$ and, from this, it holds $z\leq y$. Then, (3) holds and $y\in {\uparrow}R(x)$.
On the other hand, if $V={\uparrow}y$ then $y\in V$ and $V\in \mathcal{P}_i(X)$. Then, from (2) we have $R(x)\cap V \neq\emptyset$ and so there exists $w\in R(x)$ such that $y\leq w$. That is,  $y\in {\downarrow}R(x)$, from this and (3) we have that $$y\in {\uparrow}R(x)\cap{\downarrow}R(x)$$
since, from (K5), ${\uparrow}R(x)\cap{\downarrow}R(x)=R(x)$ we have $(x,y)\in R$.
\end{proof}

\

Lemmas \ref{L56}, \ref{L510}, \ref{L511}, \ref{L513} state a discrete duality between \tdlat-frames and tense distributive lattices that is summarized in the theorem.

\begin{theorem}[Discrete Duality] Let $\mathcal{A}$ be a tense distributive lattice and {\bf X} be an \tdlat-frame. Then

\begin{itemize}
  \item[(1)] The canonical map $\mathfrak{M}(\mathcal{A})$ of $\mathcal{A}$ is an \tdlat-frame.
  \item[(2)] The complex algebra $\mathfrak{C}({\bf X})$ of {\bf X} is a tense distributive lattice.
  \item[(3)] The map $h_A:\mathcal{A}\longrightarrow\mathfrak{C}(\mathfrak{M}(\mathcal{A}))$ defined by  $h_A(a)=\{T\in X(A):\,a\in T\}$ is an immersion of tense distributive lattices, and if $\mathcal{A}$ is finite, then $h_A$ is an isomorphism.
  \item[(4)] The map $k_{X}:{\bf X}\longrightarrow \mathfrak{M}(\mathfrak{C}({\bf X}))$ defined by $k_{X}(x)=\{U\in \mathcal{P}_i(X): x\in U\}$ is an immersion of \tdlat-frames and, if ${\bf X}$ is finite, then $k_X$ is an isomorphism.
\end{itemize}
\end{theorem}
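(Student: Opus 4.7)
Parts (1) and (2) are essentially repackaging of earlier results: (1) is exactly Lemma \ref{L510}, which in turn depends on Lemma \ref{L224}, and (2) is Lemma \ref{L56}. So my plan is simply to cite these and turn immediately to the more interesting statements (3) and (4), where the finite-case isomorphism claims are what really require work.

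For part (3), the fact that $h_A$ is an immersion of tense distributive lattices is Lemma \ref{L511}. It only remains to show that if $\mathcal{A}$ is finite, then $h_A$ is surjective. My approach is to use the classical Birkhoff representation: in a finite distributive lattice, every upward closed subset of $X(A)$ is of the form $\sigma_A(a)$ for some $a\in A$ (each prime filter of $\mathcal{A}_0$ is principal, generated by a join-irreducible element, and $X(A)$ has the discrete order-topology). Since the underlying carrier of $\mathfrak{C}(\mathfrak{M}(\mathcal{A}))$ is $\mathcal{P}_i(X(A))$, surjectivity of $h_A$ as a lattice map follows at once; compatibility with $\G,\H,\F,\P$ has already been checked in Lemma \ref{L511} (which mirrors the argument of Lemma \ref{L312}).

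For part (4), Lemma \ref{L513} already gives that $k_X$ is an immersion of \tdlat-frames, so the task reduces to proving surjectivity when $\mathbf X$ is finite. The plan is to fix a prime filter $\mathcal{U}\in X(\mathfrak{C}(\mathbf X))$ and show that it is of the form $k_X(x)$ for a unique $x\in X$. Using finiteness, I would set $U_0=\bigcap \mathcal{U}\in\mathcal{P}_i(X)$; primeness of $\mathcal{U}$ together with the finite meet/join structure of $\mathcal{P}_i(X)$ forces $U_0$ to be of the shape ${\uparrow}x$ for some minimal element $x$ of $U_0$, and then $\mathcal{U}=k_X(x)$ follows from the characterization of prime up-sets in finite posets. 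Compatibility with the order is built into the definition of immersion, and compatibility with the relation is already established in Lemma \ref{L513} (using axioms (K1$^*$), (K2$^*$), (K5)).

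The only genuinely delicate point, and the one I expect to take some care, is the surjectivity of $k_X$ in part (4): one must recover from an abstract prime filter $\mathcal{U}$ of $\mathfrak{C}(\mathbf X)$ the single point $x\in X$ with $\mathcal{U}=k_X(x)$, and verify this identification is consistent with both the order and the binary relation $R$. Here condition (K5), $R(x)={\uparrow}R(x)\cap{\downarrow}R(x)$, plays exactly the role it played at the end of the proof of Lemma \ref{L513}: it is what guarantees that the two clauses characterising $R_{\mathfrak{C}(\mathbf X)}$ on $k_X$-images collapse back to $R$ itself. Once this is in place, the four statements fit together as the discrete analogue of the topological duality of Theorem \ref{TDT}.
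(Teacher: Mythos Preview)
Your plan is correct and matches the paper's own treatment: the paper proves this theorem simply by citing Lemmas \ref{L56}, \ref{L510}, \ref{L511}, and \ref{L513}, exactly as you do for parts (1)--(4). In fact you go further than the paper, which does not supply any argument for the finite-case isomorphism claims in (3) and (4); your Birkhoff-representation sketch for the surjectivity of $h_A$ and your prime-filter/principal-up-set argument for the surjectivity of $k_X$ are correct and fill a gap the paper leaves open.
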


\

It is possible to particularize these results to the cases of tense Heyting algebras and De Morgan algebras as it was showed in subsection \ref{subsectPC}.

\section{The logic that preserves degrees of truth w.r.t. \tdlat}\label{s6}

In this section we focus on the logic that preserves degrees of truth associated tense distributive lattices. Let $\La$ be a propositional signature and let $\K$ be a class of $\La$-algebras in which every algebra has an underlying structure of lattice with greatest element $1$. Let $\mathfrak{Fm}=\langle Fm, \La\rangle$ be the absolutely free $\La$-algebra of terms by a countably infinite set of propositional variables $Var$.

\begin{definition}\label{D61} The logic that preserve degrees of truth w.r.t. $\K$, is the propositional logic $\mathbb{L}_{{\K}}^{\leq}=\langle Fm, \models_{{\K}}^{\leq}\rangle$ defined as follows, for every $\Gamma\cup\{\alpha\}\subseteq Fm$:
\begin{itemize}
  \item[(i)] If $\Gamma$ is a finite non-empty set,
\begin{eqnarray*}
\Gamma \models_{\K}^{\leq} \alpha & \Longleftrightarrow & \hspace{1cm}\forall A\in{\K},\,\,\forall h\in Hom_{\K}(\mathfrak{Fm},A),\,\,\forall a\in A\\
& & \mbox{ if }h(\gamma)\geq a \mbox{ for every } \gamma\in\Gamma \mbox{ then } h(\alpha)\geq a.
\end{eqnarray*}
  \item[(ii)] $\emptyset\models_{{\bf K}}^{\leq} \alpha\,\,\,\Longleftrightarrow\,\,\,\forall A\in{\bf K},\,\,\forall h\in Hom(\mathfrak{Fm},A),\,\,h(\alpha)=1.$
  \item[(iii)] If $\Gamma$ is infinite,
\begin{eqnarray*}
\Gamma \models_{{\K}}^{\leq} \alpha & \Longleftrightarrow & \mbox{ there exists } \Gamma_0\subseteq \Gamma \mbox{ finite such that } \Gamma_0 \models_{{\K}}^{\leq} \alpha.\\
\end{eqnarray*}
 
\end{itemize}
\end{definition}

\begin{proposition}\label{P62} Let $\Gamma\cup\{\alpha\}\subseteq Fm$ be finite and non-empty. The following conditions are equivalent:
\begin{itemize}
  \item[(a)] $\Gamma \models_{{\K}}^{\leq} \alpha$
  \item[(b)] $\forall A\in{\K},\,\,\forall h\in Hom_{\K}(\mathfrak{Fm},A),\,\,\bigwedge\{h(\gamma):\gamma\in\Gamma\}\leq h(\alpha).$ 
\end{itemize}
\end{proposition}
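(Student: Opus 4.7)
The plan is to prove the equivalence directly, exploiting the finiteness of $\Gamma$ to guarantee that the meet $\bigwedge\{h(\gamma):\gamma\in\Gamma\}$ exists in every $A\in\K$ (recall that each such $A$ is assumed to have an underlying lattice structure).

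For the implication (b) $\Rightarrow$ (a), I would fix an arbitrary $A\in\K$, a homomorphism $h:\mathfrak{Fm}\to A$, and an element $a\in A$ with $h(\gamma)\geq a$ for every $\gamma\in\Gamma$. Since $\Gamma$ is finite and non-empty, $a$ is a lower bound of the finite set $\{h(\gamma):\gamma\in\Gamma\}$, so $a\leq\bigwedge\{h(\gamma):\gamma\in\Gamma\}$. Composing with the hypothesis (b) then yields $a\leq h(\alpha)$, which is exactly the condition required by Definition~\ref{D61}(i).

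For the converse (a) $\Rightarrow$ (b), I would again fix $A\in\K$ and $h\in Hom_{\K}(\mathfrak{Fm},A)$, and then set $a:=\bigwedge\{h(\gamma):\gamma\in\Gamma\}$, which is well-defined because $\Gamma$ is finite. By construction $h(\gamma)\geq a$ for every $\gamma\in\Gamma$, so the hypothesis (a) applied to this particular choice of $a$ immediately gives $h(\alpha)\geq a=\bigwedge\{h(\gamma):\gamma\in\Gamma\}$.

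There is essentially no obstacle here: the only subtlety is ensuring that the finite meet exists, which is guaranteed by the standing assumption that every member of $\K$ has an underlying lattice structure, together with the assumption that $\Gamma$ is finite and non-empty. The proof is symmetric in the sense that in one direction we plug the universal bound $a$ into the meet, and in the other direction we take the meet itself as a witness for $a$.
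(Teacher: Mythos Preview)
Your proof is correct and follows exactly the approach the paper intends: the paper's own proof is the single line ``From Definition~\ref{D61} and the fact that every algebra in $\K$ has an underlying structure of lattice,'' and your argument is simply the detailed unpacking of that sentence, using the finite meet as the witnessing element in one direction and as the common lower bound in the other.
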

\begin{proof} From Definition \ref{D61} and the fact that every algebra in $\K$ has an underlying structure of lattice.
\end{proof}

\begin{corollary}\label{C63} For $\{\alpha_1,\hdots,\alpha_n,\alpha\}\subseteq Fm,\,n\geq1$, the following conditions are equivalent:
\begin{itemize}
  \item[(i)] $\alpha_1,\hdots,\alpha_n \models_{{\K}}^{\leq} \alpha$,
  \item[(ii)] $\alpha_1\wedge\hdots\wedge\alpha_n \models_{{\K}}^{\leq} \alpha$. 
\end{itemize}
\end{corollary}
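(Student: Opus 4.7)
The plan is to derive this corollary as a direct consequence of Proposition \ref{P62} together with the fact that every $h \in Hom_{\K}(\mathfrak{Fm}, A)$ is in particular a lattice homomorphism and thus commutes with finite meets. So the main job is to translate both sides of the equivalence into the same semantic condition via Proposition \ref{P62}.

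First I would apply Proposition \ref{P62} to the finite non-empty set $\Gamma = \{\alpha_1, \ldots, \alpha_n\}$: condition (i) becomes
\[
\forall A \in \K,\, \forall h \in Hom_{\K}(\mathfrak{Fm}, A),\quad h(\alpha_1) \wedge \cdots \wedge h(\alpha_n) \leq h(\alpha).
\]
Next, I would apply Proposition \ref{P62} again, this time to the singleton $\Gamma = \{\alpha_1 \wedge \cdots \wedge \alpha_n\}$: condition (ii) becomes
\[
\forall A \in \K,\, \forall h \in Hom_{\K}(\mathfrak{Fm}, A),\quad h(\alpha_1 \wedge \cdots \wedge \alpha_n) \leq h(\alpha).
\]

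The remaining step is to observe that each $h$ is a $\La$-homomorphism and, since the signature of every algebra in $\K$ contains the lattice meet, $h$ preserves $\wedge$. Hence $h(\alpha_1 \wedge \cdots \wedge \alpha_n) = h(\alpha_1) \wedge \cdots \wedge h(\alpha_n)$, which shows that the two semantic conditions above are literally identical. Therefore (i) and (ii) are equivalent.

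There is essentially no obstacle here; the only small point to be careful about is that Proposition \ref{P62} requires $\Gamma$ to be finite and non-empty, which is satisfied in both applications since $n \geq 1$. No appeal to clause (ii) or (iii) of Definition \ref{D61} is needed.
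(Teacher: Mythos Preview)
Your proof is correct and follows exactly the approach the paper intends: the corollary is stated immediately after Proposition~\ref{P62} with no further argument, and your derivation---applying that proposition to $\{\alpha_1,\ldots,\alpha_n\}$ and to $\{\alpha_1\wedge\cdots\wedge\alpha_n\}$ and then using that each $h$ preserves $\wedge$---is precisely the intended one-line justification.
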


\begin{proposition}\label{P64} The consequence relation $\models_{{\K}}^{\leq}$ verifies the following properties. For every $\Gamma\cup\Delta\cup\{\alpha,\beta\}\subseteq Fm$:
\begin{itemize}
  \item[(C1)] $\alpha\in \Gamma$ implies $\Gamma\models_{{\K}}^{\leq}\alpha$, \hfill(Reflexivity)
  \item[(C2)] $\Delta\models_{{\K}}^{\leq}\alpha$, $\Delta\subseteq\Gamma$ implies $\Gamma\models_{{\K}}^{\leq}\alpha$, \hfill(Monotonic)   
  \item[(C3)] $\Delta\models_{{\K}}^{\leq}\alpha$, $\forall\beta\in\Delta(\Gamma\models_{{\K}}^{\leq}\beta)$ implies $\Gamma\models_{{\K}}^{\leq}\alpha$.\hfill(Cut for sets)
  \item[(C4)] $\Delta\models_{{\K}}^{\leq}\alpha$, $\Gamma,\alpha\models_{{\K}}^{\leq}\beta$ implies $\Delta,\Gamma\models_{{\K}}^{\leq}\beta$. \hfill(Cut for formulas)
  \item[(C5)] $\Gamma\models_{{\K}}^{\leq}\alpha$ implies $\varepsilon(\Gamma)\models_{{\K}}^{\leq}\varepsilon(\alpha)$, for every endomorphism $\varepsilon$ of $\mathfrak{Fm}$. \hfill(Structurality)
\end{itemize}

\end{proposition}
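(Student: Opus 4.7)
The plan is to verify each of (C1)--(C5) directly from Definition~\ref{D61}, using Proposition~\ref{P62} as the main technical tool to rephrase finite entailments in terms of a single lattice inequality $\bigwedge h(\Gamma)\leq h(\alpha)$. Throughout, the only subtlety will be a routine case split over whether the relevant premise set is empty, finite non-empty, or infinite; once that is handled, each property reduces to a one-line lattice-theoretic observation plus the fact that homomorphisms compose.

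For (C1), if $\alpha\in\Gamma$, take $\Gamma_0=\{\alpha\}\subseteq\Gamma$ and observe that for every $\A\in\K$, every $h\in\mathrm{Hom}_{\K}(\mathfrak{Fm},\A)$ and every $a\in A$, $h(\alpha)\geq a$ trivially implies $h(\alpha)\geq a$; hence $\Gamma_0\models_{\K}^{\leq}\alpha$, which by clause (iii) (or directly, if $\Gamma$ is already finite) gives $\Gamma\models_{\K}^{\leq}\alpha$. For (C2), suppose $\Delta\models_{\K}^{\leq}\alpha$ and $\Delta\subseteq\Gamma$. If $\Delta=\emptyset$, then $h(\alpha)=1\geq\bigwedge h(\Gamma_0)$ for any finite $\Gamma_0\subseteq\Gamma$ (choosing any singleton suffices); otherwise, pick a finite $\Delta_0\subseteq\Delta$ witnessing the entailment from clause (i) or (iii), and note $\Delta_0\subseteq\Gamma$ is itself a finite witness for $\Gamma\models_{\K}^{\leq}\alpha$.

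For (C3), assume $\Delta\models_{\K}^{\leq}\alpha$ and $\Gamma\models_{\K}^{\leq}\beta$ for every $\beta\in\Delta$. First replace $\Delta$ by a finite witness $\Delta_0=\{\beta_1,\dots,\beta_k\}$; then, for each $\beta_i$, choose a finite witness $\Gamma_i\subseteq\Gamma$ so that by Proposition~\ref{P62} we have $\bigwedge h(\Gamma_i)\leq h(\beta_i)$ for all $\A,h$. Setting $\Gamma_0:=\bigcup_{i=1}^k\Gamma_i$ (a finite subset of $\Gamma$) and using the monotonicity of $\wedge$ in a lattice,
\begin{equation*}
\bigwedge h(\Gamma_0)\;\leq\;\bigwedge_{i=1}^k h(\beta_i)\;=\;\bigwedge h(\Delta_0)\;\leq\; h(\alpha),
\end{equation*}
so $\Gamma_0\models_{\K}^{\leq}\alpha$ and hence $\Gamma\models_{\K}^{\leq}\alpha$; the empty-$\Delta$ subcase again reduces to $h(\alpha)=1$. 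Property (C4) then follows as an immediate instance of (C3) by taking the premise-set on the right to be $\Gamma\cup\{\alpha\}$ and applying (C1) to obtain $\Delta,\Gamma\models_{\K}^{\leq}\gamma$ for every $\gamma\in\Gamma$.

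For (C5), the key observation is that if $\varepsilon\colon\mathfrak{Fm}\to\mathfrak{Fm}$ is an endomorphism and $h\in\mathrm{Hom}_{\K}(\mathfrak{Fm},\A)$, then $h\circ\varepsilon\in\mathrm{Hom}_{\K}(\mathfrak{Fm},\A)$; so if $\Gamma_0\subseteq\Gamma$ is a finite witness for $\Gamma\models_{\K}^{\leq}\alpha$, applying the defining inequality to $h\circ\varepsilon$ yields $\bigwedge h(\varepsilon(\Gamma_0))\leq h(\varepsilon(\alpha))$, and $\varepsilon(\Gamma_0)$ is a finite subset of $\varepsilon(\Gamma)$. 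The only real ``obstacle'' throughout the proof is the bookkeeping forced by the three-clause definition; once the empty-premise case is dispatched by the equality $h(\alpha)=1$, all remaining arguments are uniform and lattice-theoretic.
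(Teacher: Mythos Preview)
Your proof is correct and follows exactly the approach the paper indicates: the paper's entire proof is the single line ``From Proposition~\ref{P62}'', and what you have done is spell out in detail the case analysis (empty, finite non-empty, infinite premise set) that this one-liner leaves implicit. The only remarks are cosmetic: in (C2) and (C3) the edge case $\Gamma=\emptyset$ deserves the same explicit mention you give to $\Delta=\emptyset$ (it goes through via $h(\beta_i)=1$ and hence $h(\alpha)=1$), and in (C4) you silently also use (C2) to pass from $\Delta\models_{\K}^{\leq}\alpha$ to $\Delta\cup\Gamma\models_{\K}^{\leq}\alpha$; neither affects correctness.
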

\begin{proof} From Proposition \ref{P62}.
\end{proof}

\begin{corollary} $\mathbb{L}_{{\bf K}}^{\leq}$ is a sentential, that is, $\models_{{\bf K}}^{\leq}$ is a finitary consequence relation on $Fm$.
\end{corollary}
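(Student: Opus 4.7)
The plan is to unpack what ``sentential'' means in the context used by the paper: a propositional logic $\langle Fm, \models\rangle$ whose consequence relation is Tarskian (reflexive, monotonic, transitive) plus structural and finitary. Once this is made explicit, the corollary becomes a bookkeeping verification.

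First, I would observe that four of the five required properties are already recorded in Proposition \ref{P64}: reflexivity (C1), monotonicity (C2), transitivity in either of its equivalent forms (C3 or C4), and structurality (C5). So the only thing that still needs to be checked is finitarity, namely that $\Gamma \models_{\K}^{\leq} \alpha$ implies $\Gamma_0 \models_{\K}^{\leq} \alpha$ for some finite $\Gamma_0 \subseteq \Gamma$.

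For finitarity, I would split on the cardinality of $\Gamma$, following the three clauses of Definition \ref{D61}. If $\Gamma$ is finite and non-empty, take $\Gamma_0 := \Gamma$ and there is nothing to prove; if $\Gamma = \emptyset$, then $\Gamma_0 := \emptyset$ is itself finite; and if $\Gamma$ is infinite, clause (iii) of Definition \ref{D61} is literally the statement that $\Gamma \models_{\K}^{\leq}\alpha$ iff some finite $\Gamma_0 \subseteq \Gamma$ already satisfies $\Gamma_0 \models_{\K}^{\leq}\alpha$, which is exactly the finitarity condition.

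There is no real obstacle here: the definition of $\models_{\K}^{\leq}$ was engineered in Definition \ref{D61} so that finitarity holds by construction, and Proposition \ref{P64} has already done the work for the remaining Tarskian and structural conditions. Thus the corollary follows by collecting (C1), (C2), (C3), (C5) from Proposition \ref{P64} together with clause (iii) of Definition \ref{D61}, and there is nothing further to verify.
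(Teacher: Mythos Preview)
Your proposal is correct and matches the paper's intent: the corollary is stated without proof immediately after Proposition~\ref{P64}, so the paper clearly expects it to follow by collecting (C1)--(C5) from that proposition together with the built-in finitarity clause~(iii) of Definition~\ref{D61}, exactly as you have spelled out.
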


Taking into account Proposition \ref{P62}, we can extend $\mathbb{L}_{{\bf K}}^{\leq}$ as follows: Let $\Gamma$ and $\Delta$ be two finite sets of formulas, we say that $\Delta$ \emph{is consequence of }  $\Gamma$ in $\mathbb{L}_{{\bf K}}^{\leq}$, denoted by $\Gamma\models_{{\bf K}}^{\leq}\Delta$, if 
$$\forall A\in{\bf K},\,\,\forall h\in Hom(\mathfrak{Fm},A),\,\,\bigwedge\{h(\gamma):\gamma\in\Gamma\}\leq \bigvee\{h(\delta):\delta\in\Delta\}.$$
If $\Delta$ is a singleton then we recover the consequence relation of Definition \ref{D61}.\\
In what follows, we consider the logic that preserves degrees of truth w.r.t. the class of tense distributive lattices, $\mathbb{L}_{\tdlat}^{\leq}=\langle Fm, \models_{\tdlat}^{\leq}\rangle$.

\subsection{A Gentzen-style system for $\mathbb{L}_{\tdlat}^{\leq}$}

In what follows, we deal with multiple-conclusioned sequent systems, i.e. sequents of the form $\Gamma\Rightarrow\Delta$ where $\Gamma$ are $\Delta$ finite sets of formulas. Besides, we denote by $\#\Gamma$ the set $\{\#\gamma: \gamma\in \Gamma\}$ for $\#\in\{ \H, \G, \F, \P\}$. Let $\mathfrak{Lt}$ be the system given by the following axioms and rules:

\

{\bf Axioms} 
$$\alpha\Rightarrow \alpha\hspace{2cm}\bot\Rightarrow\hspace{2cm}\Rightarrow\top$$

{\bf Structural rules}

$$[we]_i\,\,\frac{\Gamma\Rightarrow\Delta}{\Gamma,\alpha\Rightarrow\Delta}\hspace{1cm}[we]_d\,\,\frac{\Gamma\Rightarrow\Delta}{\Gamma\Rightarrow\Delta,\alpha}\hspace{1cm}[cut]\,\,\frac{\Gamma\Rightarrow\Delta,\alpha\hspace{.5cm}\alpha,\Gamma\Rightarrow\Delta}{\Gamma\Rightarrow\Delta}$$

\

{\bf  Logical rules}

\

\begin{minipage}[b]{0.5\textwidth}
\begin{itemize}
    \item[][$\wedge\Rightarrow$] \hspace{.6cm} $\displaystyle\frac{\Gamma,\alpha,\beta\Rightarrow\Delta}{\Gamma,\alpha\wedge\beta\Rightarrow\Delta}$     
    \item[][$\vee\Rightarrow$] \hspace{.7cm} $\displaystyle\frac{\Gamma,\alpha\Rightarrow\Delta\hspace{.5cm}\Gamma,\beta\Rightarrow\Delta}{\Gamma,\alpha\vee\beta\Rightarrow\Delta}$
    \item[][$\G^{\ast}$] \hspace{.7cm}$\displaystyle\frac{\Gamma\Rightarrow \Delta,\alpha}{\G\Gamma\Rightarrow \F\Delta,\G\alpha}$
    \item[][${\ast}\F$] \hspace{.7cm}$\displaystyle\frac{\Gamma,\alpha\Rightarrow \Delta}{\G\Gamma,\F\alpha\Rightarrow \F\Delta}$ 
    \item[][$\P\G$] \hspace{.7cm}$\displaystyle\frac{\alpha\Rightarrow \Delta}{\P\G\alpha\Rightarrow \Delta}$ 
    \item[][$\G\P$] \hspace{.7cm}$\displaystyle\frac{\Gamma\Rightarrow \alpha}{\Gamma\Rightarrow \G\P\alpha}$ 
\end{itemize}
\end{minipage} \hfill \begin{minipage}[b]{0.5\textwidth}
\begin{itemize}
    \item[][$\Rightarrow\wedge$]\hspace{.75cm} $\displaystyle\frac{\Gamma\Rightarrow\Delta,\alpha\hspace{.5cm}\Gamma\Rightarrow\Delta,\beta}{\Gamma\Rightarrow\Delta,\alpha\wedge\beta}$
    \item[][$\Rightarrow\vee$] \hspace{.75cm}$\displaystyle\frac{\Gamma\Rightarrow\Delta,\alpha,\beta}{\Gamma\Rightarrow\Delta,\alpha\vee\beta}$ 
    \item[] [$\H^{\ast}$] \hspace{.7cm}$\displaystyle\frac{\Gamma\Rightarrow \Delta,\alpha}{\H\Gamma\Rightarrow \P\Delta,\H\alpha}$
    \item[] [${\ast}\P$] \hspace{.7cm}$\displaystyle\frac{\Gamma,\alpha\Rightarrow \Delta}{\H\Gamma,\P\alpha\Rightarrow \P\Delta}$
    \item[] [$\F\H$] \hspace{.7cm}$\displaystyle\frac{\alpha\Rightarrow \Delta}{\F\H\alpha\Rightarrow \Delta}$
    \item[] [$\H\F$] \hspace{.7cm}$\displaystyle\frac{\Gamma\Rightarrow \alpha}{\Gamma\Rightarrow \H\F\alpha}$  
\end{itemize}
\end{minipage}

\

We write $\mathfrak{Lt}\vdash \Gamma\Rightarrow\Delta$ to indicate that there exists a $\mathfrak{Lt}$-proof of the sequent $\Gamma\Rightarrow\Delta$ and we say that it is provable in $\mathfrak{Lt}$. Besides, we denote $\Gamma\Leftrightarrow\Delta$ to denote that $\Gamma\Rightarrow\Delta$ and $\Delta\Rightarrow\Gamma$ are both provable in $\mathfrak{LT}$.

\begin{proposition}\label{P66} The following rules are derivable in $\mathfrak{Lt}$.
\end{proposition}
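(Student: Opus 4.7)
The plan is to check, case by case, that each rule listed in Proposition~\ref{P66} admits a derivation in $\mathfrak{Lt}$ built from the displayed axioms, structural rules and tense rules. Since all the announced rules are expected to mirror the algebraic properties (t1)--(t18) and Proposition~\ref{P26} (monotonicity of $\G,\H,\F,\P$, the ``$\K$-axioms'' in sequent form, the interaction rules of (t4) and (t8), the conjugacy/adjunction rules (t13), (t15), (t16), (t17), and the identities of (t18)), the proof is uniform in spirit: start from the trivial sequent $\alpha\Rightarrow\alpha$, use $[\Rightarrow\wedge]$, $[\vee\Rightarrow]$, $[\wedge\Rightarrow]$, $[\Rightarrow\vee]$ together with weakenings to produce the small propositional sequent one needs, and then \emph{lift} it through exactly one of the four tense rules $[\G^{\ast}]$, $[{\ast}\F]$, $[\H^{\ast}]$, $[{\ast}\P]$.

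To illustrate the pattern I intend to use, I would handle monotonicity of $\G$ by noting that from a hypothetical derivation of $\alpha\Rightarrow\beta$ the rule $[\G^{\ast}]$ (with $\Gamma=\{\alpha\}$, $\Delta=\emptyset$) yields $\G\alpha\Rightarrow\G\beta$, and symmetrically for $\H,\F,\P$ using $[\H^{\ast}]$, $[{\ast}\F]$, $[{\ast}\P]$. For the $\K$-type rules such as $\G(\alpha\wedge\beta)\Leftrightarrow\G\alpha\wedge\G\beta$ I would derive the two directions from the monotonicity just established together with $[\Rightarrow\wedge]$ and $[\wedge\Rightarrow]$. For the (t4)-style rule $\G(\alpha\vee\beta)\Rightarrow\G\alpha,\F\beta$ I would first build $\alpha\vee\beta\Rightarrow\alpha,\beta$ from $\alpha\Rightarrow\alpha$, $\beta\Rightarrow\beta$, two weakenings and $[\vee\Rightarrow]$, and then apply $[\G^{\ast}]$ with $\Gamma=\{\alpha\vee\beta\}$, $\Delta=\{\beta\}$. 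For the (t8)-style rule $\G\alpha,\F\beta\Rightarrow\F(\alpha\wedge\beta)$ I would begin with $\alpha,\beta\Rightarrow\alpha\wedge\beta$ (from $[\Rightarrow\wedge]$ plus weakening) and lift it via $[{\ast}\F]$. The conjugacy/adjunction rules (t13), (t15), (t16), (t17) follow from the pairs $[\P\G]/[\G\P]$ and $[\F\H]/[\H\F]$ combined with monotonicity and $[cut]$; for instance, to get $\alpha\Rightarrow\G\beta$ from $\P\alpha\Rightarrow\beta$ one applies $[\G\P]$ to produce $\alpha\Rightarrow\G\P\alpha$ and then monotonicity of $\G$ plus $[cut]$.

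For the identities of (t18) such as $\F\Leftrightarrow\F\H\F$, one direction is obtained directly from $[\F\H]$ (taking $\alpha:=\F\beta$) and the other from $[\H\F]$ (taking $\alpha:=\F\beta$) followed by monotonicity of $\F$. All the remaining announced rules (if any concern $\hat d,d$ or the $\H$-duals) reduce by the obvious past/future symmetry of the system to a case already handled, simply by exchanging $\G\leftrightarrow\H$ and $\F\leftrightarrow\P$ throughout the derivation.

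The step I expect to be most delicate is the bookkeeping with the \emph{contexts} $\Gamma,\Delta$ in the tense rules: the rules $[\G^{\ast}]$ and $[{\ast}\F]$ impose a rigid shape on both sides of the sequent (every left formula becomes $\G$-boxed, every right formula becomes $\F$-diamonded except the principal one), so inserting auxiliary hypotheses needed later for $[cut]$ has to be done \emph{before} the tense rule is applied, via the weakening rules $[we]_i$ and $[we]_d$. Apart from that, the derivations are short and mechanical, and each rule in the list will be dispatched by a single small figure of the kind just sketched.
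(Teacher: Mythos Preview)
Your approach is correct and matches the paper's. The actual list in Proposition~\ref{P66} consists only of the monotonicity rules $[m\G],[m\H],[m\F],[m\P]$, the necessitation-type rules $[\G],[\H],[\F],[\P]$, and the four adjunction rules $[Ad\G],[Ad\H],[Ad\P],[Ad\F]$; the paper proves just $[Ad\P]$ and $[Ad\G]$ explicitly, by exactly the route you sketch (apply $[m\P]$ or $[m\G]$ to one side, use $[\P\G]$ or $[\G\P]$ on the other, then $[cut]$). The $\K$-type equivalences, the (t4)/(t8)-style sequents, and the (t18) identities you also discuss belong to Proposition~\ref{P67}, not here, so that material is extraneous but not wrong.
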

\begin{itemize}
  \item[] $[m\G]\,\,\,\,\displaystyle\frac{\Gamma\Rightarrow\alpha}{\G\Gamma\Rightarrow \G\alpha}$\hspace{3.5cm}$[m\H]\,\,\,\,\displaystyle\frac{\Gamma\Rightarrow\alpha}{\H\Gamma\Rightarrow \H\alpha}$ 
  \item[] $[m\F]\,\,\,\,\displaystyle\frac{\alpha\Rightarrow\Delta}{\F\alpha\Rightarrow \F\Delta}$\hspace{3.5cm}$[m\P]\,\,\,\,\displaystyle\frac{\alpha\Rightarrow\Delta}{\P\alpha\Rightarrow \P\Delta}$ 
  \item[] $[\G]\,\,\,\,\,\,\,\,\,\,\,\,\displaystyle\frac{\Rightarrow \alpha}{\Rightarrow \G\alpha}$\hspace{4.1cm}$[\H]\,\,\,\,\,\,\,\,\,\,\,\,\displaystyle\frac{\Rightarrow \alpha}{\Rightarrow \H\alpha}$
  \item[] $[\F]\,\,\,\,\,\,\,\,\,\,\,\,\displaystyle\frac{\alpha\Rightarrow }{\F\alpha\Rightarrow }$\hspace{4.1cm}$[\P]\,\,\,\,\,\,\,\,\,\,\,\,\displaystyle\frac{\alpha\Rightarrow }{\P\alpha\Rightarrow }$
  \item[] $[Ad\G]\,\,\,\displaystyle\frac{\P\alpha\Rightarrow \beta}{\alpha\Rightarrow \G\beta}$\hspace{3.8cm}$[Ad\H]\,\,\,\displaystyle\frac{\F\alpha\Rightarrow \beta}{\alpha\Rightarrow \H\beta}$
  \item[] $[Ad\P]\,\,\,\displaystyle\frac{\alpha\Rightarrow \G\beta}{\P\alpha\Rightarrow \beta}$\hspace{3.8cm}$[Ad\F]\,\,\,\displaystyle\frac{\alpha\Rightarrow \H\beta}{\F\alpha\Rightarrow \beta}$
\end{itemize}
\begin{proof} We show it just for $[Ad\P]$ and $[Ad\G]$

\

\begin{minipage}[b]{0.5\textwidth}
$[Ad\P]$
\begin{prooftree}
\AxiomC{$\alpha\Rightarrow \G\beta$}
\LeftLabel{\small{$[m\P]$}}
\UnaryInfC{$\P\alpha\Rightarrow \P\G\beta$}
\AxiomC{$\beta\Rightarrow \beta$}
\LeftLabel{\small{$[\P\G]$}}
\UnaryInfC{$\P\G\beta\Rightarrow \beta$}
\LeftLabel{\small{$[cut]$}}
\BinaryInfC{$\P\alpha\Rightarrow \beta$}
\end{prooftree}
\end{minipage} \hfill \begin{minipage}[b]{0.5\textwidth}
$[Ad\G]$
\begin{prooftree}
\AxiomC{$\alpha\Rightarrow\alpha$}
\LeftLabel{\small{$[\G\P]$}}
\UnaryInfC{$\alpha\Rightarrow \G\P\alpha$}
\AxiomC{$\P\alpha\Rightarrow \beta$}
\LeftLabel{\small{$[m\G]$}}
\UnaryInfC{$\G\P\alpha\Rightarrow \G\beta$}
\LeftLabel{\small{$[cut]$}}
\BinaryInfC{$\alpha\Rightarrow \G\beta$}
\end{prooftree}
\end{minipage}
\end{proof}

\begin{proposition}\label{P67} The following sequents are provable in $\mathfrak{Lt}$.
\begin{itemize}
  \item[(1)] $\G\top\Leftrightarrow \top$ and $\H\top\Leftrightarrow \top$,
  \item[(2)] $\G(\alpha\wedge\beta)\Leftrightarrow \G\alpha\wedge \G\beta$ and $\H(\alpha\wedge\beta)\Leftrightarrow \H\alpha\wedge \H\beta$,
  \item[(3)] $\alpha\Rightarrow  \G\P\alpha$ and $\alpha\Rightarrow \H\F\alpha$,
  \item[(4)] $\G(\alpha\vee\beta)\Rightarrow \G\alpha\vee \F\beta$ and $\H(\alpha\vee\beta)\Rightarrow \H\alpha\vee \P\beta$,
   \item[(5)] $\F\bot\Leftrightarrow \bot$ and $\P\bot\Leftrightarrow \bot$,
  \item[(6)] $\F(\alpha\vee\beta)\Leftrightarrow \F\alpha\vee \F\beta$ and $\P(\alpha\vee\beta)\Leftrightarrow \P\alpha\vee \P\beta$, 
  \item[(7)] $\P\G\alpha\Rightarrow \alpha$ and $\F\H\alpha\Rightarrow \alpha$,
  \item[(8)] $\G\alpha\wedge \F\beta\Rightarrow \F(\alpha\wedge\beta)$ and $\H\alpha\wedge \P\beta\Rightarrow \P(\alpha\wedge\beta)$,
  \item[(9)] $\G\alpha\Leftrightarrow \G\P\G\alpha$ and $\H\alpha\Leftrightarrow \H\F\H\alpha$,
  \item[(10)] $\F\alpha\Leftrightarrow \F\H\F\alpha$ and $\P\alpha\Leftrightarrow \P\G\P\alpha$
\end{itemize}
\end{proposition}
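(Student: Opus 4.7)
The plan is to prove each of the ten items by short derivations using the structural rules, the logical rules of $\mathfrak{Lt}$, and the derived rules of Proposition \ref{P66}. The ten sequents are exactly the tense axioms (t1)--(t8) of Definition \ref{D21} together with the idempotency identities (t18); since $\mathfrak{Lt}$ was designed to internalize the tense algebra axioms, each one should follow from a single well-chosen application of one of the rules. Many items split into two directions, and one direction will typically follow from the monotonicity rules $[m\G],[m\H],[m\F],[m\P]$ applied to an obvious sequent, while the other will use the more delicate rules $[\G^{\ast}],[\H^{\ast}],[{\ast}\F],[{\ast}\P]$ or the adjunction-like rules $[\G\P],[\H\F],[\P\G],[\F\H]$.

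For the easy items, I would proceed as follows. Items (1) and (5) come from the axioms $\Rightarrow\top$ and $\bot\Rightarrow$ combined with $[\G],[\H]$ or $[\F],[\P]$ and weakening. For (2) and (6), one direction is by $[m\G]/[m\H]$ (resp.\ $[m\F]/[m\P]$) on $\alpha\wedge\beta\Rightarrow\alpha,\beta$ followed by $[\Rightarrow\wedge]$; the other comes from $\alpha,\beta\Rightarrow\alpha\wedge\beta$ via $[\G^{\ast}]$ (resp.\ from $\alpha\vee\beta\Rightarrow\alpha,\beta$ via $[{\ast}\F]$ and then $[\Rightarrow\vee]$). Items (3) and (7) are immediate: (3) is just $[\G\P]/[\H\F]$ applied to $\alpha\Rightarrow\alpha$, and (7) is $[\P\G]/[\F\H]$ applied to the same axiom.

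The heart of the argument lies in (4) and (8), which encode the interaction axioms (t4) and (t8). For (4), I would first obtain $\alpha\vee\beta\Rightarrow\alpha,\beta$ by $[\vee\Rightarrow]$, then apply the mixed rule $[\G^{\ast}]$ with $\Gamma:=\{\alpha\vee\beta\}$, $\Delta:=\{\beta\}$ and distinguished succedent formula $\alpha$, producing $\G(\alpha\vee\beta)\Rightarrow\F\beta,\G\alpha$; a single use of $[\Rightarrow\vee]$ finishes. Dually, (8) is derived by starting from $\alpha,\beta\Rightarrow\alpha\wedge\beta$, applying $[{\ast}\F]$ with $\Gamma:=\{\alpha\}$ and distinguished antecedent formula $\beta$, obtaining $\G\alpha,\F\beta\Rightarrow\F(\alpha\wedge\beta)$, and then collapsing the antecedent with $[\wedge\Rightarrow]$. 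The analogous derivations using $[\H^{\ast}]$ and $[{\ast}\P]$ handle the symmetric halves.

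Finally, the idempotency statements (9) and (10) are corollaries. For (9) the inclusion $\G\alpha\Rightarrow\G\P\G\alpha$ is (3) applied to $\G\alpha$, and the reverse inclusion $\G\P\G\alpha\Rightarrow\G\alpha$ follows by applying $[m\G]$ to the instance $\P\G\alpha\Rightarrow\alpha$ of (7); the pair $\H\alpha\Leftrightarrow\H\F\H\alpha$ is entirely analogous. Item (10) is handled the same way, with the roles of (3)/(7) interchanged: $\F\alpha\Rightarrow\F\H\F\alpha$ is $[m\F]$ applied to $\alpha\Rightarrow\H\F\alpha$, and $\F\H\F\alpha\Rightarrow\F\alpha$ is the instance of $\F\H\alpha\Rightarrow\alpha$ at $\alpha:=\F\alpha$. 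The only real obstacle is recognizing the correct split of $\Gamma,\Delta$ in the mixed rules $[\G^{\ast}]$ and $[{\ast}\F]$ for items (4) and (8); once that is identified, the rest is essentially mechanical.
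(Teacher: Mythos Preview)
Your proposal is correct and follows exactly the strategy of the paper, which only writes out item (2) explicitly (two applications of $[m\G]$ plus $[\Rightarrow\wedge]$ for one direction, and $[m\G]$ on $\alpha,\beta\Rightarrow\alpha\wedge\beta$ plus $[\wedge\Rightarrow]$ for the other) and declares ``the others are similar.'' Your outline in fact supplies the details the paper omits; the only cosmetic slip is that $[m\G]$ takes a single succedent formula, so for the direction $\G(\alpha\wedge\beta)\Rightarrow\G\alpha\wedge\G\beta$ you must apply it separately to $\alpha\wedge\beta\Rightarrow\alpha$ and $\alpha\wedge\beta\Rightarrow\beta$ before combining with $[\Rightarrow\wedge]$, exactly as the paper does.
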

\begin{proof} We only prove (2), the others are similar.

\begin{prooftree}
\AxiomC{$\alpha\Rightarrow\alpha$}
\LeftLabel{\small{$[we]_i$}}
\UnaryInfC{$\alpha,\beta\Rightarrow\alpha$}
\LeftLabel{\small{$[\wedge\Rightarrow]$}}
\UnaryInfC{$\alpha\wedge\beta\Rightarrow\alpha$}
\LeftLabel{\small{$[m\G]$}}
\UnaryInfC{$\G(\alpha\wedge\beta)\Rightarrow \G\alpha$}

\AxiomC{$\beta\Rightarrow\beta$}
\LeftLabel{\small{$[we]_i$}}
\UnaryInfC{$\alpha,\beta\Rightarrow\beta$}
\LeftLabel{\small{$[\wedge\Rightarrow]$}}
\UnaryInfC{$\alpha\wedge\beta\Rightarrow\beta$}
\LeftLabel{\small{$[m\G]$}}
\UnaryInfC{$\G(\alpha\wedge\beta)\Rightarrow \G\beta$}

\LeftLabel{\small{$[\Rightarrow\wedge]$}}
\BinaryInfC{$\G(\alpha\wedge\beta)\Rightarrow \G\alpha\wedge \G\beta$}
\end{prooftree}

\begin{prooftree}
\AxiomC{$\alpha\Rightarrow \alpha$}
\LeftLabel{\small{$[we]_i$}}
\UnaryInfC{$\alpha,\beta\Rightarrow \alpha$}
\AxiomC{$\beta\Rightarrow \beta$}
\LeftLabel{\small{$[we]_i$}}
\UnaryInfC{$\alpha,\beta\Rightarrow \beta$}
\LeftLabel{\small{$[\Rightarrow\wedge]$}}
\BinaryInfC{$\alpha,\beta\Rightarrow \alpha\wedge\beta$}
\LeftLabel{\small{[$m\G]$}}
\UnaryInfC{$\G\alpha, \G\beta\Rightarrow \G(\alpha\wedge\beta)$}
\LeftLabel{\small{[$\wedge\Rightarrow]$}}
\UnaryInfC{$\G\alpha\wedge \G\beta\Rightarrow \G(\alpha\wedge\beta)$}

\end{prooftree}

\end{proof}

\subsection{Soundness and completeness}

Let $\Gamma$ and $\Delta$ be two finite subsets of $Fm$. We say that the sequent $\Gamma\Rightarrow\Delta$ of $\mathfrak{Lt}$ is  {\em valid} if it holds:
$$\forall A\in{\tdlat},\,\,\forall h\in Hom(\mathfrak{Fm},A),\,\,\bigwedge\{h(\gamma):\gamma\in\Gamma\}\leq \bigvee\{h(\delta):\delta\in\Delta\}.$$
Besides, we say that the rule of sequents
$$\frac{\Gamma_1\Rightarrow\Delta_1,\hdots,\Gamma_k\Rightarrow\Delta_k}{\Gamma\Rightarrow\Delta}$$
{\em preserves validity} if it holds:
$$\Gamma_i\Rightarrow\Delta_i  \mbox{ is valid for } 1\leq i\leq k \, \mbox{ implies } \, \Gamma\Rightarrow\Delta \mbox{ is valid. }$$
Then, the following proposition is immediate.

\begin{proposition}\label{P68} Let $\Gamma\Rightarrow\Delta$ be a sequent of $\mathfrak{Lt}$. The following conditions are equivalent:
\begin{itemize}
\item[(i)] $\Gamma\Rightarrow\Delta$ is valid,
\item[(ii)] $\Gamma\models_{{\tdlat}}^{\leq}\Delta$, that is,  $\Delta$ is consequence of $\Gamma$ in $\mathbb{L}_{{\tdlat}}^{\leq}$.
\end{itemize}

\end{proposition}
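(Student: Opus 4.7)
The plan is essentially to unfold the definitions, since both sides of the asserted equivalence are the same universally-quantified algebraic statement. I would first recall that the extended consequence relation $\models_{\tdlat}^{\leq}$ on finite sets $\Gamma, \Delta$ was defined precisely by the condition
$$\forall \mathcal{A} \in \tdlat,\ \forall h \in \mathrm{Hom}(\mathfrak{Fm}, \mathcal{A}),\quad \bigwedge\{h(\gamma): \gamma \in \Gamma\} \leq \bigvee\{h(\delta): \delta \in \Delta\},$$
and note that the paragraph introducing the notion of validity for a sequent $\Gamma \Rightarrow \Delta$ gives verbatim the same condition. Hence (i) and (ii) are literally the same statement, and the proof reduces to a comparison of definitions.

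The small items that do need an explicit remark are the boundary conventions for the empty multisets: $\bigwedge \emptyset = 1$ and $\bigvee \emptyset = 0$ in the underlying bounded lattice. With these conventions one verifies that the single-conclusion clauses of Definition 6.1 are recovered as special cases of the extended definition. In particular, the case $\Gamma = \emptyset$ of clause (ii) in Definition 6.1 (namely $h(\alpha) = 1$) corresponds to $1 = \bigwedge \emptyset \leq h(\alpha)$, which forces $h(\alpha) = 1$; and the general finite case (i) of Definition 6.1 reduces to the proposed inequality via Proposition 6.2, also using that $\bigvee\{h(\alpha)\} = h(\alpha)$ for a singleton.

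No obstacle of substance is expected: the result is, as the author remarks, immediate. The only thing to be careful about is to make sure that the extension from single-conclusion to multiple-conclusion sequents is compatible with the original Definition 6.1 in the singleton case $\Delta = \{\alpha\}$, which was already observed in the paragraph preceding the proposition. After recording these observations the equivalence (i) $\Leftrightarrow$ (ii) holds on the nose.
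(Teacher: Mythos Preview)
Your proposal is correct and matches the paper's approach: the paper simply declares the proposition ``immediate'' from the definitions, and your unfolding of those definitions (together with the standard empty-meet and empty-join conventions) is exactly what that remark amounts to.
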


\

\begin{lemma}\label{L69} All the (structural and logical) rules of $\mathfrak{Lt}$ preserve validity.
\end{lemma}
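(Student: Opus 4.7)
Given Proposition \ref{P68}, it suffices to verify that each rule preserves the inequality $\bigwedge h(\Gamma)\leq \bigvee h(\Delta)$ in every \tdlat-algebra $\mathcal{A}$ and every homomorphism $h:\mathfrak{Fm}\to \mathcal{A}$. I will fix such $\mathcal{A}$ and $h$ and write $\gamma=\bigwedge h(\Gamma)$, $\delta=\bigvee h(\Delta)$, $a=h(\alpha)$, etc., to keep the bookkeeping light.

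The structural rules are routine. Weakening on either side is immediate from the facts that adding a premise only decreases $\gamma$ and adding a conclusion only increases $\delta$. For $[cut]$, the hypotheses give $\gamma\leq \delta\vee a$ and $\gamma\wedge a\leq \delta$, and distributivity yields $\gamma=\gamma\wedge(\delta\vee a)=(\gamma\wedge\delta)\vee(\gamma\wedge a)\leq \delta$. The lattice logical rules $[\wedge\Rightarrow],[\Rightarrow\wedge],[\vee\Rightarrow],[\Rightarrow\vee]$ follow from the universal property of meet and join, and the axioms $\bot\Rightarrow$ and $\Rightarrow\top$ are validated by $0$ being the bottom and $1$ the top.

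The tense rules are the substantive part and use Definition \ref{D21} together with Proposition \ref{P26} systematically. For $[\G^\ast]$, assume $\gamma\leq \delta\vee a$. Applying $\G$ and using monotonicity (t9) followed by (t4) with $x=a$ and $y=\delta$, we obtain $\G\gamma\leq \G(a\vee \delta)\leq \G a\vee \F\delta$. Now (t2) gives $\G\gamma=\bigwedge h(\G\Gamma)$ and (t6) gives $\F\delta=\bigvee h(\F\Delta)$, so the conclusion $\G\Gamma\Rightarrow \F\Delta,\G\alpha$ is valid. The rule $[\H^\ast]$ is symmetric using the dual half of (t4). For $[\ast\F]$, assume $\gamma\wedge a\leq \delta$; then by (t8) and monotonicity of $\F$, $\G\gamma\wedge \F a\leq \F(\gamma\wedge a)\leq \F\delta$, and the same translations through (t2) and (t6) finish the verification; $[\ast\P]$ is analogous via the other half of (t8). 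Finally, the four adjunction-style rules are direct consequences of (t3) and (t7): $[\P\G]$ gives $\P\G a\leq a\leq \delta$, $[\G\P]$ gives $\gamma\leq a\leq \G\P a$, and $[\F\H],[\H\F]$ are identical with the roles of the past/future pair exchanged.

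The only place where a subtle choice appears is the application of (t4) in $[\G^\ast]$ and $[\H^\ast]$: one must group the displayed formula $\alpha$ of the rule with the ``strong'' side of the axiom and the context $\Delta$ with the ``weak'' side, which is exactly what the shape of the rule dictates. Once this pairing is made, the remaining calculations are routine distributions of $\G,\H$ over finite meets and of $\F,\P$ over finite joins. In particular no appeal beyond the axioms (t1)--(t8) and their immediate consequences listed in Proposition \ref{P26} is required.
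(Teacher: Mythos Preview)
Your proof is correct and follows essentially the same approach as the paper's own argument: structural and lattice rules are handled by distributivity, the four adjunction-style rules by (t3) and (t7), and $[\G^{\ast}]$ (and its siblings) by combining monotonicity with (t2), (t4), (t6). You actually give more detail than the paper does, spelling out the cut case and the $[{\ast}\F]$ case where the paper simply says ``the others are similar''.
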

\begin{proof} That structural rules preserve validity is an immediate consequence of the fact that every $A\in {\tdlat}$ is, in particular, is a distributive lattice. 
Similarly, we can check that rules $[\wedge\Rightarrow], [\Rightarrow\wedge], [\vee\Rightarrow]$ and $[\Rightarrow\vee]$ preserve validity.  

On the other hand, from axioms (t3) and (t7) we have that rules $[\G\P], [\H\F], [\P\G]$ and $[\F\H]$ preserve validity. Next, we just check t for $[\G^{\ast}]$, the others are similar.  
$$[\G^{\ast}] \hspace{.7cm}\frac{\Gamma\Rightarrow \Delta,\alpha}{\G\Gamma\Rightarrow \F\Delta,\G\alpha}$$
Suppose that $\Gamma\Rightarrow\Delta,\alpha$ is valid and let $A\in{\tdlat}$ and $h\in Hom(\mathfrak{Fm},A)$. Then
$$\bigwedge\{h(\gamma):\gamma\in\Gamma\}\leq \bigvee\{h(\delta):\delta\in\Delta\}\vee h(\alpha)$$
Then, by the monotonicity of $\G$ and axioms (t2), (t4) and (t6) we have
$$\bigwedge\{h(\G\gamma):\gamma\in\Gamma\}\leq \bigvee\{h(\F\delta):\delta\in\Delta\}\vee h(\G\alpha)$$
and therefore, the sequent $\G\Gamma\Rightarrow \F\Delta,\G\alpha$ is valid.
\end{proof}

\begin{theorem}[Soundness]\label{T610} Every sequent $\mathfrak{Lt}$-provable is valid.
\end{theorem}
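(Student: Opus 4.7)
The plan is to prove the theorem by induction on the length (equivalently, the height) of the $\mathfrak{Lt}$-derivation of the sequent $\Gamma \Rightarrow \Delta$, using Proposition \ref{P68} to identify validity with the semantic consequence relation.

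For the base case I would verify that each of the three axioms is valid. The sequent $\alpha \Rightarrow \alpha$ is trivially valid since $h(\alpha) \leq h(\alpha)$ for every $h \in Hom(\mathfrak{Fm},A)$ and every $A \in \tdlat$. For $\bot \Rightarrow$, adopting the standard sequent convention $\bigvee \emptyset = 0$, validity reduces to $h(\bot) \leq 0$, which holds since $\bot$ is interpreted as the bottom element $0$. For $\Rightarrow \top$, with $\bigwedge \emptyset = 1$, validity reduces to $1 \leq h(\top)$, which holds since $\top$ is interpreted as $1$. The justifications (t1) and (t5) guarantee the corresponding algebraic facts are available in any \tdlat-algebra.

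For the inductive step, suppose the derivation ends with a rule application whose premises are $\Gamma_1 \Rightarrow \Delta_1, \ldots, \Gamma_k \Rightarrow \Delta_k$ and whose conclusion is $\Gamma \Rightarrow \Delta$. By the induction hypothesis each premise is valid, and by Lemma \ref{L69} every structural and logical rule of $\mathfrak{Lt}$ preserves validity. Hence $\Gamma \Rightarrow \Delta$ is valid, which closes the induction. There is no genuine obstacle: the real work was already absorbed into Lemma \ref{L69}, where (t1)--(t8) and the monotonicity and distributivity properties of the tense operators (Propositions \ref{P26} and \ref{P27}) were used to check each rule. Consequently the soundness theorem follows immediately from the combination of Proposition \ref{P68}, the axiom verification above, and Lemma \ref{L69}.
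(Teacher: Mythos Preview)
Your proposal is correct and follows exactly the paper's approach: verify the axioms are valid and then invoke Lemma~\ref{L69} (rules preserve validity) to conclude by induction on derivations. The only quibble is that the validity of $\bot\Rightarrow$ and $\Rightarrow\top$ comes directly from the interpretation of $\bot,\top$ as $0,1$ in any bounded lattice, not from (t1) or (t5), and Proposition~\ref{P27} is not actually needed in Lemma~\ref{L69}; but these are harmless side remarks that do not affect the argument.
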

\begin{proof} From the fact that the axioms of $\mathfrak{Lt}$ are valid and Lemma \ref{L69}.
\end{proof}

Next, we show the Completeness Theorem vía the construction of the Lindenbaum-Tarski algebra. Consinder the binary relations $\prec$ and $\equiv$ defined on $Fm$ as follows:
$$\alpha\prec \beta\hspace{.5cm}\mbox{ if and only if }\hspace{.5cm}\mathfrak{Lt}\vdash \alpha\Rightarrow \beta$$
$$\alpha\equiv \beta\hspace{.5cm}\mbox{ if and only if }\hspace{.5cm}\alpha\prec \beta\,\,\,y\,\,\,\beta\prec \alpha$$
Taking into account that $\mathfrak{Lt}\vdash \alpha\Rightarrow \alpha$ the rule $[cut]$ we have that $\prec$ is a pre-order and therefore $\equiv$ is a equivalence relation. Besides, using the rules $[\wedge\Rightarrow], [\Rightarrow\wedge], [\vee\Rightarrow]$ and $[\Rightarrow\vee]$ it is not difficult to verify that $\equiv$ is a compatible with 
 $\wedge$ and $\vee$. Finally, the compatibility of $\equiv$ w.r.t $\G,\H,\F$ and $\P$ follow from the rules $[m\G], [m\H], [m\F]$ and $[m\P]$, respectively. Hence, $\equiv$ is a congruence relation on the algebra $\mathfrak{Fm}$ and, from Proposition \ref{P67}, we have 

\begin{lemma}\label{L611} $\mathfrak{Fm}_{\equiv}=\langle Fm/{\equiv}, \wedge, \vee, \to, \neg, \G, \H, {\bf 0}, {\bf 1}\rangle$ is a \tdlat-algebra, where ${\bf 0}=[\bot]_{\equiv}$ and ${\bf 1}=[\top]_{\equiv}$.
\end{lemma}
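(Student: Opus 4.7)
The plan is to package together the three strands of work already assembled: the compatibility of $\equiv$ with all operations (which the paragraph preceding the lemma has just verified), the provability of the basic lattice sequents inside $\mathfrak{Lt}$, and the list of tense sequents collected in Proposition~\ref{P67}. Once those are in place, the lemma reduces to rewriting each defining identity of Definition~\ref{D21} as a pair of provable sequents and reading it on equivalence classes.

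First I would make the algebraic structure explicit. Since $\equiv$ is a congruence on $\mathfrak{Fm}$, the operations $\wedge,\vee,\G,\H,\F,\P$ descend unambiguously to $Fm/{\equiv}$, and I set $[\alpha]\wedge[\beta]:=[\alpha\wedge\beta]$, $[\alpha]\vee[\beta]:=[\alpha\vee\beta]$, and similarly for the unary operators (the displayed signature in the lemma statement should be read as the \tdlat-signature; $\to$ and $\neg$ are not part of it). To see that $\langle Fm/{\equiv},\wedge,\vee,[\bot],[\top]\rangle$ is a bounded distributive lattice, I would derive, inside $\mathfrak{Lt}$ and using only the axioms, $[we]_i$, $[we]_d$, $[\wedge\Rightarrow]$, $[\Rightarrow\wedge]$, $[\vee\Rightarrow]$, $[\Rightarrow\vee]$ and $[cut]$, the sequents that encode commutativity, associativity, idempotency, absorption and distributivity of $\wedge,\vee$, together with $\bot\Rightarrow\alpha$ and $\alpha\Rightarrow\top$. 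All of these are entirely routine Gentzen derivations of the kind illustrated in Proposition~\ref{P66}, and they give precisely the bounded-distributive-lattice identities after quotienting.

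The real content is the verification of axioms \rm{(t1)}--\rm{(t8)} for the induced operators $\G,\H,\F,\P$ on $Fm/{\equiv}$. Here I would invoke Proposition~\ref{P67} directly: item~(1) delivers (t1), item~(2) delivers (t2), item~(3) delivers (t3), item~(4) delivers (t4), item~(5) delivers (t5), item~(6) delivers (t6), item~(7) delivers (t7), and item~(8) delivers (t8); each $\Leftrightarrow$-pair gives an equality of classes, and each $\Rightarrow$ gives the correct inequality in the lattice order of $\mathfrak{Fm}_{\equiv}$ (recall $[\alpha]\le[\beta]$ iff $\mathfrak{Lt}\vdash\alpha\Rightarrow\beta$, since $[\alpha]\wedge[\beta]=[\alpha]$ is witnessed by $\alpha\Leftrightarrow\alpha\wedge\beta$, which is equivalent to $\alpha\Rightarrow\beta$). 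Combined with the monotonicity rules $[m\G],[m\H],[m\F],[m\P]$ — already noted to be derivable — and the adjunction rules $[Ad\G],[Ad\H],[Ad\F],[Ad\P]$, the four unary operators are well-defined and satisfy exactly the defining identities of a tense distributive lattice.

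The potential obstacle is mainly bookkeeping rather than mathematical: one has to check that the lattice order on $Fm/{\equiv}$ agrees both with the sequent-based definition $[\alpha]\le[\beta]\iff\mathfrak{Lt}\vdash\alpha\Rightarrow\beta$ and with the algebraic definition via $\wedge$, and that the sequents in Proposition~\ref{P67} translate literally into (t1)--(t8) rather than into some weaker variant. Once that translation is made explicit, no new derivation is required beyond what has been done in Propositions~\ref{P66} and~\ref{P67}, and the lemma follows.
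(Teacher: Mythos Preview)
Your proposal is correct and follows essentially the same approach as the paper: the paper's proof consists of the single sentence ``from Proposition~\ref{P67}'', relying on the congruence argument in the preceding paragraph and on the list of provable sequents in Proposition~\ref{P67} to deliver (t1)--(t8). Your write-up merely makes explicit the routine lattice-theoretic verifications and the translation between provable sequents and (in)equalities of classes that the paper leaves implicit; your observation that $\to$ and $\neg$ in the displayed signature are extraneous is also well taken.
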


\begin{lemma}\label{L612} Let $\Gamma\Rightarrow \Delta$ be a $\mathfrak{Lt}$-sequent, then the following conditions are equivalent.
\begin{itemize}
    \item[(i)] $\mathfrak{Lt}\vdash\Gamma\Rightarrow \Delta$ 
    \item[(ii)] $\mathfrak{Lt}\vdash\bigwedge\Gamma\Rightarrow \bigvee\Delta$
 \end{itemize}
\end{lemma}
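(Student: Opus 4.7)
The plan is to establish the two directions separately, using only the structural rules ($[we]_i$, $[we]_d$, $[cut]$) together with the four propositional connective rules $[\wedge\Rightarrow], [\Rightarrow\wedge], [\vee\Rightarrow], [\Rightarrow\vee]$; no tense rules are needed. Write $\Gamma=\{\gamma_1,\dots,\gamma_n\}$ and $\Delta=\{\delta_1,\dots,\delta_m\}$, and adopt the usual conventions $\bigwedge\emptyset=\top$ and $\bigvee\emptyset=\bot$.

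For the direction (i) $\Rightarrow$ (ii), assume $\mathfrak{Lt}\vdash\Gamma\Rightarrow\Delta$. The strategy is simply to fuse the antecedent and succedent using the introduction rules on the principal side. Namely, apply $[\wedge\Rightarrow]$ successively to the antecedent $\gamma_1,\dots,\gamma_n$ to obtain $\bigwedge\Gamma\Rightarrow\Delta$, and then apply $[\Rightarrow\vee]$ successively to the succedent $\delta_1,\dots,\delta_m$ to obtain $\bigwedge\Gamma\Rightarrow\bigvee\Delta$. In the degenerate cases, if $\Gamma=\emptyset$ we first use the axiom $\Rightarrow\top$ and $[cut]$ to prepend $\top$ as an antecedent (or use $[we]_i$) so that the left-hand side becomes $\top=\bigwedge\emptyset$; if $\Delta=\emptyset$, the axiom $\bot\Rightarrow$ together with $[we]_d$ and $[cut]$ inserts $\bot$ on the right.

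For the direction (ii) $\Rightarrow$ (i), assume $\mathfrak{Lt}\vdash\bigwedge\Gamma\Rightarrow\bigvee\Delta$. Here I would build two auxiliary derivable sequents and then cut twice. First, from the axioms $\gamma_i\Rightarrow\gamma_i$ ($1\le i\le n$), repeated $[we]_i$ gives $\Gamma\Rightarrow\gamma_i$ for each $i$, and iterated $[\Rightarrow\wedge]$ yields
\[
\Gamma\Rightarrow\bigwedge\Gamma.
\]
Dually, from $\delta_j\Rightarrow\delta_j$, $[we]_d$ gives $\delta_j\Rightarrow\Delta$ for each $j$, and iterated $[\vee\Rightarrow]$ yields
\[
\bigvee\Delta\Rightarrow\Delta.
\]
A first application of $[cut]$ on $\bigwedge\Gamma$ to $\Gamma\Rightarrow\bigwedge\Gamma$ and the hypothesis gives $\Gamma\Rightarrow\bigvee\Delta$; a second application of $[cut]$ on $\bigvee\Delta$ to this and $\bigvee\Delta\Rightarrow\Delta$ produces the desired $\Gamma\Rightarrow\Delta$. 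The empty cases are again covered by the axioms $\Rightarrow\top$ and $\bot\Rightarrow$ together with weakening.

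No step is really hard — everything is purely structural. The only thing that deserves some care is making the argument well-defined independently of the order chosen when iterating $[\wedge\Rightarrow]$, $[\Rightarrow\wedge]$, $[\vee\Rightarrow]$, $[\Rightarrow\vee]$; this reduces to checking that $\bigwedge\Gamma$ and $\bigvee\Delta$ are determined up to $\equiv$ in the sense of Lemma \ref{L611}, which already follows from the provability of associativity and commutativity of $\wedge$ and $\vee$ in $\mathfrak{Lt}$. The empty antecedent/succedent cases are the only genuine special handling, and they are dispatched by the two zero-premise axioms $\Rightarrow\top$ and $\bot\Rightarrow$.
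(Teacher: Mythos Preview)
Your proposal is correct and coincides with the paper's approach: the paper simply says ``by induction on the number of elements of $\Gamma\cup\Delta$'', and what you have written is precisely that induction spelled out (reducing the sizes of $\Gamma$ and $\Delta$ one step at a time via $[\wedge\Rightarrow],[\Rightarrow\vee]$ in one direction and $[\Rightarrow\wedge],[\vee\Rightarrow]$ plus $[cut]$ in the other). One minor remark: when you invoke $[cut]$ in direction (ii)$\Rightarrow$(i), the contexts of the two premises do not literally match the rule's format, so you need a couple of extra weakenings before each cut; this is routine and does not affect the argument.
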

\begin{proof} The proof is by induction on the number of elements of $\Gamma\cup\Delta$.
    
\end{proof}
\begin{theorem}[Completeness] \label{T613} Every valid sequent of $\mathfrak{Lt}$ is $\mathfrak{Lt}$-provable.
\end{theorem}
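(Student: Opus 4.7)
The plan is to prove completeness by the standard Lindenbaum--Tarski route, exploiting the algebra $\mathfrak{Fm}_{\equiv}$ constructed in Lemma \ref{L611} as a \tdlat-algebra that itself witnesses unprovability.

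First, I would reduce the problem from sequents to single-formula sequents. Using Lemma \ref{L612}, a sequent $\Gamma \Rightarrow \Delta$ is $\mathfrak{Lt}$-provable if and only if $\bigwedge \Gamma \Rightarrow \bigvee \Delta$ is, and by Proposition \ref{P68} (applied to the lattice operations), $\Gamma \Rightarrow \Delta$ is valid if and only if $\bigwedge \Gamma \Rightarrow \bigvee \Delta$ is valid. So it suffices to show: if $\mathfrak{Lt} \vdash \alpha \Rightarrow \beta$ fails to hold for some formulas $\alpha, \beta$, then the sequent $\alpha \Rightarrow \beta$ is not valid.

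Next, I would establish the key link between the syntactic preorder $\prec$ and the lattice order of $\mathfrak{Fm}_{\equiv}$: for all $\alpha, \beta \in Fm$,
\[
[\alpha]_{\equiv} \leq [\beta]_{\equiv} \quad \Longleftrightarrow \quad \alpha \prec \beta \quad \Longleftrightarrow \quad \mathfrak{Lt} \vdash \alpha \Rightarrow \beta.
\]
The right-to-left direction uses $\mathfrak{Lt} \vdash \alpha \Rightarrow \alpha$ together with $[\Rightarrow\wedge]$ to derive $\alpha \Rightarrow \alpha \wedge \beta$, and $\mathfrak{Lt} \vdash \alpha \wedge \beta \Rightarrow \alpha$ follows from weakening and $[\wedge\Rightarrow]$, giving $\alpha \equiv \alpha \wedge \beta$ and hence $[\alpha]_\equiv \leq [\beta]_\equiv$. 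The left-to-right direction uses the derivable $\alpha \wedge \beta \Rightarrow \beta$ and $[cut]$.

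Now the main argument: consider the canonical map $\pi \colon \mathfrak{Fm} \to \mathfrak{Fm}_{\equiv}$ given by $\pi(\varphi) = [\varphi]_{\equiv}$, which by Lemma \ref{L611} is a \tdlat-homomorphism. If $\alpha \Rightarrow \beta$ is valid, then instantiating the validity condition at the \tdlat-algebra $\mathfrak{Fm}_{\equiv}$ and the homomorphism $\pi$ gives $[\alpha]_{\equiv} = \pi(\alpha) \leq \pi(\beta) = [\beta]_{\equiv}$. By the equivalence above, this is precisely $\mathfrak{Lt} \vdash \alpha \Rightarrow \beta$.

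The main obstacle is the bookkeeping in the first step above, where one must check carefully that Lemma \ref{L612} matches the validity condition stated through $\bigwedge\Gamma$ and $\bigvee\Delta$ on the semantic side; this is routine once one observes that homomorphisms preserve finite meets and joins, but it is the point where the multiple-conclusion sequent calculus meets the algebraic semantics, and one must be careful with the edge cases $\Gamma = \emptyset$ and $\Delta = \emptyset$ (handled by the axioms $\Rightarrow \top$ and $\bot \Rightarrow$, interpreted as top and bottom of empty meets/joins).
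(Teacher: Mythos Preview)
Your proposal is correct and follows essentially the same Lindenbaum--Tarski argument as the paper: reduce to single-formula sequents via Lemma~\ref{L612}, then use the quotient homomorphism $\pi$ (the paper's $\overline{v}$) into $\mathfrak{Fm}_{\equiv}$ to witness that unprovable sequents fail in this \tdlat-algebra. Your write-up is slightly more detailed than the paper's (you spell out the equivalence $[\alpha]_{\equiv}\leq[\beta]_{\equiv}\iff\mathfrak{Lt}\vdash\alpha\Rightarrow\beta$ and flag the empty-side edge cases), but the route is the same.
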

\begin{proof} Let $\Gamma\Rightarrow \Delta$ be a sequent of $\mathfrak{Lt}$ and suppose that $\mathfrak{Lt}\not\vdash\Gamma\Rightarrow \Delta$, that is , $\Gamma\Rightarrow \Delta$ is not provable in $\mathfrak{Lt}$. Let us denote by $\gamma$ and $\delta$ the formulas $\bigwedge\Gamma$ and $\bigvee\Delta$, respectively. Then, from Lemma \ref{L612} we have  $\mathfrak{Lt}\not\vdash\gamma\Rightarrow \delta$, or equivalently $[\gamma]_{\equiv}\not\leq [\delta]_{\equiv}$.\\
On the other hand, since $\mathfrak{Fm}_{\equiv}$ is a tense distributive lattice (Lemma \ref{L611}), consider the map $v:Var\to \mathfrak{Fm}_{\equiv}$ defined as $v(p)=[p]_{\equiv}$. It is clear that $v$ can be homomorphically extended to $\overline{v}:\mathfrak{Fm}\to \mathfrak{Fm}_{\equiv}$. Then,  $\overline{v}(\gamma)\not\leq\overline{v}(\delta)$, that is,
$$\bigwedge\{\overline{v}(\gamma):\gamma\in\Gamma\}\not\leq \bigvee\{\overline{v}(\delta):\delta\in\Delta\}$$
Therefore, $\Gamma\Rightarrow\Delta$ is not valid.
\end{proof}

\

\begin{corollary} The following conditions are equivalent.
\begin{itemize}
  \item[(i)] $\mathfrak{Lt}\vdash\Gamma\Rightarrow\Delta$, that is,  $\Gamma\Rightarrow\Delta$ is provable in $\mathfrak{Lt}$,
  \item[(ii)] $\Gamma\Rightarrow\Delta$ is valid,
  \item[(iii)] $\Gamma\models_{{\tdlat}}^{\leq}\Delta$, that is,  $\Delta$ is consequence of $\Gamma$ in $\mathbb{L}_{{\tdlat}}^{\leq}$.
\end{itemize}
\end{corollary}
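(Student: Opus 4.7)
The plan is essentially a bookkeeping argument: all three ingredients have already been assembled in the preceding material, so the corollary is immediate by chaining the existing results. I would organise it as a cyclic chain of implications, although here a simpler linear presentation (i) $\Leftrightarrow$ (ii) and (ii) $\Leftrightarrow$ (iii) works equally well.

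First, I would note that (i) $\Rightarrow$ (ii) is exactly the Soundness Theorem \ref{T610}: every $\mathfrak{Lt}$-provable sequent is valid, by structural induction on the proof, using that the axioms are valid and all structural and logical rules preserve validity (the content of Lemma \ref{L69}). Second, for (ii) $\Rightarrow$ (i), I would invoke the Completeness Theorem \ref{T613}, whose proof proceeds via the Lindenbaum–Tarski construction: the quotient $\mathfrak{Fm}_{\equiv}$ is a \tdlat-algebra (Lemma \ref{L611}), and the canonical valuation $\overline{v}$ witnesses that any non-provable $\Gamma\Rightarrow\Delta$ fails to be valid, after collapsing the sequent to $\bigwedge\Gamma\Rightarrow\bigvee\Delta$ via Lemma \ref{L612}.

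Finally, (ii) $\Leftrightarrow$ (iii) is simply Proposition \ref{P68}, which unfolds the definitions: a sequent $\Gamma\Rightarrow\Delta$ is valid exactly when, for every $A\in\tdlat$ and every homomorphism $h:\mathfrak{Fm}\to A$, the inequality $\bigwedge\{h(\gamma):\gamma\in\Gamma\}\leq\bigvee\{h(\delta):\delta\in\Delta\}$ holds, which is precisely the extended multiple-conclusion consequence relation $\Gamma\models^{\leq}_{\tdlat}\Delta$ defined just before this subsection.

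There is no genuine obstacle here — the corollary is a summary statement. The only care needed is to be explicit that (iii) refers to the multiple-conclusion extension of $\models^{\leq}_{\tdlat}$ (for finite $\Delta$) introduced after Corollary \ref{C63}, so that Proposition \ref{P68} really does furnish the equivalence with (ii); in the singleton case $\Delta=\{\alpha\}$ one recovers the original single-conclusion relation of Definition \ref{D61}. With those identifications in place, the three implications compose to give the stated three-way equivalence.
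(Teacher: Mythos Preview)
Your proposal is correct and matches the paper's approach: the corollary is stated without proof, being immediate from Theorem \ref{T610} (soundness), Theorem \ref{T613} (completeness), and Proposition \ref{P68} (the definitional equivalence of validity and $\models_{\tdlat}^{\leq}$), exactly as you outline. Your remark about the multiple-conclusion extension of $\models_{\tdlat}^{\leq}$ is also apt and consistent with how the paper sets things up.
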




\subsection{A Gentzen-style system for $\mathbb{L}_{\tdlat_{\bf B}}^{\leq}$, $\mathbb{L}_{\tdlat_{\bf H}}^{\leq}$ and $\mathbb{L}_{\tdlat_{\bf M}}^{\leq}$.}

\subsubsection{Classical case: A Gentzen-style system for $\mathbb{L}_{\tdlat_{\bf B}}^{\leq}$}

Let us consider the language $\mathcal{L}=\{\wedge, \vee, \to, \neg, \H, \G, \F, \P\}$ and let $\mathfrak{Lt_C}$ obtained from $\mathfrak{Lt}$ by adding the next rules:

\

\begin{minipage}[b]{0.5\textwidth}
\begin{itemize}
    \item[][$\neg\,\Rightarrow$] \hspace{.7cm}$\displaystyle\frac{\Gamma\Rightarrow\Delta,\alpha}{\neg\alpha,\Gamma\Rightarrow\Delta}$  
    \item[][$\to\Rightarrow$]\hspace{.5cm} $\displaystyle\frac{\Gamma\Rightarrow\Delta,\alpha\hspace{.5cm}\beta,\Gamma\Rightarrow\Delta}{\alpha\to\beta,\Gamma\Rightarrow\Delta}$
\end{itemize}
\end{minipage} \hfill \begin{minipage}[b]{0.5\textwidth}
\begin{itemize}
     \item[][$\Rightarrow\,\neg$]\hspace{.75cm} $\displaystyle\frac{\alpha,\Gamma\Rightarrow\Delta}{\Gamma\Rightarrow\Delta,\neg\alpha}$ 
    \item[][$\Rightarrow\,\to$]\hspace{.5cm}$\displaystyle\frac{\alpha,\Gamma\Rightarrow\Delta,\beta}{\Gamma\Rightarrow\Delta,\alpha\to\beta}$ 
\end{itemize}
\end{minipage}

\begin{lemma} The following sequents are provable in $\mathfrak{Lt_C}$.
\begin{itemize}
  \item[(0)] Every $\mathfrak{Lt}$-provable sequent,
  \item[(1)] $\F\alpha\Leftrightarrow \neg\G\neg \alpha$\, and\, $\P\alpha\Leftrightarrow \neg\H\neg \alpha$,
  \item[(2)] $\G\alpha\Leftrightarrow \neg\F\neg \alpha$\, and\, $\H\alpha\Leftrightarrow \neg\P\neg \alpha$,
  \item[(3)] $\alpha\Rightarrow\G\neg\H\neg\alpha$\, and\, $\alpha\Rightarrow\H\neg\G\neg\alpha$
  \item[(4)] $\G(\alpha\to\beta)\Rightarrow\G\alpha\to\G\beta$\, and\, $\H(\alpha\to\beta)\Rightarrow\H\alpha\to\H\beta$,
\end{itemize}
\end{lemma}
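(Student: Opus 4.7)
The plan is to handle the five items in the order listed, with (1) and (2) being the computational core and (3)-(4) following from (1)-(2) together with results already available in $\mathfrak{Lt}$.

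Item (0) is immediate: since the axioms and rules of $\mathfrak{Lt}$ are among those of $\mathfrak{Lt_C}$, every $\mathfrak{Lt}$-proof is also an $\mathfrak{Lt_C}$-proof.

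For (1) and (2), the strategy is to combine the two classical negation rules $[\neg\Rightarrow]$, $[\Rightarrow\neg]$ with the tense rules $[\G^\ast]$, $[\ast\F]$ (and their $[\H^\ast]$, $[\ast\P]$ counterparts). Concretely, for $\F\alpha\Rightarrow\neg\G\neg\alpha$ I would start from $\alpha\Rightarrow\alpha$, apply $[\neg\Rightarrow]$ to get the explosion sequent $\neg\alpha,\alpha\Rightarrow$, then apply $[\ast\F]$ (with $\alpha$ as the principal formula) to obtain $\G\neg\alpha,\F\alpha\Rightarrow$, and finally $[\Rightarrow\neg]$. For the converse $\neg\G\neg\alpha\Rightarrow\F\alpha$, I would again start from $\alpha\Rightarrow\alpha$, apply $[\Rightarrow\neg]$ to obtain the classical excluded-middle sequent $\Rightarrow\alpha,\neg\alpha$, then apply $[\G^\ast]$ with $\neg\alpha$ as the distinguished succedent formula to produce $\Rightarrow\F\alpha,\G\neg\alpha$, and close with $[\neg\Rightarrow]$. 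The bi-implication in (2) is obtained by entirely symmetric maneuvers, swapping the roles of the premises, and the $\H,\P$ variants reuse the same pattern with $[\H^\ast]$ and $[\ast\P]$ in place of $[\G^\ast]$ and $[\ast\F]$.

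For (3), I will invoke Proposition \ref{P67}(3), which gives $\mathfrak{Lt}\vdash\alpha\Rightarrow\G\P\alpha$, together with the equivalence $\P\alpha\Leftrightarrow\neg\H\neg\alpha$ just established in (1). Applying the derived rule $[m\G]$ of Proposition \ref{P66} to the direction $\P\alpha\Rightarrow\neg\H\neg\alpha$ yields $\G\P\alpha\Rightarrow\G\neg\H\neg\alpha$, and a single $[cut]$ against $\alpha\Rightarrow\G\P\alpha$ produces the desired sequent. The second sequent in (3) is obtained symmetrically using $[m\H]$ and the $\mathfrak{Lt}$-provable $\alpha\Rightarrow\H\F\alpha$.

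For (4), the plan is to derive a propositional $K$-style sequent and then lift it through $[m\G]$. Specifically, $\alpha\to\beta,\alpha\Rightarrow\beta$ is provable by one application of $[\to\Rightarrow]$ (with standard weakenings) to $\alpha\Rightarrow\alpha$ and $\beta\Rightarrow\beta$; then $[m\G]$ gives $\G(\alpha\to\beta),\G\alpha\Rightarrow\G\beta$, and $[\Rightarrow\to]$ delivers $\G(\alpha\to\beta)\Rightarrow\G\alpha\to\G\beta$. The $\H$-case is identical with $[m\H]$ replacing $[m\G]$. The only real obstacle throughout is bookkeeping: the tense rules $[\G^\ast],[\ast\F],[\H^\ast],[\ast\P]$ act on a single distinguished formula in the succedent/antecedent, so one must choose that formula (typically $\neg\alpha$) carefully in items (1)-(2); once this choice is made the derivations are short and mechanical.
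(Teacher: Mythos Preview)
Your proposal is correct, and for item (4)---the only item the paper actually proves---your derivation coincides step for step with the paper's: $\alpha\to\beta,\alpha\Rightarrow\beta$ via $[\to\Rightarrow]$, then $[m\G]$, then $[\Rightarrow\to]$. For items (0)--(3), which the paper leaves to the reader, your arguments are sound: the choices of distinguished formula in the applications of $[\G^\ast]$ and $[\ast\F]$ (and their $\H,\P$ analogues) are exactly right, and the reductions of (3) to Proposition~\ref{P67}(3) via $[m\G]$, $[m\H]$ and cut work as stated.
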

\begin{proof} We just show  (4)

$(4)$

\begin{prooftree}
\AxiomC{$\alpha\Rightarrow\alpha$}
\LeftLabel{\small{$[we]_d$}}
\UnaryInfC{$\alpha\Rightarrow\beta,\alpha$}
\AxiomC{$\beta\Rightarrow\beta$}
\LeftLabel{\small{$[we]_i$}}
\UnaryInfC{$\beta,\alpha\Rightarrow\beta$}
\LeftLabel{\small{$[\to\Rightarrow]$}}
\BinaryInfC{$\alpha\to\beta,\alpha\Rightarrow\beta$}
\LeftLabel{\small{$[m\G]$}}
\UnaryInfC{$\G(\alpha\to\beta),\G\alpha\Rightarrow\G\beta$}
\LeftLabel{\small{$[\Rightarrow\to]$}}
\UnaryInfC{$\G(\alpha\to\beta)\Rightarrow\G\alpha\to\G\beta$}

\end{prooftree}

\end{proof}

\begin{theorem} $\mathfrak{Lt_C}$ is sound and complete w.r.t. the class $\tdlat_{\bf B}$.
\end{theorem}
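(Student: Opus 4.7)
The argument will follow exactly the template of Theorems~\ref{T610} and~\ref{T613}. For soundness, I would first note that every $\mathfrak{Lt}$-rule already preserves validity over $\tdlat_{\bf B} \subseteq \tdlat$ by Lemma~\ref{L69}. So it suffices to verify the four new rules $[\neg\,\Rightarrow]$, $[\Rightarrow\,\neg]$, $[\to\Rightarrow]$, $[\Rightarrow\,\to]$. This is a direct calculation in Boolean algebras: reading $h(\neg\alpha) = \neg h(\alpha)$ and $h(\alpha \to \beta) = \neg h(\alpha) \vee h(\beta)$, each new rule corresponds to a standard Boolean identity (e.g.\ $a \wedge c \leq b$ iff $c \leq \neg a \vee b$, together with $c \leq a \vee b$ iff $\neg a \wedge c \leq b$).

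For completeness, the plan is to redo the Lindenbaum--Tarski construction for $\mathfrak{Lt_C}$. Define $\alpha \prec \beta$ iff $\mathfrak{Lt_C} \vdash \alpha \Rightarrow \beta$, and $\alpha \equiv \beta$ iff $\alpha \prec \beta$ and $\beta \prec \alpha$. As in Lemma~\ref{L611}, $\equiv$ is a congruence on the term algebra $\mathfrak{Fm}$, now additionally compatible with $\neg$ and $\to$ thanks to the new rules. All $\tdlat$-axioms remain valid in the quotient by Proposition~\ref{P67}, which is still applicable since $\mathfrak{Lt_C}$ extends $\mathfrak{Lt}$. The crucial additional step is to show that the underlying lattice of $\mathfrak{Fm}_\equiv$ is Boolean: starting from the axiom $\alpha \Rightarrow \alpha$, one application of $[\neg\,\Rightarrow]$ yields $\alpha, \neg\alpha \Rightarrow$, and one application of $[\Rightarrow\,\neg]$ yields $\Rightarrow \alpha, \neg\alpha$. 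By Lemma~\ref{L612}, these give $[\alpha \wedge \neg\alpha]_\equiv = \mathbf{0}$ and $[\alpha \vee \neg\alpha]_\equiv = \mathbf{1}$, so $[\neg\alpha]_\equiv$ is the Boolean complement of $[\alpha]_\equiv$. By the corollary following Theorem~\ref{T216}, this forces $\mathfrak{Fm}_\equiv \in \tdlat_{\bf B}$.

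Once the quotient is shown to lie in $\tdlat_{\bf B}$, the completeness argument is finished verbatim as in Theorem~\ref{T613}: if $\mathfrak{Lt_C} \not\vdash \Gamma \Rightarrow \Delta$, set $\gamma = \bigwedge \Gamma$ and $\delta = \bigvee \Delta$; the analogue of Lemma~\ref{L612} for $\mathfrak{Lt_C}$ gives $[\gamma]_\equiv \not\leq [\delta]_\equiv$, and the canonical valuation $\overline{v}(p) := [p]_\equiv$, homomorphically extended via the algebraic structure of $\mathfrak{Fm}_\equiv$, refutes the sequent over a $\tdlat_{\bf B}$-algebra. The main obstacle I expect is bookkeeping around the new connectives in the Lindenbaum step, specifically reverifying compatibility of $\equiv$ with $\neg$ and $\to$; this comes down to derivable ``monotonicity'' rules for $\neg$ and $\to$ built from the four new sequent rules together with $[cut]$, which is routine once the complementation identities above are in hand.
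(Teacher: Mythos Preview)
Your proposal is correct and follows exactly the template the paper sets up: the paper states this theorem without proof, clearly intending the reader to rerun the arguments of Theorems~\ref{T610} and~\ref{T613} with the four new classical rules, which is precisely what you do. The only points worth recording are the derivability of the contraposition rule $\alpha\Rightarrow\beta\,/\,\neg\beta\Rightarrow\neg\alpha$ (needed for compatibility of $\equiv$ with $\neg$, obtained by one application of $[\neg\Rightarrow]$ followed by $[\Rightarrow\neg]$) and the observation that Lemma~\ref{L612} transfers verbatim to $\mathfrak{Lt_C}$ since its proof uses only the lattice rules; both are routine and your sketch already flags them.
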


\begin{theorem} $\mathfrak{Lt_C}$ is an alternative presentation for $K_t$.
\end{theorem}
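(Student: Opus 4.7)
The plan is to establish the equivalence by traveling through the algebraic semantics: $\mathfrak{Lt_C}$ is already known to be sound and complete w.r.t.\ $\tdlat_{\bf B}$ (the previous theorem), and by Theorem \ref{T216} the class $\tdlat_{\bf B}$ coincides with the variety of tense Boolean algebras, which is the well-known algebraic counterpart of $K_t$. The bridge between the sequent formulation and the Hilbert-style presentation is the deduction-theorem-style reduction available in the Boolean case: since $\mathfrak{Lt_C}$ contains the rules for classical implication, a straightforward induction on derivations yields $\mathfrak{Lt}_C \vdash \Gamma \Rightarrow \Delta$ iff $\mathfrak{Lt}_C \vdash\; \Rightarrow \bigwedge\Gamma \to \bigvee\Delta$ (with the usual conventions $\bigwedge\emptyset = \top$, $\bigvee\emptyset = \bot$). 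Thus it suffices to compare theorems of the form $\mathfrak{Lt}_C \vdash\; \Rightarrow \alpha$ with theorems of $K_t$.

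First, I would show the inclusion $\vdash_{K_t} \alpha \;\Longrightarrow\; \mathfrak{Lt}_C \vdash\; \Rightarrow \alpha$. This amounts to deriving, inside $\mathfrak{Lt_C}$, the four $K_t$-axiom schemata and checking that the inference rules (modus ponens and the two necessitation rules) are admissible. The $K$-axioms $\G(\varphi\to\psi)\to(\G\varphi\to\G\psi)$ and $\H(\varphi\to\psi)\to(\H\varphi\to\H\psi)$ were already derived in the previous subsection. The interaction axioms $\varphi\to\G\P\varphi$ and $\varphi\to\H\F\varphi$ follow immediately from rules $[\G\P]$ and $[\H\F]$ together with $[\Rightarrow\to]$. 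Modus ponens is the combination of $[\to\Rightarrow]$ and $[cut]$; the necessitation rules are precisely $[\G]$ and $[\H]$ of Proposition \ref{P66}.

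For the converse inclusion $\mathfrak{Lt}_C \vdash\; \Rightarrow \alpha \;\Longrightarrow\; \vdash_{K_t}\alpha$, I would invoke the algebraic semantics. If $\mathfrak{Lt}_C \vdash\;\Rightarrow\alpha$, then by soundness of $\mathfrak{Lt}_C$ (the analogue of Theorem \ref{T610} for the classical system), $h(\alpha)=1$ for every $h: \mathfrak{Fm} \to A$ with $A\in\tdlat_{\bf B}$. By Theorem \ref{T216}, every $A\in\tdlat_{\bf B}$ is a tense algebra, so $\alpha$ is valid in every tense algebra. Completeness of $K_t$ w.r.t.\ tense algebras (the standard algebraization of the minimal tense logic) then yields $\vdash_{K_t}\alpha$.

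The main obstacle is verifying carefully the semantic identification between sequent-provability and Hilbert-provability in the Boolean case, i.e.\ the internal deduction lemma $\mathfrak{Lt}_C \vdash \Gamma\Rightarrow\Delta \iff \mathfrak{Lt}_C \vdash\; \Rightarrow\bigwedge\Gamma\to\bigvee\Delta$, because this is what guarantees that the ``preserving degrees of truth'' consequence relation on a Boolean base collapses onto the $1$-preserving relation used to define $K_t$. Once this equivalence is in place, the remainder is routine: translations of $K_t$-axioms into sequent derivations, and the soundness-plus-completeness route in the reverse direction. The proof ends with the identification $\{\alpha : \mathfrak{Lt}_C\vdash\;\Rightarrow\alpha\} = \{\alpha : \vdash_{K_t}\alpha\}$, establishing that $\mathfrak{Lt}_C$ is indeed an alternative Gentzen presentation of $K_t$.
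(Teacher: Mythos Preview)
The paper states this theorem without proof, so there is no ``paper's own proof'' to compare against. Your approach---routing through the algebraic semantics via the previous theorem and Theorem \ref{T216}, then using the internal deduction lemma $\mathfrak{Lt}_C\vdash\Gamma\Rightarrow\Delta$ iff $\mathfrak{Lt}_C\vdash\;\Rightarrow\bigwedge\Gamma\to\bigvee\Delta$ to collapse the degrees-of-truth consequence onto the $1$-preserving one---is exactly the natural argument and is correct in outline.

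Two small points you should make explicit. First, in the direction $\vdash_{K_t}\alpha\Rightarrow\mathfrak{Lt}_C\vdash\;\Rightarrow\alpha$ you must also cover the axiom schema ``all classical tautologies''; this is immediate since $\mathfrak{Lt_C}$ contains the full classical sequent calculus, but it should be said. Second, there is a minor signature issue: in $\mathfrak{Lt_C}$ the four tense operators are primitive, whereas in the usual Hilbert presentation of $K_t$ the operators $\F$ and $\P$ are introduced as abbreviations $\neg\G\neg$ and $\neg\H\neg$. The lemma immediately preceding the theorem (items (1) and (2)) gives the required interderivabilities $\F\alpha\Leftrightarrow\neg\G\neg\alpha$ and $\P\alpha\Leftrightarrow\neg\H\neg\alpha$ inside $\mathfrak{Lt_C}$, so the translation between the two languages is faithful; you should invoke this explicitly when matching theorems of the two systems.
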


\subsubsection{Intuitionistic case: A Gentzen-style system for $\mathbb{L}_{\tdlat_{\bf H}}^{\leq}$}

Consider the language $\mathcal{L}=\{\wedge, \vee, \to, \neg, \H, \G, \F, \P\}$ and let $\mathfrak{Lt_I}$ the system obtained from $\mathfrak{Lt}$ by adding the next rules:

\

\begin{minipage}[b]{0.5\textwidth}
\begin{itemize}
    \item[][$\neg\,\Rightarrow$] \hspace{.7cm}$\displaystyle\frac{\Gamma\Rightarrow\alpha}{\neg\alpha,\Gamma\Rightarrow}$  
    \item[][$\to\Rightarrow$]\hspace{.5cm} $\displaystyle\frac{\Gamma\Rightarrow\Delta,\alpha\hspace{.5cm}\beta,\Gamma\Rightarrow\Delta}{\alpha\to\beta,\Gamma\Rightarrow\Delta}$
\end{itemize}
\end{minipage} \hfill \begin{minipage}[b]{0.5\textwidth}
\begin{itemize}
     \item[][$\Rightarrow\,\neg$]\hspace{.75cm} $\displaystyle\frac{\alpha,\Gamma\Rightarrow}{\Gamma\Rightarrow\neg\alpha}$ 
    \item[][$\Rightarrow\,\to$]\hspace{.5cm}$\displaystyle\frac{\alpha,\Gamma\Rightarrow\Delta,\beta}{\Gamma\Rightarrow\Delta,\alpha\to\beta}$ 
\end{itemize}
\end{minipage}

\begin{lemma} The following sequents are $\mathfrak{Lt_I}$-provable.
\begin{itemize}
  \item[(0)] Every $\mathfrak{Lt}$-provable sequent,
  \item[(1)] $\F(\alpha\to\beta)\Rightarrow \G\alpha\to\F\beta$\,\,and\,\, $\P(\alpha\to\beta)\Rightarrow \H\alpha\to\P\beta$,
  \item[(2)] $\H\F\alpha\to\P\G\beta\Rightarrow\alpha\to\beta$\,\,and\,\, $\G\P\alpha\to\F\H\beta\Rightarrow\alpha\to\beta$,
  \item[(3)] $\F\alpha\to\G\beta\Rightarrow\G(\alpha\to\beta)$\,\,and\,\, $\P\alpha\to\H\beta\Rightarrow\H(\alpha\to\beta)$.
\end{itemize}
\end{lemma}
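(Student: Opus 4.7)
The plan is to establish each of (0)--(3) using the rules of $\mathfrak{Lt_I}$ together with the previously derived rules and sequents of $\mathfrak{Lt}$. Item (0) is immediate: since $\mathfrak{Lt_I}$ is obtained from $\mathfrak{Lt}$ by \emph{adding} rules (not removing any), every $\mathfrak{Lt}$-proof transcribes verbatim into $\mathfrak{Lt_I}$, so in particular all sequents listed in Propositions \ref{P66} and \ref{P67} remain available.

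For (1), I would first derive $\alpha\to\beta,\alpha\Rightarrow\beta$ by a single application of $[\to\Rightarrow]$ to the weakened axioms $\alpha\Rightarrow\beta,\alpha$ and $\beta,\alpha\Rightarrow\beta$. Then applying $[{\ast}\F]$ with $\Gamma=\{\alpha\}$, principal formula $\alpha\to\beta$ and $\Delta=\{\beta\}$ yields $\G\alpha,\F(\alpha\to\beta)\Rightarrow\F\beta$, and $[\Rightarrow\,\to]$ finishes the first sequent as $\F(\alpha\to\beta)\Rightarrow\G\alpha\to\F\beta$. The companion sequent $\P(\alpha\to\beta)\Rightarrow\H\alpha\to\P\beta$ is identical, replacing $[{\ast}\F]$ by $[{\ast}\P]$.

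For (2), to establish $\H\F\alpha\to\P\G\beta\Rightarrow\alpha\to\beta$ I would start from the already-proven sequents $\alpha\Rightarrow\H\F\alpha$ (Proposition \ref{P67}(3)) and $\P\G\beta\Rightarrow\beta$ (Proposition \ref{P67}(7)). Applying $[we]_d$ and $[we]_i$ I obtain $\alpha\Rightarrow\beta,\H\F\alpha$ and $\P\G\beta,\alpha\Rightarrow\beta$; then $[\to\Rightarrow]$ produces $\H\F\alpha\to\P\G\beta,\alpha\Rightarrow\beta$, and $[\Rightarrow\,\to]$ closes it. The symmetric sequent is handled the same way using $\alpha\Rightarrow\G\P\alpha$ and $\F\H\beta\Rightarrow\beta$.

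Item (3) is the main obstacle; it requires chaining (1) and (2) through the derived adjunction rules $[Ad\G]$ and $[Ad\H]$ of Proposition \ref{P66}. To derive $\F\alpha\to\G\beta\Rightarrow\G(\alpha\to\beta)$, by $[Ad\G]$ it suffices to prove $\P(\F\alpha\to\G\beta)\Rightarrow\alpha\to\beta$. Instantiating (1) with $\F\alpha$ in place of $\alpha$ and $\G\beta$ in place of $\beta$ gives $\P(\F\alpha\to\G\beta)\Rightarrow\H\F\alpha\to\P\G\beta$, while (2) gives $\H\F\alpha\to\P\G\beta\Rightarrow\alpha\to\beta$; a single $[cut]$ closes the argument. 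The dual sequent $\P\alpha\to\H\beta\Rightarrow\H(\alpha\to\beta)$ is obtained symmetrically via $[Ad\H]$ applied to the cut of (1) and (2) instantiated with $\P\alpha$ and $\H\beta$. The only subtle point is making sure the instantiations of (1) and (2) line up so that the intermediate formulas $\H\F\alpha\to\P\G\beta$ and $\G\P\alpha\to\F\H\beta$ appear on matching sides of the cut; once the right substitution is chosen, the proof is forced.
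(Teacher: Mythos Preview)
Your proposal is correct and follows essentially the same route as the paper: item~(1) via $[{\ast}\F]$/$[{\ast}\P]$ applied to $\alpha\to\beta,\alpha\Rightarrow\beta$, and item~(3) via $[Ad\G]$/$[Ad\H]$ after cutting the appropriate instances of~(1) and~(2). Your derivation of~(2) is in fact slightly more economical than the paper's: the paper first proves $\H\F\alpha\to\P\G\beta\Rightarrow\alpha\to\P\G\beta$ and $\alpha\to\P\G\beta\Rightarrow\alpha\to\beta$ separately and cuts, whereas you weaken $\alpha\Rightarrow\H\F\alpha$ and $\P\G\beta\Rightarrow\beta$ and apply a single $[\to\Rightarrow]$, which is a cleaner one-step argument.
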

\begin{proof}

$(1)$
\begin{prooftree}
\AxiomC{$\alpha\Rightarrow\alpha$}

\AxiomC{$\beta\Rightarrow\beta$}
\LeftLabel{\small{$[we]_i$}}
\UnaryInfC{$\beta,\alpha\Rightarrow\beta$}
\LeftLabel{\small{$[\to\Rightarrow]$}}
\BinaryInfC{$\alpha,\alpha\to\beta\Rightarrow\beta$}
\LeftLabel{\small{$[{\ast}\F]$}}
\UnaryInfC{$\G\alpha,\F(\alpha\to\beta)\Rightarrow\F\beta$}
\LeftLabel{\small{$[\Rightarrow\to]$}}
\UnaryInfC{$\F(\alpha\to\beta)\Rightarrow\G\alpha\to\F\beta$}
\end{prooftree}

\

$(2)$

\begin{prooftree}
\AxiomC{$\alpha\Rightarrow\alpha$}
\LeftLabel{\small{$[\H\F]$}}
\UnaryInfC{$\alpha\Rightarrow\H\F\alpha$}

\AxiomC{$\P\G\beta\Rightarrow\P\G\beta$}
\LeftLabel{\small{$[we]_i$}}
\UnaryInfC{$\P\G\beta,\alpha\Rightarrow\P\G\beta$}
\LeftLabel{\small{$[\to\Rightarrow]$}}
\BinaryInfC{$\alpha, \H\F\alpha\to\P\G\beta\Rightarrow\P\G\beta$}
\LeftLabel{\small{$[\Rightarrow\to]$}}
\UnaryInfC{$\H\F\alpha\to\P\G\beta\Rightarrow\alpha\to\P\G\beta$}

\AxiomC{$\alpha\Rightarrow\alpha$}

\AxiomC{$\beta\Rightarrow\beta$}
\LeftLabel{\small{$[\P\G]$}}
\UnaryInfC{$\P\G\beta\Rightarrow\beta$}
\LeftLabel{\small{$[we]_i$}}
\UnaryInfC{$\P\G\beta,\alpha\Rightarrow\beta$}
\LeftLabel{\small{$[\to\Rightarrow]$}}
\BinaryInfC{$\alpha,\alpha\to\P\G\beta\Rightarrow\beta$}
\LeftLabel{\small{$[\Rightarrow\to]$}}
\UnaryInfC{$\alpha\to\P\G\beta\Rightarrow\alpha\to\beta$}
\LeftLabel{\small{$[cut]$}}
\BinaryInfC{$\H\F\alpha\to\P\G\beta\Rightarrow\alpha\to\beta$}
\end{prooftree}

\

$(3)$

\begin{prooftree}
\AxiomC{$\P(\F\alpha\to\G\beta)\Rightarrow\H\F\alpha\to\P\G\beta\,\,\,(i)$}

\AxiomC{$\H\F\alpha\to\P\G\beta\Rightarrow\alpha\to\beta\,\,\,(ii)$}
\LeftLabel{\small{[cut]}}
\BinaryInfC{$\P(\F\alpha\to\G\beta)\Rightarrow\alpha\to\beta$}
\LeftLabel{\small{$[Ad\G]$}}
\UnaryInfC{$\F\alpha\to\G\beta\Rightarrow\G(\alpha\to\beta)$}

\end{prooftree}

\end{proof}

\begin{theorem} $\mathfrak{Lt_I}$ is sound and complete w.r.t. the class $\tdlat_{\bf H}$.
\end{theorem}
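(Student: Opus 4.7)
The plan is to mirror the two-step proof already carried out for $\mathfrak{Lt}$ in Theorems \ref{T610} and \ref{T613}, adapting the notion of validity so that homomorphisms range only over $\tdlat_{\bf H}$ and accounting for the extra intuitionistic sequent rules. Throughout, I will use crucially that in a Heyting algebra the operation $\to$ is characterized by the adjunction $\alpha \wedge x \leq \beta$ iff $x \leq \alpha \to \beta$, and that $\neg \alpha = \alpha \to 0$.

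For soundness, every rule inherited from $\mathfrak{Lt}$ already preserves validity over the smaller class $\tdlat_{\bf H} \subseteq \tdlat$ by Lemma \ref{L69}, so it suffices to check that the four added rules $[\neg\Rightarrow]$, $[\Rightarrow\neg]$, $[\to\Rightarrow]$, $[\Rightarrow\to]$ preserve validity over $\tdlat_{\bf H}$. This is the standard argument for intuitionistic sequent calculi: for $[\Rightarrow\to]$, from $\bigwedge h(\Gamma) \wedge h(\alpha) \leq \bigvee h(\Delta) \vee h(\beta)$ one uses distributivity and the Heyting adjunction to deduce $\bigwedge h(\Gamma) \leq \bigvee h(\Delta) \vee h(\alpha \to \beta)$; the remaining three cases are analogous. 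For completeness, I would repeat the Lindenbaum-Tarski construction: define $\alpha \equiv \beta$ iff $\mathfrak{Lt_I} \vdash \alpha \Rightarrow \beta$ and $\mathfrak{Lt_I} \vdash \beta \Rightarrow \alpha$, and show this is a congruence on $\mathfrak{Fm}$ whose quotient lies in $\tdlat_{\bf H}$. The tense-distributive part of the structure on $\mathfrak{Fm}/\mathord{\equiv}$ is handled exactly as in Lemma \ref{L611}, since the rules governing $\wedge,\vee,\G,\H,\F,\P$ are unchanged. The new content is verifying that $[\alpha \to \beta]_{\equiv}$ is the relative pseudo-complement of $[\alpha]_{\equiv}$ relative to $[\beta]_{\equiv}$: one direction uses $[\to\Rightarrow]$ with reflexivity to get $(\alpha \to \beta) \wedge \alpha \Rightarrow \beta$, and the other direction uses $[\Rightarrow\to]$ to turn $\alpha \wedge \gamma \Rightarrow \beta$ into $\gamma \Rightarrow \alpha \to \beta$. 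Negation is then the special case $\beta = \bot$. Once $\mathfrak{Fm}/\mathord{\equiv} \in \tdlat_{\bf H}$ is established, the proof concludes exactly as in Theorem \ref{T613}: if $\Gamma \Rightarrow \Delta$ is not $\mathfrak{Lt_I}$-provable then, via the $\mathfrak{Lt_I}$-analogue of Lemma \ref{L612}, $\bigwedge\Gamma \Rightarrow \bigvee\Delta$ is not provable, so $[\bigwedge\Gamma]_{\equiv} \not\leq [\bigvee\Delta]_{\equiv}$, and the canonical valuation $\overline{v}(p) = [p]_{\equiv}$ falsifies the sequent.

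The main obstacle I anticipate is the verification that $\to$ in the quotient algebra is genuinely the Heyting implication in the presence of multi-conclusion sequents, together with confirming that the congruence $\equiv$ is compatible with the tense operators in the enriched system. Compatibility of $\equiv$ with $\to$ and $\neg$ follows from $[\to\Rightarrow]$, $[\Rightarrow\to]$, $[\neg\Rightarrow]$, $[\Rightarrow\neg]$ combined with $[we]_i$, $[we]_d$, and $[cut]$, but one must be careful to lift these single-formula rules to the form $\Gamma \Rightarrow \Delta$ iff $\bigwedge\Gamma \Rightarrow \bigvee\Delta$ and to check that no derivable sequent of $\mathfrak{Lt_I}$ forces collapses incompatible with the tense axioms (t1)--(t8); the fact that the tense rules of $\mathfrak{Lt}$ do not interact with $\to$ or $\neg$ syntactically is what ultimately makes this routine.
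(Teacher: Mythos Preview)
Your overall strategy---reuse Lemma~\ref{L69} for the inherited rules, verify the four new rules preserve validity, then repeat the Lindenbaum--Tarski argument and check the quotient is a Heyting algebra---is exactly the route the paper implicitly relies on (the paper states the theorem without proof, trusting the reader to adapt Theorems~\ref{T610} and~\ref{T613}). The completeness half of your sketch is fine.

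There is, however, a genuine gap in your soundness argument for $[\Rightarrow\to]$. You claim that from
\[
\bigwedge h(\Gamma)\wedge h(\alpha)\ \le\ \bigvee h(\Delta)\vee h(\beta)
\]
one obtains, ``by distributivity and the Heyting adjunction'',
\[
\bigwedge h(\Gamma)\ \le\ \bigvee h(\Delta)\vee h(\alpha\to\beta).
\]
This implication is \emph{false} in general Heyting algebras. The adjunction only gives $\bigwedge h(\Gamma)\le h(\alpha)\to(\bigvee h(\Delta)\vee h(\beta))$, and the inequality $a\to(d\vee b)\le d\vee(a\to b)$ does not hold intuitionistically. Concretely, in the three-element chain $0<m<1$ take $h(\alpha)=m$, $\Gamma=\emptyset$, $\Delta=\{\delta\}$ with $h(\delta)=m$, and $h(\beta)=0$: the premise $p\Rightarrow p,q$ is valid in every algebra, yet the conclusion $\Rightarrow p,\,p\to q$ fails since $m\vee(m\to 0)=m\ne 1$.

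So the multi-conclusion rule $[\Rightarrow\to]$ \emph{as printed in the paper} is not sound over $\tdlat_{\bf H}$; the intended intuitionistic rule almost certainly restricts the succedent of the premise (e.g.\ $\dfrac{\alpha,\Gamma\Rightarrow\beta}{\Gamma\Rightarrow\Delta,\alpha\to\beta}$, in the Maehara style), paralleling the restriction already made in $[\neg\Rightarrow]$ and $[\Rightarrow\neg]$. With that correction your soundness argument goes through verbatim via the adjunction, and the rest of your plan is correct. You should flag this point rather than assert the step as ``standard''.
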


\begin{theorem} $\mathfrak{Lt_I}$ is an alternative syntactic presentation for $IK_t^*$.
\end{theorem}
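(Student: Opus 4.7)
The plan is to derive the theorem from two completeness results meeting at the class of $IK_t^*$-algebras. By Theorem \ref{T218}, the objects of $\tdlat_{\bf H}$ are precisely the $IK_t^*$-algebras, and by the preceding theorem, $\mathfrak{Lt_I}$ is sound and complete with respect to $\tdlat_{\bf H}$. So, if one also has algebraic completeness of $IK_t^*$ with respect to this same class, then both systems must present the same consequence relation over the common language.

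First I would recall (or sketch) the standard Lindenbaum--Tarski style completeness of Ewald's calculus: using the congruence given by interprovability in $IK_t^*$, the quotient $\mathfrak{Fm}/{\equiv_{IK_t^*}}$ is an $IK_t^*$-algebra, whence $\vdash_{IK_t^*}\alpha$ iff $h(\alpha)=1$ for every homomorphism $h:\mathfrak{Fm}\to A$ with $A\in \tdlat_{\bf H}$. This is exactly the characterization of $\emptyset\models_{\tdlat_{\bf H}}^{\leq}\alpha$ that the previous subsection yields for $\mathfrak{Lt_I}$; hence the sets of theorems coincide. To lift the match to full consequence, I would use the intuitionistic deduction theorem for $IK_t^*$ together with Corollary \ref{C63} and rule $[\Rightarrow\to]$ in $\mathfrak{Lt_I}$ to reduce $\gamma_1,\dots,\gamma_n\vdash_{IK_t^*}\alpha$ and $\mathfrak{Lt_I}\vdash \gamma_1,\dots,\gamma_n\Rightarrow\alpha$ to the single statement $\vdash (\gamma_1\wedge\cdots\wedge\gamma_n)\to\alpha$ in each system, at which point the equivalence of the theorem sets closes the argument.

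As a sanity check, I would also perform the direct two-way translation for the key axioms and rules, since this is the place where (t4) is essential. In one direction, the Ewald axioms $\G(\varphi\to\psi)\to(\G\varphi\to\G\psi)$ and $\H(\varphi\to\psi)\to(\H\varphi\to\H\psi)$ are obtained as in the derivation for Lemma 6.21(4); the characteristic $IK_t^*$-axioms $\F(\varphi\to\psi)\to(\G\varphi\to\F\psi)$ and $\P(\varphi\to\psi)\to(\H\varphi\to\P\psi)$ are exactly Lemma 6.25(1); the conversion axioms $\varphi\to\G\P\varphi$ and $\varphi\to\H\F\varphi$ come from $[\G\P]$ and $[\H\F]$; and the necessitation rules are $[\G]$ and $[\H]$ of Proposition \ref{P66}. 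In the other direction, each tense rule of $\mathfrak{Lt_I}$ is shown admissible in $IK_t^*$ using the $K$-axiom and (t4) together with Ewald's rules, which is the standard simulation of Gentzen rules by a Hilbert system.

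The main obstacle will be handling the multi-conclusion tense rules $[\G^*]$ and $[{\ast}\F]$ (and their $\H,\P$-duals) on the $IK_t^*$ side, because Ewald's system is single-conclusion. The simulation requires (t4), i.e.\ $\G(\alpha\vee\beta)\to \G\alpha\vee \F\beta$, to collapse a disjunctive right-hand side under a $\G$: from $\Gamma\Rightarrow\Delta,\alpha$ one wants to infer $\G\Gamma\Rightarrow\F\Delta,\G\alpha$, and unwinding $\Delta=\delta_1\vee\cdots\vee\delta_k$ this step is exactly the combinatorial use of (t4) iterated. This is precisely why the starred version $IK_t^*$, rather than $IK_t$, is the right target, and verifying this equivalence carefully is the only delicate point; every other rule is routine modus-ponens-plus-necessitation bookkeeping, already illustrated in Lemma \ref{L611} and Proposition \ref{P66}.
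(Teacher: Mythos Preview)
The paper states this theorem without proof; it is evidently meant to follow from the preceding completeness theorem for $\mathfrak{Lt_I}$ with respect to $\tdlat_{\bf H}$, together with the (externally assumed) algebraic completeness of Ewald's $IK_t^*$ with respect to the same class, the identification being Theorem~\ref{T218}. Your proposal spells out exactly this strategy and adds the direct syntactic cross-simulation as a consistency check, so it matches what the authors leave implicit.

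One point deserves care. You invoke ``the intuitionistic deduction theorem for $IK_t^*$'' to lift the match from theorems to full consequence. In a Hilbert system equipped with the necessitation rules $\varphi/\G\varphi$ and $\varphi/\H\varphi$, the deduction theorem fails for the \emph{global} consequence relation (for instance $p\vdash\G p$ but $\not\vdash p\to\G p$); it holds only for the \emph{local} consequence, where necessitation is applied to theorems only. The paper's relation $\models^{\leq}_{\tdlat_{\bf H}}$ is precisely this local consequence, since in a Heyting algebra $a\leq b$ iff $a\to b=1$, and $\mathfrak{Lt_I}$ matches it via $[\Rightarrow\to]$. So your reduction is valid provided you read $\vdash_{IK_t^*}$ as local consequence; the theorem as stated does not disambiguate this, but under the global reading the claim would simply be false, so the local reading is the intended one and your argument goes through.
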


\subsubsection{The De Morgan case: A Gentzen-style system for $\mathbb{L}_{\tdlat_{\bf M}}^{\leq}$}

Consider $\mathcal{L}=\{\wedge, \vee, \sim, \H, \G\}$ and consider the following definitions:

$$\F:={\sim}\G{\sim}\hspace{2cm}and\hspace{2cm}\P:={\sim}\H{\sim}$$

Let $\mathfrak{Lt_{DM}}$ the system obtained from $\mathfrak{Lt}$ by adding the following rules:

$$[\sim]\hspace{.5cm} \displaystyle\frac{\alpha\Rightarrow\beta}{\sim\beta\Rightarrow\,\sim\alpha}$$

\

\begin{minipage}[b]{0.5\textwidth}
\begin{itemize}
    \item[][$\sim\sim\,\Rightarrow$] \hspace{.7cm}$\displaystyle\frac{\Gamma,\alpha\Rightarrow\Delta}{\Gamma,\sim\sim\alpha\Rightarrow\Delta}$  \end{itemize}
\end{minipage} \hfill \begin{minipage}[b]{0.5\textwidth}
\begin{itemize}
     \item[][$\Rightarrow\,\sim\sim$]\hspace{.75cm} $\displaystyle\frac{\Gamma\Rightarrow\Delta,\alpha}{\Gamma\Rightarrow\Delta,\sim\sim\alpha}$ 
\end{itemize}
\end{minipage}

\begin{theorem} $\mathfrak{Lt_{DM}}$ is sound and complete w.r.t. to the class $\tdlat_{\bf M}$.
\end{theorem}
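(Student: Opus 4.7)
The strategy is to mimic Theorems \ref{T610} and \ref{T613} for the enriched system and restricted class, taking care of the two new ingredients: the signature contains $\sim$ (with $\F$ and $\P$ only as abbreviations) and the underlying lattice must be De Morgan.

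For soundness, I would first note that every rule inherited from $\mathfrak{Lt}$ preserves validity over $\tdlat_{\bf M}$, since $\tdlat_{\bf M}$ is a subclass of $\tdlat$ and Lemma \ref{L69} applies verbatim once every occurrence of $\F\alpha$ and $\P\alpha$ in those rule schemes is read as ${\sim}\G{\sim}\alpha$ and ${\sim}\H{\sim}\alpha$ respectively (which is precisely the definition imposed on $\tdlat_{\bf M}$). For the three new rules I would check directly that in any $\mathcal{A}\in\tdlat_{\bf M}$ and valuation $h$: rule $[\sim]$ is sound because $\sim$ is order-reversing, and rules $[\sim\sim\Rightarrow]$, $[\Rightarrow\sim\sim]$ are sound because ${\sim}{\sim}x=x$ in any De Morgan algebra.

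For completeness I would run the Lindenbaum--Tarski construction of Theorem \ref{T613} inside the language $\mathcal{L}=\{\wedge,\vee,\sim,\H,\G\}$. Define $\alpha\prec\beta$ iff $\mathfrak{Lt_{DM}}\vdash\alpha\Rightarrow\beta$ and let $\equiv$ be the resulting equivalence. The key task is to verify that $\mathfrak{Fm}_{\equiv}$ lies in $\tdlat_{\bf M}$. The bounded distributive lattice structure and axioms (t1)--(t8) for $\G$, $\H$ and the defined $\F$, $\P$ are obtained exactly as in Lemma \ref{L611} via Proposition \ref{P67}. Compatibility of $\sim$ with $\equiv$ follows from $[\sim]$; the identity ${\sim}{\sim}\alpha\equiv\alpha$ follows from the two double-negation rules applied to $\alpha\Rightarrow\alpha$; and the De Morgan laws for $\wedge,\vee$ are derivable by combining contraposition $[\sim]$ with $[\vee\Rightarrow]$, $[\Rightarrow\wedge]$ and $[\sim\sim\Rightarrow]$ (deriving, e.g., $\sim(\alpha\vee\beta)\equiv{\sim}\alpha\wedge{\sim}\beta$ first and then using double negation). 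By Theorem \ref{T219} this suffices to place $\mathfrak{Fm}_{\equiv}$ in $\tdlat_{\bf M}$. From here the usual argument closes the proof: if $\Gamma\Rightarrow\Delta$ is not $\mathfrak{Lt_{DM}}$-provable then, denoting $\gamma=\bigwedge\Gamma$ and $\delta=\bigvee\Delta$, the analogue of Lemma \ref{L612} for $\mathfrak{Lt_{DM}}$ yields $[\gamma]_{\equiv}\not\le[\delta]_{\equiv}$, and the canonical valuation $v(p)=[p]_{\equiv}$ witnesses the failure of $\Gamma\models_{\tdlat_{\bf M}}^{\le}\Delta$.

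The main obstacle I anticipate is the confirmation that $\mathfrak{Fm}_{\equiv}$ is genuinely De Morgan and that the defined abbreviations $\F={\sim}\G{\sim}$ and $\P={\sim}\H{\sim}$ behave consistently with all the $\F,\P$-rules of $\mathfrak{Lt}$ that are implicitly used inside $\mathfrak{Lt_{DM}}$-proofs. Once these syntactic derivations are carried out, both soundness and completeness follow along well-trodden lines.
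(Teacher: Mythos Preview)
Your proposal is correct and follows exactly the pattern the paper establishes for the base system in Theorems \ref{T610} and \ref{T613} (and tacitly intends for the De Morgan extension, which is stated without proof). The only point to add is that the derivations you anticipate for the De Morgan identities and for ${\sim}0=1$, ${\sim}1=0$ indeed go through with $[\sim]$ plus the double-negation rules, and since $\F,\P$ are literal abbreviations in the language the consistency issue you flag is automatic rather than an additional verification.
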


\begin{theorem} $\mathfrak{Lt_{DM}}$ is an alternative presentation for the system  $DMt$ of \cite{FL}.
\end{theorem}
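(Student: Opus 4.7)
The plan is to use the common algebraic semantics as a bridge. The immediately preceding theorem together with Theorem \ref{T219} identifies the $\mathfrak{Lt_{DM}}$-derivable sequents with those valid in every tense De Morgan algebra. On the other hand, $DMt$ in \cite{FL} is proved there to be sound and complete with respect to the class of tense De Morgan algebras. Chaining these two equivalences yields that $\mathfrak{Lt_{DM}}$ and $DMt$ derive exactly the same theorems, hence are alternative presentations of the same logic.

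To make this rigorous I would first recall from \cite{FL} the axiomatization of $DMt$ (Hilbert-style: the axioms of the classical De Morgan fragment plus tense axioms (t1)--(t4) with $\F$ and $\P$ abbreviating ${\sim}\G{\sim}$ and ${\sim}\H{\sim}$, modus ponens, and necessitation for $\G$ and $\H$) together with its algebraic completeness. Then I would fix the natural translation between the two formats: a sequent $\Gamma \Rightarrow \Delta$ corresponds to the judgement that $\bigvee \Delta$ is a $DMt$-consequence of $\bigwedge \Gamma$, in the "preserves degrees of truth" sense made precise in Proposition \ref{P68}.

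The verification splits into two directions. In one direction, every axiom and rule of $DMt$ must be translated into a derivation in $\mathfrak{Lt_{DM}}$: the De Morgan part is covered by $[\sim]$, $[\sim\sim\Rightarrow]$ and $[\Rightarrow\sim\sim]$; the tense axioms (t1)--(t4) follow from the sequents already established in Proposition \ref{P67}, whose proofs transfer to $\mathfrak{Lt_{DM}}$ unchanged; and the necessitation rules are precisely $[\G]$ and $[\H]$ from Proposition \ref{P66}. In the other direction, any sequent derivable in $\mathfrak{Lt_{DM}}$ is valid in every tense De Morgan algebra (by the soundness half of the preceding theorem) and hence a $DMt$-consequence (by completeness of $DMt$).

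The main obstacle is the explicit matching in the first direction, specifically for axioms that mention the defined operators $\F$ and $\P$: in $\mathfrak{Lt_{DM}}$ they are primitive, while in $DMt$ they abbreviate ${\sim}\G{\sim}$ and ${\sim}\H{\sim}$. The contraposition rule $[\sim]$ together with the double-negation rules $[\sim\sim\Rightarrow]$ and $[\Rightarrow\sim\sim]$ provides the bridge, allowing any occurrence of the primitive $\F$ (resp.\ $\P$) to be rewritten as its De Morgan definition and reducing the question to a sequent involving only $\G$, $\H$ and $\sim$. Once this syntactic dictionary is in place, the rest is routine bookkeeping in the Gentzen calculus.
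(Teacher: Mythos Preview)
The paper states this theorem without proof, so there is nothing to compare against directly; your semantic-bridge strategy via the common algebraic semantics (the preceding soundness/completeness theorem for $\mathfrak{Lt_{DM}}$ with respect to $\tdlat_{\bf M}$, Theorem~\ref{T219} identifying $\tdlat_{\bf M}$ with tense De Morgan algebras, and the completeness of $DMt$ from \cite{FL}) is the natural argument and is almost certainly what the authors intend.

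One factual correction: you have the status of $\F$ and $\P$ reversed. In the De Morgan subsection the paper explicitly restricts the language to $\mathcal{L}=\{\wedge,\vee,\sim,\H,\G\}$ and \emph{defines} $\F:={\sim}\G{\sim}$, $\P:={\sim}\H{\sim}$; so in $\mathfrak{Lt_{DM}}$ these operators are abbreviations, exactly as in $DMt$. The ``obstacle'' you describe therefore does not arise in the form you state, and the syntactic dictionary you propose is unnecessary. This actually simplifies your argument: once both systems are over the same primitive signature with the same abbreviations, the semantic bridge alone suffices, and the second and third paragraphs of your proposal (the direct syntactic simulation of $DMt$ inside $\mathfrak{Lt_{DM}}$) can be dropped or reduced to a remark. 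The only residual point to check is that the notion of consequence used for $DMt$ in \cite{FL} matches the degrees-of-truth consequence $\models^{\leq}_{\tdlat_{\bf M}}$; since De Morgan logic lacks an implication validating modus ponens, $DMt$ is presumably also a sequent calculus complete for the order-based semantics, and this should be cited rather than re-derived.
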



\subsection{Relational semantics for $\mathfrak{Lt}$ by \tdlat-frames} 

We denote by $\mathfrak{M}_{Lt}$ the class formed by all the \tdlat-frames (Definition \ref{D52}). 

\begin{definition} A model based in a \tdlat-frame ${\bf X}=(X,\leq,R)$ is a system $\mathcal{M}=({\bf X},m)$ where $m:Var\to \mathcal{P}_i(X)$ is a map called {\em meaning map}. 
\end{definition}

\begin{definition} Let $\mathcal{M}=({\bf X},m)$ be a model based on the \tdlat-frame {\bf X} defined by the satisfaction relation $\models\subseteq X\times Fm$ recursively. We say that the model the state $x\in X$ in the model $\mathcal{M}$ satisfies the formula $\alpha\in Fm$, denoted by $\mathcal{M},x\models \alpha$, if the following conditions are fulfilled:
\begin{itemize}
  \item[(1)] $\mathcal{M},x\models \top$.
  \item[(2)] $\mathcal{M},x\not\models \bot$.
  \item[(3)] $\mathcal{M},x\models p$ \, iff \, $x\in m(p)$, for $p\in Var$.
  \item[(4)] $\mathcal{M},x\models \beta\wedge\gamma$ \, iff \, $\mathcal{M},x\models \beta\,\,\,\&\,\,\,\mathcal{M},x\models \gamma$.
  \item[(5)] $\mathcal{M},x\models \beta\vee\gamma$ \, iff \, $\mathcal{M},x\models \beta$ or $\mathcal{M},x\models \gamma$
  \item[(6)] $\mathcal{M},x\models \G\beta$ \, iff \, for every $x'\in X$, if $x'\in R(x)$ then $\mathcal{M},x'\models \beta$.
  \item[(7)] $\mathcal{M},x\models \H\beta$ \, iff \, for every $x'\in X$, if $x'\in R^{-1}(x)$ then $\mathcal{M},x'\models \beta$.
  \item[(8)] $\mathcal{M},x\models \F\beta$ \, iff \, there is $x'\in X$ such that $x'\in R(x)$ and $\mathcal{M},x'\models \beta$.
  \item[(9)] $\mathcal{M},x\models \P\beta$ \, iff \, there is $x'\in X$ such that $x'\in R^{-1}(x)$ and $\mathcal{M},x'\models \beta$.
\end{itemize}
\end{definition}

\

\begin{remark} Given a model $\mathcal{M}=({\bf X},m)$ based on the \tdlat-frame {\bf X}, since $m:Var\to \mathcal{P}_i(X)$ there exists a unique homomorphic extension $\overline{m}:\mathfrak{Fm}\to \mathfrak{C}({\bf X})$ where $\mathfrak{C}({\bf X})$ is the complex algebra of {\bf X} (Definition \ref{D55} ). Besides, for every $\alpha\in Fm$, $\overline{m}(\alpha)=\{x\in X: \mathcal{M},x\models \alpha\}$ (this can be proved by induction on the complexity of the formula $\alpha$).
\end{remark}

\

\begin{definition} We say that the sequent $\Gamma\Rightarrow\Delta$ of $\mathfrak{Lt}$ is
\begin{itemize}
  \item[$\bullet$] valid in the model $\mathcal{M}=({\bf X},m)$ based on the \tdlat-frame {\bf X} iff \, $\overline{m}(\bigwedge\Gamma)\subseteq \overline{m}(\bigvee\Delta)$,
  \item[$\bullet$] valid in the \tdlat-frame {\bf X} iff it is valid in every model $\mathcal{M}=({\bf X},m)$ based in {\bf X}.
  \item[$\bullet$] valid in $\mathfrak{M}_{Lt}$ iff it is valid in every ${\bf X}\in \mathfrak{M}_{Lt}$ and, in this case, we write  $\mathfrak{M}_{Lt}\models\Gamma\Rightarrow\Delta$.
\end{itemize}
\end{definition}

\

\begin{theorem} Let $\Gamma\Rightarrow\Delta$ be a sequent of $\mathfrak{Lt}$. The following conditions are equivalent. 
\begin{itemize}
  \item[(i)] $\mathfrak{Lt}\vdash\Gamma\Rightarrow\Delta$, that is, $\Gamma\Rightarrow\Delta$ is provable in $\mathfrak{Lt}$.
  \item[(ii)] $\mathfrak{M}_{Lt}\models\Gamma\Rightarrow\Delta$, that is, $\Gamma\Rightarrow\Delta$ is valid in every ${\bf X}\in \mathfrak{M}_{Lt}$.
\end{itemize}
\end{theorem}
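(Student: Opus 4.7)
The plan is to prove both directions by reducing to the algebraic soundness and completeness already obtained (Theorems \ref{T610} and \ref{T613}) and transporting the result through the discrete duality of Section \ref{s5}.

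For the direction (i) implies (ii), I would start from an arbitrary \tdlat-frame ${\bf X}=(X,\leq,R)$ and the associated complex algebra $\mathfrak{C}({\bf X})$, which by Lemma \ref{L56} lies in \tdlat. Given a model $\mathcal{M}=({\bf X},m)$, the meaning map $m:Var\to \mathcal{P}_i(X)$ extends uniquely to a \tdlat-homomorphism $\overline{m}:\mathfrak{Fm}\to \mathfrak{C}({\bf X})$, and by the remark following the satisfaction definition, $\overline{m}(\alpha)=\{x\in X:\mathcal{M},x\models\alpha\}$. Thus validity of $\Gamma\Rightarrow\Delta$ in $\mathcal{M}$ is exactly the inequality $\bigwedge\overline{m}(\Gamma)\subseteq\bigvee\overline{m}(\Delta)$ in $\mathfrak{C}({\bf X})$, which, since $\mathfrak{C}({\bf X})\in\tdlat$, is guaranteed by the algebraic soundness Theorem \ref{T610} whenever $\mathfrak{Lt}\vdash\Gamma\Rightarrow\Delta$.

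For the direction (ii) implies (i), I proceed contrapositively. Suppose $\mathfrak{Lt}\not\vdash\Gamma\Rightarrow\Delta$. Mirroring the proof of Theorem \ref{T613}, consider the Lindenbaum-Tarski algebra $\mathcal{A}=\mathfrak{Fm}_{\equiv}$ (a \tdlat-algebra by Lemma \ref{L611}) and the canonical valuation $\overline{v}:\mathfrak{Fm}\to\mathcal{A}$; by Lemma \ref{L612} we have $\bigwedge\overline{v}(\Gamma)\not\leq\bigvee\overline{v}(\Delta)$. Now invoke the discrete duality: by Lemma \ref{L510}, the canonical frame $\mathfrak{M}(\mathcal{A})=(X(A),\subseteq,R_A)$ is a \tdlat-frame, and by Lemma \ref{L511}, the map $h_A:\mathcal{A}\to\mathfrak{C}(\mathfrak{M}(\mathcal{A}))$ is an immersion of tense distributive lattices (hence an injective \tdlat-homomorphism, and in particular order-reflecting). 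Define a meaning map $m:Var\to\mathcal{P}_i(X(A))$ by $m(p)=h_A(\overline{v}(p))$; since both $\overline{v}$ and $h_A$ respect all the operations, the unique homomorphic extension of $m$ equals $\overline{m}=h_A\circ\overline{v}$. Because $h_A$ reflects the order, the inequality $\bigwedge\overline{v}(\Gamma)\not\leq\bigvee\overline{v}(\Delta)$ transfers to $\bigwedge\overline{m}(\Gamma)\not\subseteq\bigvee\overline{m}(\Delta)$ in $\mathfrak{C}(\mathfrak{M}(\mathcal{A}))$. Hence $\Gamma\Rightarrow\Delta$ fails in the model $(\mathfrak{M}(\mathcal{A}),m)$, witnessing $\mathfrak{M}_{Lt}\not\models\Gamma\Rightarrow\Delta$.

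The main obstacle, and the point deserving the most care, is completeness: one needs to verify precisely that the composite $h_A\circ\overline{v}$ coincides with the homomorphic extension of the syntactically-defined meaning map and that this extension agrees with the recursive satisfaction clauses through the operators $\G_R$, $\H_{R^{-1}}$, $\F_R$, $\P_{R^{-1}}$ of the complex algebra. This reduces to the fact that $h_A$ is a full \tdlat-morphism (not merely a lattice embedding), which is guaranteed by Lemma \ref{L511}, combined with the pointwise description of satisfaction given in the remark after the definition of models. Soundness is essentially automatic once the algebraic Theorem \ref{T610} is in place.
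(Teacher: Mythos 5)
Your proposal is correct and follows essentially the same route as the paper: both directions are reduced to the algebraic soundness and completeness theorems via the discrete duality, using the complex algebra $\mathfrak{C}({\bf X})$ for soundness over frames and the canonical frame $\mathfrak{M}(\mathcal{A})$ together with the immersion $h_A$ for completeness. The only cosmetic differences are that you phrase (i)$\Rightarrow$(ii) directly rather than contrapositively, and you appeal to order-reflection of the injective homomorphism $h_A$ where the paper explicitly exhibits a separating prime filter; these amount to the same argument.
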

\begin{proof}
$(i)$ implies $(ii)$: Suppose that $\mathfrak{M}_{Lt}\not\models\Gamma\Rightarrow\Delta$, that is, there is a model $\mathcal{M}=({\bf X},m)$ based on the \tdlat-frame {\bf X} such that $\overline{m}(\bigwedge\Gamma)\not\subseteq \overline{m}(\bigvee\Delta)$. Then, there exists the \tdlat-algebra $\mathfrak{C}({\bf X})$ (Lemma \ref{L56}) and $\overline{m}\in Hom(\mathfrak{Fm},\mathfrak{C}({\bf X}))$ such that $\bigwedge\{\overline{m}(\gamma): \gamma \in \Gamma\}\not\subseteq \bigvee\{\overline{m}(\delta): \delta \in \Delta\}$ and therefore $\Gamma\Rightarrow\Delta$ is not valid. Then, by Theorem \ref{T610}, we have that $\mathfrak{Lt}\not\vdash\Gamma\Rightarrow\Delta$. \\[2mm]
$(ii)$ implies $(i)$: Suppose that $\mathfrak{Lt}\not\vdash\Gamma\Rightarrow\Delta$ then, by Theorem \ref{T613}, $\Gamma\Rightarrow\Delta$ is not valid. Therefore, there exists a \tdlat-algebra $\mathcal{A}$ and homomorphism $f:\mathfrak{Fm}\to\mathcal{A}$ such that $f(\bigwedge\Gamma)\not\leq f(\bigvee\Delta)$. Then, there is $S\in X(A)$ such that $f(\bigwedge\Gamma)\in S$ and $f(\bigvee\Delta)\not\in S$. \\
On the other hand, from Lemmas \ref{L510} and \ref{L511} we know that $\mathfrak{M}(\mathcal{A})$ is a \tdlat-frame and $h_A:\mathcal{A}\to \mathfrak{C}(\mathfrak{M}(\mathcal{A}))$ defined by $h_A(a)=\{T\in X(A): a\in T\}$ is an immersion of \tdlat-algebras. Then, there is a model $\mathcal{M}=(\mathfrak{M}(\mathcal{A}),m)$ based in $\mathfrak{M}(\mathcal{A})$ where $m=h_A\circ f$ and such that $m(\bigwedge\Gamma)\not\subseteq m(\bigvee\Delta)$. Therefore,  $\mathfrak{M}_{Lt}\not\models\Gamma\Rightarrow\Delta$.
\end{proof}

\

\begin{corollary} The following conditions are equivalent.
\begin{itemize}
  \item[(i)]  $\Gamma\Rightarrow\Delta$ is provable in $\mathfrak{Lt}$.
  \item[(ii)] $\Delta$ is consequence of $\Gamma$ in $\mathbb{L}_{{\bf tdLAt}}^{\leq}$.
  \item[(iii)] $\Gamma\Rightarrow\Delta$ is valid in every \tdlat-frame {\bf X}.  
\end{itemize}
\end{corollary}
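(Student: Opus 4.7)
The plan is to chain together the equivalences that have already been established immediately above. The statement is a direct wrap-up corollary that collects three different characterizations (syntactic provability, algebraic validity, frame validity) into a single bi-conditional triple, so no essentially new work is required; I only need to cite the right results and make the linkages explicit.

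First I would prove (i) $\Leftrightarrow$ (ii). The direction (i) $\Rightarrow$ (ii) follows from the Soundness Theorem (Theorem~\ref{T610}) together with Proposition~\ref{P68}: if $\Gamma\Rightarrow\Delta$ is $\mathfrak{Lt}$-provable then it is valid in the algebraic sense, which by Proposition~\ref{P68} is exactly $\Gamma \models_{\tdlat}^{\leq}\Delta$. Conversely, (ii) $\Rightarrow$ (i) uses again Proposition~\ref{P68} to translate $\Gamma \models_{\tdlat}^{\leq}\Delta$ into algebraic validity of $\Gamma\Rightarrow\Delta$, and then the Completeness Theorem (Theorem~\ref{T613}) delivers a derivation in $\mathfrak{Lt}$.

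Then I would establish (i) $\Leftrightarrow$ (iii), which is precisely the content of the relational completeness theorem stated just before this corollary (the unnumbered theorem proved via the complex algebra $\mathfrak{C}(\mathbf{X})$ and the canonical frame $\mathfrak{M}(\mathcal{A})$, using Lemmas~\ref{L56}, \ref{L510}, and \ref{L511}). So this direction is simply a citation.

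Combining the two equivalences yields (i) $\Leftrightarrow$ (ii) $\Leftrightarrow$ (iii), which is the claim. There is no genuine obstacle here; the only thing one has to be careful about is that Proposition~\ref{P68} is phrased for the algebraic validity of a sequent rather than for the consequence relation $\models_{\tdlat}^{\leq}\!$, and that the definition of $\Gamma\models_{\tdlat}^{\leq}\Delta$ given in the paper (via meets on the left and joins on the right over arbitrary homomorphisms into \tdlat-algebras) matches exactly the notion of validity used in the soundness/completeness theorems. Once this match is pointed out, the corollary follows immediately by transitivity of the biconditional.
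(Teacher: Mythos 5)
Your proof is correct and follows exactly the route the paper intends: the equivalence (i) $\Leftrightarrow$ (ii) is the content of Theorems \ref{T610} and \ref{T613} together with Proposition \ref{P68}, and (i) $\Leftrightarrow$ (iii) is the relational soundness/completeness theorem proved immediately before the corollary. Nothing further is needed.
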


Again, it is possible to particularize these results and obtain relational semantics for the different logics presented in this section. We leave the details to the reader.


\end{document}